\documentclass[12pt]{amsart} 
\usepackage[margin= 1.1 in]{geometry}
\usepackage[usenames,dvipsnames]{xcolor}
\usepackage[hidelinks]{hyperref}
\hypersetup{colorlinks = true,
linkcolor = blue, anchorcolor =blue,
citecolor = ForestGreen,
filecolor = blue,urlcolor = blue,
            pdfauthor=author}
\usepackage[capitalize]{cleveref}

\usepackage{mathtools}

\usepackage{amsmath,amssymb,bm}
\usepackage[alphabetic]{amsrefs}
\usepackage{indentfirst}
\usepackage{graphicx}
\usepackage{amsthm}

% \usepackage{hhline,pifont}

% \usepackage[T1]{fontenc}
% \usepackage{newpxtext,newpxmath}

% \usepackage{newtxtext,newtxmath}
%\usepackage{parskip} 

% % margins
% \setlength\topmargin{-1.35cm} \setlength\textheight{245mm}
% \setlength\oddsidemargin{-5mm}
% \setlength\evensidemargin\oddsidemargin \setlength\textwidth{180mm}
\usepackage{enumitem}

\makeatletter 
\@mparswitchfalse%
\makeatother
\normalmarginpar %for right-handed notes and lines

\usepackage{todonotes}

\newtheorem{theorem}{Theorem}[section]
\newtheorem{lemma}[theorem]{Lemma}
\newtheorem{proposition}[theorem]{Proposition}
\newtheorem{corollary}[theorem]{Corollary}

\numberwithin{equation}{section}

\newtheorem{assumption}{Assumption}

\theoremstyle{remark}
\newtheorem{remark}{Remark}

\theoremstyle{definition}
\newtheorem{definition}[theorem]{Definition}

\DeclareMathOperator{\dist}{dist}

\newcommand{\ud}{\,\mathrm{d}}

\newcommand{\R}{\mathbb{R}}
\newcommand{\C}{\mathbb{C}}

\def \Or {\mathcal{O}}

\newcommand{\ZZ}{\mathbb{Z}}

\newcommand{\rr}{\mathcal{R}}

\newcommand{\dd}{\cdot}
\newcommand{\ti}{\times}

% operator

% \usepackage{times}

\IfFileExists{mathabx.sty}%
  {\DeclareFontFamily{U}{mathx}{\hyphenchar\font45}%
   \DeclareFontShape{U}{mathx}{m}{n}{<->mathx10}{}%
   \DeclareSymbolFont{mathx}{U}{mathx}{m}{n}%
   \DeclareFontSubstitution{U}{mathx}{m}{n}%
   \DeclareMathAccent{\widebar}{0}{mathx}{"73}%
}{%
  \PackageWarning{mathabx}{%
    Package mathabx not available, therefore\MessageBreak substituting
    widebar with overline\MessageBreak }%
  \newcommand{\widebar}[1]{\overline{#1}}%
}
\newcommand{\wb}[1]{\widebar{#1}}

\newcommand{\mc}[1]{\mathcal{#1}}

\newcommand{\eps}{\epsilon}
\newcommand{\lad}{\lambda}
\newcommand{\si}{\sigma}

\newcommand{\p}{\partial}
\def \h {\hat} 

\def \ww {\omega}
\def \l {\langle} 
\def \r {\rangle}
\def \d {\delta}
\def \vp {\varphi}

%%% for this problem 

\def \kk {\mc{K}}
\def \wkk {\w{\mc{K}}_D}
\def \wph {\w{\phi}}
\def \rr {\mc{R}}

\def \br {\breve}

\newcommand{\norm}[1]{\lVert#1\rVert}

\renewcommand{\Re}{\mathfrak{Re}\,} 
\renewcommand{\Im}{\mathfrak{Im}\,}

\def \q {\quad}
\def \w {\widetilde}
\def \mm {\left[\begin{matrix}}
\def \nn {\end{matrix}\right]}

%% for scattering 

\newcommand{\kb}[1]{\mathcal{K}_{D,#1}}

\title[Nonlinear dielectric resonances]{Dielectric scattering resonances for high-refractive resonators with cubic nonlinearity}

\author{Habib Ammari} %
\address[H. Ammari]{Department of Mathematics, ETH Z\"{u}rich, R\"{a}mistrasse 101, CH-8092 Z\"{u}rich, Switzerland; Hong Kong Institute for Advanced Study, City University of Hong Kong, Kowloon Tong, Hong Kong}
\email{habib.ammari@math.ethz.ch}

\author{Bowen Li} %
\address[B. Li]{Department of Mathematics,  City University of Hong Kong, Kowloon Tong, Hong Kong SAR}
\email{bowen.li@cityu.edu.hk}

% \else
% \fi
\date{}

% \date{\today}
\begin{document}

\begin{abstract}
This work establishes a rigorous mathematical framework for the analysis of nonlinear dielectric resonances in wave scattering by high-index resonators with Kerr-type nonlinearities. We consider both two- and three-dimensional settings and prove the existence of nonlinear dielectric resonances in the subwavelength regime, bifurcating from the zero solution at the corresponding linear resonances. Furthermore, we derive asymptotic expansions for the nonlinear resonances and states in terms of the high contrast parameter $\tau$ and the normalization constant $\mathcal{N}$. For a symmetric dimer of resonators, these small-amplitude nonlinear resonant states exhibit either symmetric or antisymmetric profiles. In three dimensions, under conditions valid in the dilute regime, we prove that as the field amplitude $\mathcal{N}$ increases, mode hybridization induces a symmetry-breaking bifurcation along the principal symmetric solution branch at a critical amplitude $ \mathcal{N}_{\mathrm{crit}}$. This bifurcation gives rise to two asymmetric resonant states, each localized on one of the particles in the dimer.  Remarkably, in two dimensions, we show that no such symmetry-breaking bifurcation exists along the principal solution branches, owing to the distinct scaling behavior of the principal nonlinear subwavelength resonance arising from the logarithmic singularity.
\end{abstract}

\maketitle

\bigskip

\noindent \textbf{Keywords.} Nonlinear subwavelength dielectric resonance, Kerr nonlinearity, Helmholtz equation, asymptotic expansions, hybridization, symmetry-breaking bifurcation. \par

\bigskip

\noindent \textbf{AMS Subject classifications.} 35P30, 35C20, 74J20.
\\

\tableofcontents

\section{Introduction}
Subwavelength photonic resonators made from all-dielectric materials with high refractive indices, such as silicon nanoparticles, have recently emerged as promising building blocks for advanced optical devices. These resonators provide a compelling alternative to traditional plasmonic structures, offering broad functionality across various applications \cites{kuznetsov2016optically,arbabi2015dielectric}. Unlike plasmonic systems, all-dielectric nanoresonators inherently exhibit low radiative losses and support engineered magnetic Mie-type modes, enabling unprecedented control over light at subwavelength scales \cite{baranov2017all}.  
The rise of all-dielectric nanophotonics has also unlocked new possibilities, particularly in nonlinear optics. By exploiting their resonant properties, dielectric nanostructures have demonstrated exceptional efficiency in nonlinear optical processes, surpassing metallic nanoparticles with plasmonic resonances by several orders of magnitude \cites{butet2015optical}. The combination of a high refractive index contrast and Kerr nonlinearity in these structures facilitates dynamic light manipulation through effects such as self-focusing and intensity-dependent refractive index changes.  These advancements underscore the immense potential of dielectric nanoparticles to confine and guide electromagnetic waves at subwavelength scales, driving progress toward the long-standing goal of achieving robust wave control and transport \cites{kivshar2018all,koshelev2020subwavelength,smirnova2020nonlinear}. 

% This remarkable enhancement is largely attributed to the resonant properties of high-index dielectric materials, which facilitate strong electromagnetic field confinement and amplification. When combined with substantial Kerr-type optical nonlinearities, these resonant effects enable pronounced nonlinear responses at comparatively low input intensities, paving the way for advanced nanophotonic devices with unprecedented efficiency and tunability.

% The high contrast in refractive index amplifies light-matter interactions, facilitating efficient energy transfer and enhancing optical effects such as scattering, absorption, and field localization. 

Despite significant advances in experimental and numerical modeling of nonlinear optical scattering, the mathematical understanding of the nonlinear resonant behavior in high-index resonators remains quite limited, particularly in comparison to the well-understood linear scattering regime. 
For the analysis of linear resonant responses, a particularly intriguing phenomenon observed experimentally \cites{evlyukhin2012demonstration, kuznetsov2016optically} is that silicon nanoparticles can resonate in the optical domain with a high $Q$-factor, where the excited electric and magnetic dipole moments are of comparable magnitudes. These properties are mathematically associated with the so-called dielectric resonances. 
In the specific case of spherical nanoparticles, dielectric resonances are well described by Mie scattering theory \cite{tzarouchis2018light}. More generally, in the high-contrast limit, these resonances can be treated as a linear eigenvalue problem for the Newtonian potential \cites{meklachi2018asymptotic, ammari2019subwavelength}, restricted to the TM or TE polarization model governed by the Helmholtz equation. For the full Maxwell equations, comprehensive mathematical theories have been developed for both single dielectric nanoparticles and clusters of nanoparticles, as detailed in \cite{ammari2023mathematical} and \cite{cao2022electromagnetic}, respectively. Moreover, recent work \cite{ammari2024fano} provides a solid mathematical foundation for Fano resonances in asymmetric dielectric metasurfaces. Specifically, it shows the existence of embedded eigenvalues for symmetric metasurfaces under normal incidence. When the system experiences slight perturbations, these embedded eigenvalues shift into the lower complex half-plane with a small imaginary part, leading to the appearance of sharp, asymmetric Fano-type line shapes in the transmission and reflection spectra. 

 Nevertheless, a deep mathematical understanding of nonlinear dielectric scattering resonances remains largely undeveloped. Existing works \cite{meklachi2018asymptotic} and \cite{ammari2025nonlinear} address high-contrast nonlinear scatterers and nonlinear acoustic bubbly media, respectively, using asymptotic analysis based on small volume size and contrast parameters. Both, however, assume a priori the existence of nonlinear resonances with a convergent expansion series. Additionally, the numerical experiments in \cite{ammari2025nonlinear} reveal that, compared to the linear case, nonlinearities can induce extra resonant modes in a dimer of resonators. In this work, we aim to address these gaps by rigorously proving the existence of nonlinear dielectric resonances in the high-contrast regime and establishing the emergence of extra resonances in the dimer setting.

\subsection{Main results and layout}
Electromagnetic scattering by inhomogeneous media is mathematically described by Maxwell’s equations with various boundary conditions and material coefficients. In this work, we focus on scattering by high-index dielectric resonators subjected to high-intensity radiation, where the refractive index of the medium is large and depends on the magnitude of the scattered field. Specifically, we consider Kerr-type nonlinearity, a prominent example in nonlinear optics \cites{boyd2008nonlinear}. 
For simplicity, we examine the transverse field case under time-harmonic propagation, which reduces the problem to the nonlinear Helmholtz equation in $\R^d$ ($d = 2,3$):  
\begin{align} \label{eq:mainmodel}
    -\Delta u - \ww^2 u = \ww^2 \tau \chi_D (1 + |u|^2) u + f,
\end{align}
subject to an appropriate outgoing radiation condition. Here, $f$ represents a given source derived from the incident wave, $D$ denotes the configuration of particles, and $\tau > 0$ is the contrast parameter, which we assume to be sufficiently large. The term $|u|^2$ arises from the Kerr nonlinearity. 

We study the associated resonance problem for \eqref{eq:mainmodel}, \textit{i.e.}, with $f = 0$, building on previous works in the linear regime \cites{ammari2019subwavelength,ammari2020superresolution,ammari2023mathematical,ammari2024fano}. Using the volume potential \eqref{eq:helmpotential}, which is closely related to the scattering resolvent of the linear Helmholtz equation \cite{dyatlov2019mathematical}, we characterize the nonlinear dielectric scattering resonances via a nonlinear Lippmann-Schwinger equation (\textit{cf.}\,\cref{def:scareso}).  By an argument similar to Rellich's lemma, it is straightforward to show that nonlinear dielectric resonances, like their linear counterparts, are located in the lower complex plane $\{\ww \in \C \,;\, \Im \ww < 0\}$ and are symmetric with respect to the imaginary axis (\textit{cf.}\,\cref{prop:nonvanish}). However, unlike the linear problem, discussing the existence of nonlinear resonances requires fixing the amplitude of the corresponding resonant states, namely, enforcing the normalization constraint \eqref{eq:normalizecond}. First, by leveraging the Lyapunov-Schmidt reduction technique, which helps overcome the $S^1$ equivariance (\textit{cf.}\,\cref{lem:invarphaseu}), along with the implicit function theorem in Banach spaces \cite{nirenberg1974topics}, in \cref{thm:3dexist,thm:2dexist}, we demonstrate, for both two- and three-dimensional settings, the existence of nonlinear subwavelength resonances with small-amplitude resonant states bifurcating from the zero solution at linear resonances. Additionally, we derive their asymptotic expansions with respect to the high-contrast parameter $\tau$ and the normalization constant $\mc{N}$. The analysis relies heavily on previously established results for linear subwavelength dielectric resonances, which we recall in \cref{sec:lineareso}. 

Our primary focus is on the principal resonances, \textit{i.e.}, the resonant frequencies with the smallest real parts, analogous to the ground-state energy in quantum systems. In this case, both the linear and nonlinear resonant states are unique; see the discussion at the end of \cref{sec:lineareso}.  
The second main part of this work (\cref{sec:existasympdimer}) is devoted to the analysis of nonlinear subwavelength resonances for a symmetric dimer of dielectric resonators, a setting reminiscent of the Schrödinger operators with double-well potentials \cite{harrell1980double}. For such a symmetric configuration, the small-amplitude nonlinear resonant states constructed in \cref{sec:existasymp} must either be symmetric or antisymmetric, and the principal resonant states are necessarily symmetric (\textit{cf.}\,\cref{lem:symresostate,coro:smallsym}).  
Motivated by the analogy between optics and quantum physics, we extend the results in \cite{kirr2008symmetry} for nonlinear Schrödinger equations to our nonlinear scattering problem. Specifically, in the three-dimensional case, we show that when the normalization constant $\mc{N}$ is sufficiently small, these small-amplitude symmetric or antisymmetric states are the only nontrivial solutions to the nonlinear resonance problem. However, as $\mc{N}$ increases beyond a critical value $\mc{N}_{\rm crit}$, a second symmetry-breaking bifurcation occurs along the principal symmetric solution branch due to the hybridization of the two principal symmetric and antisymmetric states. The resonant states associated with this symmetry-breaking bifurcation are asymmetric and tend to localize predominantly in one of the dielectric particles in the symmetric dimer; see \cref{thm:2ndbirfuc} and the remarks following it.  
Interestingly, in the two-dimensional case, the principal two nonlinear resonances scale as $\Or(1 / \sqrt{\tau \ln \tau})$ and $\Or(1)$, respectively. This separation of scaling implies that, while mode hybridization does still exist, it is insufficient to induce a symmetry-breaking bifurcation on the principal solution branches; see \cref{thm:2d2ndbif} for the mathematical statement.

\subsection{Related works}
Compared to the relatively limited resonance analyses in the literature, there is already a substantial body of well-posedness and numerical studies for the nonlinear Helmholtz equation, which generally takes the form:  
\begin{align} \label{eq:genenlh}  
    - \Delta u - \ww^2 u = f(x,u)\,,  
\end{align}  
subject to a radiation condition. For numerical solutions, the equation is typically approximated on a bounded domain with appropriate boundary conditions. In \cite{wu2018finite}, a mathematical and numerical analysis of \eqref{eq:genenlh} with localized Kerr nonlinearity and impedance boundary conditions was carried out. The study established the well-posedness of \eqref{eq:genenlh}, developed its finite element approximation, and derived stability estimates for the numerical solution with explicit dependence on the wave number. These results were extended in \cite{jiang2022finite} to incorporate the more accurate perfectly matched layer (PML) boundary condition.  Earlier works include \cites{fibich2001high,fibich2005numerical,baruch2007high}, among others. For instance, \cite{fibich2001high} proposed a fourth-order finite difference scheme to solve \eqref{eq:genenlh} using a so-called two-way artificial boundary condition. This approach was later refined in \cite{fibich2005numerical} by introducing Sommerfeld-type local radiation boundary conditions, offering improved numerical performance.

For the well-posedness of the nonlinear scattering problem \eqref{eq:genenlh}, most existing works are restricted to the case of incident waves of small amplitude. In this case, the nonlinear problem can be treated as a perturbation of the linear regime, enabling the use of contraction mappings in conjunction with resolvent estimates for the linear Helmholtz operator. For instance, \cite{jalade2004inverse} established well-posedness in the three-dimensional case for incident plane waves of small amplitude and compactly supported nonlinearities. Similarly, \cite{gutierrez2004non} proved the existence of solutions to \eqref{eq:genenlh} with cubic power nonlinearity in dimensions $d = 3, 4$, for small Herglotz-type incident waves (which do not include plane waves). More recently, \cite{gell2020existence} extended the results of \cite{gutierrez2004non} to accommodate more general nonlinearities, albeit with a more restricted class of incident Herglotz waves. To remove the smallness assumption on the incident wave, \cite{chen2021complex} employed topological fixed-point theory and global bifurcation theory to solve the Lippmann-Schwinger equation corresponding to \eqref{eq:genenlh}, leveraging new a priori estimates for the solutions. Specifically, the authors of \cite{chen2021complex} considered the three-dimensional case ($d = 3$) and proved well-posedness for a class of linearly bounded nonlinearities $f(x,u)$, and for power-type nonlinearities of the form $f(x,u) = Q(x) |u|^{p - 2} u$ in the defocusing case $Q \leq 0$, assuming that $Q$ is compactly supported.

We emphasize that all the results reviewed above focus on complex-valued solutions $u$, generally assuming a fixed frequency $\ww > 0$. In contrast, there has been a growing body of work in recent years aimed at establishing the existence of real-valued standing wave solutions of \eqref{eq:mainmodel}; see \cites{evequoz2013real,evequoz2015dual,evequoz2020dual,mandel2017oscillating} and references therein.  
In particular, by reducing the problem to finding critical points of an energy functional, 
\cite{evequoz2013real} developed a variational method to prove the existence of nontrivial solutions in the case of compactly supported nonlinearities, including $f(x,u) = Q(x) |u|^{p - 2} u$ with $Q$ compactly supported. Inspired by \cite{ambrosetti1973dual}, the later work \cite{evequoz2015dual} introduced a dual variational framework to detect real solutions of \eqref{eq:genenlh} in dimension $d = 3$, for nonlinearities of the form $f(x,u) = Q(x) |u|^{p - 2} u$ with periodic $Q$ or $Q(x) \to 0$ as $Q(x) \to 0$. This variational approach has been further developed in \cites{evequoz2020dual,mandel2017oscillating,mandel2025dual} for various types of nonlinearities. However, our work focuses on scattering resonances, a setting lacking a variational structure, which distinguishes it from the mentioned studies.

\subsection*{Notation} We will use standard Sobolev spaces on a domain $D \subset \mathbb{R}^d$. For example, $ H^2_{\text{loc}}(D)$ denotes the space of functions that locally have $H^2$ regularity. For two complex Banach spaces $X$ and $Y$, we denote by $\mathcal{L}(X, Y)$ the space of bounded linear operators from $X$ to $Y$, and by $\mathcal{L}(X)$ if $Y = X$. We will use $\mathcal{L}_{\mathbb{R}}(X, Y)$ to emphasize the real linearity. We write \(\| \cdot \|_X \) for the norm on the space \( X \), and \( \| \cdot \|_{X \to Y} \) for the operator norm in \( \mathcal{L}(X, Y) \). For simplicity, we denote the \( L^2 \)-inner product on \( D \) by \( \langle \cdot, \cdot \rangle_D \), \textit{i.e.}, \( \langle f, g \rangle_D = \int_D \overline{f} g \, \mathrm{d}x \), and we use the partial derivative symbol $\partial$ to denote the Fr\'echet derivative. Moreover, $U_a$ represents a generic neighborhood of $a \in X$ in $X$, and $C > 0$ denotes a generic positive constant, which may vary from line to line.

\section{Problem formulation and preliminaries}

\subsection{Nonlinear wave scattering} \label{sec:basicsetting}
In this section, we mathematically formulate the nonlinear dielectric resonance problem for the wave scattering by resonators with high refractive indices and a cubic nonlinearity of Kerr type. 

% in terms of both the time-harmonic full Maxwell's equations and the scalar Helmholtz equation. 

Let $D \subset \mathbb{R}^d$ ($d \ge 2$) be a \emph{bounded connected open} subset with a sufficiently \emph{smooth boundary} $\partial D$ and unit outer normal vector  $\nu$, and let $\chi_D$ denote the associated characteristic function. Without loss of generality, by scaling, suppose that $D$ has a characteristic size of order one. We denote by $|D|$ the volume of the domain $D$. 
We consider the nonlinear wave equation on $\R^d$ for $d = 2, 3$, 
\begin{align} \label{eq:timedepmodel}
  \frac{\p^2}{\p t^2} u(t,x) - \Delta u(t,x) = h(x, u(t,x))\,,\q (t, x) \in \R \ti \R^d\,,
\end{align}
which models acoustic or electromagnetic scattering by dielectric particles with high refractive indices and nonlinear responses. Specifically, we assume that the nonlinearity  $h(x, u)$ takes the following form \cites{griesmaier2022inverse,ammari2025nonlinear}: for $\ww \in \mathbb{C}$,
\begin{align} \label{eq:nonterm}
   h(x,u(t,x)) := \ww^2 n(x)^2 q_\eta(u(t,x))u(t,x)\,,
\end{align}
where $n(x)$ represents the relative refractive index of high-contrast particles, defined as 
\begin{equation} \label{eq:contrast}
    n(x)^2 := \tau \chi_D(x),\q \tau \gg 1\,,
\end{equation}
with real $\tau$ being the contrast and the function 
\begin{align} \label{eq:nonfunc}
  q_\eta(z) := 1 + \eta |z|^2:\ \C^d \to \R\,,
\end{align}
models the nonlinear response of the refractive index to the intensity of the wave field. Here, \( \eta > 0 \) is a real parameter that characterizes the strength of nonlinearity.

To formulate the associated scattering resonance problem, we consider a time-harmonic wave ansatz with the same frequency $\omega \in \mathbb{C}$ as the one in \eqref{eq:nonterm}:
\begin{align*}
  u(t,x) = u(x) e^{-i \omega t}\,.
\end{align*}
Substituting this ansatz into \eqref{eq:timedepmodel} leads to the following nonlinear Helmholtz equation:
\begin{equation} \label{eq:timeharhelm}
  - \Delta u - \ww^2 u = \ww^2 \tau \chi_D q_\eta(u) u\,,
\end{equation}
subject to the transmission boundary conditions on  $\partial D$:
\begin{align*}
 u|_+ = u|_-, \quad \frac{\p u}{\p \nu}\Big|_+ = \frac{\p u}{\p \nu}\Big|_-\,,
\end{align*}
and the Sommerfeld radiation condition for $\ww > 0$:
\begin{align} \label{eq:radiation}
   r^{\frac{d - 1}{2}} \left| \frac{\p u}{\p r} - i \ww u \right| \to 0\,,\q \text{as $r = |x| \to \infty$}. 
\end{align}
When $\ww$ is complex, the outgoing radiation condition can be specified using the free-space Green's function, as in the case of linear scattering. 
Moreover, observe that the function $1 + \tau \chi_D q_\eta(u)$ denotes the nonlinear refractive index in $\mathbb{R}^d$, characterized by high contrast and Kerr-type nonlinearity.% an outgoing radiation condition as \( x \to \infty \). Note that the field $u$ is smooth on $\R^d \backslash D$ by elliptic regularity, so that the standard outgoing radiation condition is well-defined. 

A nontrivial solution $u \neq 0$ of the eigenvalue problem \eqref{eq:timeharhelm} is referred to as a \emph{resonant state}, and the corresponding frequency $\ww \in \C$ is called the 
\emph{nonlinear dielectric resonance}. 
In this work, we shall investigate this nonlinear scattering resonance problem using the Lippmann-Schwinger equation, building on the framework developed in \cites{ammari2023mathematical,ammari2024fano} for the linear case.  Let \(G^\ww(x) \) denote the outgoing Green's function of \( -(\Delta + \omega^2) \) in \( \mathbb{R}^d\) ($d = 2, 3$), given by, for $\ww \in \C$, 
\begin{align} \label{eq:green2d3d}
  G^\ww(x) := \begin{dcases}
     \frac{i}{4}H_0^{(1)}(\ww|x|)\,, &\  d = 2\,,\\ 
      \frac{e^{i \omega |x|}}{4 \pi |x|}\,, &\  d = 3\,,
  \end{dcases} 
  % \frac{i}{4}H_0^{(1)}(\ww|x|)\,, \ (d = 2)\q \text{or}\q 
  %   \frac{e^{i \omega |x|}}{4 \pi |x|}\,,\ (d = 3)\,.
\end{align}
% \begin{align*}
%   g(x,\ww) = \frac{e^{i\ww |x-y|}}{4 \pi |x-y|}\q \text{or}\q \frac{i}{4}H_0^{(1)}(\ww|x-y|)
% \end{align*}
where $H_0^{(1)}$ is the Hankel function of the first kind and of order zero.
We define the associated volume integral operator by 
\begin{align} \label{eq:helmpotential}
    \mathcal{K}_D^\omega[\varphi] := \int_D G^\ww(x-y) \varphi(y) \, \mathrm{d}y: \quad L^2(D) \to H^2(D)\,,
\end{align}
which satisfies 
\begin{equation*}
  - (\Delta + \omega^2) \kk_D^\omega[\varphi] = \chi_D \varphi \q \text{on $\R^d$}\,.
\end{equation*}
It is helpful to recall the Sobolev embedding $ H^2(D) \hookrightarrow C^{0,\frac{1}{2}}(\wb{D})$ when $d = 3$, and $H^2(D) \hookrightarrow C^{0,\gamma}(\wb{D})$ for any $0 < \gamma < 1$ when $d = 2$, and the Rellich-Kondrachov compact embedding $H^2(D) \Subset C(\wb{D})$; see \cite{ciarlet2013linear}*{Theorems 6.6-1 and 6.6-3}, where $C^{0,\gamma}(\wb{D})$ denotes the $\gamma$-H\"older space. Then the following lemma holds. 
\begin{lemma} \label{lem:compactop}
    The volume potential $\mathcal{K}_D^\omega$ in \eqref{eq:helmpotential}, with $\omega \in \mathbb{C}$, is a compact linear operator on both $L^2(D)$ and $C(\wb{D})$.
\end{lemma}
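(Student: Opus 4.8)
The plan is to exploit the smoothing property of the volume potential that is already recorded in \eqref{eq:helmpotential}, namely that $\mathcal{K}_D^\omega$ is a \emph{bounded} linear operator from $L^2(D)$ into $H^2(D)$. (This is classical: since $x,y$ range over the bounded set $D$, the kernel $G^\omega(x-y)$ is locally integrable in both $d=2$ and $d=3$, so Young's/Schur's inequality gives $\|\mathcal{K}_D^\omega[\varphi]\|_{L^2(D)}\le C\|\varphi\|_{L^2(D)}$; combining this with $-(\Delta+\omega^2)\mathcal{K}_D^\omega[\varphi]=\chi_D\varphi$ and interior elliptic regularity --- equivalently Calder\'on--Zygmund estimates for the Newtonian potential --- yields $\mathcal{K}_D^\omega[\varphi]\in H^2$ in a neighborhood of $\wb D$ with $H^2$-norm controlled by $\|\varphi\|_{L^2(D)}$.) Granting this, the lemma reduces to a composition argument: one combines the above with the Rellich--Kondrachov compact embedding $H^2(D)\Subset C(\wb D)$ recalled just before the statement, and with the trivial continuous inclusion $C(\wb D)\hookrightarrow L^2(D)$, valid since $D$ is bounded ($\|\varphi\|_{L^2(D)}\le |D|^{1/2}\|\varphi\|_{C(\wb D)}$).

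For compactness on $C(\wb D)$, I would note that, viewed as an operator on $C(\wb D)$, $\mathcal{K}_D^\omega$ is the composition of the continuous inclusion $C(\wb D)\hookrightarrow L^2(D)$, the bounded operator $\mathcal{K}_D^\omega\colon L^2(D)\to H^2(D)$, and the compact embedding $H^2(D)\Subset C(\wb D)$; since one factor is compact and the others bounded, the composition is compact. Compactness on $L^2(D)$ is symmetric: as an operator on $L^2(D)$, $\mathcal{K}_D^\omega$ factors as the bounded map $\mathcal{K}_D^\omega\colon L^2(D)\to H^2(D)$, then the compact embedding $H^2(D)\Subset C(\wb D)$, then the continuous inclusion $C(\wb D)\hookrightarrow L^2(D)$, hence again compact. (The integral formula \eqref{eq:helmpotential} genuinely defines the same operator on all these spaces: for fixed $y$, $G^\omega(\cdot-y)\in L^1_{\mathrm{loc}}$, so the integral is well defined and agrees whether $\varphi$ is regarded as an element of $C(\wb D)$ or of $L^2(D)$.)

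I do not expect any genuine obstacle here: the only input carrying content is the $L^2(D)\to H^2(D)$ boundedness, which is standard and already built into \eqref{eq:helmpotential}, together with the Rellich--Kondrachov embedding, which the excerpt cites. Should one wish to avoid Sobolev spaces for the $C(\wb D)$ statement, an equivalent route is a direct Arzel\`a--Ascoli argument: for a bounded set $B\subset C(\wb D)$, the weak (locally integrable) singularity of $G^\omega$ gives $\sup_{x\in\wb D}\int_D |G^\omega(x-y)|\,\mathrm{d}y<\infty$, hence uniform boundedness of $\mathcal{K}_D^\omega[B]$; splitting $\int_D$ into a small ball about $x$ (negligible by integrability) and its complement in $D$ (where $G^\omega$ is uniformly continuous) gives equicontinuity of $\mathcal{K}_D^\omega[B]$, and Arzel\`a--Ascoli closes the argument. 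This is slightly more hands-on but fully self-contained.
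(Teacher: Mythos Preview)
Your proposal is correct and follows exactly the route the paper intends: the paper does not give a formal proof but immediately precedes the lemma by recalling the boundedness $\mathcal{K}_D^\omega: L^2(D)\to H^2(D)$ and the compact embedding $H^2(D)\Subset C(\wb D)$, which is precisely the factorization you spell out. Your added detail (the explicit composition and the optional Arzel\`a--Ascoli variant) only elaborates what the paper leaves implicit.
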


Moreover, we have the following regularity result for $\kk_D^\ww$ from the standard theory of elliptic PDEs; see \cite{gilbarg1977elliptic}*{Lemmas 4.1 and 4.2} and \cite{yang2020newtonian}*{Theorem 2.3}. 

\begin{lemma} \label{lem:regkdw}
Let $D \subset \R^d$, $ d \ge 2$, be a bounded connected open set. It holds that, for $\ww \in \C$, 
\begin{align} \label{eq:reg1}
    \kk_D^\ww[\vp] \in C^1(\R^d)\,,\q \forall\, \vp \in L^\infty(D)\,,
\end{align}
and 
\begin{align} \label{eq:reg2}
    \kk_D^\ww[\vp] \in C^2(D)\,,\q \forall\, \vp \in C^{0,\alpha}(D)\,,\ \alpha \in (0,1)\,.
\end{align}
% For $\vp \in L^\infty(D)$, then $\kk_D^{\ww}[\vp] \in C^1(\R^d)$. For $\vp \in C^{0,\alpha}(D)$ with $\alpha \in (0,1)$, then $\kk_D^{\ww}[\vp] \in C^2(D)$
%   Let $D$ be bounded and open in $\R^d$ with $d \ge 2$. For each $p \in (n, \infty]$, we have $\kk_D^\ww: L^p(D) \to C^1(\R^d)$. We also have $\kk_D^\ww: C^{0,\alpha}(D) \to C^2(D)$. 
\end{lemma}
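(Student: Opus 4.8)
The plan is to read off the regularity of $\kk_D^\ww[\vp]$ from the identity $-(\Delta+\ww^2)\kk_D^\ww[\vp]=\chi_D\vp$ on $\R^d$ recorded after \eqref{eq:helmpotential}, after first isolating the Newtonian part of the kernel. Write $G^\ww=\Phi+H^\ww$, where $\Phi$ is the fundamental solution of $-\Delta$ in $\R^d$ (so $\Phi(x)=\tfrac1{4\pi|x|}$ when $d=3$ and $\Phi(x)=-\tfrac1{2\pi}\ln|x|$ when $d=2$) and $H^\ww:=G^\ww-\Phi$. Expanding $e^{i\ww|x|}$ when $d=3$, and using the classical expansion $H_0^{(1)}(z)=\tfrac{2i}{\pi}\ln(z/2)\,J_0(z)+E(z)$ with $E$ entire and even when $d=2$, one sees that $H^\ww$ is smooth off the origin and, near it, behaves like $\tfrac{i\ww}{4\pi}-\tfrac{\ww^2}{8\pi}|x|+O(|x|^2)$ if $d=3$, resp.\ like a smooth function plus a constant multiple of $|x|^2\ln|x|$ if $d=2$. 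In particular $H^\ww\in C^{0,1}_{\mathrm{loc}}(\R^d)$ with $\nabla H^\ww\in L^\infty_{\mathrm{loc}}(\R^d)$ in both dimensions, while $G^\ww\in L^1_{\mathrm{loc}}(\R^d)$.

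To prove \eqref{eq:reg1} I would split $\kk_D^\ww[\vp]=\Phi*(\chi_D\vp)+H^\ww*(\chi_D\vp)$ for $\vp\in L^\infty(D)$. Since $D$ is bounded, $\chi_D\vp$ is bounded and integrable, so the Newtonian potential $\Phi*(\chi_D\vp)$ lies in $C^1(\R^d)$ by \cite{gilbarg1977elliptic}*{Lemma~4.1}. For the remainder, since $\nabla H^\ww\in L^\infty_{\mathrm{loc}}$, the difference quotients of $x\mapsto H^\ww(x-y)$ are dominated by a constant times $|\vp|\chi_D\in L^1(D)$ and converge a.e.\ in $y$ by Rademacher's theorem, so one may differentiate under the integral sign to get $\nabla\bigl(H^\ww*(\chi_D\vp)\bigr)(x)=\int_D(\nabla H^\ww)(x-y)\vp(y)\,\mathrm{d}y$, which is continuous in $x$ by dominated convergence; hence $H^\ww*(\chi_D\vp)\in C^1(\R^d)$ as well (one may also quote \cite{yang2020newtonian}*{Theorem 2.3} for this term). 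Summing the two gives \eqref{eq:reg1}. An equivalent and even shorter route bypasses the splitting: $\kk_D^\ww[\vp]$ is locally bounded because $G^\ww\in L^1_{\mathrm{loc}}(\R^d)$, so $-\Delta\kk_D^\ww[\vp]=\ww^2\kk_D^\ww[\vp]+\chi_D\vp\in L^q_{\mathrm{loc}}(\R^d)$ for every $q<\infty$, whence $\kk_D^\ww[\vp]\in W^{2,q}_{\mathrm{loc}}(\R^d)\hookrightarrow C^{1,\gamma}_{\mathrm{loc}}(\R^d)$ for all $\gamma\in(0,1)$ by interior Calder\'on--Zygmund estimates together with the Sobolev embedding with $q>d$.

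For \eqref{eq:reg2} I would take $\vp\in C^{0,\alpha}(D)\subset L^\infty(D)$, so that \eqref{eq:reg1} already gives $\kk_D^\ww[\vp]\in C^1(\R^d)$ and hence $\kk_D^\ww[\vp]|_D\in C^{0,\alpha}_{\mathrm{loc}}(D)$; then on $D$ the equation reads $-\Delta\kk_D^\ww[\vp]=\ww^2\kk_D^\ww[\vp]+\vp$ with right-hand side in $C^{0,\alpha}_{\mathrm{loc}}(D)$, and interior Schauder estimates — equivalently, \cite{gilbarg1977elliptic}*{Lemma~4.2} applied to $\Phi*(\chi_D\vp)$ together with the remainder bound from the first step — yield $\kk_D^\ww[\vp]\in C^{2,\alpha}_{\mathrm{loc}}(D)\subset C^2(D)$. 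This cannot be upgraded to $C^2(\R^d)$, since $\chi_D\vp$ is generically discontinuous across $\p D$ so that the Hessian of $\Phi*(\chi_D\vp)$ fails to be continuous there; this is precisely why \eqref{eq:reg2} is stated only on the open set $D$. The only step that is not entirely mechanical is pinning down the regularity of the remainder kernel $H^\ww$, and the accompanying differentiation under the integral, from the Hankel/exponential expansions; everything else is a direct application of the classical Newtonian-potential and interior elliptic estimates.
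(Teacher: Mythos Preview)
Your proposal is correct and follows essentially the same approach the paper indicates: the paper does not give a detailed proof but simply refers to \cite{gilbarg1977elliptic}*{Lemmas 4.1 and 4.2} and \cite{yang2020newtonian}*{Theorem 2.3}, which is exactly your decomposition into the Newtonian potential (handled by Gilbarg--Trudinger) plus a more regular remainder from the Helmholtz kernel. Your alternative bootstrap via Calder\'on--Zygmund and interior Schauder is an equally valid route to the same conclusion.
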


We are now ready to formulate the nonlinear eigenvalue problem \eqref{eq:timeharhelm} in $(\ww,u)$ in terms of a Lippmann-Schwinger equation. 

\begin{definition} \label{def:scareso}
$\ww \in \C$ is a \emph{nonlinear dielectric (scattering) resonance} for the nonlinear Helmholtz equation \eqref{eq:timeharhelm} if there is a nontrivial $0 \not \equiv u \in H^2(D)$ such that
\begin{align} \label{eq:nonLippscalar}
    u = \tau \ww^2 \kk_D^\ww \left[q_\eta(u)u\right],
\end{align}
where the function $q_\eta(\dd)$ is given in \eqref{eq:nonfunc}. In particular, a nonlinear resonance $\ww$ is called \emph{subwavelength} if $\ww(\tau) \to 0$ as $\tau \to \infty$.   
\end{definition}

To justify the well-posedness of equation \eqref{eq:nonLippscalar}, it suffices to note from the compact embedding $H^2(D) \Subset C(\wb{D})$ that for $u \in H^2(D)$, 
\begin{align} \label{eq:regularitylipp}
  q_\eta(u) u \in L^2(D)\,,\q \kk_D^\ww\left[q_\eta(u)u\right] \in H^2(D) \Subset C(\wb{D})\,,
\end{align}
and any $u \in H^2(D)$ satisfying \eqref{eq:nonLippscalar} can be extended to a solution $u \in H^2_{\rm loc}(\mathbb{R}^d)$ of the Helmholtz equation \eqref{eq:timeharhelm}. 
This also shows that one can equivalently seek the solution to \eqref{eq:nonLippscalar} in the space $L^\infty(D)$ or $C(\wb{D})$. Indeed, if $u \in L^\infty(D)$ or $C(\wb{D})$, the regularity \eqref{eq:regularitylipp} still holds and the equation \eqref{eq:nonLippscalar} is well-defined on $L^\infty(D)$ or $C(\wb{D})$. 

The next result shows that any solution $u \in H^2(D)$ to \eqref{eq:nonLippscalar} is also a classical solution of the nonlinear Helmholtz equation \eqref{eq:timeharhelm}. 

\begin{proposition}
    Let $u \in H^2(D)$ be a solution to the Lippmann-Schwinger equation \eqref{eq:nonLippscalar}. Then $u \in C^2(D)$ solves the Helmholtz equation \eqref{eq:timeharhelm} on $D$. 
\end{proposition}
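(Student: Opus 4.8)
The plan is to bootstrap regularity from the Lippmann-Schwinger equation \eqref{eq:nonLippscalar} using the two regularity statements in \cref{lem:regkdw}, then differentiate the equation to recover the PDE \eqref{eq:timeharhelm} pointwise on $D$.

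First I would record that, by \eqref{eq:regularitylipp}, any $u \in H^2(D)$ satisfying \eqref{eq:nonLippscalar} lies in $C(\wb D)$ via the compact embedding $H^2(D) \Subset C(\wb D)$; in particular $u \in L^\infty(D)$. Hence $q_\eta(u) u = (1 + \eta |u|^2) u \in L^\infty(D)$ as well, since $q_\eta$ maps bounded sets to bounded sets. Applying \eqref{eq:reg1} of \cref{lem:regkdw} with $\vp = q_\eta(u) u \in L^\infty(D)$ gives $\kk_D^\ww[q_\eta(u)u] \in C^1(\R^d)$, so by \eqref{eq:nonLippscalar}, $u = \tau \ww^2 \kk_D^\ww[q_\eta(u)u] \in C^1(\R^d)$, and in particular $u \in C^1(\wb D)$. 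Now I bootstrap once more: since $u \in C^1(\wb D)$, the map $x \mapsto |u(x)|^2 = u(x)\overline{u(x)}$ is $C^1$, hence Lipschitz on $\wb D$, so $q_\eta(u) u \in C^{0,1}(\wb D) \subset C^{0,\alpha}(D)$ for every $\alpha \in (0,1)$. Applying \eqref{eq:reg2} of \cref{lem:regkdw} then yields $\kk_D^\ww[q_\eta(u)u] \in C^2(D)$, and therefore $u \in C^2(D)$ by \eqref{eq:nonLippscalar}.

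It remains to verify that $u$ solves the differential equation. By the defining property of the volume potential recalled just after \eqref{eq:helmpotential}, namely $-(\Delta + \ww^2)\kk_D^\ww[\vp] = \chi_D \vp$ on $\R^d$, we apply $-(\Delta + \ww^2)$ to both sides of \eqref{eq:nonLippscalar}. On $D$ this gives
\begin{align*}
  -(\Delta + \ww^2) u = \tau \ww^2\, \big(-(\Delta + \ww^2)\big)\kk_D^\ww[q_\eta(u)u] = \tau \ww^2\, \chi_D\, q_\eta(u) u = \tau \ww^2\, q_\eta(u) u \quad \text{on } D,
\end{align*}
which is exactly \eqref{eq:timeharhelm} restricted to $D$. (The identity $-(\Delta+\ww^2)\kk_D^\ww[\vp] = \chi_D\vp$ holds in the distributional sense in general, but once we know $\kk_D^\ww[q_\eta(u)u] \in C^2(D)$ the equality holds classically inside $D$.)

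**The main obstacle** is purely a matter of organizing the two-step bootstrap correctly: one must first upgrade $L^\infty$ to $C^1$ via \eqref{eq:reg1} — which is the step that makes $|u|^2$ honestly Hölder (indeed Lipschitz) rather than merely bounded — and only then invoke the $C^{0,\alpha} \to C^2$ statement \eqref{eq:reg2}. Skipping directly from $L^\infty$ to $C^2$ is not licensed by \cref{lem:regkdw}, since $q_\eta(u)u$ for $u$ merely in $L^\infty$ need not be Hölder continuous. The rest — the algebraic fact that $q_\eta$ preserves the relevant regularity classes and the distributional identity for $\kk_D^\ww$ — is routine.
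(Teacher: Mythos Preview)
Your proof is correct and follows essentially the same bootstrap as the paper: use $H^2 \hookrightarrow C(\wb D)$ to get $q_\eta(u)u \in L^\infty$, apply \eqref{eq:reg1} to gain $C^1$, then feed the resulting H\"older regularity of $q_\eta(u)u$ into \eqref{eq:reg2} to reach $C^2(D)$. Your added verification that the PDE holds classically is a welcome detail the paper leaves implicit.
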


\begin{proof}
    It suffices to note that $q_\eta(u)u \in L^\infty(D)$ for $u \in H^2(D) \Subset C(\wb{D})$, and \eqref{eq:reg1} in \cref{lem:regkdw} gives $u = \tau \ww^2 \kk_D^\ww \left[q_\eta(u)u\right] \in C^1(\R^d)$ and $q_\eta(u)u \in C^1(D)$. Then applying the property \eqref{eq:reg2}, we can conclude that $u \in C^2(D)$ is a classical solution. 
\end{proof}

In the linear case of $\eta = 0$, we have that $q_\eta(u) = 1$ and therefore \eqref{eq:nonLippscalar} reduces to 
\begin{align} \label{prob:lineareso}
u = \tau \ww^2 \kk_D^\ww \left[u\right].  
\end{align}
Hence, linear dielectric resonances are characterized by the characteristic values of the analytic Fredholm operators $1 - \tau \ww^2 \kk_D^\ww$. In \cites{ammari2019subwavelength,ammari2023mathematical}, linear resonances have been shown to exist in the subwavelength regime $\{\ww \in \C\,;\ |\ww| \ll 1\}$ when the contrast $\tau$ is large enough. In \cref{sec:lineareso} below, we present the fundamental properties of linear subwavelength dielectric resonances and derive their asymptotic formulas. These results refine those in \cite{ammari2019subwavelength} using the analytical framework developed in \cites{ammari2023mathematical,ammari2024fano}, which will serve as a basis for our subsequent discussions on the nonlinear case (see \cref{sec:existasymp,sec:existasympdimer}). 

Analogously to the linear resonance regime, we obtain the following result on the distribution of nonlinear resonances. The proof follows arguments similar to those in \cite{ammari2023mathematical}*{Lemma 3.19} and is provided in \cref{app:A} for completeness. 

% in the three-dimensional case ($d = 3$), nonlinear resonances must be complex-valued with $\Im \omega \neq 0$. 

\begin{proposition} \label{prop:nonvanish}
All the nonlinear dielectric resonances lie in the lower half-plane $\{\omega \in \mathbb{C}\,; \Im \omega < 0\}$ and are symmetric about the imaginary axis.
\end{proposition}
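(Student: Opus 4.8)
The plan is to mimic the classical Rellich-type argument that localizes linear scattering resonances in the lower half-plane, adapting it to the nonlinear Lippmann–Schwinger equation \eqref{eq:nonLippscalar}. Let $\ww \in \C$ be a nonlinear dielectric resonance with resonant state $0 \not\equiv u \in H^2(D)$. As noted after \cref{def:scareso}, $u$ extends to $u = \tau\ww^2 \kk_D^\ww[q_\eta(u)u] \in H^2_{\rm loc}(\R^d)$ solving $-(\Delta + \ww^2)u = \tau\ww^2 \chi_D q_\eta(u)u$ on $\R^d$, and by the properties of $G^\ww$ this $u$ is outgoing. First I would treat the case $\Im\ww > 0$ (and $\ww \ne 0$): here $G^\ww$ decays exponentially, so $u \in H^2(\R^d)$ genuinely, and one can integrate by parts over $\R^d$ (or over a large ball and send the radius to infinity, the boundary terms vanishing by the exponential decay) to get
\begin{align*}
\int_{\R^d} |\nabla u|^2 \, \mathrm{d}x - \ww^2 \int_{\R^d} |u|^2 \, \mathrm{d}x = \tau \ww^2 \int_D (1 + \eta|u|^2)|u|^2 \, \mathrm{d}x.
\end{align*}
Dividing by $\ww^2$ and taking imaginary parts, the right-hand side is real (as $\tau, \eta$ are real and the integrand is nonnegative), and $\Im(1/\ww^2)\int |u|^2 \ne 0$ unless $\ww$ is real or $u \equiv 0$; since $\Im\ww > 0$ forces $\Im(1/\ww^2) \ne 0$ while $\int_{\R^d}|u|^2 > 0$, we also need $\Im\left(\int|\nabla u|^2 / \ww^2\right)$ to cancel it, and a careful bookkeeping of real/imaginary parts rules this out — more robustly, one invokes the standard fact (Rellich's lemma / unique continuation) that a solution of the homogeneous Helmholtz equation with $\Im\ww \ge 0$ that is outgoing and in $L^2$ near infinity must vanish identically, contradicting $u \ne 0$. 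The real-axis case $\ww \in \R \setminus \{0\}$ is handled the same way: Rellich's lemma plus unique continuation shows the only outgoing $L^2_{\rm loc}$ solution is $u \equiv 0$. The case $\ww = 0$ is excluded since \eqref{eq:nonLippscalar} then reads $u = 0$.

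For the symmetry about the imaginary axis, the argument is purely algebraic: suppose $(\ww, u)$ solves \eqref{eq:nonLippscalar}. I claim $(-\wb\ww, \wb u)$ does too. Indeed, from \eqref{eq:green2d3d} one checks $\overline{G^\ww(x)} = G^{-\wb\ww}(x)$ (for $d = 3$ this is immediate from $\overline{e^{i\ww|x|}} = e^{-i\wb\ww|x|} = e^{i(-\wb\ww)|x|}$; for $d = 2$ it follows from $\overline{H_0^{(1)}(z)} = H_0^{(2)}(\wb z) = H_0^{(1)}(-\wb z)$ via the standard connection formula, valid on the appropriate branch). Hence, conjugating \eqref{eq:nonLippscalar} and using that $q_\eta(\wb u) = q_\eta(u)$ is real-valued and $\overline{\ww^2} = (-\wb\ww)^2$,
\begin{align*}
\wb u = \tau \overline{\ww^2}\, \overline{\kk_D^\ww[q_\eta(u)u]} = \tau (-\wb\ww)^2 \kk_D^{-\wb\ww}\!\left[q_\eta(\wb u)\wb u\right],
\end{align*}
so $-\wb\ww$ is also a nonlinear resonance with resonant state $\wb u$. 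Since $\ww \mapsto -\wb\ww$ is reflection across the imaginary axis, the resonance set is symmetric about it.

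The main obstacle is the first part, specifically justifying the vanishing $u \equiv 0$ when $\Im\ww \ge 0$: one must be careful that the nonlinear term does not spoil the Rellich/unique-continuation machinery. The key observation making this routine is that \emph{outside} $D$ the equation is exactly the homogeneous linear Helmholtz equation $-(\Delta + \ww^2)u = 0$, so Rellich's lemma applies verbatim in $\R^d \setminus \wb D$ to conclude $u \equiv 0$ there once one has the outgoing condition and the sign of $\Im\ww$; then unique continuation across $\p D$ (using that $u \in H^2_{\rm loc}$, hence $u$ and $\p_\nu u$ match across $\p D$) propagates the vanishing into $D$. The nonlinearity only enters through the real, nonnegative factor $q_\eta(u)|u|^2$ in the energy identity, which is precisely what the sign argument needs, so no new difficulty arises; this is why the proposition states that the proof parallels \cite{ammari2023mathematical}*{Lemma 3.19} and can be relegated to \cref{app:A}.
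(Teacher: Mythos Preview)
Your approach is essentially the same as the paper's: use that outside $D$ the equation is the free Helmholtz equation, apply Rellich's lemma there, and then propagate the vanishing into $D$ by unique continuation; the symmetry statement is exactly the $(\ww,u)\mapsto(-\wb\ww,\wb u)$ observation you give (which the paper records as \cref{lem:invarphaseu}). Your handling of $\Im\ww>0$ via the global energy identity and exponential decay is fine.

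There is, however, a genuine gap in your treatment of the real-axis case $\ww\in\R\setminus\{0\}$. You write that ``Rellich's lemma applies verbatim in $\R^d\setminus\wb D$ \ldots\ once one has the outgoing condition and the sign of $\Im\ww$,'' but for real $\ww$ the outgoing condition alone does \emph{not} force vanishing: an outgoing Helmholtz solution with real frequency only decays like $|x|^{-(d-1)/2}$, so $\int_{\p B_r}|u|^2$ need not tend to zero, and Rellich's lemma cannot be invoked without that input. The paper closes this gap by the standard two-step flux computation: integrating by parts in $D$ gives $\int_{\p D} u\,\p_\nu\wb u = \int_D |\nabla u|^2 - \ww^2 V|u|^2$ with $V=\tau(1+\eta|u|^2)$ \emph{real}, hence $\Im\int_{\p D} u\,\p_\nu\wb u=0$; integrating by parts in $B_r\setminus D$ and combining with the radiation condition then yields $\int_{\p B_r}|u|^2\to 0$, at which point Rellich bites. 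You allude to the realness of $q_\eta(u)|u|^2$ in your final paragraph, but you never connect it to the vanishing of the boundary flux that Rellich actually needs. Spelling out that chain---real potential $\Rightarrow$ $\Im\int_{\p D} u\,\p_\nu\wb u=0$ $\Rightarrow$ $\int_{\p B_r}|u|^2\to 0$---is the missing step.
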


\subsection{Linear dielectric resonances and asymptotics}  \label{sec:lineareso} We start with the asymptotic expansion of the Green's function $G^\ww(x)$ in $\ww \in \C$ near zero: for $d = 3$, 
\begin{equation} \label{eq:expgreengn}
    G^\ww(x) = \sum_{n = 0}^\infty \ww^n G_n(x) \quad \text{with}\q G_n(x) := \frac{i^n|x|^{n-1}}{4 \pi n!}\,,
\end{equation}
while for $d = 2$, we have 
\begin{equation} \label{eq:expgreengn2d}
    G^\ww(x) = \h{G}_0^\ww + \sum_{j=1}^\infty \left(\omega^{2j} \ln \omega \right) G_n^{(1)}(x) + \sum_{j=1}^\infty \omega^{2j} G_n^{(2)}(x)\,,
\end{equation}
where 
\begin{align} \label{def:expgf2}
  \h{G}_0^\ww(x) := - \frac{1}{2\pi} \ln |x| - \eta_\omega\,, \q G_n^{(1)}(x) := - b_j |x|^{2j}\,,\q G_n^{(2)}(x) := - (b_j \ln |x| + c_j) |x|^{2 j}\,,
\end{align}
with 
\begin{align*}
   b_j := \frac{(-1)^j}{2\pi} \frac{1}{2^{2j}(j!)^2}\,, \quad c_j := b_j \left(\gamma - \ln 2 - \frac{\pi i}{2} - \sum_{l=1}^j \frac{1}{l} \right)\,,
\end{align*}
and 
\begin{align} \label{eq:constgaome}
    \eta_\omega := \frac{1}{2\pi} \big(\ln \omega + \h{\gamma} \big)\,,\q \h{\gamma} := \gamma - \ln 2 - \frac{i \pi}{2}\,.
\end{align}
Here, $\gamma$ is the Euler constant.
%  It is convenient to introduce another constant $\h{\gamma}$ by
% \begin{align} 
%      \eta_\omega = \frac{1}{2\pi} \ln (\omega \h{\gamma})\,,\q \text{i.e., } \ \h{\gamma} = \frac{1}{2} e^{\gamma - \frac{i \pi}{2}}\,.
% \end{align}
Thanks to the expansions of $G^\ww(x)$ in \eqref{eq:expgreengn} and \eqref{eq:expgreengn2d}, we have the following lemma. 
\begin{lemma} \label{lem:opasym}
  For $d = 3$, the volume potential $\kk_D^\ww$ in \eqref{eq:helmpotential} has the expansion:
\begin{align} \label{eq:expop3d}
   \mc{K}_D^{\ww} = \kk_D + \sum_{j = 1}^\infty \ww^j \kb{j}\q \text{with} \q   \kb{j}[\vp] := \int_D G_j(x-y) \vp(y) \ud y\,,
\end{align}
where $\{G_n\}$ are from \eqref{eq:expgreengn}, and the series converges in the operator norm. In particular, 
\begin{align} \label{eq:leading3d}
  \mc{K}_{D}[\vp] := \int_D \frac{1}{4 \pi |x-y|} \vp(y) \ud y\,,\q \mc{K}_{D,1}[\vp] := \frac{i}{4 \pi} \int_D \vp(y) \ud y\,.
\end{align}
% with $g_n$ being given in \eqref{eq:expgreengn}. Here 
% \begin{align*}
%   \mc{K}_D = \int_D \frac{1}{4 \pi |x-y|} \vp(y) \ud y\,, \quad \mc{K}_{D,1} = \frac{i}{4 \pi} \int_D \vp(y) \ud y\,,  \quad  \mc{K}_{D,2} = - \frac{1}{8 \pi} \int_D |x-y| \vp(y) \ud y
% \end{align*}
For $d = 2$, a similar expansion of $\kk_D^\ww$ holds: 
\begin{align} \label{eq:expop2d}
    \kk^\omega_D = \h{\kk}^\omega_D + \sum_{j=1}^\infty \left(\omega^{2j} \ln \omega \right) \kk^{(1)}_{D,j} + \sum_{j=1}^\infty \omega^{2j} \kk^{(2)}_{D,j}\,,
\end{align}
where 
\begin{align} \label{eq:leading2d}
    \h{\kk}^\omega_D[\vp] := \kk_D[\vp] - \eta_\ww \int_D \vp(y) \ud y\,,\q \kk_D[\vp] := \int_D - \frac{1}{2 \pi} \ln|x-y| \vp(y) \ud y\,,
\end{align}
and with $\{G^{(i)}_n\}$, $i = 1,2$, from \eqref{def:expgf2}, 
\begin{align*}
  \kk^{(1)}_{D,j}[\vp](x) := \int_{D} G_n^{(1)}(x-y) \vp(y) \ud y\,,\q \kk^{(2)}_{D,j}[\vp](x) := \int_{D} G_n^{(2)}(x-y) \vp(y) \ud y\,.
\end{align*}
\end{lemma}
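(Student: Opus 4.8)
The plan is to derive both operator-norm expansions from the respective \emph{pointwise} kernel expansions \eqref{eq:expgreengn} and \eqref{eq:expgreengn2d}, the only real work being to upgrade pointwise convergence of the kernel series to convergence in operator norm. Everything rests on one elementary reduction: each operator appearing in \cref{lem:opasym} has the convolution form $T_k\vp(x)=\int_D k(x-y)\vp(y)\ud y$, with kernel evaluated only on the compact set $\Omega:=\{x-y \,:\, x,y\in\wb D\}$, and for such an operator one has the crude bounds $\Norm{T_k}_{L^2(D)\to L^2(D)}\le\norm{k}_{L^1(\Omega)}$ (Young's inequality, after extending $k$ by zero to $\R^d$) and $\Norm{T_k}_{C(\wb D)\to C(\wb D)}\le\norm{k}_{L^1(\Omega)}$ (immediate, together with continuity of translation in $L^1$ for the continuity of $T_k\vp$). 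Hence every operator-norm estimate reduces to an $L^1(\Omega)$-estimate of a kernel, and throughout I would use that on $\Omega$ one has $\abs{x}\le L:=\sup_{x,y\in D}\abs{x-y}<\infty$, that $\abs{x}^{-1}\in L^1(\Omega)$ when $d=3$, and that $\ln\abs{x}\in L^1(\Omega)$ when $d=2$.

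For $d=3$ this is nearly immediate. Writing $G^\omega(x)=(4\pi\abs{x})^{-1}\sum_{n\ge0}(i\omega\abs{x})^n/n!$, the $N$-th remainder kernel is $R_N^\omega(x)=(4\pi\abs{x})^{-1}\sum_{n>N}(i\omega\abs{x})^n/n!$, whence $\abs{R_N^\omega(x)}\le(4\pi\abs{x})^{-1}\rho_N(\abs{\omega}L)$, with $\rho_N(t):=\sum_{n>N}t^n/n!$ the exponential tail (so $\rho_N(t)\to 0$ as $N\to\infty$); therefore $\norm{R_N^\omega}_{L^1(\Omega)}\le C\,\rho_N(\abs{\omega}L)\to 0$. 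The same computation with $N=0$ yields $\sum_{n\ge1}\abs{\omega}^n\Norm{\kb{n}}\le\sum_{n\ge1}\abs{\omega}^n\norm{G_n}_{L^1(\Omega)}<\infty$, so $\sum_n\omega^n\kb{n}$ converges in operator norm; comparing these two facts identifies its sum with $\kk_D^\omega$, which is \eqref{eq:expop3d}, and \eqref{eq:leading3d} is then read off from $G_0(x)=(4\pi\abs{x})^{-1}$ and $G_1(x)=i(4\pi)^{-1}$.

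For $d=2$ the route is the same but the algebra is heavier. I would start from $\tfrac{i}{4}H_0^{(1)}(z)=\tfrac{i}{4}J_0(z)-\tfrac{1}{4}Y_0(z)$ and the classical ascending series for $J_0$ and $Y_0$, substitute $z=\omega\abs{x}$, use $\ln(\omega\abs{x}/2)=\ln\omega+\ln\abs{x}-\ln 2$, and regroup the resulting double series --- which, for fixed $\omega$, converges absolutely and uniformly on $\Omega$ --- into the $j=0$ contribution $\hat{G}_0^\omega$, the family $\{\omega^{2j}\ln\omega\,G_j^{(1)}\}_{j\ge1}$, and the family $\{\omega^{2j}G_j^{(2)}\}_{j\ge1}$; this reproduces \eqref{eq:expgreengn2d}--\eqref{eq:constgaome}. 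The $N$-th remainder is then a finite combination of tails of these series, each carrying a weight of the form $\abs{x}^{2j}$, $\abs{x}^{2j}\ln\abs{x}$, or $\abs{x}^{2j}\ln\omega$ --- all bounded on $\Omega$, with the mild growth of $b_j,c_j$ absorbed by the factor $1/(j!)^2$ --- so its $L^1(\Omega)$-norm tends to $0$; meanwhile $\hat{G}_0^\omega(x)=-\tfrac{1}{2\pi}\ln\abs{x}-\eta_\omega$ contributes precisely the bounded weakly singular operator $\kk_D$ plus the bounded constant term $-\eta_\omega\int_D\vp(y)\ud y$. The reduction of the first paragraph then gives \eqref{eq:expop2d}--\eqref{eq:leading2d} together with operator-norm convergence.

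The part I expect to need genuine care is the rearrangement of the two-dimensional Hankel double series into the three groups of \eqref{eq:expgreengn2d}: this regrouping must be justified by the absolute convergence of $\sum_k\tfrac{(\abs{\omega}L/2)^{2k}}{(k!)^2}\bigl(1+\abs{\ln\abs{x}}+\abs{\ln\omega}\bigr)$ over $\Omega$ before any Fubini-type manipulation is permitted, and the same estimate then controls the remainders. I would also note that the $d=2$ expansion is to be read for $\omega$ in a neighbourhood of $0$ in $\C\setminus(-\infty,0]$ with the principal branch of the logarithm (so that $\eta_\omega$ makes sense), whereas the $d=3$ expansion is valid for all $\omega\in\C$.
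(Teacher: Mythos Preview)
Your proof is correct and supplies precisely what the paper omits: the paper states \cref{lem:opasym} without proof, treating it as an immediate consequence of the pointwise kernel expansions \eqref{eq:expgreengn}--\eqref{eq:expgreengn2d}, and your reduction to $L^1(\Omega)$ kernel estimates via Young's inequality is the standard way to make this rigorous. One small addition worth making: since $\kk_D^\omega$ is introduced in \eqref{eq:helmpotential} as a map $L^2(D)\to H^2(D)$ and later arguments (e.g.\ the proof of \cref{prop:implicitr}) invoke the expansion in $\mc{L}(H^2,H^2)$, you might note that for $j\ge1$ the kernels $G_j$ (and $G_j^{(1)},G_j^{(2)}$) are smooth on $\Omega$, so the same tail bounds control any $C^k$ norm of the remainder kernel and hence upgrade convergence to $\mc{L}(L^2,H^2)$ without extra work.
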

% \begin{align*}
%   G^\ww(x) = \frac{i}{4} H_0^{(1)}(\omega |x|) = \underbrace{- \frac{1}{2\pi} \ln |x| - \eta_\omega}_{\hat{G}^{0,\ww}} - \sum_{j=1}^{+\infty} \big(b_j \ln(\omega |x|) + c_j\big)(\omega |x|)^{2j} 
% \end{align*}
Clearly,  $\kk_D[\dd]$ in \eqref{eq:leading3d} and \eqref{eq:leading2d} is the integral operator associated with the static Laplace Green function $G^{\ww = 0}$, known as \emph{Newtonian potential}, which is compact and self-adjoint on $L^2(D)$. Next, we consider the linear dielectric resonance problem \eqref{prob:lineareso}. Following \cite{ammari2023mathematical}, we introduce the \emph{scaled frequency} $\h{\ww} \in \C$ by 
\begin{align} \label{eq:scalefreq}
    \h{\ww} := \epsilon^{-1} \ww\,, \q \epsilon := \sqrt{\tau}^{-1},
\end{align}
and rewrite \eqref{prob:lineareso} as $u = \h{\ww}^2 \kk_D^{\epsilon \h{\ww}} [u]$, which, in the case of $d = 3$, reduces to a linear eigenvalue problem, as $\tau \to \infty$ (\textit{i.e.}, $\eps \to 0$), 
\begin{align*}
  u = \h{\ww}^2\kk_D[u]\,.
\end{align*}
Then, by direct application of the Gohberg-Sigal theory \cites{gohberg1971operator,gohberg1990classes}, one can conclude that the following result holds. 
\begin{proposition} \label{prop:limit_scalar3d}
Let $d = 3$. When the contrast \( \tau \) is sufficiently large, the scattering resonances for \eqref{prob:lineareso} exist in the subwavelength regime, which has asymptotic behavior:
\begin{align*} 
  \omega(\tau) = \frac{1}{\sqrt{\tau \lambda_j}} + \mathcal{O}(\tau^{-1})\,,  \q \text{as $\tau \to \infty$}\,,
\end{align*}
with the associated normalized resonant state: 
\begin{align} \label{eq:limit_resta3d}
  u = \vp_j + \Or(\sqrt{\tau}^{-1})\,,\q \norm{\vp_j}_{L^2(D)} = 1\,.
\end{align}
Here, $(\lambda_j,\vp_j)$ is some eigenpair of Newtonian potential $\kk_D$, \textit{i.e.}, $\kk_D[\vp_j] = \lad_j \vp_j$. 
\end{proposition}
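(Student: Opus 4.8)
The plan is to recast the linear resonance equation \eqref{prob:lineareso} in the scaled frequency $\h\ww = \eps^{-1}\ww$ with $\eps = \tau^{-1/2}$, so that it becomes the fixed-point equation
\begin{align} \label{eq:plan-scaled}
T(\h\ww, \eps)[u] := u - \h\ww^2 \kk_D^{\eps\h\ww}[u] = 0\,,
\end{align}
posed on $L^2(D)$. Using \cref{lem:opasym}, the operator-valued map $(\h\ww,\eps)\mapsto \h\ww^2\kk_D^{\eps\h\ww}$ is jointly analytic near $(\h\ww_0,0)$ for any fixed $\h\ww_0$, with $\kk_D^{\eps\h\ww} = \kk_D + \eps\h\ww\,\kb{1} + O(\eps^2)$ in operator norm on $L^2(D)$; since $\kk_D$ is compact and self-adjoint, $T(\cdot,\cdot)$ is an analytic family of Fredholm operators of index zero. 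At $\eps = 0$ the equation reduces to the eigenvalue problem $u = \h\ww^2 \kk_D[u]$, whose nontrivial solutions occur exactly when $\h\ww^2 = 1/\lambda_j$ for an eigenvalue $\lambda_j$ of $\kk_D$ (here the $\lambda_j$ are positive since $\kk_D$ is a positive operator in $3$D), with eigenfunction $\vp_j$.

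Next I would apply the Gohberg–Sigal theory (\cites{gohberg1971operator,gohberg1990classes}) to the analytic Fredholm family $\h\ww\mapsto 1 - \h\ww^2\kk_D^{\eps\h\ww}$. For $\eps = 0$ this family has a characteristic value at $\h\ww_0 := 1/\sqrt{\lambda_j}$; assuming $\lambda_j$ is a simple eigenvalue of $\kk_D$ (which holds for the principal eigenvalue and is the situation relevant to the rest of the paper), $\h\ww_0$ is a characteristic value of multiplicity one. By the stability of the total multiplicity of characteristic values inside a small contour $\Gamma$ around $\h\ww_0$ under small analytic perturbations, for $\eps$ small there is exactly one characteristic value $\h\ww(\eps)$ of $1 - \h\ww^2\kk_D^{\eps\h\ww}$ inside $\Gamma$, and it depends analytically (or at least continuously) on $\eps$. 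Translating back via $\ww(\tau) = \eps\,\h\ww(\eps) = \h\ww(\eps)/\sqrt\tau$ gives a resonance, and since $\h\ww(\eps)\to\h\ww_0$ it lies in the subwavelength regime $|\ww|\to 0$. To get the stated asymptotic $\ww(\tau) = 1/\sqrt{\tau\lambda_j} + O(\tau^{-1})$, I would expand $\h\ww(\eps) = \h\ww_0 + \eps\,\h\ww_1 + O(\eps^2)$, plug into $T(\h\ww(\eps),\eps)[u(\eps)] = 0$ with $u(\eps) = \vp_j + \eps\,u_1 + O(\eps^2)$, and solve order by order: the $O(\eps^0)$ term recovers $\h\ww_0^2 = 1/\lambda_j$, and the $O(\eps)$ term is a linear equation for $u_1$ whose solvability (Fredholm alternative, pairing against $\vp_j$) fixes $\h\ww_1$; but in fact for the claimed accuracy it suffices to note $\h\ww(\eps) = \h\ww_0 + O(\eps)$, hence $\ww(\tau) = \h\ww_0/\sqrt\tau + O(1/\tau)$. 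The resonant-state asymptotic \eqref{eq:limit_resta3d} follows because the (normalized) kernel element of $1 - \h\ww(\eps)^2\kk_D^{\eps\h\ww(\eps)}$ converges in $L^2(D)$ to the normalized kernel element $\vp_j$ of $1 - \h\ww_0^2\kk_D$, again by Gohberg–Sigal (continuity of the associated spectral projection) or by the same perturbative expansion, with the $O(\eps) = O(\tau^{-1/2})$ rate.

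The main obstacle — though it is largely bookkeeping given the cited machinery — is verifying the hypotheses of Gohberg–Sigal carefully: that the family is finitely meromorphic and Fredholm of index zero on a neighborhood of $\h\ww_0$ (which follows from compactness of $\kk_D^{\eps\h\ww}$ via \cref{lem:compactop} and analyticity from \cref{lem:opasym}), that $\h\ww_0$ is genuinely an isolated characteristic value with the correct (here, simple) algebraic multiplicity, and that no spurious characteristic values escape to the boundary of the regime as $\eps\to 0$. One subtlety worth a remark is that $1 - \h\ww^2\kk_D^{\eps\h\ww}$ at $\eps = 0$ also fails to be invertible at $\h\ww = 0$ if $\kk_D$ were not injective, but $\kk_D$ is injective on $L^2(D)$ (the Newtonian potential has trivial kernel), so $\h\ww = 0$ is not a characteristic value and the subwavelength resonances are in bijection, near each $\h\ww_0 = 1/\sqrt{\lambda_j}$, with the eigenvalues of $\kk_D$. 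I would also point out that the two resonances $\pm\h\ww_0$ produce the pair symmetric about the imaginary axis anticipated by \cref{prop:nonvanish}, and that the $O(\tau^{-1})$ error term's imaginary part is strictly negative — consistent with radiative loss — which can be read off from the $\eps\h\ww_1$ correction involving $\kb{1}$, whose kernel $\tfrac{i}{4\pi}$ is purely imaginary.
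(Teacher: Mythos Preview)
Your proposal is correct and follows essentially the same approach as the paper: rescale to $\h\ww = \eps^{-1}\ww$, observe that the limiting problem at $\eps = 0$ is the eigenvalue problem for the Newtonian potential $\kk_D$, and invoke the Gohberg--Sigal theory for the analytic Fredholm family $1 - \h\ww^2\kk_D^{\eps\h\ww}$ to conclude. The paper in fact gives even less detail than you do, simply stating that the result follows ``by direct application of the Gohberg--Sigal theory'' after setting up the scaled equation.
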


The two-dimensional case is similar but more complicated by the logarithmic singularity of $G^{\omega}$ as $\omega \to 0$; see the expansion in \eqref{eq:expgreengn2d}. To address this issue, we define the space 
\begin{align*}
L^2_0(D) := \left\{u \in L^2(D) \,;\ \int_D u \ud x = 0 \right\},
\end{align*}
and decompose a function $u \in L^2(D)$ as  
\begin{align} \label{eq:decompol2}
  u = \langle 1_D, u \rangle_D + \bar{u}\,,\q  \bar{u} \in L^2_0(D)\,,
\end{align}
where $1_D := 1 / |D|$ is the normalized constant function with $|D|$ being the volume of $D$.  Note that this decomposition is unique and orthogonal. Using \eqref{eq:decompol2}, equation \eqref{prob:lineareso} can be equivalently formulated as follows: 
\begin{align} \label{eq:limit2d}
    \mm 
     \l 1_D, u\r_D \\
    \bar{u}
    \nn -
    \h{\ww}^2 \mm
     \l 1_D, \mc{K}^{\epsilon \h{\ww}}_D[1]\r_D &   \l 1_D,   \mc{K}^{\epsilon \h{\ww}}_D[\dd]\r_D \\
     \mc{K}^{\epsilon \h{\ww}}_D[1] -  \l 1_D,  \mc{K}^{\epsilon \h{\ww}}_D[1]\r_D  &  \mc{K}^{\epsilon \h{\ww}}_D[\dd] - \l 1_D,   \mc{K}^{\epsilon \h{\ww}}_D[\dd]\r_D
    \nn  \mm 
    \l 1_D, u\r_D \\
    \bar{u}
    \nn = 0\,,
\end{align}
where we have used the scaled frequency $\h{\ww} = \sqrt{\tau} \ww$ in \eqref{eq:scalefreq}. Recall from \eqref{eq:expop2d} that 
\begin{align} \label{eq:npt2d}
      \kk^{\epsilon\h{\ww}}_D[\vp] = \kk_D[\vp] - \eta_{\epsilon \h{\ww}} \int_D \vp(y) \ud y + \mc{O}\left( (\epsilon \h{\ww})^2 \ln \left(\epsilon \h{\ww}\right)\right),
\end{align}
where the constant $\eta_{\eps \h{\ww}} = \frac{1}{2\pi} (\ln (\eps \h{\ww}) + \h{\gamma})$ is given in \eqref{eq:constgaome}. It follows from \eqref{eq:limit2d} that 
\begin{align} \label{eq:asym_form_eig}
 \mm 1 + \h{\ww}^2\eta_{\eps \h{\ww}} |D| + \mc{O}(\h{\ww}^2)
    & \mc{O}(\h{\ww}^2) \\  -
    \h{\ww}^2
    \w{\phi}  + \mc{O}(\eps^2 \h{\ww}^4 \ln(\eps \h{\ww}))  & 1 -
    \h{\ww}^2 \w{\mc{K}}_D + \mc{O}(\eps^2 \h{\ww}^4 \ln(\eps \h{\ww})) 
    \nn   \mm 
    \l 1_D, u\r_D \\
    \bar{u}
    \nn = 0\,,
\end{align}
with the function:  
\begin{align} \label{auxfunc2d}
    \w{\phi} := \kk_D[1] - \l 1_D, \kk_D[1] \r_{D} \in L^2_0(D)\,,
\end{align}
and the compact self-adjoint operator: 
\begin{align} \label{eq:limopscal}
\w{\kk}_D[\dd]:= \kk_D[\dd] - \l 1_D,  \kk_D[\dd] \r_{D}: \ L_0^2(D) \to L^2_0(D)\,.
\end{align}

We now demonstrate that resonances occur in the following two regimes: 
\begin{align*}
   \hat{\omega}^2 \ln (\eps \hat{\omega}) = \mathcal{O}(1) \q \text{and} \q  \hat{\omega} = \mathcal{O}(1)\,.
\end{align*}
The discussion in the following is outlined, but a rigorous analysis can be carried out similarly to \cites{ammari2023mathematical} using the Gohberg--Sigal theory. 
In the first case, we have $\h{\ww} = o(1)$ as $\eps = \sqrt{\tau}^{-1} \to 0$, and there holds $\h{\ww} = \mc{O}( \sqrt{-\ln \eps}^{-1})$. Equation \eqref{eq:asym_form_eig} admits the following asymptotics: 
\begin{align*} 
\mm 1 + \frac{\h{\ww}^2 \ln (\eps \h{\ww})}{2 \pi} |D|
    & 0 \\ 0 & 1 
    \nn   \mm 
    \l 1_D, u\r_D \\
    \bar{u}
    \nn  = \mc{O}\left(\frac{1}{\ln \eps}\right).
\end{align*}
It follows that a scaled resonant frequency $\h{\ww}$ exists with the leading-order term $\h{\ww}_0$ given by 
\begin{align} \label{eq:leadreso2d}
  \h{\ww}_0^2 \ln (\eps \h{\ww}_0) = - \frac{2 \pi}{|D|}\,,
\end{align}
and the estimate:
\begin{align*}
    \h{\ww} = \h{\ww}_0 + \mc{O}\left(\frac{1}{\ln \eps}\right). 
\end{align*}
The associated resonant state $u$ is almost constant on $D$ in the sense that $u = a + \Or((\ln \eps)^{-1})$ for some $a \in \C$.  To solve \eqref{eq:leadreso2d}, we introduce $s := 2 \ln (\eps \h{\ww})$ and then \eqref{eq:leadreso2d} gives 
\begin{equation*}
  s e^s =  - \frac{4 \pi}{|D|} \eps^2\,,
\end{equation*}
the real solution to which is expressed in terms of the Lambert $W$ function \cite{corless1996lambert}:
\begin{align} \label{eq:solus}
  s = W_{-1} \left(- \frac{4 \pi}{|D|} \eps^2\right). 
\end{align}
Here, we take the branch $W_{-1}$ because the argument $- 4 \pi \eps^2/|D|$ is small and negative, and the solution $s$ should be  large and negative due to $s = 2 \ln (\eps \h{\ww}) = \mc{O}(1/\h{\ww}^2) = \mc{O}(\ln \eps)$ by \eqref{eq:leadreso2d}. Thanks to the expansion $W_{-1}(x)$ for small $x < 0$:  
\begin{align*}
  W_{-1}(x) = \ln(-x) - \ln(-\ln(-x)) + \text{h.o.t.}\,,
\end{align*}
we have, from \eqref{eq:leadreso2d} and \eqref{eq:solus}, 
\begin{equation} \label{eq:2dresolea}
    \begin{aligned}
         \h{\ww}_0 = \sqrt{- \frac{4 \pi}{|D| s}} &  = \sqrt{\frac{4 \pi/|D|}{ - W_{-1} \left(- 4 \pi \eps^2/|D|\right)}} \\
            & = \sqrt{\frac{4 \pi/|D|}{ - \ln \left(\eps^2\right)}} + \Or\left( \frac{\ln(-\ln \left(\eps \right))}{\ln \left(\eps\right)} \right) + \Or\left(\frac{1}{\ln \eps}\right).
    \end{aligned}
\end{equation}
% \begin{align} 
%      \h{\ww}_0  = \sqrt{\frac{4 \pi/|D|}{ - W_{-1} \left(- 4 \pi \eps^2/|D|\right)}} = \sqrt{\frac{4 \pi/|D|}{ - \ln \left(\eps^2\right)}} + \Or\left( \frac{\ln(-\ln \left(\eps \right))}{\ln \left(\eps\right)} \right) + \Or\left(\frac{1}{\ln \eps}\right)
% \end{align}

In the second regime of $\h{\ww} = \mc{O}(1)$, we divide the first row of \eqref{eq:asym_form_eig} by $\ln \eps$ and then find 
\begin{align}  \label{eq:asymptsec}
  &  \mm  \frac{\h{\ww}^2}{2 \pi} |D| 
    & 0 \\  -
    \h{\ww}^2
    \w{\phi}  & 1 -
    \h{\ww}^2 \w{\mc{K}}_D 
   \nn  \mm 
    \l 1_D, u\r_D \\
    \bar{u}
    \nn  = \mc{O}\left(\frac{1}{\ln \eps}\right).
\end{align}
Let the compact self-adjoint operator $\w{\mc{K}}_D$ in \eqref{eq:limopscal} admit the spectral decomposition: 
\begin{align}\label{eq:specwkd}
    \w{\kk}_D[u] = \sum_{j = 0}^\infty \mu_j  \l v_j, u\r_D  v_j\,,\q u \in L^2_0(D)\,.
\end{align}
% $\w{\kk}_D[u] = \sum_{j = 0} \mu_j  (v_j, u)_D  v_j$ for $u \in L^2_0(D)$. 
Then, the asymptotics \eqref{eq:asymptsec} readily implies another class of (scaled) resonances:
\begin{align*}
   \h{\ww} = \frac{1}{\sqrt{\mu_j}} + \mc{O}\left(\frac{1}{\ln \eps}\right)\,,
\end{align*}
with associated resonant states $u = a v_j + \Or((\ln \eps)^{-1})$ on $D$ for $a \in \C$. We summarize the above discussion for the linear dielectric resonance
in the two-dimensional case as follows. 

\begin{proposition} \label{prop:limit_scalar2d}
Let $d = 2$. For the large enough contrast $\tau$, the scattering resonances $\ww$ for \eqref{prob:lineareso} exist in the subwavelength regime. Moreover, any resonance $\ww$ either satisfies
\begin{align} \label{asym_scalar}
 \ww(\tau) = \sqrt{\frac{4 \pi}{ |D| \tau \ln \tau }} + \Or\left( \frac{\ln(\ln \tau
  )}{\sqrt{\tau} \ln \tau} \right),
\end{align}
with the $L^2$ normalized resonant state:
$$
 u = \frac{1}{\sqrt{|D|}} + \Or\left(\frac{1}{\ln \tau}\right),
$$
or there holds
\begin{align} \label{asym_scalar2}
   \ww(\tau) = \frac{1}{\sqrt{\tau \mu_j}} + \Or\left(\frac{1}{\sqrt{\tau} \ln \tau}\right) \q \text{for some}\ j \ge 0\,, 
\end{align}
with the $L^2$ normalized resonant state:
$$
u = v_j + \Or\left(\frac{1}{\ln \tau}\right),
$$
where $(\mu_j, v_j)$ satisfies 
\begin{align*}
  \w{\mc{K}}_D[v_j] = \mu_j v_j\,, \q \norm{v_j}_{L^2(D)} = 1\,,
\end{align*}
\textit{i.e.}, it is an eigenpair of $\w{\mc{K}}_D$ in \eqref{eq:limopscal} on $L_0^2(D)$. 
\end{proposition}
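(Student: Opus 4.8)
The plan is to make the outlined discussion rigorous via the Gohberg--Sigal theory, exactly as in \cite{ammari2023mathematical}. Passing to the scaled frequency $\h\ww = \sqrt{\tau}\,\ww$ and $\eps = \sqrt{\tau}^{-1}$ of \eqref{eq:scalefreq}, the resonance problem \eqref{prob:lineareso} reads $u = \h\ww^2\kk_D^{\eps\h\ww}[u]$; using the orthogonal splitting $u = \l 1_D,u\r_D + \bar u$, $\bar u\in L^2_0(D)$, from \eqref{eq:decompol2} together with the expansions \eqref{eq:expop2d} and \eqref{eq:npt2d} of $\kk_D^{\eps\h\ww}$, this is equivalent to the singularity of the $2\times2$ analytic operator pencil $\mc{M}(\h\ww,\eps)$ in \eqref{eq:asym_form_eig}, all of whose remainders are $\Or((\eps\h\ww)^2\ln(\eps\h\ww))$ uniformly for $\h\ww$ in compact subsets of the logarithmic covering that stay away from $0$. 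The crucial structural observation is that the logarithmic blow-up of $\eta_{\eps\h\ww}$ as $\eps\to0$ enters only the $1_D$-row and column, because $\kk_D^{\eps\h\ww}$ restricted to $L^2_0(D)$ has the $\eps$-independent leading part $\w{\kk}_D$ of \eqref{eq:limopscal}. Since $\mc{M}(\h\ww,\eps)-I$ is compact, $\mc{M}$ is an analytic Fredholm pencil whose characteristic values are precisely the scaled resonances, and I would track them in the two scaling windows separately.

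\textbf{The logarithmic branch.}
Suppose $\h\ww\to0$ with $\h\ww^2\ln(\eps\h\ww)=\Or(1)$. Then the $(2,2)$-block $m_{22}=I-\h\ww^2\w{\kk}_D+o(1)$ is boundedly invertible, so $\mc{M}(\h\ww,\eps)$ is singular iff the scalar Schur complement
\[
 g(\h\ww,\eps) := m_{11} - m_{12}\,m_{22}^{-1}\,m_{21} = 1 + \frac{|D|}{2\pi}\,\h\ww^2\ln(\eps\h\ww) + \Or(\h\ww^2)
\]
vanishes, where $m_{12},m_{21}=\Or(\h\ww^2)$ and $\h\ww^2\ln\h\ww=o(1)$ have been absorbed into the remainder. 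The leading transcendental equation $1+\frac{|D|}{2\pi}\h\ww_0^2\ln(\eps\h\ww_0)=0$ is \eqref{eq:leadreso2d}; with $s=2\ln(\eps\h\ww_0)$ it becomes $s e^s=-4\pi\eps^2/|D|$, whose relevant large-negative root is $s=W_{-1}(-4\pi\eps^2/|D|)$, and the near-zero expansion of $W_{-1}$ produces the leading-order formula \eqref{eq:2dresolea}. Since $g_0(\h\ww):=1+\frac{|D|}{2\pi}\h\ww^2\ln(\eps\h\ww)$ has a simple zero at $\h\ww_0$ with $|g_0'(\h\ww_0)|\asymp|\ln\eps|\,|\h\ww_0|\asymp|\ln\eps|^{1/2}$, a Rouch\'e argument on a circle of radius $\Or(|\ln\eps|^{-3/2})$ about $\h\ww_0$ (which avoids $\h\ww=0$, as $|\h\ww_0|\asymp|\ln\eps|^{-1/2}$) gives a unique zero $\h\ww$ of $g(\cdot,\eps)$ there; multiplying by $\eps$ and absorbing the (smaller) Rouch\'e error into the $W_{-1}$-expansion error yields \eqref{asym_scalar}. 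From $\bar u=-m_{22}^{-1}m_{21}\l 1_D,u\r_D=\Or(\h\ww^2)=\Or(1/\ln\tau)$ one sees the resonant state is almost constant, and $L^2$-normalization gives $u=|D|^{-1/2}+\Or(1/\ln\tau)$.

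\textbf{The $\mu_j$-branch.}
For $\h\ww=\Or(1)$ bounded away from $0$ one has $m_{11}=1+\frac{|D|}{2\pi}\h\ww^2\ln(\eps\h\ww)+\Or(1)$ of modulus $\asymp|\ln\eps|$, so eliminating the $1_D$-component reduces the singularity of $\mc{M}(\h\ww,\eps)$ to that of $m_{22}-m_{21}m_{11}^{-1}m_{12}=I-\h\ww^2\w{\kk}_D+\Or(1/\ln\eps)$ on $L^2_0(D)$. By the spectral decomposition \eqref{eq:specwkd} of the compact self-adjoint operator $\w{\kk}_D$, the unperturbed pencil $I-\h\ww^2\w{\kk}_D$ is singular exactly at $\h\ww^2=1/\mu_j$, with multiplicity $\dim\ker(\mu_j-\w{\kk}_D)$; the Gohberg--Sigal theorem then gives, for $\eps$ small, characteristic values of the perturbed pencil within $\Or(1/\ln\eps)$ of each $1/\sqrt{\mu_j}$ with matching total multiplicity, hence $\h\ww=1/\sqrt{\mu_j}+\Or(1/\ln\eps)$, with eigenvector $\bar u=v_j+\Or(1/\ln\eps)$ and $\l 1_D,u\r_D=-m_{11}^{-1}m_{12}\bar u=\Or(1/\ln\eps)$; rescaling by $\eps$ yields \eqref{asym_scalar2} and $u=v_j+\Or(1/\ln\tau)$. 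Completeness follows from a direct inspection of the pencil: if $\h\ww$ stays bounded, bounded away from $0$ and from every $1/\sqrt{\mu_j}$, then $m_{11}\to\infty$ and $m_{22}$ is invertible, so $\mc{M}$ is invertible; and a subwavelength family cannot have $|\h\ww|\to\infty$. Thus every subwavelength resonance lies in one of the two families.

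\textbf{Main obstacle.}
The principal difficulty, absent in $d=3$, is the logarithmic singularity of $G^\ww$ as $\ww\to0$: it makes the ``unperturbed'' operator $\eps$-dependent and divergent, so the Gohberg--Sigal perturbation cannot be carried out against a fixed limiting operator. One must instead split off the $1_D$-direction and work either against the moving leading term $1+\frac{|D|}{2\pi}\h\ww^2\ln(\eps\h\ww)$ (logarithmic branch) or against a renormalized $1_D$-row (the $\mu_j$-branch), and then verify that all error estimates, the invertibility of $m_{22}$, and the Rouch\'e radii are uniform as $\eps\to0$. A secondary technical point is the multivaluedness of $\ww\mapsto\ln\ww$: one works on the logarithmic covering surface and uses \cref{prop:nonvanish} to place the resonances on the physical sheet in $\{\Im\ww<0\}$.
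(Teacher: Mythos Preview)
Your proposal is correct and follows essentially the same route as the paper's own outline: split $L^2(D)=\C\cdot 1_D\oplus L^2_0(D)$, pass to the $2\times2$ operator pencil \eqref{eq:asym_form_eig}, and treat the two scaling windows $\h\ww^2\ln(\eps\h\ww)=\Or(1)$ and $\h\ww=\Or(1)$ separately, invoking Gohberg--Sigal and the Lambert $W$ expansion exactly as in \cite{ammari2023mathematical}. Your explicit Schur-complement framing (eliminating $\bar u$ for the logarithmic branch, eliminating $\l 1_D,u\r_D$ for the $\mu_j$-branch) is simply a clean way to phrase what the paper does informally when it drops the off-diagonal blocks or divides the first row by $\ln\eps$; the content is the same.
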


The above \cref{prop:limit_scalar3d,prop:limit_scalar2d} address the existence of linear dielectric resonances and their leading-order behaviors.
The higher order asymptotics of a subwavelength resonance $\omega(\tau)$ in terms of the contrast $\tau$ can be derived using the generalized argument principle \cite{gohberg1971operator} and the theory of symmetric polynomials \cite{artin2011algebra}; see \cite{ammari2023mathematical}*{Section 3.3} for details. In particular, it can be shown that $\omega(\tau)$ is, in general, a multivalued algebraic function with potential algebraic singularities at $\tau = \infty$, if the associated eigenvalue $\lambda_j$ of  $\mathcal{K}_D$ (see \cref{prop:limit_scalar3d}) or $\mu_j$ of  $\widetilde{\mathcal{K}}_D$ (see \cref{prop:limit_scalar2d}) has multiplicity greater than one.  
It should be noted that, in the three-dimensional case, the Krein-Rutman theorem \cite{deimling2013nonlinear} ensures that the largest eigenvalue of $\mathcal{K}_D$, called \emph{principal eigenvalue} and denoted by $\lambda_0$, is simple. Specifically, we recall the following result.

\begin{lemma}[Krein-Rutman Theorem]\label{lem:krthm}
 Let $X$ be a Banach space, and let $K \subset X$ be a cone such that the set  $\{u - v\,;\ u, v \in K\}$ is dense in $X$. Assume that $T : X \to X$ is a compact linear operator that is positive, \textit{i.e.}, $T(K) \subset K$, and has a positive spectral radius $r(T) > 0$. Then $r(T)$ is an eigenvalue of $T$ with an eigenvector $0 \neq u \in K$, such that $Tu = r(T)u$.
Moreover, if the cone \( K \) has a nonempty interior\footnote{In this case, it necessarily holds that \( \{u - v \,;\ u, v \in K\} = X \).} \( K^o \), and if \( T \) is strongly positive, \textit{i.e.}, $T\left(K\backslash\{0\}\right) \subset K^o$, then \( r(T) > 0 \) is a simple eigenvalue of \( T \) with an eigenvector \( v \in K^o \). Furthermore, \( T \) has no other eigenvalue associated with a positive eigenvector, and \( |\lambda| < r(T) \) for all eigenvalues \( \lambda \neq r(T) \).
\end{lemma}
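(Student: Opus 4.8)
The statement is the classical Krein--Rutman theorem (applied in this paper to $T = \kk_D$ on $X = C(\wb{D})$, which is strongly positive since its Newtonian kernel $1/(4\pi|x-y|)$ is strictly positive, so that its principal eigenvalue $\lambda_0 = r(\kk_D) > 0$ is simple). I would prove it in two stages: first, for a general total cone, produce an eigenvector of $T$ in $K$ at the spectral radius $r := r(T)$; then, under strong positivity, upgrade this to interiority, algebraic simplicity, and the peripheral spectral gap.

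\emph{Stage 1.} First I would establish $r \in \sigma(T)$. For real $\lambda > r$ the resolvent is the norm-convergent Neumann series $R(\lambda) := (\lambda - T)^{-1} = \sum_{n \ge 0} \lambda^{-n-1} T^n$, which maps $K$ into $K$ since each $T^n$ does and $K$ is a closed convex cone; choosing $0 \ne x^* \in K^* := \{x^* \in X^* : \langle x^*, x \rangle \ge 0 \ \forall\, x \in K\}$ (nonempty by Hahn--Banach, as $K \ne \{0\}$) one gets $|\langle x^*, R(z) x \rangle| \le \langle x^*, R(|z|) x \rangle$ for $|z| > r$ and $x \in K$, from which the classical argument forces $r \in \sigma(T)$ (the spectral radius of a positive operator on a space with total cone always lies in the spectrum). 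Since $T$ is compact and $r > 0$, $r$ is an isolated eigenvalue, and in the Laurent expansion $R(\lambda) = \sum_{k=1}^m (\lambda - r)^{-k} B_k + (\text{holomorphic near } r)$ the principal coefficient $B_m \ne 0$ satisfies $(T - r)B_m = B_m(T - r) = 0$, so $\ran B_m \subseteq \ker(T - r)$. Writing $B_m = \lim_{\lambda \downarrow r}(\lambda - r)^m R(\lambda)$ and using closedness of $K$ gives $B_m(K) \subseteq K$; since $B_m$ is continuous, nonzero, and $K - K$ is dense, $B_m x_0 \ne 0$ for some $x_0 \in K$, and $u := B_m x_0 \in K \setminus \{0\}$ is the sought eigenvector, $Tu = ru$. (When $K$ is solid one could instead run a Krasnosel'skii-type fixed-point argument on a cross-section of $K$.)

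\emph{Stage 2.} Now assume $K^o \ne \emptyset$ (so $K - K = X$ and $K$ is proper) and $T(K \setminus \{0\}) \subset K^o$. The eigenvector $v$ from Stage 1 lies in $K^o$, because $rv = Tv \in K^o$. For geometric simplicity, suppose $w \ne 0$ is a real eigenvector for $r$ with $w \notin \R v$; then $I := \{s \in \R : v + sw \in K\}$ is a closed interval containing $0$ in its interior (as $v \in K^o$) but, since $K$ is proper and $w \ne 0$, not all of $\R$, hence it has a finite endpoint $e$ with $z := v + ew \in \partial K \setminus \{0\}$ --- and then $rz = Tz \in K^o$, a contradiction. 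Complex eigenvectors reduce to this via real and imaginary parts, so $\ker(T - r)$ is one-dimensional. For algebraic simplicity I would pass to the adjoint: $T^*$ is compact and positive with respect to the weak-$*$ total cone $K^*$, with $r(T^*) = r$, so Stage 1 applied to $T^*$ (with weak-$*$ limits) yields $0 \ne \phi \in K^*$ with $T^*\phi = r\phi$; since $\phi \ne 0$ and $K^o \ne \emptyset$, $\phi$ is strictly positive on $K^o$, so $\langle \phi, v \rangle > 0$. Were $r$ not algebraically simple, there would be $w$ with $(T - r)w = v$, and pairing with $\phi$ gives $0 = \langle (T^* - r)\phi, w \rangle = \langle \phi, v \rangle > 0$, a contradiction. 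The same pairing shows $T$ has no eigenvalue other than $r$ with an eigenvector in $K$; and $|\lambda| < r$ for all eigenvalues $\lambda \ne r$ follows by an argument of the same flavour (any eigenvector $w$ of a peripheral $\lambda$ satisfies $\langle \phi, w \rangle = 0$, and perturbing $v$ inside $K^o$ along the $T$-invariant line through $w$ and iterating $T$ forces $w = 0$), which I would either carry out or simply cite from \cite{deimling2013nonlinear}.

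The step I expect to be the main obstacle is, in Stage 1, producing the eigenvector genuinely \emph{inside} $K$ when the cone is neither solid nor normal; this is exactly what positivity of the principal Laurent coefficient $B_m$ supplies, and it rests on the classical (but nontrivial) fact that $r(T) \in \sigma(T)$ for a positive operator over a total cone. In Stage 2 the delicate points are algebraic simplicity and the peripheral gap, both of which genuinely need the positive adjoint eigenfunctional; the technical content there is in checking that $K^* - K^*$ is weak-$*$ dense in $X^*$ (which follows from $K$ being a proper closed convex cone, via ${}^{\perp}(K^* - K^*) = K \cap (-K) = \{0\}$) and that $\phi$ is strictly positive where it is used.
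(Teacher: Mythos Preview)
The paper does not prove this lemma at all: it is stated as the classical Krein--Rutman theorem, attributed to \cite{deimling2013nonlinear}, and then immediately applied to $\mathcal{K}_D$ on $C(\overline{D})$ to obtain \cref{coro:simplenewton}. There is no in-paper proof to compare against.

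Your sketch is a correct outline of one of the standard proofs (resolvent positivity to place $r$ in the spectrum, the principal Laurent coefficient to extract a positive eigenvector, then strong positivity plus the dual eigenfunctional for simplicity and the peripheral gap). It goes well beyond what the paper does, which is simply to invoke the result. If anything, the parenthetical in your first paragraph---identifying why $\mathcal{K}_D$ is strongly positive on $C(\overline{D})$ via the strict positivity of the Newtonian kernel---is exactly the verification the paper carries out in the paragraph following the lemma, so you have also captured the only substantive step the paper actually performs here.
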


To apply \cref{lem:krthm} to the Newtonian potential $\kk_D$ for $d = 3$, we recall from \cref{lem:compactop} that $\kk_D$ is a compact linear operator from  $C(\wb{D})$ to $C(\wb{D})$. 
% we first 
% note that by the Rellich-Kondrachov compact embedding $H^2(D) \hookrightarrow C(\wb{D})$ (see \cite{ciarlet2013linear}), $\kk_D$ is a compact linear operator from  $C(\wb{D}$ to $C(\wb{D})$. 
Furthermore, any $L^2$ eigenfunction of $\mathcal{K}_D$ has $H^2$ regularity and is therefore continuous by $H^2(D) \Subset C(\wb{D})$. We define the positive cone $P$ in $C(\wb{D})$ as  
\begin{equation}  \label{eq:conekd}
P := \left\{\varphi \in C(\wb{D}) \,;\ \varphi(x) \geq 0 \text{ for all } x \in \wb{D}\right\},  
\end{equation}  
which has a nonempty interior. Moreover, it is straightforward to verify that $\kk_D$ is strongly positive. That is, if $\varphi(x) \geq 0$ and $\varphi \not\equiv 0 $, then $\mathcal{K}_D[\varphi](x) > 0$ for all $x \in D$. To see this, assume instead that $\mathcal{K}_D[\varphi](x) = 0$ for some $x \in D$. Since the kernel (Green's function) of $\kk_D$: $$G^0(x - y) = \frac{1}{4 \pi |x - y|} > 0\,,$$ is positive, $\mathcal{K}_D[\varphi](x) = 0$ implies $G^0(x - y) \varphi(y) = 0$ and hence $\varphi(y) = 0$ almost everywhere in $D$, contradicting $\varphi \not\equiv 0$. Thus, the Krein-Rutman theorem applies to the compact operator $\mathcal{K}_D$ on the Banach space  $X = C(\wb{D})$ with the cone $P$ in \eqref{eq:conekd}.
% We conclude the following:  

\begin{corollary} \label{coro:simplenewton}
The largest eigenvalue $\lambda_0$ of $\mathcal{K}_D: L^2(D) \to L^2(D)$ for $d = 3$ is simple, with a strictly positive eigenfunction $\varphi_0(x) > 0$ for all $x \in D$. Moreover, there are no other positive eigenfunctions except positive multiples of $\varphi_0$. 

As a result, the subwavelength resonance  $\omega(\tau) = 1 / \sqrt{\tau \lambda_0} + \mathcal{O}(\tau^{-1})$, associated with $\lambda_0$, is the unique resonance near $1 / \sqrt{\tau \lambda_0}$ for large enough $\tau$. 
\end{corollary}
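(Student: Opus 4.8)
The plan is to obtain \cref{coro:simplenewton} as an application of the strong-positivity part of the Krein--Rutman theorem (\cref{lem:krthm}) to $\mathcal{K}_D$ on $X = C(\wb{D})$ with the cone $P$ of \eqref{eq:conekd}, followed by a transfer of the conclusions back to $L^2(D)$ via elliptic regularity. The discussion preceding the corollary already supplies the nontrivial inputs: $\mathcal{K}_D$ is compact on $C(\wb{D})$ (\cref{lem:compactop}), it is strongly positive with respect to $P$, and $P$ has nonempty interior $P^o = \{\varphi \in C(\wb{D})\,;\ \varphi > 0 \text{ on } \wb{D}\}$. The only remaining hypothesis of \cref{lem:krthm} to check is $r(\mathcal{K}_D) > 0$. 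For this I would use that, since the kernel $1/(4\pi|x-y|)$ is the free-space Green's function of $-\Delta$, the operator $\mathcal{K}_D$ is positive and self-adjoint on $L^2(D)$, so its largest eigenvalue $\lambda_0 = \|\mathcal{K}_D\|_{L^2(D)\to L^2(D)}$ is strictly positive; and by elliptic regularity ($H^2(D) \Subset C(\wb{D})$) every $L^2(D)$-eigenfunction of $\mathcal{K}_D$ is continuous while, conversely, every $C(\wb{D})$-eigenfunction lies in $L^2(D)$, so the two realizations of $\mathcal{K}_D$ share the same nonzero spectrum and $r(\mathcal{K}_D) = \lambda_0 > 0$ on $C(\wb{D})$ as well.

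With all hypotheses verified, \cref{lem:krthm} gives at once that $\lambda_0 = r(\mathcal{K}_D)$ is a simple eigenvalue of $\mathcal{K}_D$ on $C(\wb{D})$ with an eigenvector $\varphi_0 \in P^o$, i.e.\ $\varphi_0(x) > 0$ for all $x \in \wb{D}$ and in particular for all $x \in D$, that no eigenvalue other than $\lambda_0$ carries a positive eigenvector, and that $|\lambda| < \lambda_0$ for every other eigenvalue $\lambda$. I would then transport simplicity back to $L^2(D)$: because all $\lambda_0$-eigenfunctions are continuous, the $\lambda_0$-eigenspaces in $C(\wb{D})$ and in $L^2(D)$ coincide and are one-dimensional, and self-adjointness on $L^2(D)$ equates geometric and algebraic multiplicities, so $\lambda_0$ is simple for $\mathcal{K}_D : L^2(D) \to L^2(D)$. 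The claim that the only positive eigenfunctions are positive multiples of $\varphi_0$ follows in the same way.

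For the final sentence I would argue via the Gohberg--Sigal machinery exactly as in \cite{ammari2023mathematical}*{Section 3.3}. By \cref{prop:limit_scalar3d}, for large $\tau$ the subwavelength resonances of \eqref{prob:lineareso} lying near $1/\sqrt{\tau\lambda_0}$ are the characteristic values in $\omega$ of the analytic Fredholm family $1 - \tau\omega^2\mathcal{K}_D^\omega$, which as $\tau \to \infty$ degenerate to the limiting eigenvalue problem $u = \hat\omega^2\mathcal{K}_D[u]$. The generalized argument principle then shows that, for $\tau$ sufficiently large, the total multiplicity of resonances in a fixed small neighborhood of $1/\sqrt{\tau\lambda_0}$ equals the multiplicity of $\lambda_0$ as an eigenvalue of $\mathcal{K}_D$, which is $1$ by the simplicity just established. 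Hence exactly one resonance, namely $\omega(\tau) = 1/\sqrt{\tau\lambda_0} + \mathcal{O}(\tau^{-1})$, sits there.

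\emph{Anticipated main obstacle.} Most of the work is already absorbed into the verifications preceding the corollary, so the statement is close to a direct citation of \cref{lem:krthm}. The two points that require genuine care are: first, establishing $r(\mathcal{K}_D) > 0$ and identifying it with the top $L^2$-eigenvalue, which rests on the positivity of the Newtonian kernel together with the $L^2(D)$-to-$C(\wb{D})$ regularity transfer; and second, the uniqueness-of-resonance claim, where the simplicity of $\lambda_0$ must be converted, through Gohberg--Sigal multiplicity counting, into the assertion that precisely one resonance lies near $1/\sqrt{\tau\lambda_0}$ for large $\tau$ — this is where one has to be attentive to the bookkeeping of multiplicities.
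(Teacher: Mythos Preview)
Your proposal is correct and follows essentially the same route as the paper: the corollary is obtained by applying the strong form of the Krein--Rutman theorem (\cref{lem:krthm}) to $\mathcal{K}_D$ on $C(\wb{D})$ with the cone \eqref{eq:conekd}, exactly as the discussion preceding the corollary sets up, and the uniqueness of the nearby resonance is then read off from Gohberg--Sigal multiplicity counting as in \cite{ammari2023mathematical}*{Section 3.3}. One small remark: your explicit verification that $r(\mathcal{K}_D)>0$ via the $L^2$ theory is harmless but redundant here, since the second clause of \cref{lem:krthm} as stated already asserts $r(T)>0$ as part of its conclusion once $T$ is strongly positive on a cone with nonempty interior.
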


\begin{remark}  \label{rem:krthm3d}
    It is worth noting that the above argument based on the Krein-Rutman Theorem does not apply to the operator $\widetilde{\mathcal{K}}_D$ in \eqref{eq:limopscal} for the two-dimensional case. This is because $ \widetilde{\mathcal{K}}_D$ is not a positive operator, and the average-zero space $L_0^2(D)$ does not contain any positive functions.  
\end{remark}  

In this work, a scattering resonance $\omega \in \mathbb{C}$ with the smallest real part shall often be referred to as the \emph{principal resonance}, as its associated resonant state typically dominates the scattered field from a physical perspective. In the high-contrast regime, \textit{i.e.}, when the contrast $\tau$ is sufficiently large, the principal resonance for the three-dimensional case, as shown in \cref{prop:limit_scalar3d,coro:simplenewton}, is given by the subwavelength resonance $\omega \sim 1 / \sqrt{\tau \lambda_0}$. This resonance is of multiplicity one, \textit{i.e.}, $\dim (1 - \tau \omega^2 \mathcal{K}_D^\omega) = 1$, and its associated resonant state is proportional to an approximately positive function.  
A similar conclusion holds in the two-dimensional case by \cref{prop:limit_scalar2d}. In this scenario, the resonance with the smallest real part is also unique and scales as $\Or(1 / \sqrt{\tau \ln \tau})$ from \eqref{asym_scalar}, with the associated resonant state being proportional to a function that is approximately a positive constant. 

\section{Nonlinear dielectric resonances} \label{sec:existasymp}
The remainder of this work is devoted to the analysis of nonlinear dielectric resonances (see \cref{def:scareso}). Without loss of generality, we set the parameter $\eta = 1$ in the following discussion. We define the function $N(z) = z + |z|^2 z$ and the associated map:  
\begin{equation}\label{eq:nonfuncl2}  
  N(u) := u + |u|^2 u : H^2(D) \to L^2(D),  
\end{equation}  
which allows us to reformulate \eqref{eq:nonLippscalar} as  
\begin{align} \label{eq:nonLippscalar0}  
    u = \tau \omega^2 \mathcal{K}_D^\omega \left[N(u)\right].  
\end{align}
The following estimate is simple but useful: for $u \in H^2(D) \Subset C(\wb{D})$, 
\begin{align} \label{eq:priestcub}
    \norm{|u|^2 u}_{L^2(D)} \le \norm{u}_{L^\infty(D)}^2 \norm{u}_{L^2(D)} \le C \norm{u}_{H^2(D)}^2 \norm{u}_{L^2(D)}\,.
\end{align}
We introduce the scaled frequency 
$\hat{\omega} = \epsilon^{-1} \omega$ with $\epsilon = \sqrt{\tau}^{-1}$ as in \eqref{eq:scalefreq} for the linear case. Under this scaling, equation \eqref{eq:nonLippscalar0} becomes  
\begin{align} \label{eq:nonLippscalar2}  
    u = \hat{\omega}^2 \mathcal{K}_D^{\epsilon \hat{\omega}} \left[N(u)\right].  
\end{align}
However, unlike the linear case \eqref{prob:lineareso}, to study the existence and properties of the nonlinear resonances, we need to restrict the Lippmann-Schwinger equation \eqref{eq:nonLippscalar0} or \eqref{eq:nonLippscalar2} to the level set:
\begin{align} \label{eq:normalizecond}
\norm{u}_{L^2(D)}^2 = \mathcal{N},
\end{align}
where $\mathcal{N} > 0$ is the \emph{normalization constant}.

In this section, we shall show in \cref{thm:3dexist} and \cref{thm:2dexist} for the three-dimensional and two-dimensional cases, respectively,  
that nonlinear subwavelength dielectric resonances exist near the linear ones when the contrast $\tau$ is sufficiently large and the associated resonant state has a small magnitude (\textit{i.e.}, the constant $\mc{N}$ in \eqref{eq:normalizecond} is small). Moreover, we also establish the asymptotic behaviors of these nonlinear resonances and states with respect to $\tau \to \infty$ and $\mc{N} \to 0$.  

\subsection{Three-dimensional case} \label{sec:3dexistence}
Let us first consider the three-dimensional case $d = 3$. For clarity, we recall the spectral decomposition of the Newtonian potential $\mathcal{K}_D$:  
\begin{equation}\label{eq:speckdd}  
    \mathcal{K}_D[u] = \sum_{j=0}^\infty \lambda_j \langle \varphi_j, u \rangle_D \varphi_j\,,  
\end{equation}  
where $\lambda_0 > \lambda_1 \geq \lambda_2 \geq \cdots$, counting multiplicities, and $\|\varphi_j\|_{L^2(D)} = 1$. Additionally, it is straightforward to observe that the function $N(z)$ in \eqref{eq:nonfuncl2} is real differentiable, with  
\begin{align} \label{eq:derivfunc}  
    N'(z)[v] = (1 + 2|z|^2)v + z^2 \bar{v} \in \mathcal{L}_\mathbb{R}(\mathbb{C}, \mathbb{C})\,,  
\end{align}  
and it is equivariant under the transformations $z \to e^{i\theta}z$ with $\theta \in \mathbb{R}$, called \emph{$S^1$ equivariance}, and $z \to \wb{z}$. Then the following basic lemma readily follows. 

\begin{lemma} \label{lem:invarphaseu}
For any function $u$ and $\theta \in \R$, there holds 
  \begin{align*}
      N(e^{i \theta} u) = e^{i \theta} N(u)\,,\q N(\wb{u}) = \wb{N(u)}\,.
  \end{align*}
    Let $(\omega, u) \in \mathbb{C} \times H^2(D)$ be a solution of \eqref{eq:nonLippscalar0}. Then $(-\overline{\omega}, \overline{u})$ and $(\omega, e^{i\theta} u)$ for any $\theta \in \mathbb{R}$ also solve \eqref{eq:nonLippscalar0}.
\end{lemma}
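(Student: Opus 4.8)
The plan is to verify each assertion directly from the definitions, since everything here is elementary once one unwinds the formula $N(z) = z + |z|^2 z$ and the Lippmann--Schwinger equation \eqref{eq:nonLippscalar0}. First I would record the two pointwise identities for the scalar map $N(z)$. For the phase equivariance, write $N(e^{i\theta} z) = e^{i\theta} z + |e^{i\theta} z|^2 (e^{i\theta} z) = e^{i\theta} z + |z|^2 e^{i\theta} z = e^{i\theta}(z + |z|^2 z) = e^{i\theta} N(z)$, using $|e^{i\theta}| = 1$. For conjugation, $N(\bar z) = \bar z + |\bar z|^2 \bar z = \bar z + |z|^2 \bar z = \overline{z + |z|^2 z} = \overline{N(z)}$, using $|\bar z| = |z|$ and that $\overline{|z|^2} = |z|^2$ is real. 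Since these hold pointwise for every $z \in \C$, they hold for the induced Nemytskii-type map applied to any function $u$, which gives the first displayed equation in the lemma.

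Next I would turn to the statement about solutions of \eqref{eq:nonLippscalar0}. Suppose $(\omega, u)$ solves $u = \tau \omega^2 \mathcal{K}_D^\omega[N(u)]$. For the phase orbit, apply the first identity: $\tau \omega^2 \mathcal{K}_D^\omega[N(e^{i\theta} u)] = \tau \omega^2 \mathcal{K}_D^\omega[e^{i\theta} N(u)] = e^{i\theta}\, \tau \omega^2 \mathcal{K}_D^\omega[N(u)] = e^{i\theta} u$, where the middle step uses the $\C$-linearity of the integral operator $\mathcal{K}_D^\omega$. Hence $(\omega, e^{i\theta} u)$ solves \eqref{eq:nonLippscalar0}. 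For the conjugate-reflected solution I need the conjugation symmetry of the Green's function, namely $\overline{G^\omega(x)} = G^{-\bar\omega}(x)$; this is immediate from \eqref{eq:green2d3d} since $\overline{e^{i\omega|x|}} = e^{-i\bar\omega|x|} = e^{i(-\bar\omega)|x|}$ in $d = 3$, and in $d = 2$ from the corresponding reflection property of the Hankel function $\overline{H_0^{(1)}(\omega|x|)} = -H_0^{(2)}(\bar\omega|x|)$ combined with $H_0^{(2)}(z) = \overline{H_0^{(1)}(\bar z)}$ on the relevant domain — in any case it yields $\overline{\mathcal{K}_D^\omega[\varphi]} = \mathcal{K}_D^{-\bar\omega}[\overline{\varphi}]$ by conjugating under the integral sign. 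Then, conjugating the equation $u = \tau \omega^2 \mathcal{K}_D^\omega[N(u)]$ and using $\overline{N(u)} = N(\bar u)$ gives $\bar u = \tau \overline{\omega^2}\, \overline{\mathcal{K}_D^\omega[N(u)]} = \tau (-\bar\omega)^2 \mathcal{K}_D^{-\bar\omega}[N(\bar u)]$, which is exactly the statement that $(-\bar\omega, \bar u)$ solves \eqref{eq:nonLippscalar0}.

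The only genuinely non-routine point is the conjugation symmetry of $\mathcal{K}_D^\omega$ in the two-dimensional case, where the Hankel function's behavior under $\omega \mapsto -\bar\omega$ must be handled carefully (for instance by appealing to the series expansion \eqref{eq:expgreengn2d}, in which every coefficient $G_n(x)$ is real up to an explicit factor, so that $\overline{G^\omega(x)} = G^{-\bar\omega}(x)$ follows termwise on the domain of convergence and then by analytic continuation). I expect this to be the main obstacle, though it is already implicit in the analogous linear statement and in \cref{prop:nonvanish}; for $d = 3$ the identity is transparent. Everything else is a one-line computation. I would present the proof in the order: (i) the two pointwise identities for $N$; (ii) the $S^1$-orbit of solutions via $\C$-linearity of $\mathcal{K}_D^\omega$; (iii) the conjugation symmetry of the Green's function and hence of $\mathcal{K}_D^\omega$, and the resulting $(-\bar\omega, \bar u)$ solution.
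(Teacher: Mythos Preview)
Your proposal is correct and is exactly the computation one would expect; the paper in fact omits any proof, treating the lemma as an immediate consequence of the formula $N(z)=z+|z|^2z$ and of \eqref{eq:green2d3d}. Your only nontrivial observation, the identity $\overline{G^\omega(x)}=G^{-\bar\omega}(x)$, is indeed the key point behind the $(-\bar\omega,\bar u)$ claim and is implicit in the paper's use of this symmetry elsewhere (e.g.\ \cref{prop:nonvanish}).
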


In the high-contrast regime $\tau \gg 1$, with the scaling \eqref{eq:scalefreq}, let $\h{\omega}_*$ denote a scaled linear subwavelength resonance with resonant state $\varphi_*$ and the asymptotic expansion $\h{\omega}_* = 1/\sqrt{\lambda_j} + \mathcal{O}(\sqrt{\tau}^{-1})$ by \cref{prop:limit_scalar3d}. Assuming that  $\lambda_j$ is a simple eigenvalue, which is the case for the principal eigenvalue $\lambda_0$ by \cref{coro:simplenewton}, \cref{thm:smallampl,coro:nonsmall} below prove that the Lippmann-Schwinger equation \eqref{eq:nonLippscalar2} for nonlinear resonance admits a unique solution $(\h{\omega}, u)$, up to a phase factor $e^{i \theta}$ of $u$, 
bifurcating from $(\h{\omega}_*, 0)$, with the corresponding small-amplitude nonlinear resonant states being, to leading order, small multiples of $\varphi_*$.  

The proof essentially adapts the bifurcation theory for simple eigenvalues \cite{crandall1971bifurcation}, but it is complicated by the complex-valued function spaces and the $S^1$ equivariance of the solution $u$ to \eqref{eq:nonLippscalar2}, that is, for any solution $(\h{\ww},u)$ to \eqref{eq:nonLippscalar2}, there is an associated trivial solution curve $\{(\h{\omega}, e^{i \theta} u)\}_{\theta \in \R}$, as discussed in \cref{lem:invarphaseu}.  To eliminate this $S^1$ equivariance, we will employ Lyapunov-Schmidt reduction techniques, and the existence of nonlinear dielectric resonances is then guaranteed by the implicit function theorem.
 
\begin{theorem} \label{thm:smallampl}
Let $\h{\ww}_j := \sqrt{\lambda_j}^{-1}$, where $\lad_j$ is a simple eigenvalue of the Newtonian potential $\mathcal{K}_D$ with $L^2$ normalized eigenfunction $\vp_j$, \textit{i.e.}, $\norm{\vp_j}_{L^2(D)} = 1$. Then, for a neighborhood $U := \{(a,\eps) \in \mathbb{R}\times \R_+ \,;\ |a| + \eps  < \delta\}$ with sufficiently small $\d > 0$, a nonlinear scaled dielectric resonance $\h{\ww}(a,\eps): U \to \mathbb{C}$ exists for \eqref{eq:nonLippscalar2}, and 
the associated resonant state $u$ (\textit{i.e.}, the solution to \eqref{eq:nonLippscalar2}) is unique, up to the phase factor $e^{i \theta}$, $\theta \in [0, 2\pi)$, which can be represented as: for some $h: U \to H^2(D)$,
\begin{align} \label{eq:nonresostate}  
u(a,\eps) = a \varphi_j + h(a,\eps)\,, \quad \langle \varphi_j, h(a,\eps) \rangle_D = 0\,.
\end{align}
Here $\h{\ww}(\dd)$ and $h(\dd)$ are $C^1$ functions defined on $U$, 
satisfying: as $(a,\eps) \to 0$ in $\R^2$, 
\begin{align*}  
\hat{\omega}(a,\eps) \to \hat{\omega}_j\,, \q h(a,\eps) \to 0\,.
\end{align*}  
\end{theorem}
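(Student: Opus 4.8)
The plan is to cast \eqref{eq:nonLippscalar2} as the zero set of a $C^1$ map on a real Banach space and run a Lyapunov--Schmidt reduction tailored to the $S^1$ equivariance, in the spirit of bifurcation from a simple eigenvalue \cite{crandall1971bifurcation}. Regard $H^2(D)$ as a real Banach space and set
\[
F(\h\ww, u, \eps) := u - \h\ww^2\, \kk_D^{\eps\h\ww}\!\left[N(u)\right] : \C \times H^2(D) \times \R_+ \to H^2(D).
\]
Since $N : H^2(D) \to L^2(D)$ is a bounded cubic polynomial map in $(u,\wb u)$ by \eqref{eq:priestcub} and \eqref{eq:nonfuncl2} (hence real-analytic), $\kk_D^{\eps\h\ww} : L^2(D) \to H^2(D)$ is bounded, and $\omega \mapsto \kk_D^\omega$ is operator-norm analytic by \cref{lem:opasym} (convergent up to and including $\eps = 0$), the composite $F$ is $C^1$ — indeed smooth in $u$ and analytic in $(\h\ww,\eps)$ — on a neighborhood of $(\h\ww_j, 0, 0)$. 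By \cref{lem:invarphaseu} it is $S^1$-equivariant, $F(\h\ww, e^{i\theta}u, \eps) = e^{i\theta} F(\h\ww, u, \eps)$. Because of this equivariance it suffices to produce solutions with $a := \langle\varphi_j,u\rangle_D \in \R_{\ge 0}$: any nontrivial solution near $(\h\ww_j, 0)$ has $a \neq 0$ (if $a = 0$ then $u \in \varphi_j^\perp$ and the uniqueness established below forces $u = 0$), and a phase rotation makes $a$ real and positive, so every solution is a phase factor times one in this slice.

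Next compute the linearization at the trivial solution. From \eqref{eq:derivfunc}, $N'(0) = \mathrm{Id}$, so $\partial_u F(\h\ww_j, 0, 0)[v] = v - \lambda_j^{-1}\kk_D[v]$. Since $\lambda_j$ is a simple eigenvalue of the compact self-adjoint operator $\kk_D$ (see \eqref{eq:speckdd}, and \cref{coro:simplenewton} for $\lambda_0$), this operator has kernel $\C\varphi_j$ and range $X_0 := \{h \in H^2(D)\,;\ \langle\varphi_j, h\rangle_D = 0\}$; moreover, self-adjointness gives $\kk_D(X_0)\subset X_0$ and $\mathrm{Id} - \lambda_j^{-1}\kk_D$ is a real-linear isomorphism of $X_0$ (no other eigenvalue equals $\lambda_j$). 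Write $u = a\varphi_j + h$ with $h \in X_0$, and let $P$, $Q = \mathrm{Id} - P$ be the induced (real-linear) projections onto $\C\varphi_j$ and $X_0$. The \emph{range equation} $Q F(\h\ww, a\varphi_j + h, \eps) = 0$ has, at $(\h\ww_j, 0, 0, 0)$, $h$-derivative $h \mapsto h - \lambda_j^{-1}\kk_D[h]$ on $X_0$, an isomorphism; hence by the implicit function theorem in Banach spaces \cite{nirenberg1974topics} there is a $C^1$ map $h = h(\h\ww, a, \eps) \in X_0$ solving it near the origin, with $h(\h\ww, 0, \eps) \equiv 0$ (the trivial solution being the unique small one there). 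In particular $h$ vanishes at $a = 0$, so $\tfrac1a N(a\varphi_j + h) = \varphi_j + O(a^2 + a\eps)$ extends to a $C^1$ map across $a = 0$.

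Substituting into the \emph{kernel equation} $\langle\varphi_j, F(\h\ww, a\varphi_j+h, \eps)\rangle_D = 0$ and dividing by $a$, we obtain the scalar bifurcation equation
\[
\Phi(\h\ww, a, \eps) := 1 - \h\ww^2 \big\langle \varphi_j,\ \kk_D^{\eps\h\ww}\!\big[\tfrac1a N(a\varphi_j + h(\h\ww,a,\eps))\big]\big\rangle_D = 0,
\]
a $C^1$ map into $\C$ with $\Phi(\h\ww, 0, 0) = 1 - \h\ww^2\langle\varphi_j,\kk_D[\varphi_j]\rangle_D = 1 - \h\ww^2\lambda_j$. Thus $\Phi(\h\ww_j, 0, 0) = 0$, and since the non-holomorphic corrections are $O(a^2+a\eps)$ at $(a,\eps) = (0,0)$, one has $\partial_{\h\ww}\Phi(\h\ww_j, 0, 0) = -2\h\ww_j\lambda_j = -2\sqrt{\lambda_j} \neq 0$: the transversality condition. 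A second application of the implicit function theorem yields a $C^1$ map $\h\ww = \h\ww(a,\eps)$ near $(0,0)$ with $\h\ww(0,0) = \h\ww_j$; setting $h(a,\eps) := h(\h\ww(a,\eps), a, \eps)$ and $u(a,\eps) := a\varphi_j + h(a,\eps)$ gives the claimed $C^1$ solution on $U$, with $\h\ww(a,\eps) \to \h\ww_j$ and $h(a,\eps) \to 0$ as $(a,\eps)\to 0$. Uniqueness up to the phase $e^{i\theta}$ follows by combining the two implicit function theorem uniqueness statements with the $S^1$ equivariance and the slice reduction from the first paragraph.

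I expect the principal obstacle to be the bookkeeping forced by working over $\R$: $N$ is only real-differentiable, so $\C\varphi_j$ is a \emph{two}-real-dimensional kernel and the classical (holomorphic) simple-eigenvalue bifurcation theorem does not apply verbatim — it is exactly the $S^1$ equivariance that removes the extra dimension, and one must verify that restricting to $a \in \R_{\ge 0}$ is consistent with both reduction steps. A related delicate point is checking that the division by $a$ genuinely produces a $C^1$ (not merely continuous) map, which is where the polynomial structure of $N$ enters rather than a general nonlinearity; a secondary technicality is ensuring the regularity and analyticity of $F$ hold up to the boundary $\eps = 0$ so that the implicit function theorem applies on the half-neighborhood $\{|a| + \eps < \delta,\ \eps \ge 0\}$.
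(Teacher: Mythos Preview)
Your proposal is correct and follows essentially the same route as the paper: a Lyapunov--Schmidt reduction based on the spectral projection onto $\varphi_j$, first solving the range equation for $h(\h\ww,a,\eps)$ via the implicit function theorem (where the paper likewise notes $h(\h\ww,0,\eps)\equiv 0$), then using $S^1$ equivariance to restrict to real $a$, dividing the kernel equation by $a$, and applying the implicit function theorem a second time at the nonzero $\h\ww$-derivative. The only cosmetic differences are that the paper separates out the remainder $\rr^{\eps\h\ww}=\kk_D^{\eps\h\ww}-\kk_D$ in the formulas and normalizes the scalar bifurcation function slightly differently (their $\mc{G}$ versus your $\Phi$), yielding $\partial_{\h\ww}\mc{G}(\h\ww_j,0,0)=2\h\ww_j^{-1}$ in place of your $-2\sqrt{\lambda_j}$; the transversality is the same.
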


\begin{proof}
% For notational simplicity, and without loss of generality, we set the index $j = 0$, and 
% We consider the scaled equation \eqref{eq:nonLippscalar2}. 
By the spectral decomposition of $\mathcal{K}_D$ in \eqref{eq:speckdd}, and recalling that $\lad_j$ is an eigenvalue of multiplicity one, let  $P_j u = \langle \varphi_j, u \rangle_D \varphi_j$ denote the eigenprojection associated with $\lambda_j$, and $P_r u = \sum_{\ell \neq j}^\infty \langle \varphi_\ell, u \rangle_D \varphi_\ell$ represent the projection corresponding to the remaining eigenvalues of $\mathcal{K}_D$. Then we parametrize $u \in H^2(D) \subset L^2(D)$ by
\begin{align} \label{parau}
    u(a,h) = a \vp_j + h\,,\q   a := \l \vp_j, u \r_{D}\,,
\end{align}
where 
\begin{align*}
  h := P_r u = u - a \vp_j \in H_\sim^2(D): = H^2(D) \cap P_r L^2(D)\,.  
\end{align*}
We now define, for $u \in H^2(D)$, 
\begin{align} \label{def:fofr}
    f_j(a,h) := \l \vp_j, |u|^2 u \r_D \in \C\,,\q f_r(a,h): = P_r (|u|^2 u) \in L^2(D)\,,
\end{align}
such that $$|u|^2 u  = f_j(a,h) \vp_j + f_r(a,h)\,.$$  
It follows that $N(u) = u + |u|^2 u$ is also parametrized by $(a,h) \in \C \times H_\sim^2(D)$ 
\begin{align}\label{eq:paranonlinear}
   N(u)(a,h) = (a + f_j(a,h))\vp_j + (h + f_r(a,h))\,.
\end{align}

To solve \eqref{eq:nonLippscalar2}, we apply the projections $P_j$ and $P_r$ and find 
\begin{align}
   & a \vp_j = \h{\ww}^2 \kk_D \left[ a \vp_j + f_j(a, h) \vp_j \right] + \h{\ww}^2 P_j \rr^{\eps \h{\ww}}[N(u)]\,,  \label{eq:proj0} \\
   & h = \h{\ww}^2 \kk_D \left[ h + f_r(a, h) \right] + \h{\ww}^2 P_r \rr^{\eps \h{\ww}}[N(u)]\,, \label{eq:proj1}
\end{align} 
where $$\mc{R}^{\eps \h{\ww}} := \kk_D^{\eps \h{\ww}} - \kk_D\,,$$ and we have used 
\begin{align*}
  P_j \kk_D = \kk_D  P_j\,, \q P_r \kk_D = \kk_D P_r\,.
\end{align*}
It is clear that \eqref{eq:proj0} is equivalent to 
\begin{align}\label{eq:proj00}
    a = \h{\ww}_j^{-2} \h{\ww}^2  \left( a + f_j(a, h) \right) + \h{\ww}^2 \left\l \vp_j, \rr^{\eps \h{\ww}}[N(u)] \right\r_D\,,
\end{align}
by taking the inner product with $\vp_j$ and using $\h{\ww}_j^{2}  \kk_D[\vp_j] = \vp_j$. For later use, we also compute the Fr\'echet derivatives of $|u(a,h)|^2 u(a,h)$ in $a$ and $h$, by using \eqref{eq:derivfunc}, 
\begin{equation}\label{eq:deva}
   \p_a (|u|^2 u)[b]  = 2 |u|^2 b \vp_j + u^2 \wb{b \vp_j} \in \mc{L}_{\R}(\C, L^2(D))\,,
\end{equation}
and 
\begin{equation}\label{eq:devh}
\p_h (|u|^2 u)[v]  = 2 |u|^2 v + u^2 \wb{v} \in \mc{L}_{\R}(H_\sim^2(D), L^2(D))\,.
\end{equation}

We shall solve the system \eqref{eq:proj1} and \eqref{eq:proj00} by first expressing $h \in H_\sim^2(D)$ locally in terms of  $(\h{\omega}, a, \eps) \in \mathbb{C}^2 \times \R$ near $(\h{\ww}_j, 0, 0) $ via \eqref{eq:proj1}. Substituting $ h(\h{\omega}, a, \eps)$ into \eqref{eq:proj00} then yields an equation involving only $(\h{\omega}, a, \eps)$, which allows us to further solve $\h{\ww}(a,\eps)$. Note that for $\h{\ww}$ near $\h{\ww}_j$,
% and small enough $\eps = \sqrt{\tau}^{-1}$,
the operator $(1 - \h{\ww}^2\kk_D)^{-1}$ is well-defined on $P_r L^2(D)$, which is an invariant subspace of $\kk_D$.
% , since there holds $|\h{\ww}_* - \h{\ww}_j| = \Or(\eps)$ by \cref{approx:lineareso}.
Thus, the solution to \eqref{eq:proj1} is given by the zero set of the following nonlinear map:
\begin{align} \label{eq:nonlinff}
    \mc{F}(\h{\ww}, a, h, \eps) := h - \left(\h{\ww}^{-2} - \kk_D \right)^{-1}  \kk_D [f_r(a,h)] - \left(\h{\ww}^{-2} - \kk_D\right)^{-1} P_r \rr^{\eps \h{\ww}}[N(u)(a,h)]\,,
\end{align}
which is real analytic from $U_{\h{\ww}_j} \times \C \times H_\sim^2(D) \times \R$ to $H_\sim^2(D)$. Here, $U_{\h{\ww}_j} \subset \C$ is a neighborhood of $\h{\ww}_j$. It is easy to compute 
\begin{align*}
    \mc{F}\left(\h{\ww}_j, 0, 0, 0\right) = 0\,, \q \p_h \mc{F}\left(\h{\ww}_j, 0, 0, 0\right) = I \,,
    % - \left(\h{\ww}_*^{-2} - \h{\ww}_*^2\kk_D \right)^{-1} P_r \rr^{\eps \h{\ww}_*}\,,
\end{align*}
by using 
\begin{align*}
  \p_h f_r(0,0) = 0\,,\q \p_h N(u)(0,0) = I\,,\q \rr^{\eps \ww}|_{\eps = 0} = 0\,,
\end{align*}
from \eqref{def:fofr} and \eqref{eq:devh}. Since the linear map $\p_h \mc{F}\left(\h{\ww}_j, 0, 0, 0\right)$ is invertible, by the implicit function theorem, there exists a unique $C^1$ function: for a small neighborhood $U_{\left(\h{\ww}_j, 0, 0\right)}$ of $(\h{\ww}_j, 0, 0)$, 
\begin{align} \label{eq:functionh}
  h(\h{\ww}, a, \eps):\   U_{\left(\h{\ww}_j, 0, 0\right)}  \to H_{\sim}^2(D)\,,
\end{align}
such that $\mc{F}\left(\h{\ww},a,h\left(\h{\ww},a, \eps \right), \eps\right) = 0$. 
% where $U_{\left(\h{\ww}_j, 0, 0\right)}$ is .
That is, \eqref{eq:proj1} admits a unique solution $h \in H_{\sim}^2(D)$ for any $(\h{\ww}, a, \eps) \in \C^2 \times \R$ close to $(\h{\ww}_j, 0, 0)$. It is also helpful to note that $h = 0$ satisfies \eqref{eq:proj1} for any given $\h{\ww},\eps$, and $a = 0$, and thus, by uniqueness, 
\begin{align} \label{eq:hctnzero}
    h(\h{\ww},0,\eps) = 0\,.
\end{align}

As observed in \cref{lem:invarphaseu}, for any $(a,h)$ solving equations \eqref{eq:proj1} and \eqref{eq:proj00}, $\left(e^{i \theta} a, e^{i \theta} h \right)$ with $\theta \in \R$ is also a solution. By the uniqueness of $h$ for given $(\h{\ww},a, \eps) \in U_{(\h{\ww}_j, 0, 0)}$, there necessarily holds
\begin{align*}
  h(\h{\ww},a, \eps) = \frac{a}{|a|} h(\h{\ww}, |a|, \eps)\,.
\end{align*}
% Recall $\vp_j$ is a real function. We can similarly apply the implicit function theorem to \cref{eq:projr} under the restriction that $\lad, a$ and $h$ are real. By uniqueness, $h(\lad,|a|)$ is real if $\lad$ is real.
Therefore, we only need to consider $h(\h{\ww},a, \eps)$ with real $a \in \R$ and find, by plugging it into 
\eqref{eq:proj00} and dividing the equation by $a$,
\begin{align} \label{eq:proj000}
    1 = \h{\ww}_j^{-2} \h{\ww}^2  \left(1 + a^{-1} f_j(a, h(\h{\ww}, a, \eps)) \right) + a^{-1} \h{\ww}^2 \left\l \vp_j, \rr^{\eps \h{\ww}}[N(u)] \right\r_D\,. 
\end{align}
Here, both $a^{-1} f_j(a, h(\h{\ww}, a, \eps))$ and $a^{-1} N(u)$ are well-defined at $a = 0$, since 
\begin{align*}
 h(\h{\ww},0, \eps) = 0\,,\q f_j(0, h(\h{\ww}, 0, \eps)) = 0\,,\q N(u)(0,h(\h{\ww},0, \eps)) = 0\,,
\end{align*}
and $f_j(a, h(\h{\ww}, a, \eps))$ and $N(u)(a,h(\h{\ww}, a, \eps))$ are real differentiable in $a$. 
Specifically, we have
\begin{align} \label{eq:dervf0a}
        \lim_{a \to 0} a^{-1} f_j(a, h(\h{\ww}, a, \eps)) = \p_a f_j(0,0) + \p_h f_j(0,0) \p_a h(\h{\ww},0,\eps) = 0\,,
\end{align}
for $\h{\ww}$ near $\h{\ww}_j$, due to $\p_a f_j(0,0) = \p_h f_j(0,0) = 0$.

Now, we solve \eqref{eq:proj000}. We define the nonlinear map:
\begin{align} \label{eq:nonligg}
     \mc{G}(\h{\ww}, a, \eps) := \h{\ww}_j^{-2} \h{\ww}^2  \left(1 + a^{-1} f_j(a, h(\h{\ww}, a, \eps)) \right) + a^{-1} \h{\ww}^2 \left\l \vp_j, \rr^{\eps \h{\ww}}[N(u)] \right\r_D - 1\,,
\end{align}
which is $C^1$ from $\{\h{\ww} \in \C\,;\ |\h{\ww} - \h{\ww}_j| \le \d\} \times \{(a, \eps) \in \R^2\,;\ |a| + |\eps| \le \d\}$ to $\C$ for a small $\d > 0$. It is direct to compute 
\begin{align} \label{auxeq1}
    \mc{G}(\h{\ww}_j, 0, 0) = \h{\ww}_j^{-2} \h{\ww}_j^2 
    % + \h{\ww}_*^2 \left\l \vp_j, \rr^{\eps \h{\ww}_*}[\vp_*] \right\r_D 
    - 1 = 0\,,
\end{align}
where we have used \eqref{eq:dervf0a}.
% , \eqref{eq:dervna}, and \eqref{eq:dervna2}. Then, recalling \eqref{eq:linear1} and taking the inner product with $\vp_j$, we find, by $\vp_* - \vp_j \perp \vp_j$ and \cref{auxeq1}, 
% \begin{align*}
%      \mc{G}(\h{\ww}_*, 0) = 0\,.
% \end{align*}
We then compute
\begin{align*}
  \p_{\h{\ww}}\mc{G}(\h{\ww}_j, 0, 0) = 2 \h{\ww}_j^{-1}\,.
  % + \Or(\eps)\,,
\end{align*}
% thanks to, by \cref{lem:opasym}, 
% \begin{align*}
%     \norm{\rr^{\eps \h{\ww}}}_{L^2 \to L^2} = \norm{\p_{\h{\ww}}\rr^{\eps \h{\ww}}}_{L^2 \to L^2} = \Or(\eps)\,,
% \end{align*}
% uniformly in $\h{\ww} = \Or(1)$. 
Again, since $\p_{\h{\ww}}\mc{G}(\h{\ww}_j,0, 0) \neq 0$, the implicit function theorem
implies the existence of a $C^1$ function $\h{\ww}(a,\eps)$ for small enough $(a,\eps) \in \R^2$.
% It follows that we can find locally a $C^1$ function $\lad(a)$ defined for $ - \d_2 < a < \d_2 $. Similarly, we can restrict the discussion to real $\lad$ and by uniqueness, the function $\lad(a)$ has to be real. 
Substituting $\h{\ww}(a, \eps)$ into $h(\h{\ww},a,\eps)$ readily gives $h(a,\eps)$, which satisfies $h(a,\eps) \to 0$ in $H^2(D)$ as $(a, \eps) \to 0$ in $\R^2$. The proof is complete. 
\end{proof}

From \cref{thm:smallampl}, it easily follows that nonlinear dielectric resonances exist near the linear ones for a given high contrast $\tau$, accompanied by small-amplitude resonant states (see \cref{coro:nonsmall} below). Specifically, by \cref{prop:limit_scalar3d}, let $\omega_*$ be a linear subwavelength dielectric resonance with the resonant state $\varphi_*$, corresponding to $(\lad_j,\vp_j)$, which satisfies
\begin{align} \label{approx:lineareso}  
   \omega_* = \sqrt{\tau}^{-1} \hat{\omega}_j + \mathcal{O}(\tau^{-1})\,, \quad \hat{\omega}_j = \sqrt{\lambda_j}^{-1}\,,  
\end{align}  
and  
\begin{align} \label{eq:linear2}  
  \varphi_* = \varphi_j + \mathcal{O}(\sqrt{\tau}^{-1})\,, \quad \| \varphi_j \|_{L^2(D)}^2 = 1\,, \quad \varphi_* - \varphi_j \perp \varphi_j\,.  
\end{align}  
Here, $\lad_j$ is the simple eigenvalue as in \cref{thm:smallampl}. 

\begin{corollary} \label{coro:nonsmall}
Under the assumptions of \cref{thm:smallampl}, let $\hat{\omega}_*(\eps) = \eps^{-1} \omega_*(\eps)$ be the scaled linear dielectric resonance, and $\vp_*(\eps)$ solve  
\begin{equation} \label{eq:linear1}  
    \varphi_* = \hat{\omega}_*^2 \mathcal{K}_D^{\eps \hat{\omega}_*}[\varphi_*]\,,
\end{equation}  
with the estimates \eqref{approx:lineareso} and \eqref{eq:linear2}. 
It holds that, for $C^1$ functions $\h{\ww}(a,\eps)$ and $h(a,\eps)$ defined in \cref{thm:smallampl}, as $a \to 0$ with fixed small $\eps > 0$,
\begin{align} \label{auxeq:hw}
    h(a,\eps) \to h(0,\eps) = 0\,,\q \h{\ww}(a,\eps) \to \h{\ww}(0,\eps) = \h{\ww}_*(\eps)\,,
\end{align}
and for nonlinear resonant state $u(a,\eps)$ in \eqref{eq:nonresostate},
\begin{align} \label{auxeq:stau}
    a^{-1} u(a,\eps) \to \vp_*(\eps)\,.
\end{align}
% as $a \to 0$, for a fixed high contrast $\tau = \eps^{-2}$. 
\end{corollary}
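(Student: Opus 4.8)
The plan is to extract the three limits directly from the $C^1$ structure of the bifurcating branch constructed in \cref{thm:smallampl}, exploiting that this branch departs from the trivial solution $u\equiv 0$ at $a=0$. Two of the limits are bookkeeping: since $h(a,\eps)=h\big(\h{\ww}(a,\eps),a,\eps\big)$ is $C^1$ in $(a,\eps)$ and, by \eqref{eq:hctnzero}, $h(\h{\ww},0,\eps)=0$ for every $(\h{\ww},\eps)$, evaluating at $a=0$ gives $h(0,\eps)=0$, whence $h(a,\eps)\to h(0,\eps)=0$; likewise $\h{\ww}(a,\eps)\to\h{\ww}(0,\eps)$ by continuity. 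The real content is to show $\h{\ww}(0,\eps)=\h{\ww}_*(\eps)$ and to identify the limit of $a^{-1}u(a,\eps)$.

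For this I would differentiate the Lippmann--Schwinger equation \eqref{eq:nonLippscalar2} along the branch. For fixed small $\eps$, the solution functions obey the identity $u(a,\eps)=\h{\ww}(a,\eps)^2\,\kk_D^{\eps\h{\ww}(a,\eps)}\big[N(u(a,\eps))\big]$ in $a$, with $u(a,\eps)=a\vp_j+h(a,\eps)$, so that $u(0,\eps)=0$ and $N(u(0,\eps))=N(0)=0$. Differentiating in $a$ at $a=0$, every term in which $\p_a$ falls on the scalar prefactor $\h{\ww}^2$ or on the $\eps\h{\ww}$-dependence of $\kk_D^{\eps\h{\ww}}$ carries the factor $N(u(0,\eps))=0$ and drops out, while $N'(0)=I$ by \eqref{eq:derivfunc}; hence
\begin{align*}
  \p_a u(0,\eps)=\h{\ww}(0,\eps)^2\,\kk_D^{\eps\h{\ww}(0,\eps)}\big[\p_a u(0,\eps)\big],
\end{align*}
so $\big(\h{\ww}(0,\eps),\p_a u(0,\eps)\big)$ solves the linear dielectric resonance equation \eqref{eq:linear1}. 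From \eqref{eq:nonresostate}, $\p_a u(0,\eps)=\vp_j+\p_a h(0,\eps)$ with $\l\vp_j,\p_a h(0,\eps)\r_D=0$ (differentiate the orthogonality relation $\l\vp_j,h(a,\eps)\r_D\equiv0$), so $\l\vp_j,\p_a u(0,\eps)\r_D=1$; in particular $\p_a u(0,\eps)\not\equiv 0$ is a genuine nontrivial linear resonant state.

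It remains to match this with $(\h{\ww}_*(\eps),\vp_*(\eps))$. Since $\lad_j$ is simple, \cref{prop:limit_scalar3d} together with the Gohberg--Sigal argument underlying \cref{coro:simplenewton} gives, after possibly shrinking $\delta$, a unique scaled linear resonance $\h{\ww}_*(\eps)$ in a fixed small disc about $\h{\ww}_j$ with a one-dimensional resonant space; the normalization $\l\vp_j,\cdot\r_D=1$ then singles out precisely $\vp_*(\eps)$ obeying \eqref{approx:lineareso}--\eqref{eq:linear2}. Comparing with the previous step forces $\h{\ww}(0,\eps)=\h{\ww}_*(\eps)$ and $\p_a u(0,\eps)=\vp_*(\eps)$, and since $u(\cdot,\eps)$ is $C^1$ with $u(0,\eps)=0$ we conclude $a^{-1}u(a,\eps)=a^{-1}\big(u(a,\eps)-u(0,\eps)\big)\to\p_a u(0,\eps)=\vp_*(\eps)$, which is the last assertion. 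The only genuine point to watch is that the $C^1$ regularity in \cref{thm:smallampl} suffices to differentiate the operator-valued equation termwise --- which it does, because $\h{\ww}\mapsto\kk_D^{\eps\h{\ww}}$ is operator-norm analytic by \cref{lem:opasym} and $N:H^2(D)\to L^2(D)$ is real-$C^1$ with derivative \eqref{eq:derivfunc} (bounded via \eqref{eq:priestcub}) --- and that $\h{\ww}(0,\eps)$ lies inside the uniqueness neighborhood of $\h{\ww}_j$ for the $\eps$ under consideration, which is arranged by continuity of $\h{\ww}(\cdot)$ after shrinking $\delta$. There is no deeper obstacle.
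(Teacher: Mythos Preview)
Your proof is correct and takes a somewhat more direct route than the paper's. The paper stays inside the Lyapunov--Schmidt reduction: it verifies $\mathcal{G}(\hat{\omega}_*, 0, \eps) = 0$ by explicitly computing $\partial_a h(\hat{\omega}, 0, \eps)$ from the implicit derivative of $\mathcal{F}$, showing this equals $\varphi_* - \varphi_j$ at $\hat{\omega} = \hat{\omega}_*$, and then checking the reduced scalar equation against the inner product of \eqref{eq:linear1} with $\varphi_j$; the implicit-function-theorem uniqueness for $\mathcal{G}$ then forces $\hat{\omega}(0,\eps) = \hat{\omega}_*$. You instead differentiate the full Lippmann--Schwinger equation along the branch at $a=0$, directly obtaining that $(\hat{\omega}(0,\eps), \partial_a u(0,\eps))$ is a nontrivial linear resonance pair, and invoke the simplicity of $\lambda_j$ (hence uniqueness of the linear resonance near $\hat{\omega}_j$) to identify it with $(\hat{\omega}_*(\eps), \varphi_*(\eps))$; the shared normalization $\langle\varphi_j,\cdot\rangle_D = 1$ pins down the eigenfunction. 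Your argument is more conceptual and bypasses unwinding the reduction; the paper's is more computational but makes the formula $\partial_a h(\hat{\omega}_*, 0, \eps) = \varphi_* - \varphi_j$ explicit, a calculation that is reused verbatim in the dimer analysis (\cref{sol:purestate}). Both rely on the same ingredients --- $C^1$ dependence of the branch, $N'(0) = I$, and the uniqueness of the linear resonance near $\hat{\omega}_j$ --- so the difference is organizational rather than substantive.
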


\begin{proof}
We adopt the notation used in the proof of \cref{thm:smallampl}. 
% Similarly, by abuse of notation, we identify functions $\ww(a,\tau)$ and $h(a,\tau)$ with $\eps \h{\ww}(a,\eps)$ and $h(a,\eps) $ via $\eps = \sqrt{\tau}^{-1}$ and $\h{\ww} = \sqrt{\tau} \ww$, where functions $\h{\ww}(a,\eps)$ and $h(a,\eps)$ are defined in the proof of \cref{thm:smallampl}. 
For proving \eqref{auxeq:hw}, by continuity, it suffices to show $h(0,\eps) = 0$ and $\h{\ww}(0,\eps) = \h{\ww}_*$, where $h(0,\eps) = 0$ follows directly from \eqref{eq:hctnzero}.
Note that $\h{\ww}(0,\eps) = \h{\ww}_*$ is equivalent to 
\begin{align} \label{eq:zerogm}
    \mc{G}(\h{\ww}_*,0,\eps) = 0\,,
\end{align}
where the mapping $\mc{G}(\dd)$ is defined in \eqref{eq:nonligg}. To prove \eqref{eq:zerogm}, recalling $h(\h{\ww},a,\eps)$ introduced in \eqref{eq:functionh}, we compute 
\begin{equation} \label{eq:dervna}
     \lim_{a \to 0} a^{-1} N(u)(a,h(\h{\ww},a, \eps))  = \vp_j + \p_a h(\h{\ww},0, \eps)\,,
\end{equation}
and 
\begin{align*}
    \p_a h(\h{\ww},0, \eps) & = - \p_h \mc{F}\left(\h{\ww}, 0, 0, \eps\right)^{-1} \p_a \mc{F}\left(\h{\ww}, 0, 0, \eps\right) \\
      & = \left(1 - \h{\ww}^2 \kk_D - \h{\ww}^2 P_r \rr^{\eps \h{\ww}}\right)^{-1} \h{\ww}^2 P_r \rr^{\eps \h{\ww}}[\vp_j]\,.    
\end{align*}
At $\h{\ww} = \h{\ww}_*$, we further have 
\begin{align} \label{eq:dervna2}
    \p_a h(\h{\ww}_*, 0, \eps) =  \vp_* - \vp_j \,,
\end{align}
by observing from \eqref{eq:linear1} that 
\begin{equation*}
     P_r \left(1 -\h{\ww}_*^2 \kk_D^{\eps \h{\ww}_*}\right)[\vp_* - \vp_j] = - P_r (1 -\h{\ww}_*^2 \kk_D^{\eps \h{\ww}_*})[\vp_j] = \h{\ww}_*^2  P_r  \rr^{\eps \h{\ww}_*} [\vp_j]\,,
\end{equation*}
and from $\vp_* - \vp_j \perp \vp_j$ in \eqref{eq:linear2} that
\begin{align*}
   P_r \left(1 -\h{\ww}_*^2 \kk_D^{\eps \h{\ww}_*}\right)[\vp_* - \vp_j] = \left(1 - \h{\ww}_*^2 \kk_D - \h{\ww}_*^2 P_r \rr^{\eps \h{\ww}_*}\right)[\vp_* - \vp_j]\,,
\end{align*}
where the operator $1 - \h{\ww}_*^2 \kk_D - \h{\ww}_*^2 P_r \rr^{\eps \h{\ww}_*}$ is invertible for small $\eps$.
Substituting \eqref{eq:dervna2} into \eqref{eq:dervna}, we obtain  
\[
\lim_{a \to 0} a^{-1} N(u)(a, h(\hat{\omega}_*, a, \eps)) = \varphi_* \,.  
\]  
Then, using \eqref{eq:nonligg} with \eqref{eq:dervf0a}, we have  
\begin{align*}  
    \mathcal{G}(\hat{\omega}_*, 0, \eps) = \hat{\omega}_j^{-2} \hat{\omega}_*^2 + \hat{\omega}_*^2 \left\langle \varphi_j, \mathcal{R}^{\eps \hat{\omega}_*}[\varphi_*] \right\rangle_D - 1\,.
\end{align*}  
Taking the inner product of \eqref{eq:linear1} with $\varphi_j$ and $\vp_* - \vp_j \perp \vp_j$, it follows that 
$$
\mc{G}(\h{\ww}_*,0,\eps) = 0\,,
$$
from which we conclude that $\h{\ww}(a,\eps) \to \h{\ww}(0,\eps) = \h{\ww}_*$ as $a \to 0$. 

Finally, we consider the nonlinear Lippmann-Schwinger equation:
\begin{align*}
        u(a,\eps) =  \h{\omega}(a,\eps)^2 \mathcal{K}_D^{\eps \h{\omega}(a,\eps)} \left[N(u)(a,\eps)\right].  
\end{align*}
Dividing it by small $a \in \R$ and taking $a \to 0$, we find 
\begin{align*}
    \lim_{a \to 0} a^{-1} u(a,\eps) & =  \h{\ww}_*^2 \mathcal{K}_D^{\eps \h{\ww}_*} \left[\lim_{a \to 0} a^{-1} N(u)(a,\eps)\right] \\
    & = \h{\ww}_*^2 \mathcal{K}_D^{\eps \h{\ww}_*} \left[\lim_{a \to 0} a^{-1} u(a,\eps)\right],
\end{align*}
which gives \eqref{auxeq:stau} by $|u(a,\eps)|^2 u(a,\eps) = \Or(|a|^3)$ due to $h(a,\eps) = \Or(a)$ from \eqref{auxeq:hw}, and by
uniqueness of the normalized solution to \eqref{eq:linear1}. 
\end{proof}

\cref{thm:smallampl,coro:nonsmall} have established the leading order behaviors of the scaled nonlinear resonance $\hat{\omega}(a, \eps)$ and the corresponding resonant state $u(a, \eps)$ near $(a, \eps) = (0, 0)$. The higher order asymptotics can be readily derived using the Taylor expansions of $\hat{\omega}(a, \eps)$ and $u(a, \eps)$, noting that $ \hat{\omega}$ and $u$ are real analytic in $(a, \eps)$ by the implicit function theorem, since the nonlinear mappings $ \mathcal{F}$ and $\mathcal{G}$ defined in \eqref{eq:nonlinff} and \eqref{eq:nonligg} are real analytic. 

\begin{corollary}  \label{coro:nonsmall2}
  Under the assumptions of \cref{thm:smallampl,coro:nonsmall}, the following asymptotics hold for the scaled nonlinear subwavelength dielectric resonances $\hat{\omega}(a, \eps)$ and the associated small-amplitude resonant states $u(a, \eps)$: as $(a,\eps) \to 0$ in $\R^2$,  
\begin{align*}  
    \hat{\omega}(a, \eps) = \hat{\omega}_j - \frac{i \hat{\omega}_j^4}{8 \pi} \left(\int_D \varphi_j \, dx \right)^2 \eps + \mathcal{O}(a^2 + \eps^2)\,,  
\end{align*}  
and  
\begin{align*}  
    u(a, \eps) = a \varphi_j + \mathcal{O}(a^2 + \eps^2)\,.  
\end{align*}
\end{corollary}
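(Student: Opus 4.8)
Here is how I would organize the argument, building only on \cref{thm:smallampl} and \cref{coro:nonsmall} and the analyticity remark preceding this corollary.

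The plan is to Taylor-expand the two real-analytic functions $\hat{\omega}(a,\eps)$ and $h(a,\eps)$ furnished by \cref{thm:smallampl} around $(a,\eps)=(0,0)$ (they are analytic because the defining maps $\mathcal{F}$ and $\mathcal{G}$ in \eqref{eq:nonlinff}--\eqref{eq:nonligg} are, so the analytic implicit function theorem applies). Since $\hat{\omega}(0,0)=\hat{\omega}_j$, $u(a,\eps)=a\varphi_j+h(a,\eps)$, and $h(0,\eps)=0$ by \eqref{eq:hctnzero}, the statement reduces to three points: controlling the linear-in-$a$ terms of $\hat{\omega}$ and of $a^{-1}u$, computing $\partial_\eps\hat{\omega}(0,0)$, and showing the $h$-remainder is $\mathcal{O}(a^2+\eps^2)$.

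First I would extract the parity forced by the $S^1$-equivariance of \cref{lem:invarphaseu} at $\theta=\pi$. If $(a,h)$ solves \eqref{eq:proj1}--\eqref{eq:proj00}, then so does $(-a,-h)$; the uniqueness in \cref{thm:smallampl} then gives $h(\hat{\omega},-a,\eps)=-h(\hat{\omega},a,\eps)$, and since $N(\cdot)$ and $|\cdot|^2(\cdot)$ are odd, the scalar map satisfies $\mathcal{G}(\hat{\omega},-a,\eps)=\mathcal{G}(\hat{\omega},a,\eps)$. Because $\hat{\omega}(a,\eps)$ is the unique root of $\mathcal{G}(\cdot,a,\eps)$ near $\hat{\omega}_j$, it is even in $a$; hence its Taylor expansion carries no $\mathcal{O}(a)$ term and $\hat{\omega}(a,\eps)=\hat{\omega}_j+\partial_\eps\hat{\omega}(0,0)\,\eps+\mathcal{O}(a^2+\eps^2)$. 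Correspondingly $u(a,\eps)=a\varphi_j+h(a,\eps)$ is odd in $a$, so $u(a,\eps)=a\,\partial_a u(0,\eps)+\mathcal{O}(a^3)$; by \eqref{auxeq:stau}, $\partial_a u(0,\eps)=\varphi_*(\eps)=\varphi_j+\mathcal{O}(\eps)$ via \eqref{eq:linear2}, whence $u(a,\eps)=a\varphi_j+\mathcal{O}(a\eps+a^3)=a\varphi_j+\mathcal{O}(a^2+\eps^2)$ near the origin, and in particular $h(a,\eps)=\mathcal{O}(a^2+\eps^2)$.

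It remains to evaluate $\partial_\eps\hat{\omega}(0,0)$. Using $\hat{\omega}(0,\eps)=\hat{\omega}_*(\eps)$ from \eqref{auxeq:hw}, this equals $\tfrac{d}{d\eps}\hat{\omega}_*(\eps)\big|_{\eps=0}$, which I would compute by first-order perturbation of the linear resonance equation \eqref{eq:linear1}. Writing $\hat{\omega}_*(\eps)=\hat{\omega}_j+\eps\,\hat{\omega}_{*,1}+\mathcal{O}(\eps^2)$ and $\varphi_*(\eps)=\varphi_j+\eps\,\varphi_{*,1}+\mathcal{O}(\eps^2)$ with $\varphi_{*,1}\perp\varphi_j$ (as in \eqref{eq:linear2}), and inserting $\mathcal{K}_D^{\eps\hat{\omega}}=\mathcal{K}_D+\eps\hat{\omega}\,\mathcal{K}_{D,1}+\mathcal{O}(\eps^2)$ from \cref{lem:opasym} with $\mathcal{K}_{D,1}[\varphi]=\tfrac{i}{4\pi}\int_D\varphi\,dx$, the $\mathcal{O}(1)$ balance is $\hat{\omega}_j^2\mathcal{K}_D[\varphi_j]=\varphi_j$ and the $\mathcal{O}(\eps)$ balance reads
\[
  \varphi_{*,1}=2\hat{\omega}_j\hat{\omega}_{*,1}\,\mathcal{K}_D[\varphi_j]+\hat{\omega}_j^2\,\mathcal{K}_D[\varphi_{*,1}]+\hat{\omega}_j^3\,\mathcal{K}_{D,1}[\varphi_j].
\]
Pairing with $\varphi_j$ and using the self-adjointness of $\mathcal{K}_D$, $\mathcal{K}_D[\varphi_j]=\lambda_j\varphi_j$, $\varphi_{*,1}\perp\varphi_j$, $\hat{\omega}_j^2\lambda_j=1$, and the choice $\varphi_j$ real, only two terms survive and one gets $\hat{\omega}_{*,1}=-\tfrac{i\hat{\omega}_j^4}{8\pi}\big(\int_D\varphi_j\,dx\big)^2$. (Equivalently, differentiate $\mathcal{G}(\hat{\omega}(a,\eps),a,\eps)=0$ implicitly at $(0,0)$, using $\partial_{\hat{\omega}}\mathcal{G}(\hat{\omega}_j,0,0)=2\hat{\omega}_j^{-1}$ from \cref{thm:smallampl} and $\partial_\eps\mathcal{G}(\hat{\omega}_j,0,0)=\hat{\omega}_j^3\langle\varphi_j,\mathcal{K}_{D,1}[\varphi_j]\rangle$, which follows from \eqref{eq:nonligg} via $\partial_\eps\mathcal{R}^{\eps\hat{\omega}}|_{\eps=0}=\hat{\omega}\,\mathcal{K}_{D,1}$ and $\lim_{a\to0}a^{-1}N(u)(a,h(\hat{\omega}_j,a,0))=\varphi_j$.) Combining this with the parity discussion yields the two displayed expansions. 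The only genuine point to watch is the remainder bookkeeping: it is the evenness/oddness in $a$ coming from the $S^1$-symmetry that kills the otherwise-present $\mathcal{O}(a)$ contributions and lets everything besides the explicit $\eps$-coefficient be absorbed into $\mathcal{O}(a^2+\eps^2)$; the perturbation computation itself is routine.
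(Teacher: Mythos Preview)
Your argument is correct and reaches the same conclusion as the paper, but the route is slightly different and worth noting. The paper obtains $\partial_a\hat{\omega}(0,0)=0$ by direct differentiation of the reduced scalar equation \eqref{eq:proj000} at $(0,0)$, and then simply quotes the known linear-resonance expansion $\hat{\omega}_*(\eps)=\hat{\omega}_j-\frac{i\hat{\omega}_j^4}{8\pi}(\int_D\varphi_j)^2\eps+\mathcal{O}(\eps^2)$ from \cites{ammari2019subwavelength,ammari2023mathematical} to identify $\partial_\eps\hat{\omega}(0,0)$. You instead exploit the $S^1$-equivariance at $\theta=\pi$ to conclude that $\hat{\omega}(\cdot,\eps)$ is even and $u(\cdot,\eps)$ is odd in $a$, which kills the linear-in-$a$ terms without any computation, and you rederive the $\eps$-coefficient by a clean first-order perturbation of \eqref{eq:linear1}. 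Your parity argument is conceptually nicer (it explains \emph{why} the $a$-derivative vanishes rather than just verifying it), and your perturbation calculation makes the corollary self-contained; the paper's approach is more economical since both facts are already available from earlier work. The remainder bookkeeping via $|a\eps|\le\tfrac12(a^2+\eps^2)$ is the same in spirit in both.
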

    
\begin{proof}
    We follow the analysis in \cref{coro:nonsmall}, which proved the following:
    \begin{align*}
        \h{\ww}(a,\eps) \underset{\eqref{auxeq:hw}}{=}  & \h{\ww}_*(\eps) + \p_a \h{\ww}(0,0) a + \Or(a^2 + |a|\eps) \\  =\ \, & \h{\ww}_j + \p_\eps \h{\ww}_*(0) \eps + \p_a \h{\ww}(0,0) a + \Or(a^2 + \eps^2)\,,
    \end{align*}
    and 
    \begin{align*}
        u(a,\eps) \underset{\eqref{auxeq:stau}}{=} a \vp_*(\eps) + \Or(a^2) \underset{\eqref{eq:limit_resta3d}}{=} a \vp_0 + \Or(a^2 + \eps^2)\,.
    \end{align*}
  To complete the proof, we now compute $ \partial_\eps \hat{\omega}_*(0)$ and $\partial_a \hat{\omega}(0,0)$. Here, $\partial_\eps \hat{\omega}_*(0)$ follows directly from the asymptotics of the linear subwavelength dielectric resonances derived in \cites{ammari2019subwavelength,ammari2023mathematical}:  
\begin{align*}  
    \hat{\omega}_*(\eps) = \hat{\omega}_j - \frac{i \hat{\omega}_j^4}{8 \pi} \left(\int_D \varphi_j \, dx \right)^2 \eps + \mathcal{O}(\eps^2)\,,  
\end{align*}  
which implies  
\begin{align*}  
    \partial_\eps \hat{\omega}_*(0) = -\frac{i \hat{\omega}_j^4}{8 \pi} \left(\int_D \varphi_j \, dx \right)^2.  
\end{align*}  
To compute $\partial_a \hat{\omega}(0,0)$, we take the derivative of \eqref{eq:proj000} with respect to $a$ and evaluate it at the point $(a, \eps) = (0,0)$:  
\begin{align*}  
    0 = 2 \hat{\omega}_j^{-2} \hat{\omega}_j \partial_a \hat{\omega}(0,0)\,.  
\end{align*}  
This implies $\partial_a \hat{\omega}(0,0) = 0$, and completes the proof. 
\end{proof}

For completeness, we now translate the results of \cref{thm:smallampl,coro:nonsmall,coro:nonsmall2} for the scaled equation \eqref{eq:nonLippscalar2} back to the original one \eqref{eq:nonLippscalar0}. 

\begin{theorem} \label{thm:3dexist}
Let $\lad_j$ be a simple eigenvalue of the Newtonian potential $\mathcal{K}_D$ with normalized eigenfunction $\vp_j$, namely $\norm{\vp_j}_{L^2(D)} = 1$. Moreover, let $(\ww_*(\tau), \vp_*(\tau))$ be the associated linear dielectric resonance and resonant state given in \eqref{approx:lineareso} and \eqref{eq:linear2}. 
Then, there exists an open set $U := \{(a,\tau) \in  \R_+^2 \,;\ a + \tau^{-1} < \delta\}$ with sufficiently small $\d > 0$ such that
\begin{itemize}
\item  A nonlinear dielectric resonance $\ww(a, \tau): U \to \C$ exists for \eqref{eq:nonLippscalar0} with the corresponding resonant state parameterized as 
\begin{align*}
    u(a, \tau) = a \varphi_j + h(a, \tau)\,, \quad \langle \varphi_j, h(a, \tau) \rangle_D = 0\,,
\end{align*}
which is unique, up to the symmetry $u\to e^{i \theta} u$, $\theta \in [0, 2\pi)$, where $h(a,\tau): U \to H^2(D)$.  
\item Functions $\ww(a,\tau)$ and $u(a,\tau)$ are real analytic in $(a,\tau)$. Furthermore, they satisfy the following asymptotics: for a fixed high contrast $\tau$, as $a \to 0$, 
 \begin{align} \label{eq:nonresoandstate}
      \omega(a, \tau) = \ww_*(\tau) + \Or\left(\frac{a^2}{\sqrt{\tau}} + \frac{a}{\tau}\right),\q  u(a,\tau)  = a \vp_*(\tau) + \Or(a^2)\,.
 \end{align}
 In particular, as $a \to 0$ and $\tau \to 0$, 
\begin{align*}
    u(a,\tau) = a \vp_j + \mathcal{O}\left(a^2 + \frac{1}{\tau}\right),
\end{align*}
and
\begin{equation*}
        \omega(a, \tau) = \frac{1}{\sqrt{\lad_j \tau}} - 
 \frac{i}{8 \pi \lad_j^2 \tau} \left(\int_D \varphi_j \, dx \right)^2 + \mathcal{O}\left(\frac{a^2}{\sqrt{\tau}} + \frac{1}{\tau^{\frac{3}{2}}}\right).
\end{equation*} 
\end{itemize}
\end{theorem}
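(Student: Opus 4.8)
The plan is to obtain \cref{thm:3dexist} directly from \cref{thm:smallampl,coro:nonsmall,coro:nonsmall2} by undoing the frequency rescaling $\h{\ww} = \eps^{-1}\ww$, $\eps = \sqrt{\tau}^{-1}$ of \eqref{eq:scalefreq}, since the scaled Lippmann--Schwinger equation \eqref{eq:nonLippscalar2} is exactly \eqref{eq:nonLippscalar0} after the substitution $\ww = \eps\h{\ww}$. First I would observe that $\tau \mapsto \eps = \tau^{-1/2}$ is a real-analytic diffeomorphism of $(0,\infty)$ mapping $\{\tau^{-1} < \delta\}$ onto $\{\eps < \sqrt{\delta}\}$; hence the neighbourhood $U = \{(a,\eps)\in\R\times\R_+ : |a|+\eps < \delta'\}$ from \cref{thm:smallampl}, restricted to $a \ge 0$ — which is no loss of generality, since negative $a$ is reached from positive $a$ via the phase $\theta=\pi$ in \cref{lem:invarphaseu} — corresponds to a set of the form $U = \{(a,\tau)\in\R_+^2 : a + \tau^{-1} < \delta\}$ for a suitable small $\delta>0$. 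Setting $\ww(a,\tau) := \eps\,\h{\ww}(a,\eps)$, $u(a,\tau) := u(a,\eps)$, $h(a,\tau) := h(a,\eps)$ with $\eps = \tau^{-1/2}$ then transports the solution of \eqref{eq:nonLippscalar2} to one of \eqref{eq:nonLippscalar0}; the parametrisation $u = a\vp_j + h$ with $\l\vp_j,h\r_D = 0$ and the uniqueness up to the $S^1$-action carry over verbatim, and real analyticity in $(a,\tau)$ follows by composing the real analyticity in $(a,\eps)$ (from the implicit function theorem, as $\mc{F},\mc{G}$ in \eqref{eq:nonlinff}, \eqref{eq:nonligg} are real analytic) with the real-analytic change of variable $\tau \mapsto \tau^{-1/2}$.

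Next I would push the asymptotics through the same substitution. \cref{coro:nonsmall,coro:nonsmall2} give, for fixed small $\eps$, $\h{\ww}(a,\eps) = \h{\ww}_*(\eps) + \p_a\h{\ww}(0,\eps)\,a + \Or(a^2)$ with $\p_a\h{\ww}(0,0) = 0$, whence $\h{\ww}(a,\eps) = \h{\ww}_*(\eps) + \Or(a^2 + a\eps)$, together with $a^{-1}u(a,\eps)\to\vp_*(\eps)$, i.e.\ $u(a,\eps) = a\vp_*(\eps) + \Or(a^2)$ by analyticity (cf.\ \eqref{auxeq:hw}, \eqref{auxeq:stau}). Multiplying the first identity by $\eps = \tau^{-1/2}$ yields $\ww(a,\tau) = \ww_*(\tau) + \Or(a^2\tau^{-1/2} + a\tau^{-1})$, which is the first relation in \eqref{eq:nonresoandstate}, and the second follows at once. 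For the joint expansion as $a,\tau^{-1}\to 0$ I would substitute the known linear asymptotics $\h{\ww}_*(\eps) = \h{\ww}_j - \tfrac{i\h{\ww}_j^4}{8\pi}\bigl(\int_D\vp_j\,\ud x\bigr)^2\eps + \Or(\eps^2)$ and $\vp_*(\tau) = \vp_j + \Or(\tau^{-1/2})$ (see \eqref{approx:lineareso}, \eqref{eq:linear2}, \eqref{eq:limit_resta3d} from \cref{prop:limit_scalar3d}), use $\h{\ww}_j = \lambda_j^{-1/2}$ so that $\h{\ww}_j^4 = \lambda_j^{-2}$, and absorb cross terms via the elementary bounds $a\eps \le \tfrac12(a^2+\eps^2)$ and $a\tau^{-1} = (a\tau^{-1/4})(\tau^{-3/4}) \le \tfrac12(a^2\tau^{-1/2}+\tau^{-3/2})$; this produces exactly the displayed formulas for $u(a,\tau)$ and $\ww(a,\tau)$.

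Since the argument is a change of variables plus bookkeeping of remainders, there is no genuine analytic obstacle here; the only points requiring a little care are that the rescaling $\eps = \tau^{-1/2}$ indeed preserves real analyticity and maps the parameter neighbourhoods as asserted (in particular that $a$ may be taken $\ge 0$ without loss of generality, using the phase symmetry), and that the error terms be tracked consistently — notably checking, via the AM--GM estimates above, that the cross term $a\tau^{-1}$ in $\ww(a,\tau)$ and the cross term $a\tau^{-1/2}$ in $u(a,\tau)$ are dominated by the remainders claimed in the statement and do not perturb the two explicit leading terms of $\ww(a,\tau)$.
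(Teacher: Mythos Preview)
Your proposal is correct and matches the paper's approach exactly: the paper states \cref{thm:3dexist} as a direct translation of \cref{thm:smallampl,coro:nonsmall,coro:nonsmall2} back through the rescaling $\ww=\eps\h{\ww}$, $\eps=\tau^{-1/2}$, without giving a separate proof. Your bookkeeping of the remainder terms (in particular absorbing the cross terms via AM--GM) is precisely what is implicit in the paper's derivation of \cref{coro:nonsmall2}.
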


\begin{remark} \label{rem:invernorcont}
   In the proof of \cref{thm:3dexist}, we have seen $h(a,\tau) = \Or(a)$. Combining it with the equation $\mathcal{F}(\h{\ww}, a, h, \eps) = 0$ for $\mathcal{F}$ defined in \eqref{eq:nonlinff}, we obtain the refined estimate:
\begin{align*}
    h(a,\tau) = \Or(|a|^3 + |a| \tau^{-1/2})\,.
\end{align*}
The normalization condition \eqref{eq:normalizecond} then yields
\begin{equation*}
    \|u\|_{L^2(D)}^2 = \Or\big((1 + \tau^{-1}) |a|^2\big) = \mathcal{N}.
\end{equation*}
Consequently, for sufficiently small $|a| > 0$ and large $\tau$, the mapping $|a| \mapsto \mathcal{N}$ is invertible. This allows us to reparameterize the nonlinear resonance and the corresponding state in \eqref{eq:nonresoandstate} as $\ww(\mathcal{N},\tau)$ and $u(\mathcal{N},\tau)$, respectively.
Therefore, we have shown that for small amplitude $\mathcal{N}$ and high contrast $\tau$, the nonlinear dielectric resonances exist in the subwavelength regime (see \cref{def:scareso}).
\end{remark}

\subsection{Two-dimensional case} \label{subsec:2dsolution}
In this section, we establish the two-dimensional analogue of \cref{thm:3dexist}, proving that nonlinear subwavelength resonances exist near the linear resonances given in \cref{prop:limit_scalar2d} with small-amplitude resonant states. 

\begin{theorem} \label{thm:2dexist}
In the high contrast regime $\tau \gg 1$, for $d = 2$, let $\ww_*(\tau)$ be a linear subwavelength resonance satisfying either 
\begin{align} \label{case1} 
 \ww_*(\tau) = \sqrt{\frac{4 \pi}{ |D| \tau \ln \tau }} + \Or\left( \frac{\ln(\ln \tau
  )}{\sqrt{\tau} \ln \tau} \right),
\end{align}
with normalized resonant state $\vp_*(\tau) = 1 + \Or\left(\frac{1}{\ln \tau}\right)$, or 
\begin{align} \label{case2} 
   \ww_*(\tau) = \frac{1}{\sqrt{\tau \mu_j}} + \Or\left(\frac{1}{\sqrt{\tau} \ln \tau}\right) \q \text{for some}\ j \ge 0\,, 
\end{align}
with normalized resonant state $\vp_*(\tau) = v_j + \Or\left(\frac{1}{\ln \tau}\right)$, 
where $(\mu_j, v_j)$ satisfies $\w{\mc{K}}_D[v_j] = \mu_j v_j$ with $\mu_j$ being simple and $\norm{v_j}_{L^2(D)} = 1$. 

Then,  there exists an open set $U := \{(a,\tau) \in \R_+^2 \,;\ a + \tau^{-1} < \delta\}$ with sufficiently small $\d > 0$ such that a nonlinear dielectric resonance $\ww(a, \tau): U \to \C$ exists for \eqref{eq:nonLippscalar0} with unique
resonant state $u(a, \tau)$, up to $u \to e^{i \theta} u$, $\theta \in [0, 2\pi)$. Moreover, the following asymptotic holds: $$u(a,\tau)  = a \vp_*(\tau) + \Or(a^2)\,,$$ and 
\begin{align*}
    \ww(a,\tau) = \begin{dcases}
    \ww_*(\tau) + \Or\left(\frac{a^2}{\sqrt{\tau \ln \tau}} + \frac{a}{\sqrt{\tau} \ln \tau}\right)  & \text{for case \eqref{case1}}\,,\\
    \ww_*(\tau) + \Or\left(\frac{a^2}{\sqrt{\tau}} + \frac{a}{\sqrt{\tau \ln \tau}}\right)  & \text{for case \eqref{case2}}\,.
    \end{dcases}            
\end{align*}
\end{theorem}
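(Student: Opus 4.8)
The plan is to follow the three-dimensional argument of \cref{thm:smallampl,coro:nonsmall,coro:nonsmall2,thm:3dexist} step for step at the level of structure -- a Lyapunov--Schmidt reduction to quotient out the $S^1$ equivariance of \cref{lem:invarphaseu}, then two applications of the implicit function theorem (one to solve the complementary ``range'' component $h$, one to solve the scalar equation that pins down $\ww$), followed by a Taylor expansion in $a$ -- but carried out at a \emph{fixed} large contrast $\tau$ rather than with $\eps = \sqrt{\tau}^{-1}$ as one of the variables. The reason for this modification is that in $d = 2$ the operator family $\kk_D^\ww$ is \emph{not} analytic at $\ww = 0$, owing to the logarithmic term $\eta_\ww = \tfrac1{2\pi}(\ln\ww + \h{\gamma})$ in \eqref{def:expgf2}; nonetheless, for each finite $\tau$ the linear resonance $\ww_*(\tau)$ is nonzero and $\kk_D^\ww$ is analytic in $\ww$ near $\ww_*(\tau)$, so the abstract machinery applies there. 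This is precisely why \cref{thm:2dexist} claims less than its three-dimensional analogue: the $\tau$-asymptotics are not read off a map jointly analytic at $\eps = 0$, but reconstructed from the linear theory of \cref{prop:limit_scalar2d}.

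Concretely, work with the orthogonal splitting $L^2(D) = \C\,1_D \oplus L^2_0(D)$ of \eqref{eq:decompol2} and the spectral decomposition \eqref{eq:specwkd} of $\w{\kk}_D$. For \textbf{case \eqref{case2}}, take the bifurcation direction to be the eigenfunction $v_j$ of $\w{\kk}_D$: parametrize $u = a v_j + h$ with $\langle v_j, h\rangle_D = 0$, project \eqref{eq:nonLippscalar0} onto $\mathrm{span}\{v_j\}$ and its complement $P_r L^2(D)$ as in \eqref{eq:proj0}--\eqref{eq:proj1} (with $\tau\ww^2\kk_D^{\ww}$ in place of $\h{\ww}^2\kk_D$), and exploit the key algebraic fact that $\kk_D^\ww$ agrees with the planar Newtonian potential $\kk_D$ up to $\Or(\ww^2\ln\ww)$ already on $L^2_0(D)$, so that the logarithmic singularity enters only through the constant mode $1_D$. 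Then: (i) the relevant linearized operator $I - \tau\ww_*^2 P_r\kk_D^{\ww_*}\big|_{P_r L^2(D)}$ at $(\ww_*(\tau),0)$ is invertible for large $\tau$ because $\ww_*(\tau)$ is a \emph{simple} linear resonance (the simplicity of $\mu_j$) and $P_r$ is close to the eigenprojection-complement, its inverse remaining bounded despite the logarithmic growth of the constant-mode block; the IFT yields $h = h(\ww,a,\tau)$, and $S^1$-equivariance forces $h(\ww,a,\tau) = \tfrac{a}{|a|}h(\ww,|a|,\tau)$, reducing to real $a \ge 0$; (ii) substituting into the $v_j$-component and dividing by $a$ gives a scalar map $\mc{G}(\ww,a,\tau)$ with $\partial_\ww\mc{G}(\ww_*,0,\tau) \ne 0$, so the IFT produces $\ww(a,\tau)$ with $\ww(0,\tau) = \ww_*(\tau)$; (iii) the argument of \cref{coro:nonsmall} then gives $a^{-1}u(a,\tau) \to \vp_*(\tau)$ as $a \to 0$. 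For \textbf{case \eqref{case1}}, the construction is identical except that the bifurcation direction is the normalized constant $|D|^{-1/2}$ (the leading term of $\vp_*(\tau)$): the complementary component then lies essentially in $L^2_0(D)$ and is small because the corresponding block is $I + \Or(1/\ln\tau)$, while the scalar $\mc{G}$-equation becomes the transcendental constant-mode relation $1 + \tau\ww^2\eta_\ww|D| + (\text{cubic and higher}) = 0$, smooth in $\ww$ near $\ww_*(\tau) \ne 0$ and hence amenable to the IFT.

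For the asymptotics I would Taylor expand $\ww(a,\tau)$ and $u(a,\tau)$ in $a$ about $a = 0$ (both real analytic in $a$ for fixed $\tau$, since $\mc{F}$ and $\mc{G}$ are), as in \cref{coro:nonsmall2}: the cubic term enters the scalar equation only at order $a^2$, so $\partial_a\ww(0,\tau)$ is governed by the contrast-decaying remainder $\kk_D^\ww - \kk_D$ and is of lower order, giving $u(a,\tau) = a\vp_*(\tau) + \Or(a^2)$ and $\ww(a,\tau) = \ww_*(\tau) + \Or(a^2)$; inserting the linear asymptotics \eqref{case1}--\eqref{case2} for $\ww_*(\tau), \vp_*(\tau)$ (with the Lambert-$W$ expansion \eqref{eq:2dresolea} feeding case \eqref{case1}) and bounding the Taylor coefficients in $\tau$ yields the stated remainders $\Or\!\big(a^2/\sqrt{\tau\ln\tau} + a/(\sqrt{\tau}\ln\tau)\big)$ and $\Or\!\big(a^2/\sqrt{\tau} + a/\sqrt{\tau\ln\tau}\big)$. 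Finally, as in \cref{rem:invernorcont}, the refined bound on $h$ (cubic in $a$ plus a contrast-decaying linear-in-$a$ term) makes $|a| \mapsto \mc{N} = \norm{u}_{L^2(D)}^2$ invertible, permitting the reparametrization by the normalization constant.

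I expect the main obstacle to be the logarithmic bookkeeping: showing that the linearized $h$-equation is invertible with an inverse controlled \emph{uniformly enough} in $\tau$ for the claimed $\tau$-dependence of the remainders to survive -- the constant-mode block of $\kk_D^{\ww}$ grows like $\ln\tau$, so one must verify that this growth only helps invertibility and that the $\Or(1)$ couplings between $\C\,1_D$ and $L^2_0(D)$ do not spoil it -- together with, in case \eqref{case1}, correctly threading the Lambert-$W$ asymptotics of the linear resonance through the nonlinear correction to the constant-mode equation. By contrast, the $S^1$-reduction and the two abstract implicit-function steps are essentially identical to the three-dimensional proof and require no new idea.
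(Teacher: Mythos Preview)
Your proposal takes a genuinely different route from the paper.  The paper does \emph{not} work at fixed $\tau$ and then chase uniform-in-$\tau$ bounds; instead it regularizes the logarithmic singularity once and for all by the change of variables $s = 1/\sqrt{-\ln\eps}$, i.e.\ $\eps = e^{-1/s^2}$, which is $C^\infty$ at $s = 0$ with all derivatives vanishing.  In case~\eqref{case1} it combines this with a further rescaling $\br{\ww} = \sqrt{-\ln\eps}\,\h{\ww}$, so that the limit problem at $s = 0$ is regular; in case~\eqref{case2} it uses a \emph{three}-way Lyapunov--Schmidt splitting $u = a\cdot 1 + b\,v_j + h$ (constant mode, $v_j$-mode, remainder) rather than your two-way split, precisely because the constant-mode block carries the $\ln\eps$ term and needs to be solved out separately via its own IFT step before the scalar equation for $\h{\ww}$ is reached.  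With these substitutions the maps $\mc{F}$, $\mc{G}$ (and an intermediate $\mc{L}$ in case~II) are $C^1$ at $(a,s) = (0,0)$, and a single application of the implicit function theorem there yields the uniform neighborhood and the $\Or(a^2 + |a|s)$ remainders directly, from which the stated $\tau$-asymptotics follow by undoing the scalings.

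Your fixed-$\tau$ approach is not wrong in principle, but the ``main obstacle'' you correctly identify is exactly what the paper's $\eps = e^{-1/s^2}$ substitution is designed to sidestep: without it you must prove by hand that the IFT neighborhood in $a$ and the second-derivative bounds on $\mc{F},\mc{G}$ are uniform as $\tau\to\infty$, which in case~\eqref{case1} involves an operator whose constant-mode block diverges like $\ln\tau$ while $\ww_*(\tau)\to 0$.  This is likely doable but is more work than the paper's argument, not less.  The three-way splitting in case~\eqref{case2} is also not optional in the paper's setup: attempting to leave the constant mode inside $h$ while taking $s\to 0$ would make the linearized $h$-equation singular at the limit point.
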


The remainder of this section is devoted to the proof of \cref{thm:2dexist}. As in the three-dimensional case, it suffices to analyze the rescaled equation \eqref{eq:nonLippscalar2} with the frequency scaling \eqref{eq:scalefreq}: $\h{\ww} = \eps^{-1} \ww$. We first recall from \eqref{eq:leading2d} and \eqref{eq:npt2d} the decomposition
\begin{align} \label{decomkdw}
    \kk^{\epsilon\h{\ww}}_D  = \h{\kk}^{ \eps \h{\omega}}_D + \rr^{\eps \h{\ww}}\,,
\end{align}
with 
\begin{align} \label{decomkdw2}
    \h{\kk}^{\eps \h{\omega}}_D[\vp] = \kk_D[\vp] - \eta_{\eps \h{\ww}} \int_D \vp(y) \ud y\,,\q \rr^{\eps \h{\ww}} = \mc{O}\left( (\epsilon \h{\ww})^2 \ln \left(\epsilon \h{\ww}\right)\right).
\end{align}
Under the assumption of \cref{thm:2dexist}, let the scaled resonant pair $(\h{\ww}_*(\eps),\vp_*(\eps))$ solve 
\begin{align} \label{screso2d}
      \varphi_* = \hat{\omega}_*^2 \mathcal{K}_D^{\eps \hat{\omega}_*}[\varphi_*]\,,
\end{align}
with either 
\begin{align} \label{screso2d1}
    \h{\ww}_*(\eps) = \sqrt{\frac{2 \pi}{ - \ln \left(\eps\right) |D|}} + \Or\left( \frac{\ln(-\ln \left(\eps \right))}{\ln \left(\eps\right)} \right)\,,\q \vp_*(\eps) = 1 + \Or\left(\frac{1}{\ln \eps}\right)\,,
\end{align}
or 
\begin{align} \label{screso2d2}
    \h{\ww}_*(\eps) = \frac{1}{\sqrt{\mu_j}} + \mc{O}\left(\frac{1}{\ln \eps}\right)\,, \q \vp_*(\eps) = v_j + \Or\left(\frac{1}{\ln \eps}\right)\,.
\end{align}

\begin{proof}[Proof of \cref{thm:2dexist}]
Since the arguments are parallel to those of the three-dimensional case (\cref{thm:3dexist}), we outline only the main steps, highlighting the main differences arising from the logarithmic singularity in \eqref{screso2d}. Next, we consider the two cases in \eqref{screso2d1} and \eqref{screso2d2} separately.

\medskip

\noindent \underline{\emph{Case I: $\h{\ww}_* = \Or(\sqrt{-\ln \eps}^{-1})$}.} We introduce projections: for $u \in H^2(D) \subset L^2(D)$, 
\begin{align} \label{defproj2dd}
    P_0 u = \l 1_D, u \r_D: L^2(D) \to \C \,,\q P_r = I - P_0: L^2(D) \to L_0^2(D)\,.
\end{align}
Then, the solution $u$ for \eqref{eq:nonLippscalar2} can be parameterized as: 
\begin{align*}
    u(a,h) = a + h\,,\q \text{with }\, a = P_0 u\,, \ h = P_r u\,.
\end{align*}
Similarly to \eqref{def:fofr}, define 
\begin{align*}
    f_0(a,h) := P_0(|u|^2 u) = \l 1_D, |u|^2 u  \r_D\,,\q f_r(a,h) := P_r(|u|^2 u)\,.
\end{align*}
Applying projections $P_0$ and $P_r$ to \eqref{eq:nonLippscalar2}, along with \eqref{decomkdw}, implies 
\begin{align}
   & a = \h{\ww}^2 \l 1_D, \h{\kk}_D^{\eps \h{\ww}} \left[ N(u) \right] \r_D + \h{\ww}^2 \l 1_D, \rr^{\eps \h{\ww}}[N(u)] \r_D\,,  \label{eq:proj02} \\
   & h = \h{\ww}^2 P_r \h{\kk}_D^{\eps \h{\ww}} \left[ N(u) \right] + \h{\ww}^2 P_r \rr^{\eps \h{\ww}}[N(u)]\,. \label{eq:proj12}
\end{align} 
With the help of $\wph = P_r \kk_D[1]$ and $\wkk = P_r \kk_D P_r$ in \eqref{auxfunc2d} and \eqref{eq:limopscal}, we have 
\begin{align*}
    P_r \h{\kk}_D^{\eps \h{\ww}} \left[ N(u) \right] \underset{\eqref{decomkdw2}}{=} P_r \kk_D \left[ N(u) \right] = (a + f_0(a,h)) \w{\phi} + \w{\kk}_D[h + f_r(a,h)]\,.
\end{align*}
We also introduce new scaled variables:
\begin{align} \label{eq:newscaling}
    \br{\ww} = \sqrt{-\ln \eps} \, \h{\ww}\,,\q s = \frac{1}{\sqrt{-\ln \eps}}\,,\q \textit{i.e.},\q \eps = e^{-\frac{1}{s^2}}.
\end{align}
Note that the function $\eps(s) = e^{-\frac{1}{s^2}}$ with $\eps(0) = 0$ is infinitely differentiable at  $s = 0$, and all its derivatives at $s = 0$ are $0$. Moreover, the scaled subwavelength resonance $\h{\ww}_*(\eps)$ in \eqref{screso2d1} admits 
\begin{align*}
    \h{\ww}_*(\eps) = s \br{\ww}_0  + \Or( s^2 \ln s)\,,\q \br{\ww}_0 := \sqrt{\frac{2 \pi}{|D|}}\,.
\end{align*}
It follows that \eqref{eq:proj12} can be reformulated as 
\begin{align*}
    \mc{F}(\br{\ww},a,h, s) = 0\,,
\end{align*}
where 
\begin{multline*}
    \mc{F}(\br{\ww},a,h, s) := h - s^2(a + f_0(a,h)) (\br{\ww}^{-2} - s^2 \wkk)^{-1}[\w{\phi}] \\- s^2(\br{\ww}^{-2} - s^2 \wkk)^{-1} \w{\kk}_D[f_r(a,h)] - s^2  (\br{\ww}^{-2} - s^2 \wkk)^{-1} P_r \rr^{\eps s \br{\ww}}[N(u)]\,.
\end{multline*}
Then it is easy to compute 
\begin{align*}
    \mc{F}(\br{\ww}_0, 0, 0, 0) = 0\,, \q \p_{h}\mc{F}(\br{\ww}_0, 0, 0, 0) = I\,.
\end{align*}
The implicit function theorem readily gives a unique $C^1$ function 
$h(\br{\ww},a,s)$ on a small neighborhood of $(\br{\ww}_0,0,0)$ satisfying 
\begin{align*}
  h(\br{\ww},a,s) = \frac{a}{|a|} h(\br{\ww}, |a|, s)\,.
\end{align*}
We can therefore assume that $a \in \R$ and plug $h(\dd)$ into \eqref{eq:proj02} to obtain the following equation: 
\begin{align*}
    \mc{G}(\br{\ww},a, s) = 0\,,
\end{align*}
where 
\begin{align} \label{deffuncg}
    \mc{G}(\br{\ww},a, s) := s^2 \br{\ww}^2 a^{-1} \l 1_D, \h{\kk}_D^{\eps s \br{\ww}} \left[ N(u) \right] \r_D + s^2 \br{\ww}^2 a^{-1} \l 1_D, \rr^{\eps s \br{\ww}}[N(u)] \r_D - 1\,.
\end{align}
Here, $\mc{G}(\br{\ww},a, s)$ is well-defined at $a = 0$ since $\lim_{a \to 0} a^{-1} N(u)$ exists, and we have 
\begin{align*}
    \l 1_D, \h{\kk}_D^{\eps s \br{\ww}} \left[ N(u) \right] \r_D = \l 1_D, \kk_D \left[ N(u) \right] \r_D - \eta_{\eps s \br{\ww}} (a + f_0(a,h)) |D|\,,
\end{align*}
with by \eqref{eq:constgaome},
\begin{align*}
    \eta_{\eps s \br{\ww}} = \frac{1}{2\pi} \big(\ln (\eps s \br{\ww}) + \h{\gamma} \big)\,.
\end{align*}
We now compute 
\begin{align*}
    \mc{G}(\br{\ww}_0, 0, 0) =  - \br{\ww}^2_0 \lim_{s \to 0} s^2\eta_{\eps s \br{\ww}} |D| - 1 = 0\,,
\end{align*}
and 
\begin{align*}
    \p_{\br{\ww}}\mc{G}(\br{\ww}_0, 0, 0) = 2 \br{\ww}_0 \frac{|D|}{2 \pi} \neq 0\,.
\end{align*}
Again, the implicit function theorem allows us to solve a unique $C^1$ function $\br{\ww}(a,s) \in \C$ for a sufficiently small $(a,s) \in \R_+^2$, and then define $h(a,s) : = h(\br{\ww}(a,s),a,s)$.

Next, we derive the asymptotics of $\br{\ww}(a,s)$ and $h(a,s)$ as $(a,s) \to (0,0)$.  We first note $h(0,s) = 0$ since $(a,h) = (0,0)$ satisfies \eqref{eq:proj12}, and proceed to prove $\br{\ww}(0,s) = s^{-1} \h{\ww}_*(\eps) =: \br{\ww}_*(s)$, where $\h{\ww}_*(\eps)$ is the scaled linear resonance \eqref{screso2d1}. For this, it suffices to prove 
\begin{align} \label{auxeqq2d}
    \mc{G}(\br{\ww}_*(s), 0, s) = 0\,.
\end{align}
We compute, with $h(\br{\ww},a,s)$ solved from \eqref{eq:proj12},
\begin{align*}
    \lim_{a \to 0} a^{-1} N(u) & = 1 + \p_a h(\br{\ww},0,s) \\
     & = 1 - \p_h \mc{F}\left(\br{\ww}, 0, 0, s\right)^{-1} \p_a \mc{F}\left(\br{\ww}, 0, 0, s\right) \\
     & \underset{\h{\ww} = s \br{\ww}}{=} 1 + (1 - \h{\ww}^2 P_r \kk_D^{\eps \h{\ww}})^{-1} \left(\h{\ww}^2 P_r \kk_D^{\eps \h{\ww}}[1]\right).
\end{align*}
Moreover, from \eqref{screso2d} we note that 
\begin{align*}
     P_r (1 - \h{\ww}^2\kk_D^{\eps \h{\ww}})[\vp_* - 1] = - P_r (1 - \h{\ww}^2\kk_D^{\eps \h{\ww}})[1] = \h{\ww}^2 P_r  \kk_D^{\eps \h{\ww}}[1]\,.
\end{align*}
It follows that $\lim_{a \to 0} a^{-1} N(u) = \vp_*$, and then \eqref{auxeqq2d} holds by taking the inner product of \eqref{screso2d} with $1_D = 1/|D|$. Similarly to the proof of \cref{coro:nonsmall}, we can show that $\lim_{a \to 0} a^{-1} u(a,s) = \vp_*$. Then, a direct expansion gives 
\begin{align*}
    \br{\ww}(a,s) = \br{\ww}_*(s) + \p_a \br{\ww}(0,0)a + \Or(a^2 + |a| s)\,,\q u(a,s) = a \vp_* + \Or(a^2)\,,
\end{align*}
with $\p_a \br{\ww}(0,0) = 0$ obtained by taking the derivative of $\mc{G}(\dd)$ in \eqref{deffuncg} with respect to $a$ at the point $(\br{\ww}_0, 0, 0)$. We obtain the desired estimates by changing the variables $(s,\br{\ww})$ back to $(\tau,\ww)$.  

\medskip

\noindent \underline{\emph{Case II: $\h{\ww}_* = \Or(1)$}.} For simplicity, we sketch only the main steps. We define projections $P_j u := \l v_j , u\r_D v_j$ and $\w{P}_r := I - P_0 - P_j$, with $P_0$ given in \eqref{defproj2dd}. We then write 
\begin{align*}
    u = a + b v_j + h\,,\q a = P_0 u\,,\  b = \l v_j, u\r_D\,,\ h = \w{P}_r u\,,
\end{align*}
and 
\begin{align*}
    f_0(a, b, h) = \l 1_D, |u|^2 u  \r_D\,, \q f_1(a, b, h) = \l v_j, |u|^2 u \r_D\,,\q f_r(a,b, h) := \w{P}_r(|u|^2 u)\,.
\end{align*}
Similarly, equation \eqref{eq:nonLippscalar2} is equivalent to the following system: 
\begin{align}
   & a = \h{\ww}^2 \l 1_D, \h{\kk}_D^{\eps \h{\ww}} \left[ N(u) \right] \r_D + \h{\ww}^2 \l 1_D, \rr^{\eps \h{\ww}}[N(u)] \r_D\,,  \label{eq:proj03} \\ 
   & b = \h{\ww}^2 \l v_j, \kk_D \left[ N(u) \right] \r_D + \h{\ww}^2 \l v_j, \rr^{\eps \h{\ww}}[N(u)] \r_D\,,  \label{eq:proj04} \\
   & h = \h{\ww}^2 \w{P}_r \kk_D \left[ N(u) \right] + \h{\ww}^2 \w{P}_r \rr^{\eps \h{\ww}}[N(u)]\,. \label{eq:proj15}
\end{align} 

We first solve $h$ from \eqref{eq:proj15} by considering $\mc{F}(\h{\ww},a,b,h,s) = 0$, where 
\begin{align*}
    \mc{F}(\h{\ww},a,b,h,s) := h - (\h{\ww}^{-2} - \w{\kk}_D)^{-1} \w{P}_r \kk_D \left[a + |u|^2 u\right] - (\h{\ww}^{-2} - \w{\kk}_D)^{-1} \w{P}_r \rr^{\eps \h{\ww}}[N(u)]\,,
\end{align*}
with $\eps = e^{-\frac{1}{s^2}}$ from \eqref{eq:newscaling} to remove the singularity. 
Let $\h{\ww}_j = 1/\sqrt{\mu_j}$. It is easy to see that $\mc{F}(\h{\ww}_j,0,0,0,0) = 0$ and $\p_h\mc{F}(\h{\ww}_j,0,0,0,0) = I$. The implicit function theorem gives the unique $h(\h{\ww},a,b,s)$ near $(\h{\ww}_j,0,0,0)$ with $S^1$ equivariance. 
The plugging of $h(\dd)$ into \eqref{eq:proj03} implies that $\mc{L}(\h{\ww},a,b,s) = 0$, where 
\begin{align*}
    \mc{L}(\h{\ww},a,b,s) := a - s^2\frac{\h{\ww}^2 \big(\l 1_D, \kk_D \left[ N(u) \right] \r_D - \eta_{\eps \h{\ww}} f_0(a,b, h) |D| \big) + \h{\ww}^2 \l 1_D, \rr^{\eps \h{\ww}}[N(u)] \r_D}{s^2 + \h{\ww}^2 \frac{1}{2\pi} \big(s^2\ln \h{\omega} - 1 + s^2\h{\gamma} \big) |D|}\,.
\end{align*}
Noting that $\mc{L}(\h{\ww}_j, 0, 0, 0) = 0$ and $\p_a\mc{L}(\h{\ww}_j, 0, 0, 0) = 1$, and applying the implicit function theorem, we find the function $a(\h{\ww},b,s)$. Finally, with $h(\dd)$ and $a(\dd)$, we solve \eqref{eq:proj04} via $\mc{G}(\h{\ww},b,s) = 0$, where 
\begin{multline*}
   \mc{G}(\h{\ww},b,s) := (1 -  \h{\ww}^2 \h{\ww}_j^{-2}) - b^{-1} a \h{\ww}^2 \l v_j, \w{\phi} \r_D \\ -  b^{-1} \h{\ww}^2 \l v_j, \kk_D \left[|u|^2 u\right] \r_D   -  b^{-1} \h{\ww}^2 \l v_j, \rr^{\eps \h{\ww}}[N(u)] \r_D\,.
\end{multline*}
We compute 
\begin{align*}
    \mc{G}(\h{\ww}_j,0,0) = - \h{\ww}^2 \l v_j, \w{\phi} \r_D \p_b a(\h{\ww}_j,0,0) = 0\,, \q \p_{\h{\ww}} \mc{G}(\h{\ww}_j,0,0) = - 2 \h{\ww}_j^{-1}\,,
\end{align*}
by using $  \p_b\mc{L} + \p_a \mc{L} \p_b a|_{(\h{\ww}_j,0,0,0)} = \p_b a(\h{\ww}_j,0,0) = 0$. Again,  the implicit function theorem readily gives $\h{\ww}(b,s)$ near $(b,s) = (0,0)$ with $\h{\ww}(0,0) = \h{\ww}_j$. Similarly, we can show that for fixed small $s$, $u(0,s) = a + b v_j + r = 0$ and $\h{\ww}(0,s) = \h{\ww}_*(s)$ with $\h{\ww}_*(s)$ being the linear subwavelength resonance, as well as $b^{-1} u (b,s) \to \vp_*$ as $b \to 0$, where $(\h{\ww}_*, \vp_*)$ is given in \eqref{screso2d2}. Moreover, we have 
\begin{align*}
    \h{\ww}(b,s) = \h{\ww}_*(s) + \Or(b^2 + |b|s)\,.
\end{align*}
The proof is complete by changing the variables to $\ww$ and $\tau$.  
\end{proof}

\section{Symmetry-breaking bifurcation of a resonator dimer} \label{sec:existasympdimer}

In \cref{sec:existasymp}, we showed that for any configuration of dielectric particles $D$ and a sufficiently small normalization constant $\mathcal{N}$, there exist nonlinear subwavelength resonances with unique corresponding resonant states bifurcating from the zero solution at linear resonances. In the case of $D$ being symmetric, thanks to the uniqueness, these small-amplitude nonlinear resonant states shall also present certain symmetries. Furthermore, motivated by \cites{aschbacher2002symmetry,kirr2008symmetry}, one may expect that as $\mathcal{N}$ increases, additional symmetry-breaking bifurcations may emerge beyond these primary small-amplitude branches originating from the zero solution. 

In this section, we examine a dimer of resonators with reflection symmetry. As previously noted, the small-amplitude resonant states constructed in \cref{sec:existasymp} must be either symmetric or antisymmetric; see \cref{lem:symresostate,coro:smallsym}. Our primary focus concerns the existence of additional bifurcations as $\mathcal{N}$ increases. For the three-dimensional case, we demonstrate in \cref{thm:2ndbirfuc} that under mild assumptions verifiable in the dilute regime, a symmetry-breaking bifurcation occurs along the principal symmetric solution branch. In contrast, in the two-dimensional setting, we find in \cref{thm:2d2ndbif} that no such secondary bifurcation exists, owing to the distinct scaling behavior of principal subwavelength resonances (recall \cref{prop:limit_scalar2d,thm:2dexist}).

Let $D = D_1 \cup D_2$ be a dimer of dielectric particles in $\mathbb{R}^d$ ($d = 2,3$) that are symmetric with respect to a hyperplane. Without loss of generality, we consider $d = 3$, and take the symmetry hyperplane to be $\{x_1 = 0\}$ and define the corresponding reflection operator for a function $\varphi$ as  
\begin{equation} \label{def:reffunc} 
  \mathcal{R}[\varphi](x_1, x_2, x_3) := \varphi(-x_1, x_2, x_3)\,.  
\end{equation}  
Let $D_0$ be a single particle (\textit{i.e.}, a smooth, bounded, open set as specified in \cref{sec:basicsetting}) centered at the origin. We construct the dimer as follows:  
\begin{align}  \label{eq:dimercase}
  D_1 = D_0 + (-L, 0, 0)\,, \quad D_2 = \mathcal{R}(D_0) + (L, 0, 0)\,,  
\end{align}  
where $\mathcal{R}(D_0)$ is the reflected domain of $D_0$ with respect to $\{x_1 = 0\}$, defined via $\mathcal{R} \chi_{D_0} = \chi_{\mathcal{R}(D_0)}$. It is straightforward to see that the dimer $D = D_1 \cup D_2$ constructed in this manner is symmetric with respect to $\{x_1 = 0\}$, satisfying $\mathcal{R} \chi_D = \chi_D$.  For later use, we also introduce the translation operator $\mathcal{T}_a$ for $a \in \mathbb{R}$, defined as  
\begin{align}  \label{def:transfunc}
    \mathcal{T}_a[\varphi](x_1, x_2, x_3) := \varphi(x_1 + a, x_2, x_3)\,.  
\end{align}
Note that both $\mathcal{R}$ and  $\mathcal{T}_a$ in \eqref{def:reffunc} and \eqref{def:transfunc} are unitary operators.  Moreover, \cref{lem:invarphaseu} demonstrates the equivariance of the nonlinear dielectric resonance problem under the  $S^1$ group action. For the case of a symmetric dimer, the problem also admits $\mathbb{Z}_2$ equivariance due to reflection symmetry. The following lemma can be established straightforwardly, so the proof is omitted.

\begin{lemma} \label{lem:z2equivar}
  Let $\kk_D^\ww[\dd]$ and $N(\dd)$ be the operators defined in \eqref{eq:helmpotential} and \eqref{eq:nonfuncl2}. It holds that 
  \begin{align*}
        \mc{R}[N(u)] = N(\mc{R}[u])\,,\q   \mc{R}\circ \kk_D^\ww[u] = \kk_D^\ww \circ \rr [u]\,,
  \end{align*}
  for any $L^2$ function $u$. It follows that if      
  $(\omega, u) \in \mathbb{C} \times H^2(D)$ is a solution of the nonlinear resonance problem
  \eqref{eq:nonLippscalar0}, then $(\omega, \rr[u])$ is also a solution. 
\end{lemma}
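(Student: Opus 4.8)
The plan is to verify the two operator identities directly and then feed them into the Lippmann--Schwinger formulation \eqref{eq:nonLippscalar0}. The first identity is purely pointwise: since $N(z) = z + |z|^2 z$ in \eqref{eq:nonfuncl2} acts on the value of the function at each point, for any $u$ and a.e.\ $x = (x_1,x_2,x_3)$ one has
\[
\mc{R}[N(u)](x) = N(u)(-x_1,x_2,x_3) = N\big(u(-x_1,x_2,x_3)\big) = N\big(\mc{R}[u](x)\big),
\]
which is the claimed relation $\mc{R}[N(u)] = N(\mc{R}[u])$. (The same reasoning applies to any Nemytskii-type map induced by a fixed function of the value $u(x)$.)

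For the volume potential I would perform the change of variables $y = \mc{R}(y')$ in the defining integral \eqref{eq:helmpotential}. The reflection $\mc{R}$ is an isometry with unit Jacobian, it satisfies $\mc{R}(D) = D$ by the construction \eqref{eq:dimercase}, and $G^\ww(z)$ depends only on $|z|$ with $|\mc{R}(x)-\mc{R}(y)| = |x-y|$; hence
\begin{align*}
\mc{R}\circ\kk_D^\ww[u](x) &= \int_D G^\ww\big(\mc{R}(x) - y\big)\, u(y)\,\ud y \\
&= \int_D G^\ww\big(\mc{R}(x) - \mc{R}(y')\big)\, u(\mc{R}(y'))\,\ud y' = \int_D G^\ww(x - y')\, \rr[u](y')\,\ud y' = \kk_D^\ww\circ\rr[u](x).
\end{align*}
Convergence of the integrals is not an issue, since $\kk_D^\ww$ is bounded on $L^2(D)$ by \cref{lem:compactop}.

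Finally, applying $\mc{R}$ to both sides of the Lippmann--Schwinger equation $u = \tau\ww^2\kk_D^\ww[N(u)]$ and using the two identities in turn gives $\rr[u] = \tau\ww^2\kk_D^\ww[N(\rr[u])]$, so $(\ww,\rr[u])$ solves \eqref{eq:nonLippscalar0} whenever $(\ww,u)$ does; in particular $\rr[u]\in H^2(D)$ and is nontrivial iff $u$ is. There is no genuine obstacle in this argument; the only point requiring a little care is tracking the change of variables and invoking the symmetry $\mc{R}(D)=D$ together with the radial dependence of $G^\ww$, which is exactly what makes the $\mathbb{Z}_2$ equivariance hold.
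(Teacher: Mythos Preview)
Your proof is correct and is exactly the direct verification the paper has in mind; the paper itself omits the proof, stating only that the lemma ``can be established straightforwardly,'' and your argument---pointwise equivariance of the Nemytskii map $N$, a change of variables exploiting $|\mc{R}(x)-\mc{R}(y)|=|x-y|$ and $\mc{R}(D)=D$ for the volume potential, then applying $\mc{R}$ to \eqref{eq:nonLippscalar0}---fills this in with nothing missing.
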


We have seen in \cref{thm:3dexist,thm:2dexist} that for a general $D$,  there exists nonlinear resonant states, which are unique, up to $u \to e^{i \theta} u$, with small $\mc{N}$. We next show that for symmetric dimer $D$, these nonlinear resonant states must also exhibit symmetry by the simple lemma below. 

% that for any configuration of dielectric particles $D$, and for a small normalization constant $\mathcal{N}$, there exists a unique nonlinear resonant state, up to a phase factor, bifurcating from the zero solution at the linear subwavelength resonance (see \cref{thm:3dexist,thm:2dexist}). 
% % $\omega_* = \sqrt{\tau \lambda_j}^{-1} + \mathcal{O}(\tau^{-1})$, where $\lambda_j$ is a simple eigenvalue of $\kk_D$. 

\begin{lemma} \label{lem:symresostate}
Given a symmetric dimer of resonators $D$, suppose that $\omega$ is a nonlinear dielectric resonance for \eqref{eq:nonLippscalar0} with a unique resonant state $u$, up to a phase factor $e^{i \theta}$, $\theta \in [0, 2\pi)$. Then, $u$ is either symmetric (even) or antisymmetric (odd).  
\end{lemma}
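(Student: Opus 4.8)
The plan is to exploit the $\mathbb{Z}_2$ equivariance recorded in \cref{lem:z2equivar} together with the hypothesized uniqueness of the resonant state modulo the $S^1$ phase. First I would note that since $(\omega,u)$ solves the nonlinear Lippmann--Schwinger equation \eqref{eq:nonLippscalar0}, \cref{lem:z2equivar} tells us that $(\omega,\mathcal{R}[u])$ is again a solution; moreover $\mathcal{R}$ is unitary on $L^2(D)$, so $\|\mathcal{R}[u]\|_{L^2(D)} = \|u\|_{L^2(D)}$, and $\mathcal{R}[u]$ is therefore a resonant state for the \emph{same} frequency $\omega$ with the \emph{same} normalization constant $\mathcal{N}$ in \eqref{eq:normalizecond}. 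By the assumed uniqueness of the resonant state up to a phase, there exists $\theta \in [0,2\pi)$ with $\mathcal{R}[u] = e^{i\theta} u$.

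The second and final step is to apply $\mathcal{R}$ once more. Since $\mathcal{R}$ is a complex-linear involution, $\mathcal{R}^2 = I$, we get $u = \mathcal{R}^2[u] = \mathcal{R}[e^{i\theta}u] = e^{i\theta}\mathcal{R}[u] = e^{2i\theta}u$. As $u \not\equiv 0$, this forces $e^{2i\theta} = 1$, hence $e^{i\theta} \in \{1,-1\}$. Therefore either $\mathcal{R}[u] = u$, so that $u$ is symmetric (even under $x_1 \mapsto -x_1$), or $\mathcal{R}[u] = -u$, so that $u$ is antisymmetric (odd). This is exactly the dichotomy forced by the fact that the reflection group $\mathbb{Z}_2$ has only the two characters $\pm 1$ inside the circle $S^1$.

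There is essentially no obstacle here: the substantive content is packaged in \cref{lem:z2equivar} (which uses $\mathcal{R}\chi_D = \chi_D$, the intertwining $\mathcal{R}\circ\kk_D^\ww = \kk_D^\ww\circ\mathcal{R}$, and $\mathcal{R}[N(u)] = N(\mathcal{R}[u])$) and in the prior existence theorems, which supply the uniqueness-up-to-phase that the statement takes as a hypothesis. The only items needing a word of care are that $\mathcal{R}$ preserves the normalization (so uniqueness legitimately applies to $\mathcal{R}[u]$) and that the gauge ambiguity is precisely the one-parameter $S^1$ action of \cref{lem:invarphaseu} rather than anything larger; both are already in hand.
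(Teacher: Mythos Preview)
Your proof is correct and follows essentially the same argument as the paper: use \cref{lem:z2equivar} to get $\mathcal{R}[u] = e^{i\theta}u$ from uniqueness, then apply $\mathcal{R}$ once more to force $e^{2i\theta}=1$. Your added remark that $\mathcal{R}$ preserves the $L^2$ normalization is a useful clarification but does not change the substance.
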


\begin{proof}
    By \cref{lem:z2equivar}, $\rr[u]$ is also a resonant state associated with the nonlinear resonance $\ww$. Then, by the uniqueness assumption of $u$, there exists $\theta \in [0, 2\pi)$ such that $$\rr[u] = e^{i \theta} u\,,$$ which further gives, by applying $\mathcal{R}$ again,
        \begin{align*}
          u = \rr^2[u] = e^{i \theta} \rr[u] = e^{i 2 \theta} u\,.
        \end{align*}
    It follows that $\theta = 0$ or $\pi$, and hence $\rr[u] = u$ or $\rr[u] = - u$. The proof is complete. 
\end{proof}

The following corollary, for the three-dimensional case, is a direct consequence of \cref{thm:3dexist} and \cref{lem:symresostate}, as well as \cref{coro:simplenewton}, which readily yields that the resonant state $u(a,\tau)$ corresponding to the principal nonlinear resonance $\ww(a,\tau) \sim 1/\sqrt{\lad_0 \tau}$ is even, \textit{i.e.}, $\rr[u(a,\tau)]  =u(a,\tau)$, and almost positive, up to some $e^{i \theta}$. A similar result can be established for the two-dimensional case based on \cref{thm:2dexist}, though we omit the details here for simplicity.

\begin{corollary} \label{coro:smallsym}
    Let $D$ be a symmetric dimer defined as above, and $\lad_j$ be a simple eigenvalue of the Newtonian potential $\mathcal{K}_D$ with normalized eigenfunction $\vp_j$. Then, $\vp_j$ is either symmetric or antisymmetric, \textit{i.e.}, $\rr[\vp_j] = \vp_j$ or $\rr[\vp_j] = - \vp_j$. In particular, the principal eigenvalue $\lad_0$ admits a positive and symmetric normalized eigenfunction $\vp_0$. Moreover, let $(\ww(a,\tau), u(a,\tau))$ be the nonlinear dielectric resonance and resonant state given in \cref{thm:3dexist}, associated with $(\lad_j,\vp_j)$. Then, $u(a,\tau)$ has the same symmetry as $\vp_j$.  
\end{corollary}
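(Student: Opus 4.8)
The plan is to reduce the statement to the $\ZZ_2$-equivariance recorded in \cref{lem:z2equivar} together with the simplicity facts already established. The key preliminary observation is that the reflection $\rr$ in \eqref{def:reffunc} restricts to a \emph{unitary involution} on $L^2(D)$: since $\rr \chi_D = \chi_D$ and Lebesgue measure is reflection-invariant, one has $\l \rr f, \rr g\r_D = \l f, g\r_D$ for all $f, g \in L^2(D)$, and of course $\rr^2 = I$. Specializing \cref{lem:z2equivar} to $\ww = 0$ (equivalently, using that the Newtonian kernel $G^0(x-y)$ is invariant under simultaneously reflecting $x$ and $y$) yields the commutation $\rr \circ \kk_D = \kk_D \circ \rr$ on $L^2(D)$. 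Applying $\rr$ to $\kk_D[\vp_j] = \lad_j \vp_j$ then gives $\kk_D[\rr \vp_j] = \lad_j\, \rr \vp_j$, so $\rr \vp_j$ lies in the eigenspace of $\lad_j$; since $\lad_j$ is simple, $\rr \vp_j = c\, \vp_j$ for some scalar $c$, and applying $\rr$ once more with $\rr^2 = I$ forces $c^2 = 1$, hence $c = \pm 1$. This is precisely the dichotomy $\rr \vp_j = \vp_j$ (symmetric) or $\rr \vp_j = -\vp_j$ (antisymmetric).

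For the principal eigenvalue, I would invoke \cref{coro:simplenewton}, which already provides that $\lad_0$ is simple with a strictly positive normalized eigenfunction $\vp_0 > 0$ on $D$. By the dichotomy just proved, $\vp_0$ is symmetric or antisymmetric; were it antisymmetric, then $\vp_0(-x_1,x_2,x_3) = -\vp_0(x_1,x_2,x_3)$, and since $\rr(D) = D$ both points lie in $D$, contradicting $\vp_0 > 0$ on $D$. Hence $\vp_0$ must be symmetric.

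For the resonant state, I would argue as follows. By \cref{thm:3dexist}, $u(a,\tau)$ is the unique resonant state associated with $\ww(a,\tau)$, up to a phase $e^{i\theta}$; hence \cref{lem:symresostate} applies and $u(a,\tau)$ is symmetric or antisymmetric, say $\rr[u(a,\tau)] = \epsilon_2\, u(a,\tau)$ with $\epsilon_2 \in \{1,-1\}$, and write $\rr \vp_j = \epsilon_1 \vp_j$. To identify $\epsilon_2$ with $\epsilon_1$, I would use the parametrization $u(a,\tau) = a \vp_j + h(a,\tau)$ with $\l \vp_j, h(a,\tau)\r_D = 0$ and $\norm{\vp_j}_{L^2(D)} = 1$, so that the mode amplitude equals exactly $a = \l \vp_j, u(a,\tau)\r_D$, which is nonzero for $(a,\tau) \in U$. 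Unitarity of $\rr$ then gives $a = \l \vp_j, u(a,\tau)\r_D = \l \rr \vp_j, \rr u(a,\tau)\r_D = \epsilon_1 \epsilon_2\, a$, and since $a \neq 0$ this forces $\epsilon_1 = \epsilon_2$. Therefore $u(a,\tau)$ has the same symmetry as $\vp_j$.

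This corollary is essentially a bookkeeping exercise, and I expect the only step requiring real care to be the last one: upgrading the conclusion of \cref{lem:symresostate} (that $u(a,\tau)$ is merely even or odd) to the statement that it carries the \emph{same} symmetry as $\vp_j$. This hinges on the nonvanishing of the mode amplitude $a = \l \vp_j, u(a,\tau)\r_D$ on the parameter set $U$, which makes the identity $a = \epsilon_1 \epsilon_2\, a$ a nontrivial constraint; everything else follows from the simplicity of $\lad_j$ and the $\ZZ_2$- and $S^1$-equivariance already in hand.
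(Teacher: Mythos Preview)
Your proposal is correct and follows essentially the same route as the paper, which does not give a detailed proof but simply states that the corollary is a direct consequence of \cref{thm:3dexist}, \cref{lem:symresostate}, and \cref{coro:simplenewton}. Your argument fills in the details cleanly, and the final step pinning down the sign via $a = \l \vp_j, u\r_D = \l \rr \vp_j, \rr u\r_D = \epsilon_1\epsilon_2\, a$ with $a \neq 0$ is exactly the right way to upgrade \cref{lem:symresostate} to the matching-symmetry statement.
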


\subsection{Three-dimensional case} \label{subsec:3dcase}

For convenience, we denote by $\lad_+$ the principal eigenvalue of $\kk_D$ and by $ \vp_+$ the associated normalized positive symmetric eigenfunction.  
The main result of this section is the existence of a bifurcation on the principal nonlinear resonance curve  $\ww \sim \sqrt{\lad_+ \tau}$ at a non-zero normalization constant. To establish this, we rely on \cref{assp:second}, which can be verified in the dilute regime, that is, when the distance $L$ in \eqref{eq:dimercase} is sufficiently large (see \cref{prop:veriassp}).

\begin{assumption} \label{assp:second}
Let $D = D_1 \cup D_2$ be a symmetric dimer.  The second largest eigenvalue $\lad_-$ of $\kk_D$ is simple with the antisymmetric normalized eigenfunction $\vp_-$. 
\end{assumption}

\begin{proposition} \label{prop:veriassp}
Let $D = D_1 \cup D_2$ be a symmetric dimer of dielectric particles 
with $D_i$, $i = 1,2$, given in \eqref{eq:dimercase}, and denote by $\lad_0$ the largest eigenvalue of $\kk_D$ that is simple with eigenfunction $\vp_0$ satisfying $\vp_0 > 0$ and $\norm{\vp_0}_{L^2(D_0)} = 1$. Then, for sufficiently large $L$, there exists exactly two eigenvalues $\lad_{\pm}$ of the Newtonian potential $\kk_{D}$ around $\lad_0$, satisfying 
\begin{align*}
  \text{$\lad_\pm = \lad_0 \pm k_I + \Or(L^{-2}) \to \lad_0$\q as\ \, $L \to \infty$,\q and\ \, $\lad_- < \lad_0 < \lad_+$}\,,
\end{align*}
where 
\begin{align*}
    k_I := \l \mathcal{R} \mc{T}_L \vp_0, \kk_D[\mc{T}_{L} \vp_0] \r_D = \Or(L^{-1})> 0\,.
\end{align*}
Moreover, the normalized eigenfunctions $\vp_\pm$ associated with $\lad_\pm$ satisfy
% \begin{align*}
%   \mathcal{K}_{D_L} [\vp_\pm] = \lad_\pm \vp_\pm
% \end{align*}
% with  
\begin{align*}
  \mathcal{R} [\vp_\pm] = \pm \vp_\pm\,,\q \norm{\vp_\pm}_{L^2(D)} = 1\,,
\end{align*} 
and as $L \to 0$, 
\begin{align} \label{eq:coneigfunc}
  \|\vp_\pm - (\mc{T}_{L} \vp_0 \pm \mathcal{R} \mc{T}_L \vp_0)/\sqrt{2} \|_{L^2(D)} \to 0\,,    
\end{align}
where the operators $\rr[\dd]$ and $\mc{T}_L[\dd]$ are defined in \eqref{def:reffunc} and \eqref{def:transfunc}.
\end{proposition}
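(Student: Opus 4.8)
The plan is to treat \cref{prop:veriassp} as a tight-binding (nearly degenerate) perturbation problem: for $L$ large, $\kk_D$ is a small off-diagonal perturbation of the block-diagonal operator on $L^2(D_1)\oplus L^2(D_2)$ whose two diagonal blocks are each unitarily equivalent to the single-particle Newtonian potential $\kk_{D_0}$, whose principal eigenvalue $\lad_0$ is simple. Using the reflection ($\ZZ_2$) symmetry to split the problem into the symmetric and antisymmetric sectors reduces it, in each sector, to a self-adjoint $\Or(L^{-1})$-perturbation of a single simple isolated eigenvalue, and a first/second-order perturbation computation produces the splitting $\lad_0\pm k_I$.

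\emph{Reduction.} Since $\kk_D$ has the real symmetric kernel $G^0(x-y)=1/(4\pi|x-y|)$ and $\mc{R}D=D$, it commutes with the reflection $\mc{R}$ (the $\ww=0$ case of \cref{lem:z2equivar}), so the $\pm 1$-eigenspaces $L^2_{\mathrm s}(D)$ and $L^2_{\mathrm a}(D)$ of $\mc{R}$ are invariant under the compact self-adjoint operator $\kk_D$. Writing $\kk_D$ in $2\times 2$ block form with respect to $L^2(D_1)\oplus L^2(D_2)$, with blocks $A_{ij}[\vp]=\chi_{D_i}\int_{D_j}G^0(x-y)\vp(y)\ud y$, the restriction to $D_1$ (rescaled by $\sqrt 2$ to be unitary) identifies $\kk_D|_{L^2_{\mathrm s/a}(D)}$ with $B_\pm:=A_{11}\pm A_{12}\sigma$ on $L^2(D_1)$, where $\sigma:L^2(D_1)\to L^2(D_2)$ is the unitary reflection; one checks, again from the $\mc{R}$-commutation, that $A_{12}\sigma$ is self-adjoint. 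A change of variables gives $A_{11}=\mc{T}_L\kk_{D_0}\mc{T}_{-L}$, so $A_{11}$ is unitarily equivalent to $\kk_{D_0}$; by \cref{coro:simplenewton} applied to the single particle $D_0$, $\lad_0$ is a simple isolated eigenvalue of $\kk_{D_0}$ with positive normalized eigenfunction $\vp_0$, hence $A_{11}$ has the simple isolated eigenvalue $\lad_0$ with eigenfunction $\psi_1:=\mc{T}_L\vp_0$, and $\sigma\psi_1=\mc{R}\mc{T}_L\vp_0=:\psi_2$ (supported on $D_2$, positive).

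\emph{Smallness and perturbation.} Since $D_1,D_2$ are bounded and $|x-y|\ge cL$ for $x\in D_i$, $y\in D_j$, $i\ne j$, and $L$ large, the kernel of $A_{12}$ is bounded by $C/L$, so $\norm{A_{12}\sigma}_{L^2(D_1)\to L^2(D_1)}=\Or(L^{-1})$. Thus for $L$ large $B_\pm$ is a self-adjoint $\Or(L^{-1})$-perturbation of $A_{11}$: the Riesz projection of $B_\pm$ over a fixed small circle around $\lad_0$ has rank one, so $B_\pm$ has exactly one eigenvalue $\lad_\pm$ near $\lad_0$, it is simple, and standard perturbation theory for a simple isolated eigenvalue gives
\[
\lad_\pm=\lad_0\pm\l\psi_1,A_{12}\sigma\psi_1\r_{L^2(D_1)}+\Or(L^{-2}),\qquad \text{eigenfunction}=\psi_1+\Or(L^{-1}).
\]
Since $A_{12}\sigma\psi_1=A_{12}\psi_2=\chi_{D_1}\kk_D[\psi_2]$, $\psi_1$ is supported on $D_1$, and $\kk_D$ is self-adjoint with real kernel, one identifies $\l\psi_1,A_{12}\sigma\psi_1\r_{L^2(D_1)}=\l\psi_1,\kk_D[\psi_2]\r_D=\l\psi_2,\kk_D[\psi_1]\r_D=k_I$, so $\lad_\pm=\lad_0\pm k_I+\Or(L^{-2})$. (Equivalently, the normalized trial functions $\phi_\pm:=(\mc{T}_L\vp_0\pm\mc{R}\mc{T}_L\vp_0)/\sqrt 2$ lie in the two sectors and satisfy $\l\phi_\pm,\kk_D\phi_\pm\r_D=\lad_0\pm k_I$ with $\norm{\kk_D\phi_\pm-(\lad_0\pm k_I)\phi_\pm}_{L^2(D)}=\Or(L^{-1})$, which yields the same conclusion via Temple's inequality.)

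\emph{Positivity, size of $k_I$, and conclusion.} Because $\vp_0>0$ and $G^0>0$, the integrand of $k_I=\l\mc{R}\mc{T}_L\vp_0,\kk_D[\mc{T}_L\vp_0]\r_D=\int_{D_2}\psi_2(x)\int_{D_1}G^0(x-y)\psi_1(y)\ud y\ud x$ is strictly positive, so $k_I>0$; moreover $|x-y|\in[cL,CL]$ on $D_2\times D_1$ and $\int\vp_0>0$ give $cL^{-1}\le k_I\le CL^{-1}$ for $L$ large, hence $k_I\gg L^{-2}$ and $\lad_-=\lad_0-k_I+\Or(L^{-2})<\lad_0<\lad_0+k_I+\Or(L^{-2})=\lad_+$, with $\lad_\pm\to\lad_0$ as $L\to\infty$. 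Since $\spec(\kk_D)=\spec(B_+)\cup\spec(B_-)$, the only eigenvalues of $\kk_D$ in a fixed neighborhood of $\lad_0$ are the two distinct simple eigenvalues $\lad_\pm$, with eigenfunctions $\vp_\pm\in L^2_{\mathrm s/a}(D)$, so $\mc{R}[\vp_\pm]=\pm\vp_\pm$; undoing the $\sqrt 2$-rescaled restriction turns ``eigenfunction $=\psi_1+\Or(L^{-1})$'' into $\vp_\pm=\phi_\pm+\Or(L^{-1})=(\mc{T}_L\vp_0\pm\mc{R}\mc{T}_L\vp_0)/\sqrt 2+\Or(L^{-1})$, using that $\phi_\pm$ is $L^2(D)$-normalized since $\psi_1,\psi_2$ have disjoint supports and unit norm. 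This gives \eqref{eq:coneigfunc} and all remaining claims. The main obstacle is the quantitative perturbation step: one must make the second-order estimate uniform in $L$ (control the reduced resolvent of $A_{11}$ at $\lad_0$, which is harmless since $\lad_0$ is isolated and simple, and show the remainder is genuinely $\Or(\norm{A_{12}\sigma}^2)=\Or(L^{-2})$), and --- crucially --- establish the lower bound $k_I\gtrsim L^{-1}$, not merely $k_I>0$, so that the $\Or(L^{-2})$ error cannot disturb the ordering $\lad_-<\lad_0<\lad_+$.
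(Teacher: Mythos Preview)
Your proposal is correct and follows essentially the same tight-binding perturbation argument as the paper. The only organizational difference is that the paper works directly with the $2\times 2$ secular matrix $M=\begin{pmatrix}\lad_0 & k_I\\ k_I & \lad_0\end{pmatrix}$ on the nearly-degenerate subspace $\mathrm{span}\{\mc{T}_L\vp_0,\mc{R}\mc{T}_L\vp_0\}$ and then invokes Kato's degenerate perturbation theory, whereas you first use the $\mc{R}$-symmetry to split into the even/odd sectors and reduce to two simple-eigenvalue perturbation problems $B_\pm=A_{11}\pm A_{12}\sigma$. The two routes are equivalent (your $B_\pm$ are exactly the diagonal entries of $M$ after the orthogonal change of basis $(\psi_1\pm\psi_2)/\sqrt 2$), and your explicit lower bound $k_I\gtrsim L^{-1}$ is a useful addition that the paper leaves implicit.
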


The proof of \cref{prop:veriassp} follows from the standard perturbation theory and is provided in \cref{app:A} for completeness. \cref{thm:3dexist}, along with \cref{coro:smallsym}, ensures the existence of two solution curves to the nonlinear resonance problem \eqref{eq:nonLippscalar0}, parametrized by small $\mathcal{N}$ \eqref{eq:normalizecond}, which bifurcate from the zero solution at the linear subwavelength resonances $ \omega_{*,\pm} = 1/\sqrt{\lambda_\pm \tau} + \mathcal{O}(\tau^{-1})$, with symmetric and antisymmetric resonant states, respectively. We next show in \cref{thm:2ndbirfuc} that under further \cref{asspcob} below (that also holds in the dilute regime), when the normalization constant $\mc{N}$ increases, due to the hybridization of these two symmetric and antisymmetric modes and the cubic nonlinearity, a symmetry-breaking bifurcation could occur on the symmetric solution branch emanating from $(\ww_{*,+},0)$ at a critical $\mc{N}_c$. 

For our purpose, again, we consider the scaled Lippmann-Schwinger equation \eqref{eq:nonLippscalar2}:
\begin{align} \label{eq:nonlineardimer}
        u = \hat{\omega}^2 \mathcal{K}_D^{\epsilon \hat{\omega}} \left[N(u)\right]\,,
\end{align}
for $(\h{\ww}, u) \in \C \times H^2(D)$, under the constraint: 
\begin{align} \label{normalconst}
    \int_D |u(x)|^2 \ud x = \mc{N}\,,
\end{align}
as in \cref{sec:existasymp}, where $D = D_1 \cup D_2$ is a dimer with reflection symmetry. Under \cref{assp:second}, recall the largest two eigenvalues $0< \lad_- < \lad_+$ of the Newtonian potential $\kk_D$ with normalized even and odd modes $\vp_-$ and $\vp_+$, respectively:
\begin{equation} \label{eq:symmode}
    \rr[\vp_+] = \vp_+\,,\q  \rr[\vp_-] = \vp_-\,.
\end{equation}
Then, we define the corresponding $L^2$ projections by 
\begin{align*}
  P_+ [u] = \langle \vp_+, u \rangle_D\,\vp_+, \quad P_- [u] = \langle \vp_-, u \rangle_D \,\vp_-, \quad P_r = I - P_+ - P_-\,.   
\end{align*}
Similarly to \eqref{parau}, we assume the following solution ansatz to \eqref{eq:nonlineardimer}:
\begin{align} \label{eq:solansa}
  u = c_+ \vp_+ + c_- \vp_- + r \in H^2(D)\,,
\end{align}
where 
\begin{align}\label{anszsol2}
 c_\pm = \l \vp_\pm, u \r_D \in \C\,,\q r = P_r u \in H_\sim^2(D) = H^2(D) \cap P_r L^2(D)\,.
\end{align}
For simplicity, we define the constants, for $k,l,m,n = \pm$, 
\begin{align} \label{def:constakl}
  a_{klmn} := \l \vp_k, \vp_l \overline{\vp_m} \vp_n \r_D = \int_D \overline{\vp_k} \vp_l \overline{\vp_m} \vp_n \ud x\,.
\end{align}
Noting that the integral kernel of $\mathcal{K}_{D}$ is real, we can also let the eigenfunctions $\vp_\pm$ be real. Moreover, thanks to the symmetry \eqref{eq:symmode}, we find $a_{klmn} \in \R$ and 
\begin{align*}
  a_{klmn} = 0  \q \text{if there are one $-$ or three $-$ in $(k,l,m,n)$,}
\end{align*}
and denote 
\begin{align} \label{def:coeffA}
a_{klmn} = 
\begin{dcases}
    A_{++} = \norm{\vp_+^2}_{L^2(D)}^2  &\q \text{if there are four $+$},\\
    A_{+-} = \norm{\vp_+ \vp_-}_{L^2(D)}^2 &\q \text{if there are two $+$},\\
    A_{--} = \norm{\vp_-^2}_{L^2(D)}^2  &\q \text{if there are zero $+$}.
\end{dcases}  
\end{align}
With the help of \eqref{eq:solansa}, it is easy to write, by a direct expansion of $u$, 
\begin{align} \label{expkdcub}
  \mc{K}_{D}\left[|u|^2 u \right] = \underbrace{\sum_{l,m,n = \pm} c_l \overline{c_m} c_n \mc{K}_{D} \left[ \vp_l \widebar{\vp_m} \vp_n \right]}_{=: F(c_+,c_-)} + R(c_+,c_-, r)\,.
\end{align}

We now reformulate \eqref{eq:nonlineardimer} as 
\begin{align} \label{eq:nonlineardimer2}
  (1 - \h{\ww}^2 \kk_D)[u] = \h{\ww}^2 \kk_D[|u|^2 u] + \h{\ww}^2 \rr^{\eps \h{\ww}}[N(u)]\,,
\end{align}
with $\rr^{\eps \h{\ww}} = \kk_D^{\eps \h{\ww}} - \kk_D$, and derive, by applying projections $P_+$, $P_-$, and $P_r$ to \eqref{eq:nonlineardimer2},
\begin{align}
    &  (1 - \h{\ww}^2 \lad_+) c_+ = \h{\ww}^2 \lad_+ \sum_{l,m,n} c_l \overline{c_m} c_n a_{+lmn} + \h{\ww}^2 \l \vp_+, R(c_+,c_-,r) \r_D + \h{\ww}^2 \l \vp_+, \rr^{\eps \h{\ww}}[N(u)]  \r_D \,, \label{eq:leading1}  \\
    & (1 - \h{\ww}^2 \lad_-) c_- = \h{\ww}^2 \lad_- \sum_{l,m,n} c_l \overline{c_m} c_n a_{-lmn} + \h{\ww}^2 \l \vp_-, R(c_+,c_-,r) \r_D + \h{\ww}^2 \l \vp_-, \rr^{\eps \h{\ww}}[N(u)] \r_D \,,   \label{eq:leading2}  \\
    &  (1 - \h{\ww}^2 \kk_D)[r] = \h{\ww}^2 
    P_r (F(c_+,c_-) + R(c_+,c_-,r))
    + \h{\ww}^2 P_r \rr^{\eps \h{\ww}}[N(u)]\,, \label{eq:leading3}
\end{align}
where 
\begin{equation} \label{coeff1}
    \begin{aligned}
          \sum_{l,m,n} c_l \overline{c_m} c_n a_{+ l m n} & = |c_+|^2 c_+ a_{+ + + +} + |c_-|^2 c_+ (a_{+ + - -}  + a_{+ - - +})  + c_- \overline{c_+} c_- a_{+ - + -} \\
  & = |c_+|^2 c_+ A_{++} + \left(2 |c_-|^2 c_+ + c_-^2 \overline{c_+}  \right) A_{+-}\,,
    \end{aligned}
\end{equation}
and 
\begin{equation} \label{coeff2}
    \begin{aligned}
          \sum_{l,m,n} c_l \overline{c_m} c_n a_{- l m n} & =  |c_-|^2 c_- a_{- - - -} + |c_+|^2 c_- (a_{- - + +}  + a_{- + + -})  + c_+ \overline{c_-} c_+ a_{- + - +} \\
    & = |c_-|^2 c_- A_{--} + \left(2 |c_+|^2 c_- + c_+^2 \overline{c_-}  \right) A_{+-}\,,     
    \end{aligned}
\end{equation}
and the functions $F(\dd)$ and $R(\dd)$ in \eqref{eq:leading3} are defined by \eqref{expkdcub}. Now, we are interested in the solution $(c_+, c_-, r) \in \C^2 \times H_\sim^2(D)$ to the system \eqref{eq:leading1}--\eqref{eq:leading3} subject to 
\begin{align} \label{eq:constcoeff}  
     |c_+|^2 + |c_-|^2 + \int_D |r|^2 \ud x = \mc{N}\,,
\end{align}
from \eqref{normalconst}.
% We have the equations equivalent to \eqref{eq:nonlinearleading}:
% \begin{align} 
%       \lad_+ c_+ +  \lad_+ \sum_{l,m,n} c_l \overline{c_m} c_n a_{+lmn}  + \l \psi_+, \mathcal{R}(c_+,c_-,r) \r = \lad c_+ \label{eq:leading1} \\ 
%       \lad_- c_- +  \lad_- \sum_{l,m,n} c_l \overline{c_m} c_n a_{-lmn}  + \l \psi_-, \mathcal{R}(c_+,c_-,r) \r = \lad c_-  \label{eq:leading2} \\
%       \left(\lad - \mc{K}_{D_L}\right) [r]  =  \w{P} \mc{K}_{D_L}\left[|\psi|^2 \psi \right] = \w{P} \left( \mathcal{F}(c_+,c_-) + \mathcal{R}(c_+,c_-,r) \right)  \label{eq:leading3}
% \end{align}
The following resolvent estimate of $\kk_D$ is standard. 
\begin{lemma} \label{lem:resolv}
    Let $\h{\ww} \in \C$ such that ${\dist}(\h{\ww}^{-2}, \si(\kk_D)\backslash\{\lad_{\pm}\}) > 0$. Then the operator $1 - \h{\ww}^2 \kk_D$ is invertible on the space $H_\sim^2(D)$ defined in \eqref{anszsol2}. Moreover, there exists a constant $C > 0$ such that given $f \in H_\sim^2(D)$, the following estimate holds, for $u = (1 - \h{\ww}^2 \kk_D)^{-1} [f]$,
    \begin{equation*}
        \norm{u}_{H^2(D)} \le \left(\frac{C}{{\dist}(\h{\ww}^{-2}, \si(\kk_D)\backslash\{\lad_{\pm}\})} + 1\right)\norm{f}_{H^2(D)}\,.
    \end{equation*}
     In particular, define the open set 
    \begin{align} \label{def:omegafre}
        \Omega: = \{\h{\ww} \in \C\,;\ {\dist}(\h{\ww}^{-2}, \si(\kk_D)\backslash\{\lad_{\pm}\}) > d_* \}\,,
    \end{align}
    with constant $d_* := \tfrac{1}{2}  {\rm dist}(\lad_{\pm},\, \sigma(\kk_D)\backslash \{\lad_\pm\})$. It holds that 
      \begin{equation} \label{eq:resolkd}
        \norm{u}_{H^2(D)} \le \left( C \frac{1}{d_*} + 1 \right) \norm{f}_{H^2(D)}\,,\q \text{for $\h{\ww} \in \Omega$ uniformly.}
    \end{equation}
\end{lemma}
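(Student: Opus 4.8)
The plan is to use the spectral theorem for the compact self-adjoint operator $\kk_D$ on $L^2(D)$ together with the fact that $H_\sim^2(D)$ lies inside the spectral subspace $P_rL^2(D)$, first getting the resolvent bound on $L^2(D)$ and then upgrading it to an $H^2(D)$-bound via the mapping property of the Newtonian potential.

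\emph{Step 1: resolvent bound on $L^2(D)$.} Write $\kk_D = \sum_j \lad_j \langle \varphi_j,\cdot\rangle_D\,\varphi_j$, with $\{\varphi_j\}$ a complete orthonormal system of eigenfunctions and $\lad_j>0$ (for $d=3$ the Newtonian potential is positive definite). Since $P_r$ removes precisely the one-dimensional eigenspaces of the simple eigenvalues $\lad_\pm$, the restriction of $\kk_D$ to the invariant subspace $P_rL^2(D)$ is again compact self-adjoint, with $\sigma(\kk_D|_{P_rL^2(D)}) \subseteq \sigma(\kk_D)\setminus\{\lad_\pm\}$: simplicity of $\lad_\pm$ prevents them from reappearing as eigenvalues of the restriction, and $0$ (the only possible accumulation point) is distinct from $\lad_\pm>0$. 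Set $\delta := \dist(\h{\ww}^{-2},\sigma(\kk_D)\setminus\{\lad_\pm\}) > 0$. By the spectral theorem $1-\h{\ww}^2\kk_D$ is invertible on $P_rL^2(D)$ and, using the identity $1-\h{\ww}^2\lad_j = \h{\ww}^2(\h{\ww}^{-2}-\lad_j)$,
\[
\norm{(1-\h{\ww}^2\kk_D)^{-1}}_{P_rL^2(D)\to P_rL^2(D)} \;=\; \sup_{\lad\in\sigma(\kk_D|_{P_rL^2(D)})}\frac{1}{|1-\h{\ww}^2\lad|} \;=\; \frac{1}{|\h{\ww}|^2\,\dist(\h{\ww}^{-2},\sigma(\kk_D|_{P_rL^2(D)}))} \;\le\; \frac{1}{|\h{\ww}|^2\,\delta}\,.
\]

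\emph{Step 2: upgrade to $H^2(D)$ and invertibility on $H_\sim^2(D)$.} Given $f\in H_\sim^2(D)\subseteq P_rL^2(D)$, set $u = (1-\h{\ww}^2\kk_D)^{-1}[f]\in P_rL^2(D)$, so that $u = f + \h{\ww}^2\kk_D[u]$. Since $\kk_D:L^2(D)\to H^2(D)$ is bounded (elliptic regularity, exactly as used for \eqref{eq:helmpotential}), the right-hand side belongs to $H^2(D)$, whence $u\in H^2(D)\cap P_rL^2(D)=H_\sim^2(D)$; thus $1-\h{\ww}^2\kk_D$ is a bijection of $H_\sim^2(D)$. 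Moreover, with $C := \norm{\kk_D}_{L^2(D)\to H^2(D)}$,
\[
\norm{u}_{H^2(D)} \;\le\; \norm{f}_{H^2(D)} + |\h{\ww}|^2\,\norm{\kk_D[u]}_{H^2(D)} \;\le\; \norm{f}_{H^2(D)} + |\h{\ww}|^2 C\,\norm{u}_{L^2(D)} \;\le\; \norm{f}_{H^2(D)} + \frac{C}{\delta}\,\norm{f}_{L^2(D)}\,,
\]
where the key cancellation is that the factor $|\h{\ww}|^2$ picked up from the prefactor $\h{\ww}^2$ exactly absorbs the $|\h{\ww}|^{-2}$ from the Step 1 bound. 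Bounding $\norm{f}_{L^2(D)}\le\norm{f}_{H^2(D)}$ yields the stated estimate, and the uniform version on $\Omega$ in \eqref{def:omegafre} is immediate since there $\delta>d_*$, so $\tfrac{C}{\delta}+1\le\tfrac{C}{d_*}+1$.

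\emph{Main obstacle.} There is no genuine difficulty here — the result is essentially the spectral theorem packaged with elliptic regularity. The two points that require a word of care are the inclusion $\sigma(\kk_D|_{P_rL^2(D)})\subseteq\sigma(\kk_D)\setminus\{\lad_\pm\}$, which rests on $\lad_\pm$ being isolated simple eigenvalues (Krein--Rutman for $\lad_+$, \cref{assp:second} for $\lad_-$), and the bookkeeping of the powers of $\h{\ww}$, namely recognizing $|1-\h{\ww}^2\lad|=|\h{\ww}|^2\,|\h{\ww}^{-2}-\lad|$ and matching it against the $\h{\ww}^2\kk_D[u]$ term so that the final constant is independent of $|\h{\ww}|$.
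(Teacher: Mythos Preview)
Your proof is correct and follows essentially the same approach as the paper: first obtain the $L^2$ resolvent bound via the spectral theorem for $\kk_D$ restricted to $P_rL^2(D)$, then bootstrap to $H^2$ using the mapping property $\kk_D:L^2(D)\to H^2(D)$ and the identity $u=f+\h{\ww}^2\kk_D[u]$, so that the factors of $|\h{\ww}|^2$ cancel. Your version is slightly more explicit about the spectral inclusion $\sigma(\kk_D|_{P_rL^2(D)})\subseteq\sigma(\kk_D)\setminus\{\lad_\pm\}$ and the role of simplicity, but the argument is the same.
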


\begin{proof}
We consider the equation $(1 - \h{\ww}^2 \kk_D)[u] = f$ for $f \in H_\sim^2(D)$. Since $\kk_D$ is compact self-adjoint on $L^2(D)$, we have 
    \begin{equation*}
        \norm{u}_{L^2(D)} \le \frac{|\h{\ww}|^{-2}}{{\dist}(\h{\ww}^{-2}, \si(\kk_D)\backslash\{\lad_{\pm}\})} \norm{f}_{L^2(D)}\,.
    \end{equation*}
    Then it follows that 
    \begin{align*}
        \norm{u}_{H^2(D)} & \le |\h{\ww}|^2 \norm{\kk_D[u]}_{H^2(D)} + \norm{f}_{H^2(D)} \\
    & \le C |\h{\ww}|^2 \norm{u}_{L^2(D)} + \norm{f}_{H^2(D)} \\
    & \le \left(\frac{C}{{\dist}(\h{\ww}^{-2}, \si(\kk_D)\backslash\{\lad_{\pm}\})} + 1\right)\norm{f}_{H^2(D)}\,.
    \end{align*}
    The proof is complete by the above estimates. 
\end{proof}

Similarly to the proof of \cref{thm:smallampl}, we first solve $r \in H_\sim^2(D)$ in terms of parameters $(\h{\ww}, c_+, c_-, \eps)$ via \eqref{eq:leading3}. For this, we define the mapping:
\begin{multline} \label{eq:nondimerf}
    \mc{F}(\h{\ww}, c_+, c_-, r, \eps)
 := r -  
   (\h{\ww}^{-2} - \kk_D)^{-1}  P_r (F(c_+,c_-) + R(c_+,c_-,r))
    \\ - (\h{\ww}^{-2} - \kk_D)^{-1} P_r \rr^{\eps \h{\ww}}[N(u)] 
    : \Omega \times \C \times \C \times H_\sim^2(D) \times \R_+\mapsto H_\sim^2(D)\,,
\end{multline}
where $\Omega$ is the set defined in \eqref{def:omegafre}. It is clear that the solution to \eqref{eq:leading3} is given by the zero set of $\mc{F}(\dd)$. We now compute 
\begin{align*}
    \mc{F}(\h{\ww},0,0,0,\eps) = 0\,, \q \p_r\mc{F}(\h{\ww},0,0,0,\eps) = I - (\h{\ww}^{-2} - \kk_D)^{-1} P_r \rr^{\eps \h{\ww}}\,.
\end{align*}
For sufficiently small $\eps$ such that 
\begin{align*} 
% \label{eq:condeps}
    \norm{(\h{\ww}^{-2} - \kk_D)^{-1} P_r \rr^{\eps \h{\ww}}}_{H^2(D) \to H^2(D)} = \Or\left(\frac{\eps}{d_*} + \eps \right) < 1\,,
\end{align*}
by the resolvent estimate \eqref{eq:resolkd}, the linear mapping $\p_r\mc{F}(\h{\ww},0,0,0,\eps)$ is invertible. Then the implicit function theorem gives the function $r(\dd) \in H_\sim^2(D)$ locally in a neighborhood of $(\h{\ww},0,0,\eps)$. Next, we show that such a neighborhood can be chosen independently of $(\h{\ww},\eps)$. 

\begin{proposition} \label{prop:implicitr}
Let the subset $\Omega \subset \C$ be defined in \eqref{def:omegafre} with $$d_* = \tfrac{1}{2}  {\rm dist}(\lad_{\pm},\, \sigma(\kk_D)\backslash \{\lad_\pm\}).$$ Then, there exists constants $c_* > 0$ and $\eps_* > 0$,  depending on $d_*$, such that for 
\begin{align*}
    \h{\ww} \in \Omega\,, \q |c_+| + |c_-| < c_*\,,\q \eps < \eps_*\,,
\end{align*}
we can uniquely solve $r(\h{\ww},c_+,c_-,\eps)$ from \eqref{eq:leading3}, which is a real-analytic mapping and satisfies the estimate: for some $C > 0$ depending on $d_*$, 
\begin{align} \label{est:rforu}
  \norm{r(\h{\ww},c_+,c_-,\eps)}_{H^2(D)} \le C \left(|c_+|^2 + |c_-|^2 + \eps \right) (|c_+| + |c_-|)\,.
\end{align}
% \begin{align*}
%     & {\dist}(\ww^{-2}, \si(\kk_D)\backslash\{\lad_{\pm}\}) > d_* \\
%     & \norm{u = c_+ \vp_+ + c_- \vp_- + r}_{H^2} < n_* \\
%     & |c_+| + |c_-| < r_*
% \end{align*}
% the solution $r$ to \eqref{eq:leading3} is given by the real-analytic mapping $r(\ww,c_+,c_-)$ with the estimate
% \begin{align} \label{est:rforu}
%   \norm{r}_{H^2} \le C \left((|c_+| + |c_-|)^2 + \eps \right) (|c_+| + |c_-|)
% \end{align}
\end{proposition}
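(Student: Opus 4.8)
The plan is to solve \eqref{eq:leading3} by a Banach fixed-point argument whose constants are controlled \emph{uniformly} in $(\h\ww,\eps)$; this is exactly the quantitative content of the implicit function theorem and will deliver the parameter-independent uniqueness and the estimate \eqref{est:rforu} simultaneously. Solving \eqref{eq:leading3} in $H_\sim^2(D)$ amounts to finding a fixed point of
\[
\Phi_{\h\ww,c_+,c_-,\eps}(r) := (\h\ww^{-2}-\kk_D)^{-1}P_r\bigl(F(c_+,c_-)+R(c_+,c_-,r)\bigr) + (\h\ww^{-2}-\kk_D)^{-1}P_r\,\rr^{\eps\h\ww}[N(u)]\,,
\]
with $u = c_+\vp_+ + c_-\vp_- + r$ as in \eqref{eq:solansa}, that is, of a zero of $\mc{F}$ in \eqref{eq:nondimerf}. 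First I would record that the set $\Omega$ in \eqref{def:omegafre} is \emph{bounded}: since $0\in\si(\kk_D)$ and $0\neq\lad_\pm$, we have $\dist(\h\ww^{-2},\si(\kk_D)\backslash\{\lad_\pm\})\le|\h\ww|^{-2}$, so $\h\ww\in\Omega$ forces $|\h\ww|\le M$ for some $M=M(d_*)$. Combining this with \cref{lem:resolv} and the identity $(\h\ww^{-2}-\kk_D)^{-1}=\h\ww^2(1-\h\ww^2\kk_D)^{-1}$ then yields a constant $\widetilde C_0=\widetilde C_0(d_*)$ with $\norm{(\h\ww^{-2}-\kk_D)^{-1}}_{H^2(D)\to H^2(D)}\le\widetilde C_0$ for all $\h\ww\in\Omega$.

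Next I would establish the multilinear bounds. Writing $\kappa:=|c_+|+|c_-|$ and using that $\kk_D\colon L^2(D)\to H^2(D)$ is bounded, the Sobolev embedding $H^2(D)\hookrightarrow C(\wb{D})$, and \eqref{eq:priestcub}, one gets, for $\norm{r}_{H^2(D)}\le\kappa$: $\norm{F(c_+,c_-)}_{H^2(D)}\le C\kappa^3$; since every monomial of $|u|^2u-|c_+\vp_++c_-\vp_-|^2(c_+\vp_++c_-\vp_-)$ carries at least one factor $r$, also $\norm{R(c_+,c_-,r)}_{H^2(D)}\le C\kappa^2\norm{r}_{H^2(D)}$ together with the matching Lipschitz bound $\norm{R(c_+,c_-,r_1)-R(c_+,c_-,r_2)}_{H^2(D)}\le C\kappa^2\norm{r_1-r_2}_{H^2(D)}$; and, since $d=3$, the norm-convergent expansion \eqref{eq:expop3d} with $|\h\ww|\le M$ gives $\norm{\rr^{\eps\h\ww}}_{L^2(D)\to H^2(D)}\le C\eps$, whence $\norm{(\h\ww^{-2}-\kk_D)^{-1}P_r\rr^{\eps\h\ww}[N(u)]}_{H^2(D)}\le C\eps\,(\kappa+\norm{r}_{H^2(D)})$ with the corresponding Lipschitz estimate $\le C\eps\norm{r_1-r_2}_{H^2(D)}$.

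With these in hand I would set $K:=4\widetilde C_0 C$ and $\rho:=K(\kappa^2+\eps)\kappa$, and pick $c_*,\eps_*$ (depending only on $d_*$) small enough that $K(c_*^2+\eps_*)\le1$ — which forces $\rho\le\kappa$ and legitimizes the bounds above — and that the contraction factor $\widetilde C_0 C(c_*^2+\eps_*)$ is $\le\tfrac12$. The previous estimates then show that, uniformly for $\h\ww\in\Omega$, $\kappa<c_*$, $\eps<\eps_*$, the map $\Phi_{\h\ww,c_+,c_-,\eps}$ sends $\{r\in H_\sim^2(D)\colon\norm{r}_{H^2(D)}\le\rho\}$ into itself and is a $\tfrac12$-contraction there, so Banach's theorem yields a unique fixed point $r(\h\ww,c_+,c_-,\eps)$ with $\norm{r}_{H^2(D)}\le\rho\le C(|c_+|^2+|c_-|^2+\eps)(|c_+|+|c_-|)$, \textit{i.e.}\ \eqref{est:rforu}. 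The same estimates on a \emph{fixed} small ball $B_{\rho_0}\subset H_\sim^2(D)$ (shrinking $c_*,\eps_*$ if needed) show $\Phi$ contracts $B_{\rho_0}$ as well, so the solution is unique in a neighborhood independent of $(\h\ww,\eps)$. For the real analyticity I would note that $\mc{F}$ in \eqref{eq:nondimerf} is real-analytic jointly in $(\h\ww,c_+,c_-,r,\eps)$ — $\h\ww\mapsto(\h\ww^{-2}-\kk_D)^{-1}$ is analytic on $\Omega$, $F$ and $R$ are polynomial in $(c_\pm,r)$, $N(u)=u+|u|^2u$ is polynomial in $(u,\wb u)$, and $(\eps,\h\ww)\mapsto\rr^{\eps\h\ww}$ is real-analytic by \eqref{eq:expop3d} — while $\p_r\mc{F}=I-(\h\ww^{-2}-\kk_D)^{-1}P_r\bigl(\p_r R+\p_u(\rr^{\eps\h\ww}N(u))\bigr)$ differs from $I$ by an operator of norm $\le\widetilde C_0 C(\kappa^2+\eps)<1$ at the solution; the analytic implicit function theorem in Banach spaces (\textit{cf.}\ \cite{nirenberg1974topics}) then applies at each point, and the uniform uniqueness patches the local analytic branches into a single real-analytic map on $\Omega\times\{|c_+|+|c_-|<c_*\}\times\{\eps<\eps_*\}$.

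The main obstacle will be the \emph{uniformity} of the constants in $(\h\ww,\eps)$, and it rests on exactly two observations: that $\Omega$ is bounded — so that $(\h\ww^{-2}-\kk_D)^{-1}$, and not merely $(1-\h\ww^2\kk_D)^{-1}$, is uniformly bounded there, via \cref{lem:resolv} — and that $\rr^{\eps\h\ww}=\Or(\eps)$ in the $L^2(D)\to H^2(D)$ operator norm uniformly for $\h\ww\in\Omega$, which is supplied by the norm-convergent expansion \eqref{eq:expop3d}. Granting these, the rest is routine multilinear bookkeeping through $\kk_D\colon L^2(D)\to H^2(D)$ and $H^2(D)\hookrightarrow C(\wb{D})$, together with the standard uniform-contraction form of the implicit function theorem.
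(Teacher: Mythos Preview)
Your proposal is correct and takes essentially the same approach as the paper: a uniform contraction/implicit-function argument applied to $\mc{F}$ in \eqref{eq:nondimerf}, with the multilinear terms controlled via $\kk_D:L^2\to H^2$, $H^2\hookrightarrow C(\wb D)$, and $\rr^{\eps\h\ww}=\Or(\eps)$. The paper phrases the contraction step slightly differently---it estimates $\p_r\mc{F}-I$ and then uses the Lipschitz bound $\|r\|\le(1-\kappa_{\rm Lip})^{-1}\|\w{\mc{F}}(\cdot,0,\cdot)\|$---but this is equivalent to your ball-mapping argument; your observation that $\Omega$ is bounded (via $0\in\si(\kk_D)\setminus\{\lad_\pm\}$) is left implicit in the paper but is indeed what makes the constants uniform in $\h\ww$.
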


\begin{proof}
We continue the above arguments and consider the nonlinear mapping $\mc{F}$ in \eqref{eq:nondimerf}. 
It suffices to estimate $(\h{\ww}, c_+, c_-, r, \eps)$ such that $\p_r \mc{F}(\dd)$ is invertible. We compute 
\begin{align*}
   \p_r\mc{F} = I -
   (\h{\ww}^{-2} - \kk_D)^{-1}  P_r \kk_D \left[\p_r |u|^2 u \right]
    -  (\h{\ww}^{-2} - \kk_D)^{-1} P_r \rr^{\eps \h{\ww}} \left[I + \p_r |u|^2 u \right].
\end{align*}
For $u = c_+ \vp_+ + c_- \vp_- + r$ as in \eqref{eq:solansa}, it is easy to see  
\begin{align} \label{auxeq:drcubu}
   \left\|\p_r (|u|^2 u)\right\|_{L^2(D) \to L^2(D)} \le 3 \norm{u}_{L^\infty(D)}^2 \le C \norm{u}_{H^2(D)}^2\,,
\end{align}
and then, it follows from \eqref{eq:resolkd} that 
\begin{align} \label{keyest1}
  \left\|\p_r \mc{F} - I \right\|_{H^2(D) \to H^2(D)} \le C\left(\frac{1}{d_*} + 1 \right) \left(\norm{u}^2_{H^2(D)} + \eps\right),
\end{align}
using \eqref{auxeq:drcubu} and $\norm{\p_r (|u|^2 u)}_{H^2(D) \to L^2(D)} \le \norm{\p_r (|u|^2 u)}_{L^2(D) \to L^2(D)}$, where $C > 0$ is a generic constant. Then, for sufficiently small $\norm{u}_{H^2(D)}$ and $\eps$, the map $\p_r \mc{F}$ is invertible on $H_\sim^2(D)$ by \eqref{keyest1}. In this case, it follows that 
\begin{align*}
    \w{\mc{F}}(\h{\ww},c_+, c_-, r, \eps) :=   \mc{F}(\h{\ww},c_+, c_-, r, \eps) - r 
\end{align*}
is Lipschitz in $r \in H_\sim^2(D)$ with Lipschitz constant $0 < \kappa_{\rm Lip} < 1$, uniformly in small $\norm{u}_{H^2(D)}$ and $\eps$. We next use \eqref{eq:leading3}, namely, $\mc{F}(\h{\ww},c_+, c_-, r, \eps) = 0$ and find 
\begin{multline*}
  \norm{r}_{H^2(D)} - \norm{\w{\mc{F}}(\h{\ww},c_+, c_-, 0, \eps)}_{H^2(D)} \le \norm{- r - \w{\mc{F}}(\h{\ww},c_+, c_-, 0, \eps)}_{H^2(D)}  \\ = \norm{\w{\mc{F}}(\h{\ww},c_+, c_-, r, \eps) - \w{\mc{F}}(\h{\ww},c_+, c_-, 0, \eps)}_{H^2(D)} \le \kappa_{\rm Lip} \norm{r}_{H^2(D)}\,,
\end{multline*}
and hence, for some $C > 0$,
\begin{equation} \label{auxestr}
    \begin{aligned}
         \norm{r}_{H^2(D)} & \le \frac{1}{1 - \kappa_{\rm Lip}} \norm{\w{\mc{F}}(\h{\ww}, c_+, c_-, 0, \eps)}_{H^2(D)} \\ 
  & \le C \left(\frac{1}{d_*} + 1 \right) \left(\norm{c_+ \vp_+ + c_- \vp_-}_{H^2(D)}^2 + \eps \right) \norm{c_+ \vp_+ + c_- \vp_-}_{L^2(D)}\,,
    \end{aligned}
\end{equation}
by \eqref{eq:resolkd} and \eqref{eq:nondimerf}, as well as \eqref{eq:priestcub},
which implies 
\begin{equation} \label{keyest2}
    \begin{aligned}
        \norm{u}_{H^2(D)} & \le \norm{c_+ \vp_+ + c_- \vp_-}_{H^2(D)} + \norm{r}_{H^2(D)}  \\
    & \le C \left(1 + \left(\frac{1}{d_*} + 1 \right) \left(|c_+|^2 + |c_-|^2 + \eps\right)\right) \left(|c_+| + |c_-|\right)\,.
    \end{aligned}
\end{equation}
Therefore, we can let $|c_+| + |c_-|$ and $\eps$ be small enough such that $\norm{u}_{H^2(D)}$ is sufficiently small and $\p_r \mc{F}$ is invertible. The implicit
function theorem guarantees the existence of $r$. The estimate \eqref{est:rforu} simply follows from \eqref{auxestr}. The proof is complete.
\end{proof}

By the uniqueness of $r(\dd)$ and $S^1\times \mathbb{Z}_2$ equivariance of \eqref{eq:nonlineardimer} shown in \cref{lem:invarphaseu,lem:z2equivar}, we have the following properties of the function $r(\dd)$. 

\begin{corollary} \label{coro:properr}
Under the assumptions of \cref{prop:implicitr}, the function $r(\h{\ww},c_+,c_-,\eps)$ satisfies 
\begin{equation} \label{eq:equivar}
    r(\h{\ww}, e^{i \theta} c_+, e^{i \theta} c_-, \eps) =  e^{i \theta} r(\h{\ww}, c_+, c_-, \eps)\,,\q \theta \in \R\,,
\end{equation}
and  
\begin{equation} \label{eq:conjugate}
    \overline{r\left(\h{\ww}, c_+,c_-, \eps \right)} = r\left(\overline{\h{\ww}}, \overline{c_+},\overline{c_-}, \eps \right).
\end{equation}
Moreover, $r(\h{\ww}, c_+, 0, \eps)$ is even, while $r(\h{\ww}, 0, c_-, \eps)$ is odd, namely, 
\begin{align} \label{eq:symmrfun}
    \rr[r(\h{\ww}, c_+, 0, \eps)] = r(\h{\ww}, c_+, 0, \eps)\,,\q \rr[r(\h{\ww}, 0, c_-, \eps)] = - r(\h{\ww}, 0, c_-, \eps)\,.
\end{align}
% If $\eps = 0$, $c_\pm$ and $\ww$ are real, then $r(\ww, c_\pm)$ is real. 
\end{corollary}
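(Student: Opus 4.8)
The plan is to derive all three identities from the \emph{uniqueness} part of \cref{prop:implicitr} by a standard equivariance argument. For each of the three symmetry operations in play --- the $S^1$ action $u\mapsto e^{i\theta}u$, complex conjugation, and the reflection $\rr$ --- I would apply the operation to the defining equation \eqref{eq:leading3} (equivalently $\mc{F}(\h{\ww},c_+,c_-,r,\eps)=0$, with $\mc{F}$ as in \eqref{eq:nondimerf}), observe that the transformed function solves \eqref{eq:leading3} with correspondingly transformed parameters, check that those parameters again lie in the admissible region $\{\h{\ww}\in\Omega,\ |c_+|+|c_-|<c_*,\ \eps<\eps_*\}$, and conclude by uniqueness that the transformed function equals $r$ evaluated at the transformed parameters. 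The region is stable under each operation: $|e^{i\theta}c_\pm|=|\overline{c_\pm}|=|c_\pm|$; $\Omega$ is invariant under $\h{\ww}\mapsto\overline{\h{\ww}}$ because $\si(\kk_D)\subset\R$; and $\rr$ is unitary on $H^2(D)$ and preserves $H_\sim^2(D)$.

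For the $S^1$ equivariance \eqref{eq:equivar}, I would note that under $c_\pm\mapsto e^{i\theta}c_\pm$ and $r\mapsto e^{i\theta}r$ the ansatz $u=c_+\vp_++c_-\vp_-+r$ in \eqref{eq:solansa} scales to $e^{i\theta}u$; that $N$ is $S^1$ equivariant (\cref{lem:invarphaseu}) while $\kk_D$ and $\rr^{\eps\h{\ww}}$ are $\C$-linear; and that the cubic terms $F(c_+,c_-)=\sum_{l,m,n}c_l\overline{c_m}c_n\,\kk_D[\vp_l\overline{\vp_m}\vp_n]$ and $R=\kk_D[|u|^2u]-F$ each scale by a single factor $e^{i\theta}$. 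Hence $e^{i\theta}r(\h{\ww},c_+,c_-,\eps)$ solves \eqref{eq:leading3} at $(\h{\ww},e^{i\theta}c_+,e^{i\theta}c_-,\eps)$, and uniqueness gives \eqref{eq:equivar}. For \eqref{eq:conjugate}, I would conjugate \eqref{eq:leading3}, using that $\kk_D$ has a real kernel and the $\vp_\pm$ are real (so $\overline{F(c_+,c_-)}=F(\overline{c_+},\overline{c_-})$ and similarly for $R$), that $N(\overline u)=\overline{N(u)}$ (\cref{lem:invarphaseu}), and the conjugation behaviour of $\kk_D^{\eps\h{\ww}}$; this identifies $\overline{r(\h{\ww},c_+,c_-,\eps)}$ as a solution of \eqref{eq:leading3} at $(\overline{\h{\ww}},\overline{c_+},\overline{c_-},\eps)$, and uniqueness gives \eqref{eq:conjugate}.

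For the reflection symmetries \eqref{eq:symmrfun}, I would apply $\rr$ to \eqref{eq:leading3} and use \cref{lem:z2equivar}: $\rr$ commutes with $\kk_D^\ww$ (hence with $\kk_D$ and with $\rr^{\eps\h{\ww}}$), $\rr N(u)=N(\rr u)$, $\rr\vp_\pm=\pm\vp_\pm$, and $\rr$ commutes with the projections $P_+,P_-,P_r$. When $c_-=0$, from $u=c_+\vp_++r$ one gets $\rr u=c_+\vp_++\rr r$, and $\rr F(c_+,0)=F(c_+,0)$ since $\rr(\vp_+^3)=\vp_+^3$, so $\rr r$ solves \eqref{eq:leading3} at the same parameters $(\h{\ww},c_+,0,\eps)$; uniqueness then forces $\rr r=r$, i.e.\ evenness. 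When $c_+=0$, from $u=c_-\vp_-+r$ one gets $\rr u=-c_-\vp_-+\rr r$, and $\rr F(0,c_-)=F(0,-c_-)$ since $\rr(\vp_-^3)=-\vp_-^3$, so $\rr r$ solves \eqref{eq:leading3} at $(\h{\ww},0,-c_-,\eps)$; uniqueness gives $\rr\,r(\h{\ww},0,c_-,\eps)=r(\h{\ww},0,-c_-,\eps)$, and combining this with \eqref{eq:equivar} at $\theta=\pi$, namely $r(\h{\ww},0,-c_-,\eps)=-r(\h{\ww},0,c_-,\eps)$, yields oddness.

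The argument is conceptually light; the one point that needs care is the bookkeeping of how $\rr$, conjugation, and the phase $e^{i\theta}$ act on the three building blocks of \eqref{eq:leading3} --- the linear operator $1-\h{\ww}^2\kk_D$, the local cubic pair $F,R$ (where one uses that the $\vp_\pm$ are real eigenfunctions of $\rr$ with eigenvalues $\pm1$), and the frequency-dependent remainder $\rr^{\eps\h{\ww}}[N(u)]$ --- together with remembering, in the antisymmetric case, to chain the $\mathbb{Z}_2$ statement with the already-established $S^1$ equivariance \eqref{eq:equivar}.
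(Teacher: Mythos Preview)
Your proposal is correct and follows essentially the same approach as the paper: exploit the uniqueness statement of \cref{prop:implicitr} together with the $S^1\times\mathbb{Z}_2$ equivariance and conjugation symmetry of the defining equation \eqref{eq:leading3}. Your treatment is in fact slightly more detailed than the paper's (you verify that $\Omega$ is conjugation-invariant and spell out the odd case via the chain $\rr r(\h{\ww},0,c_-,\eps)=r(\h{\ww},0,-c_-,\eps)=-r(\h{\ww},0,c_-,\eps)$, whereas the paper leaves the odd case to the reader), but the ideas are identical.
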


\begin{proof}
The property \eqref{eq:equivar} is by noting that if $(c_+,c_-,r)$ solves \eqref{eq:leading3}, then so does $e^{i\theta} (c_+,c_-,r)$ for any $\theta \in \R$. Taking the complex conjugate of \eqref{eq:leading3} with the uniqueness of $r$ readily gives \eqref{eq:conjugate}. We only show $\rr[r(\h{\ww}, c_+, 0, \eps)] = r(\h{\ww}, c_+, 0, \eps)$, and the other one can be proved in the same way. We apply the reflection $\rr[\dd]$ to \eqref{eq:leading3} with $c_- = 0$, and find 
\begin{align*}
     \rr[r] = \hat{\omega}^2 P_r \mathcal{K}_D^{\epsilon \hat{\omega}} \rr \left[N(u)\right],
\end{align*}
due to $\rr P_r \kk_D^{\eps \h{\ww}} = P_r \kk_D^{\eps \h{\ww}} \rr$, where $u = c_+ \vp_+ + r$ and 
\begin{align*}
   \rr[N(u)] = N(c_+ \vp_+ + \rr[r])\,.
\end{align*}
Then again, the uniqueness of $r$ yields $\rr[r] = r$ when $c_- = 0$. 
\end{proof}

Substituting $r(\h{\ww},c_+,c_-,\eps)$ solved from \eqref{eq:leading3} into equations \eqref{eq:leading1},\eqref{eq:leading2}, and \eqref{eq:constcoeff} gives the closed equations for $(\h{\ww}, c_+,c_-)$. Before formulating it (see \eqref{eq:leading11}, \eqref{eq:leading22}, and \eqref{normaleq} below), we analyze the terms 
$\l \vp_\pm, R(c_+,c_-,r) \r_D$ and $\l \vp_\pm, \rr^{\eps \h{\ww}}[N(u)] \r_D$ in equations \eqref{eq:leading1} and \eqref{eq:leading2} with $r = r(\h{\ww},c_+,c_-,\eps)$. 

Note from the symmetry property \eqref{eq:symmrfun} of $r(\dd)$ and the definition
\begin{align} \label{def:rcpm}
  R(c_+,c_-,r) = \kk_{D}\left[|u|^2 u \right] - \kk_D\left[|c_+ \vp_+ + c_- \vp_-|^2 (c_+ \vp_+ + c_- \vp_-)\right] 
\end{align}
that the function $R(c_+,c_-,r)|_{c_+ = 0}$ is odd, and hence $\l \vp_+, R(c_+, c_-, r)|_{c_+ = 0} \r_D = 0$. Similarly, we have $\l\vp_-, R(c_+, c_-, r)|_{c_- = 0} \r_D = 0$. Then,  by Taylor expansion, there exist complex-valued real analytic functions $f_+$ and $f_-$ such that 
\begin{align} \label{def:fpm}
    \langle \vp_\pm, R(c_+, c_-, r) \rangle_D  = c_\pm f_\pm(\h{\ww}, c_+, c_-, \eps)\,,
    % \,,\q  \langle \vp_-, R(c_+, c_-, r) \rangle = c_- f_-(\ww, c_+, c_-)     
\end{align}
with the following properties observed from \cref{coro:properr}: for $\theta \in \R$, 
\begin{align} \label{invar:fpm}
       f_\pm(\h{\ww}, e^{i\theta} c_+, e^{i\theta} c_-, \eps) = f_\pm(\h{\ww}, c_+, c_-, \eps)\,,\q 
      \overline{f_\pm(\h{\ww}, c_+, c_-, \eps)} = f_\pm(\overline{\h{\ww}}, \overline{c_+}, \overline{c_-}, \eps)\,,
\end{align}
and 
\begin{align} \label{est:fpm}
   |f_\pm(\h{\ww}, c_+, c_-, \eps)| \le C \left(|c_+|^2 + |c_-|^2 + \eps \right) \left(|c_+|^2 + |c_-|^2 \right),
\end{align}
by \eqref{est:rforu}, \eqref{def:rcpm}, and \eqref{def:fpm}. Similarly, by \eqref{eq:symmrfun}, the function $N(u)|_{c_+ = 0}$ is odd while $N(u)|_{c_- = 0}$ is even. It follows that 
 $$ \l \vp_\pm, \rr^{\eps \h{\ww}}[N(u)]|_{c_\pm = 0} \r_D = 0\,,$$
which defines functions $g_\pm$ by 
\begin{align} \label{def:gpm}
  \l \vp_\pm, \rr^{\eps \h{\ww}}[N(u)] \r_D  = c_\pm g_\pm(\h{\ww}, c_+, c_-, \eps)\,.
\end{align}
Moreover, we have 
\begin{align} \label{invar:gpm}
       g_\pm(\h{\ww}, e^{i\theta} c_+, e^{i\theta} c_-, \eps) = g_\pm(\h{\ww}, c_+, c_-, \eps)\,,\q 
      \overline{g_\pm(\h{\ww}, c_+, c_-, \eps)} = g_\pm(\overline{\h{\ww}}, \overline{c_+}, \overline{c_-}, \eps)\,,
\end{align}
and 
\begin{align} \label{est:gpm}
   g_\pm(\h{\ww}, c_+, c_-, \eps) = \Or(\eps)\,.
\end{align}
% and recalling 
% \begin{align*}
%   R(c_+,c_-,r) = \kk_{D}\left[|u|^2 u \right] - \kk_D\left[|c_+ \vp_+ + c_- \vp_-|^2 (c_+ \vp_+ + c_- \vp_-)\right].  
% \end{align*}

The following lemmas are crucial for the following discussion. Due to the $S^1$ equivariance in \eqref{eq:equivar}, we often, without loss of generality, assume either $c_+ \in \mathbb{R} $ or $c_- \in \mathbb{R}$ during the analysis.

\begin{lemma} \label{lem:symderivr}
Letting $c_- \in \R$, the derivative $\p_{c_-}^j r(\h{\ww},c_+,0,\eps)$, $j \ge 0$, is antisymmetric (odd) for odd $j$, and it is symmetric (even) for even $j$. Similarly, let $c_+ \in \R$, and then the derivative $\p_{c_+}^j r(\h{\ww},0, c_-, \eps)$, $j \ge 0$, is odd for even $j$, and it is even for odd $j$. 
\end{lemma}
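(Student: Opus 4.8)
The plan is to derive two $\mathbb{Z}_2$-type covariance identities for the implicitly defined map $r(\h{\ww},c_+,c_-,\eps)$ from \cref{prop:implicitr} and then differentiate them in the real parameters. First, apply the reflection $\mc{R}$ to the defining equation \eqref{eq:leading3} (equivalently, to the fixed-point equation $\mc{F}(\dd)=0$ in \eqref{eq:nondimerf}). Since $\mc{R}$ commutes with $\kk_D$, $\kk_D^{\eps\h{\ww}}$, and hence $\rr^{\eps\h{\ww}}$ by \cref{lem:z2equivar}, commutes with $P_r$ because $\mc{R}$ is self-adjoint with $\mc{R}\vp_\pm=\pm\vp_\pm$ (so $\mc{R}P_\pm=P_\pm\mc{R}$), and satisfies $\mc{R}[N(u)]=N(\mc{R}[u])$ with $\mc{R}[c_+\vp_++c_-\vp_-+r]=c_+\vp_+-c_-\vp_-+\mc{R}[r]$, the function $\mc{R}[r(\h{\ww},c_+,c_-,\eps)]$ solves \eqref{eq:leading3} with $c_-$ replaced by $-c_-$. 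By the uniqueness part of \cref{prop:implicitr}, valid on the sign-symmetric domain $|c_+|+|c_-|<c_*$, $\eps<\eps_*$,
\[
  \mc{R}\big[r(\h{\ww},c_+,c_-,\eps)\big]=r(\h{\ww},c_+,-c_-,\eps)\,,
\]
which extends \eqref{eq:symmrfun} to arbitrary $(c_+,c_-)$ in that domain.

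Second, taking $\theta=\pi$ in the $S^1$-equivariance \eqref{eq:equivar} yields $r(\h{\ww},-c_+,-c_-,\eps)=-r(\h{\ww},c_+,c_-,\eps)$; composing this with the reflection identity produces
\[
  r(\h{\ww},-c_+,c_-,\eps)=-\,\mc{R}\big[r(\h{\ww},c_+,c_-,\eps)\big]\,.
\]
For the first assertion, fix $c_+$ and $\eps$, restrict to $c_-\in\R$, and differentiate the reflection identity $j$ times in $c_-$. As $\mc{R}$ is a bounded (indeed unitary) operator on $H^2(D)$, it commutes with $\p_{c_-}^j$, while the chain rule on the right-hand side produces the factor $(-1)^j$; evaluating at $c_-=0$ gives $\mc{R}\big[\p_{c_-}^j r(\h{\ww},c_+,0,\eps)\big]=(-1)^j\,\p_{c_-}^j r(\h{\ww},c_+,0,\eps)$, so $\p_{c_-}^j r(\h{\ww},c_+,0,\eps)$ is symmetric for even $j$ and antisymmetric for odd $j$.

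For the second assertion, fix $c_-$ and $\eps$, restrict to $c_+\in\R$, and differentiate the composite identity $j$ times in $c_+$; the left-hand side acquires $(-1)^j$, so at $c_+=0$ we obtain $(-1)^j\,\p_{c_+}^j r(\h{\ww},0,c_-,\eps)=-\,\mc{R}\big[\p_{c_+}^j r(\h{\ww},0,c_-,\eps)\big]$, i.e. $\mc{R}\big[\p_{c_+}^j r(\h{\ww},0,c_-,\eps)\big]=(-1)^{j+1}\,\p_{c_+}^j r(\h{\ww},0,c_-,\eps)$. Hence $\p_{c_+}^j r(\h{\ww},0,c_-,\eps)$ is antisymmetric for even $j$ and symmetric for odd $j$, as claimed; the case $j=0$ recovers \eqref{eq:symmrfun}.

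The two points requiring care are the validity of the covariance identities, which rests solely on the uniqueness statement of \cref{prop:implicitr} together with the commutation relations recorded above, and the legitimacy of differentiating in the real variables $c_\pm$, which is guaranteed by the real-analyticity of $r$ asserted in \cref{prop:implicitr}. I do not expect a genuine obstacle here; the only place an error could creep in is the chain-rule sign bookkeeping when differentiating $r(\h{\ww},-c_+,c_-,\eps)$ and $r(\h{\ww},c_+,-c_-,\eps)$, which is why I flag it. Conceptually the proof is simply the full-Taylor-jet version of the argument already used for \eqref{eq:symmrfun} in the proof of \cref{coro:properr}.
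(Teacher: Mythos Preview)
Your proof is correct and takes a genuinely different, more conceptual route than the paper's. The paper proceeds by induction on $j$: it differentiates the defining equation \eqref{eq:leading3} $j$ times in $c_-$ to obtain a linear equation \eqref{eq:generalderivj} for $\p_{c_-}^j r|_{c_-=0}$, then argues case by case (even/odd $J+1$) that the inhomogeneity inherits the right parity from the Leibniz expansion \eqref{eq:genrucub} and the inductive hypothesis, whence uniqueness of the solution forces the parity of $\p_{c_-}^{J+1} r|_{c_-=0}$.

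Your argument instead establishes once and for all the global covariance law $\mc{R}[r(\h{\ww},c_+,c_-,\eps)]=r(\h{\ww},c_+,-c_-,\eps)$ (and its companion obtained via $\theta=\pi$ in \eqref{eq:equivar}), valid on the full domain $|c_+|+|c_-|<c_*$, and then simply differentiates; the induction and the combinatorics of \eqref{eq:genrucub} are replaced by a single chain-rule sign. This is shorter and makes transparent that the parity pattern is a direct shadow of the $\mathbb{Z}_2$-equivariance of the problem, at the modest cost of needing the covariance identity for \emph{all} admissible $c_-$ rather than only at $c_-=0$ (which \cref{prop:implicitr} indeed provides). The paper's approach, by contrast, stays closer to the equation and would adapt more readily to situations where a clean global covariance is unavailable.
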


\begin{proof}
We only prove the first statement. We consider \eqref{eq:leading3} with $r = r(\h{\ww},c_+,c_-,\eps)$, $c_- \in \R$.  Taking the derivatives of the equation in $c_-$ gives, for $j \ge 0$,
\begin{align} \label{eq:generalderivj}
    \p_{c_-}^j r\Big|_{c_- = 0} = \h{\ww}^2 P_r \kk_D^{\eps \h{\ww}}\left[ \p_{c_-}^j u + \p_{c_-}^j |u|^2 u \right] \Big|_{c_- = 0} 
\end{align}
that can uniquely determine $ \p_{c_-}^j r|_{c_- = 0}$. Here 
\begin{align} \label{eq:genrucub}
    \p_{c_-}^j |u|^2 u = \sum_{\substack{\alpha + \beta + \gamma = j \\ \alpha, \beta, \gamma \ge 0}} \frac{j!}{\alpha! \beta! \gamma!} \p_{c_-}^\alpha u\, \overline{ \p_{c_-}^\beta u}\,  \p_{c_-}^\gamma u \,.
\end{align}
For $j = 0$, the fact that $r(\h{\ww},c_+,0,\eps)$ is even has been proved in \cref{coro:properr}. For $j = 1$, we find 
\begin{align} \label{eq:1stdevr}
      \p_{c_-} r|_{c_- = 0} = \h{\ww}^2 P_r \kk_D^{\eps \h{\ww}}\left[\vp_-  + \p_{c_-} r  + \p_{c_-} |u|^2 u \right]|_{c_- = 0}\,,
\end{align}
from \eqref{eq:generalderivj}, where at $c_- = 0$, 
\begin{equation} \label{symarg}
    \begin{aligned}
        \p_{c_-} |u|^2 u & = 2 |u|^2 (\p_{c_-} u)  + u^2 \overline{\p_{c_-} u} \\
& = \text{(even func.) $\dd$ (odd func.$+ \p_{c_-} r$)} + \text{(even func.) $\dd$ (odd func.$+ \overline{\p_{c_-} r}$)} \\
& = \text{odd func.} + \text{(even func.) $\dd$ $ \p_{c_-} r$} + \text{(even func.) $\dd$ $ \overline{\p_{c_-} r}$}\,,
    \end{aligned}
\end{equation}
since $u = c_+ \vp_+ + r(\h{\ww},c_+,0,\eps)$ is even by \eqref{eq:symmrfun}, and $\p_{c_-} u = \vp_- + \p_{c_-} r$. Noting that $$\rr P_r \kk_D^{\eps \h{\ww}} = P_r \kk_D^{\eps \h{\ww}} \rr$$ and that $P_r \kk_D^{\eps \h{\ww}}$ preserves the symmetry, we can derive, by \eqref{eq:1stdevr} and \eqref{symarg}, 
\begin{align} \label{symarg2}
      \p_{c_-} r|_{c_- = 0} = \text{odd func.} + \h{\ww}^2 P_r \kk_D^{\eps \h{\ww}}\left[\text{(even func.) $\dd$ $ \p_{c_-} r$} + \text{(even func.) $\dd$ $ \overline{\p_{c_-} r}$} \right]\,.
\end{align}
Therefore, the uniqueness of solution readily implies that $\p_{c_-} r|_{c_- = 0}$ is odd. 

Now, assume that 
$\p_{c_-}^j r$ at $c_- = 0$ is odd for any odd $j \le J$, and it is even for any even $j \le J$. We first consider even $J$ and show that $\p_{c_-}^{J + 1} r(\h{\ww},c_+,0,\eps)$ is odd. It is easy to see that $\p_{c_-}^\alpha u |_{c_- = 0}$ with $\alpha \le J$ is odd for odd $\alpha$, and even for even $\alpha$, and that an odd integer $J + 1$ can only split as 
\begin{align*}
    J + 1 = \text{odd + odd + odd, or, odd + even + even}. 
\end{align*}
Thus, similarly to \eqref{symarg}, from \eqref{eq:genrucub}, we have 
\begin{align*}
     \p_{c_-}^{J + 1} |u|^2 u = \text{odd func.} + \text{(even func.) $\dd$ $ \p_{c_-}^{J + 1} r$} + \text{(even func.) $\dd$ $ \overline{\p_{c_-}^{J + 1} r}$}\,,
\end{align*}
which implies, by \eqref{eq:generalderivj}, 
\begin{align*}
        \p_{c_-}^{J + 1} r|_{c_- = 0} = \text{odd func.} + \h{\ww}^2 P_r \kk_D^{\eps \h{\ww}}\left[\text{(even func.) $\dd$ $ \p_{c_-}^{J + 1} r$} + \text{(even func.) $\dd$ $ \overline{\p_{c_-}^{J + 1} r}$} \right].
\end{align*}
It follows that $\p_{c_-}^{J + 1} r|_{c_- = 0}$ is odd, as desired. We now consider odd $J$ and show that $\p_{c_-}^{J + 1} r(\h{\ww},c_+,0,\eps)$ is even. Similarly, noting that an even integer $J + 1$ only split as 
\begin{align*}
    J + 1 = \text{even + even + even, or, odd + odd + even},
\end{align*}
we have 
\begin{align*}
     \p_{c_-}^{J + 1} |u|^2 u = \text{even func.} + \text{(even func.) $\dd$ $ \p_{c_-}^{J + 1} r$} + \text{(even func.) $\dd$ $ \overline{\p_{c_-}^{J + 1} r}$}\,,
\end{align*}
and thus
\begin{align*}
        \p_{c_-}^{J + 1} r|_{c_- = 0} = \text{even func.} + \h{\ww}^2 P_r \kk_D^{\eps \h{\ww}}\left[\text{(even func.) $\dd$ $ \p_{c_-}^{J + 1} r$} + \text{(even func.) $\dd$ $ \overline{\p_{c_-}^{J + 1} r}$} \right]\,,
\end{align*}
implying that $\p_{c_-}^{J + 1} r|_{c_- = 0}$ is even. This completes the proof.
\end{proof}

\begin{lemma} \label{lem:expfpm}
Let the real analytic functions $f_\pm(\dd)$ admit the expansion in $(c_+,c_-)$:
\begin{align} \label{eq:powerserfpm}
    f_\pm (\h{\ww}, c_+,c_-, \eps) = \sum_{k,l,m,n \ge 0} b_{klmn}^\pm(\h{\ww},\eps) c_+^k \wb{c_+}^l c_-^m \wb{c_-}^n\,.
\end{align}
Then, the indices $(k,l,m,n)$ are constrained by 
\begin{align} \label{properfpm1}
     k + l  + m + n \ge 2\,, \q k + m = l + n\,,
\end{align}
and the coefficients $b_{klmn}^\pm(\h{\ww},\eps)$ satisfies 
\begin{align} \label{properfpm3}
   \overline{b_{klmn}^\pm(\h{\ww},\eps)} = b_{klmn}^\pm(\wb{\h{\ww}},\eps)\,, \q 
\end{align}
and 
\begin{align} \label{properfpm2}
    b_{klmn}^\pm(\h{\ww},\eps) = 0\,,\q \text{for odd $\,m + n$}\,,
\end{align}
as well as
\begin{align} \label{properfpm4}
b_{klmn}^\pm(\h{\ww},\eps) = \Or(\eps)\,,\q \text{for $\,  k + l  + m + n = 2$\,.}  
\end{align}
\end{lemma}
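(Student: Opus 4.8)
The plan is to read off each of the four assertions from the defining relation \eqref{def:fpm}, the a priori bound \eqref{est:fpm} (equivalently \eqref{est:rforu} together with \eqref{def:rcpm}), and the $S^1$- and $\mathbb{Z}_2$-equivariance of $r(\dd)$ recorded in \cref{coro:properr}. A preliminary remark to settle first is that $f_\pm$ is real analytic jointly in $(\h{\ww}, c_+, c_-, \eps)$ on its domain, \emph{including at} $\eps = 0$: in three dimensions the remainder $\rr^{\eps \h{\ww}} = \kk_D^{\eps \h{\ww}} - \kk_D = \sum_{j \ge 1} (\eps \h{\ww})^j \kb{j}$ is analytic in $\eps$ by \cref{lem:opasym}, hence so are $\mc{F}$ in \eqref{eq:nondimerf}, the implicitly defined $r(\dd)$, and consequently $f_\pm$. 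Thus the coefficients $b_{klmn}^\pm(\h{\ww}, \eps)$ in \eqref{eq:powerserfpm} are themselves real analytic in $(\h{\ww}, \eps)$.

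For \eqref{properfpm1}: the bound \eqref{est:fpm} gives $f_\pm(\h{\ww}, 0, 0, \eps) = 0$ and, for small $(c_+, c_-)$ uniformly in small $\eps$, $|f_\pm(\h{\ww}, c_+, c_-, \eps)| \le C(|c_+|^2 + |c_-|^2)$, so no Taylor monomial of total degree $\le 1$ occurs and $k + l + m + n \ge 2$ for every nonzero coefficient. The identity $k + m = l + n$ follows by substituting $(e^{i\theta} c_+, e^{i\theta} c_-)$ into the first relation of \eqref{invar:fpm}: the monomial $c_+^k \wb{c_+}^l c_-^m \wb{c_-}^n$ acquires the factor $e^{i\theta(k - l + m - n)}$, which must equal $1$ for all $\theta$ whenever $b_{klmn}^\pm \ne 0$. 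For \eqref{properfpm3}, I would expand both sides of the conjugation identity $\overline{f_\pm(\h{\ww}, c_+, c_-, \eps)} = f_\pm(\overline{\h{\ww}}, \overline{c_+}, \overline{c_-}, \eps)$ from \eqref{invar:fpm} as power series in $(c_+, \wb{c_+}, c_-, \wb{c_-})$ and match the coefficient of $\wb{c_+}^k c_+^l \wb{c_-}^m c_-^n$: the left side contributes $\overline{b_{klmn}^\pm(\h{\ww}, \eps)}$ and the right side $b_{klmn}^\pm(\overline{\h{\ww}}, \eps)$, using that $\eps$ is real.

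The assertion \eqref{properfpm2} is where the reflection symmetry enters. First I would establish, exactly as in the proof of \cref{coro:properr}, the $\mathbb{Z}_2$-equivariance $r(\h{\ww}, c_+, -c_-, \eps) = \rr[r(\h{\ww}, c_+, c_-, \eps)]$: apply $\rr$ to \eqref{eq:leading3}, use $\rr P_r \kk_D^{\eps \h{\ww}} = P_r \kk_D^{\eps \h{\ww}} \rr$ and $\rr[N(u)] = N(\rr u)$ with $\rr u = c_+ \vp_+ - c_- \vp_- + \rr r$ (since $\rr \vp_\pm = \pm \vp_\pm$), and invoke the uniqueness of \cref{prop:implicitr}, which applies to $\rr r$ because $\rr$ is unitary, so $\norm{\rr r}_{H^2(D)} = \norm{r}_{H^2(D)}$ stays in the admissible neighbourhood. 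Since $\rr$ acts pointwise in space, $|\rr \psi|^2 \rr \psi = \rr(|\psi|^2 \psi)$ and $\rr$ commutes with $\kk_D$, so \eqref{def:rcpm} yields $R(c_+, -c_-, r(\h{\ww}, c_+, -c_-, \eps)) = \rr[R(c_+, c_-, r(\h{\ww}, c_+, c_-, \eps))]$; pairing with $\vp_\pm$ and using $\rr \vp_\pm = \pm \vp_\pm$ in \eqref{def:fpm}, the common prefactor $c_\pm$ cancels as an identity of convergent power series and leaves $f_\pm(\h{\ww}, c_+, -c_-, \eps) = f_\pm(\h{\ww}, c_+, c_-, \eps)$, so $b_{klmn}^\pm = 0$ whenever $m + n$ is odd. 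Finally, for \eqref{properfpm4}, setting $\eps = 0$ in \eqref{est:fpm} gives $|f_\pm(\h{\ww}, c_+, c_-, 0)| \le C(|c_+|^2 + |c_-|^2)^2$, so the degree-$2$ Taylor coefficients of $f_\pm(\h{\ww}, \cdot, \cdot, 0)$ vanish, i.e.\ $b_{klmn}^\pm(\h{\ww}, 0) = 0$ when $k + l + m + n = 2$; real analyticity of $b_{klmn}^\pm$ in $\eps$ near $0$ then promotes this to $b_{klmn}^\pm(\h{\ww}, \eps) = \Or(\eps)$.

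All of this is routine bookkeeping. The one step I expect to require a little care — and hence the main obstacle — is the reflection identity for $f_\pm$: one must check that the uniqueness in \cref{prop:implicitr} genuinely applies to the reflected solution $\rr r$, and that dividing through by $c_\pm$ is legitimate, which it is precisely because $\langle \vp_\pm, R(c_+, c_-, r) \rangle_D$ vanishes identically when $c_\pm = 0$ (exactly what made $f_\pm$ well defined in \eqref{def:fpm}) and is therefore divisible by $c_\pm$ in the ring of convergent power series.
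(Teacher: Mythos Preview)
Your proof is correct. For \eqref{properfpm1}, \eqref{properfpm3}, and \eqref{properfpm4} you argue exactly as the paper does, reading them off from \eqref{invar:fpm} and \eqref{est:fpm}. For \eqref{properfpm2} you take a genuinely different and more direct route: you establish the global $\mathbb{Z}_2$-equivariance $r(\h{\ww}, c_+, -c_-, \eps) = \rr[r(\h{\ww}, c_+, c_-, \eps)]$ in one step by applying $\rr$ to \eqref{eq:leading3} and invoking the uniqueness of \cref{prop:implicitr}, then deduce $f_\pm(\h{\ww}, c_+, -c_-, \eps) = f_\pm(\h{\ww}, c_+, c_-, \eps)$ by pairing with $\vp_\pm$ and cancelling the $c_\pm$ prefactor. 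The paper instead proves the auxiliary \cref{lem:symderivr} by induction on the order $j$ of $\partial_{c_-}^j r|_{c_-=0}$ (with real $c_-$), tracking parities through the Leibniz expansion of $\partial_{c_-}^j(|u|^2 u)$, and only then concludes that the odd-order Taylor coefficients of $f_\pm$ in $c_-$ vanish. Your argument subsumes \cref{lem:symderivr} (differentiating your global identity recovers it), avoids the inductive bookkeeping, and makes the role of the reflection symmetry transparent; the paper's derivative-by-derivative approach is more laborious but stays entirely within real-variable Taylor analysis and never revisits the defining equation for $r$.
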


\begin{proof}
With the power series expansion \eqref{eq:powerserfpm}, the equivariance property \eqref{invar:fpm} gives $k + m = l + n$ and $$\overline{b_{klmn}^\pm(\h{\ww},\eps)} = b_{klmn}^\pm(\wb{\h{\ww}},\eps)\,.$$ 
The estimate \eqref{est:fpm} implies $b_{klmn}^\pm(\h{\ww},\eps) = \Or(\eps)$ for  $ 2 \le k + l  + m + n \le 3$, and $b_{klmn}^\pm(\h{\ww},\eps) = 0$ for  $ k + l  + m + n \le 1$. We have proved \eqref{properfpm1}, \eqref{properfpm2}, and \eqref{properfpm4}. Next, we show that $b_{klmn}^\pm(\h{\ww},\eps) = 0$ if $m + n$ is odd. We first consider the case of $b_{klmn}^+(\h{\ww},\eps)$. Without loss of generality, by \eqref{invar:fpm}, we can let $c_-$ be real and write the Taylor expansion of $f_+(\dd)$ in $c_- \in \R$ near $c_- =  0$:
\begin{align*}
      f_+ (\h{\ww}, c_+, c_-, \eps) = \sum_{j \ge 0} \p_{c_-}^j f_+(\h{\ww},c_+,0,\eps) c_-^j\,.
\end{align*}
Then, it suffices to prove $\p_{c_-}^j f_+(\h{\ww},c_+,0,\eps) = 0$ for odd $j$. For this, from \cref{lem:symderivr}, we see that $\p_{c_-}^j |u|^2 u|_{c_- = 0}$ is even for even $j$ and odd for odd $j$, and so is $\p_{c_-}^j R(\dd)|_{c_- = 0}$ by \eqref{def:rcpm}. It readily implies $\p_{c_-}^j f_+(\h{\ww},c_+,0,\eps) = 0$ for odd $j$, and hence $b_{klmn}^+(\h{\ww},\eps) = 0$ if $m + n$ is odd. 
In the same way, we can show $b_{klmn}^-(\ww) = 0$ for odd $m + n$ (equivalently, for odd $k + l$ by \eqref{properfpm1}) considering the derivative of $f_-(\h{\ww},c_+,c_-,\eps)$ in $c_+ \in \R$ and \cref{lem:symderivr}. 
\end{proof}

\begin{lemma} \label{lem:expgpm}
Let the real analytic functions $g_\pm(\dd)$ admit the expansion in $(c_+,c_-)$:
\begin{align} \label{eq:powersergpm}
    g_\pm (\h{\ww}, c_+,c_-, \eps) = \sum_{k,l,m,n \ge 0} g_{klmn}^\pm(\h{\ww},\eps) c_+^k \wb{c_+}^l c_-^m \wb{c_-}^n\,.
\end{align}
Then, we have $k + m = l + n$, and the coefficients $g_{klmn}^\pm(\h{\ww},\eps)$ satisfies 
\begin{align} \label{propergpm3}
   g_{klmn}^\pm(\h{\ww},\eps) = \Or(\eps)\,,\q   \overline{g_{klmn}^\pm(\h{\ww},\eps)} = g_{klmn}^\pm(\wb{\h{\ww}},\eps)\,, 
\end{align}
and 
\begin{align} \label{propergpm2}
    g_{klmn}^\pm(\h{\ww},\eps) = 0\,,\q \text{for odd $m + n$}\,.
\end{align}
\end{lemma}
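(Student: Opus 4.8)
The plan is to follow the proof of \cref{lem:expfpm} almost verbatim, since $g_\pm$ shares exactly the same invariance structure \eqref{invar:gpm} as $f_\pm$, with the additional (and simpler) bound \eqref{est:gpm}. First I would substitute the power series \eqref{eq:powersergpm} into the first identity in \eqref{invar:gpm}: under $c_\pm \mapsto e^{i\theta}c_\pm$ the monomial $c_+^k \wb{c_+}^l c_-^m \wb{c_-}^n$ picks up a factor $e^{i\theta(k-l+m-n)}$, so invariance for every $\theta \in \R$ forces $k + m = l + n$. Conjugating \eqref{eq:powersergpm} and matching the result against $g_\pm(\wb{\h{\ww}}, \wb{c_+}, \wb{c_-}, \eps)$ via the second identity in \eqref{invar:gpm}, uniqueness of the real-analytic expansion in $(c_+, \wb{c_+}, c_-, \wb{c_-})$ yields $\overline{g_{klmn}^\pm(\h{\ww},\eps)} = g_{klmn}^\pm(\wb{\h{\ww}},\eps)$.

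For the bound $g_{klmn}^\pm(\h{\ww},\eps) = \Or(\eps)$, I would pass the uniform estimate \eqref{est:gpm} to each Taylor coefficient by extracting it via the standard Fourier/Cauchy integral over a fixed polydisc $\{|c_+|\le\rho,\ |c_-|\le\rho\}$ — contained in the domain on which $r(\cdot)$, and hence $g_\pm(\cdot)$, is defined uniformly for small $\eps$ by \cref{prop:implicitr}. This gives $|g_{klmn}^\pm(\h{\ww},\eps)| \le C\eps\,\rho^{-(k+l+m+n)}$ for each fixed multi-index, i.e.\ $\Or(\eps)$.

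The only step requiring the symmetry machinery is the vanishing for odd $m+n$. Consider $g_+$; by the $S^1$ equivariance in \eqref{invar:gpm} we may take $c_- \in \R$, and it suffices to show $\p_{c_-}^j g_+(\h{\ww}, c_+, 0, \eps) = 0$ for odd $j$. Since $u = c_+\vp_+ + r(\h{\ww}, c_+, 0, \eps)$ is even by \eqref{eq:symmode} and \eqref{eq:symmrfun}, and by \cref{lem:symderivr} the derivative $\p_{c_-}^j r(\h{\ww}, c_+, 0, \eps)$ is odd for odd $j$ and even for even $j$, the multinomial formula \eqref{eq:genrucub} shows that $\p_{c_-}^j(|u|^2 u)|_{c_-=0}$, and hence $\p_{c_-}^j N(u)|_{c_-=0}$, is odd for odd $j$ and even for even $j$. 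As $\rr$ commutes with $\kk_D^{\eps\h{\ww}}$ and $\kk_D$ (\cref{lem:z2equivar}), it commutes with $\rr^{\eps\h{\ww}} = \kk_D^{\eps\h{\ww}} - \kk_D$, so $\rr^{\eps\h{\ww}}$ preserves parity; pairing the even $\vp_+$ against the odd function $\rr^{\eps\h{\ww}}[\p_{c_-}^j N(u)]|_{c_-=0}$ then gives $\langle \vp_+, \rr^{\eps\h{\ww}}[\p_{c_-}^j N(u)]|_{c_-=0}\rangle_D = 0$ for odd $j$, whence, dividing \eqref{def:gpm} by $c_+$, $\p_{c_-}^j g_+(\h{\ww}, c_+, 0, \eps) = 0$ for odd $j$. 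Finally, the constraint $k+m=l+n$ already established determines $(m,n)$ uniquely from $(k,l,m+n)$, so the vanishing of $\p_{c_-}^j g_+$ is equivalent to $g_{klmn}^+ = 0$ for odd $m+n$. The argument for $g_-$ is identical, taking $c_+ \in \R$, differentiating in $c_+$, and using that $u|_{c_+=0}$ is odd. I do not expect a genuine obstacle; the only point needing a little care is making the domain of the Cauchy/Fourier extraction uniform in $\eps$, which is precisely what \cref{prop:implicitr} furnishes.
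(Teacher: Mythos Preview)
Your proposal is correct and follows essentially the same route as the paper: the invariance \eqref{invar:gpm} and the estimate \eqref{est:gpm} handle \eqref{propergpm3} and the selection rule $k+m=l+n$, while the parity machinery of \cref{lem:symderivr} combined with the fact that $\rr^{\eps\h{\ww}}$ commutes with the reflection yields \eqref{propergpm2}. Your added remark that, under the constraint $k+m=l+n$, the pair $(m,n)$ is determined by $(k,l,m+n)$ (so that vanishing of the $c_-$-Taylor coefficients really kills each $g_{klmn}^+$ individually) is a nice clarification the paper leaves implicit.
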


\begin{proof}
It suffices to prove $g_{klmn}^\pm(\h{\ww},\eps) = 0$ for odd $m + n$, as the others are straightforward. We only consider the case of $g_{klmn}^+(\h{\ww},\eps)$. Similarly to the proof of \cref{lem:expfpm}, by \eqref{invar:gpm}, we let $c_-$ be real and write the expansion of $g_+$ in $c_-$: 
\begin{align*}
    g_+ (\h{\ww}, c_+, c_-, \eps) = \sum_{j \ge 0} \p_{c_-}^j g_+(\h{\ww},c_+,0,\eps) c_-^j\,.
\end{align*}
Then, note that
\begin{align*}
     \l \vp_+, \rr^{\eps \h{\ww}} [  \p_{c_-}^j N(u)|_{c_- = 0} ] \r_D  = c_+ \p_{c_-}^j g_+(\h{\ww},c_+,0,\eps)\,,
\end{align*}
and that $\p_{c_-}^j N(u)|_{c_- = 0} = \p_{c_-}^j (u + |u|^2 u)|_{c_- = 0}$ is even for even $j$ and odd for odd $j$. Moreover, note also that $\rr^{\eps \h{\ww}}$ preserves symmetry. This gives $\p_{c_-}^j g_+(\h{\ww},c_+,0,\eps) = 0$ for odd $j$, equivalently, \eqref{propergpm2} holds. 
\end{proof}

Thanks to \cref{lem:expfpm}, using the polar form $c_\pm = p_\pm e^{i \theta_\pm}$ with $p_\pm \ge 0$ and $\Delta \theta = \theta_+ - \theta_-$, we can reformulate the expansion \eqref{eq:powerserfpm} as follows: 
\begin{align} \label{expfpm}
      f_\pm (\h{\ww}, c_+, c_-, \eps) & = \sum_{\substack{ k + l  + m + n \ge 2 \\ k + m = l + n}} b_{klmn}^\pm(\h{\ww},\eps) p_+^{k + l} p_-^{m + n} e^{i\theta_+(k - l)} e^{i\theta_-(m-n)} \notag \\
      & = \sum_{i + s \ge 1\,,\, t \in \ZZ} b_{ist}^\pm(\h{\ww},\eps) p_+^{2 i} p_-^{2 s} e^{i 2 t \Delta \theta} =: f_\pm (\h{\ww}, p_+, p_-, \Delta \theta, \eps)\,,
\end{align}
where in the second equality, we have used $b_{klmn}^\pm(\h{\ww},\eps) = 0$ for odd $m + n$, and that $k - l$, $k + l$, and $m - n$ are even if $m + n$ is even. Similarly, we can reformulate \eqref{eq:powersergpm} as 
\begin{align} \label{expgpm}
      g_\pm (\h{\ww}, c_+,c_-, \eps) & = \sum_{\substack{ k + l  + m + n \ge 0 \\ k + m = l + n}} g_{klmn}^\pm(\h{\ww},\eps)  p_+^{k + l} p_-^{m + n} e^{i\theta_+(k - l)} e^{i\theta_-(m-n)} \notag \\
      & = \sum_{i + s \ge 0\,,\, t \in \ZZ} g_{ist}^\pm(\h{\ww},\eps) p_+^{2 i} p_-^{2 s} e^{i 2 t \Delta \theta} =: g_\pm (\h{\ww}, p_+, p_-, \Delta \theta, \eps). 
\end{align}

We are now ready to solve the following equations in $(\h{\ww}, c_+,c_-)$, by inserting $r(\h{\ww},c_+,c_-,\eps)$ into equations \eqref{eq:leading1}, \eqref{eq:leading2}, and \eqref{eq:constcoeff}, and using \eqref{def:fpm} and \eqref{def:gpm}: 
\begin{multline} \label{eq:leading11}
   (\h{\ww}^{-2} - \lad_+) c_+ = \lad_+ \left(|c_+|^2 c_+ A_{++} + \left(2 |c_-|^2 c_+ + c_-^2 \overline{c_+}  \right) A_{+-}\right) \\ + c_+ f_+(\h{\ww}, c_+, c_-, \eps) + c_+ g_+(\h{\ww}, c_+, c_-, \eps)\,,
\end{multline}
\begin{multline} \label{eq:leading22}
    (\h{\ww}^{-2} - \lad_-) c_- = \lad_- \left(|c_-|^2 c_- A_{--} + \left(2 |c_+|^2 c_- + c_+^2 \overline{c_-}  \right) A_{+-}\right) \\  + c_- f_-(\h{\ww}, c_+, c_-, \eps)  + c_- g_-(\h{\ww}, c_+, c_-, \eps)\,,
\end{multline}
with the normalization constraint: 
\begin{align} \label{normaleq}
  (1 + \Or(\eps))(|c_+|^2 + |c_-|^2) + \Or\left( |c_+|^2 + |c_-|^2 \right)^3  = \mc{N}\,.
\end{align}

We first construct the pure-state solutions to \eqref{eq:leading11}--\eqref{normaleq} without hybridization. The argument is essentially the same as the proof of \cref{thm:smallampl}, and thus we only sketch it for completeness. Without loss of generality, we assume that $c_- = 0$. The analysis for the case of $c_+ = 0$ is similar and, therefore, omitted. 
% note that $N(u)|_{c_- = 0} = N(c_+ \vp_+ + r(\h{\ww}, c_+, 0, \eps))$ is even by \eqref{eq:symmrfun}, and $\rr^{\eps \h{\ww}}$ preserves the symmetry of $N(u)$, due to $\rr \rr^{\eps \h{\ww}} = \rr^{\eps \h{\ww}} \rr$. It follows that 
% \begin{align} 
% \l \vp_-, \rr^{\eps \h{\ww}}[N(u)] \r_D  = 0\,,
% \end{align}
% and hence 
If $c_- = 0$, \eqref{eq:leading22} holds directly, and \eqref{eq:leading11} is simplified as, for $c_+ \neq 0$, 
\begin{align*}
    \h{\ww}^{-2} - \lad_+ = \lad_+ |c_+|^2  A_{++} +  f_+(\h{\ww}, c_+, 0, \eps) + g_+(\h{\ww}, c_+, 0, \eps)\,.
\end{align*}
% where $c_+^{-1}N(u)$ is well-defined at $c_+ = 0$, since $N(u)|_{(c_+,c_-) = (0,0)} = 0$. 
Its solution is given by the zero set of the following mapping:
\begin{align*}
    \mc{G}_+(\h{\ww},c_+,\eps) =  \h{\ww}^{-2} - \lad_+ - \lad_+ |c_+|^2  A_{++} - f_+(\h{\ww}, c_+, 0, \eps) - g_+(\h{\ww}, c_+, 0, \eps)\,.
\end{align*}
Again, by equivariance \eqref{invar:fpm}, we let $c_+ \in \R$. It is easy to compute 
\begin{align*}
    & \mc{G}_+(1/\sqrt{\lad_+},0,0) = 0\,, \\
    &  \p_{\h{\ww}}\mc{G}_+(1/\sqrt{\lad_+},0,0) = - 2 \lad_+^{3/2} - \p_{\h{\ww}} f_+(1/\sqrt{\lad_+}, 0, 0, 0) = - 2 \lad_+^{3/2}\,,
\end{align*}
by $g_+(\h{\ww}, c_+, 0, 0) = 0$, and for any $\h{\ww} \in \Omega$ and small $\eps$, 
\begin{equation*}
    f_+(\h{\ww}, 0, 0, \eps) = 0\,,
\end{equation*}
from the estimate \eqref{est:fpm}. By the implicit function theorem, there is a unique solution $\h{\ww} = \h{\ww}_+(c_+, \eps)$ for $(c_+, \eps)$ near $0$. Therefore, we have constructed a family of solutions $(\h{\ww}, e^{i \theta} u)$ to \eqref{eq:nonlineardimer} with symmetric $u$ (\textit{i.e.}, $\rr[u] = u$), parameterized by small $(c_+, \eps) \in \R \times \R_+$, 
\begin{align*}
    (c_+, \eps) \to (\h{\ww},u) = \left(\h{\ww}_+(c_+,\eps), e^{i \theta} c_+ \vp_+ + e^{i \theta} r(\h{\ww}_+(c_+, \eps), c_+, 0, \eps)\right)\,,
\end{align*}
by equivariance \eqref{eq:equivar} and symmetry \eqref{eq:symmrfun}. Furthermore, in analogy with \cref{coro:nonsmall}, one can show that $\h{\ww}_+(0,\eps)$ for a fixed small $\eps$ is the scaled linear dielectric resonance $\h{\ww}_{*,+} = 1/\sqrt{\lad_+} + \Or(\eps)$. In brief, recalling \cref{prop:limit_scalar3d}, let $(\h{\ww}_{*,+}, \vp_{*,+})$ satisfy 
\begin{equation} \label{eq:linearp}  
    \varphi_{*,+} = \hat{\omega}_{*,+}^2 \mathcal{K}_D^{\eps \hat{\omega}_{*,+}}[\varphi_{*,+}]\,,
\end{equation}  
with $\vp_{*,+}$ being symmetric and
\begin{align*}
 \varphi_{*,+} = \varphi_+ + \mathcal{O}(\eps)\,, \quad \| \varphi_+ \|_{L^2(D)}^2 = 1\,, \quad \varphi_* - \varphi_+ \perp \varphi_+\,.   
\end{align*}
We next prove that
\begin{align*}
    \mc{G}_+(\h{\ww}_{*,+}, 0, \eps) = \h{\ww}_{*,+}^{-2} - \lad_+ - \l \vp_+, c_+^{-1} \rr^{\eps \h{\ww}_{*,+}}[N(u)(\h{\ww}_{*,+},c_+,0,\eps)] \r_D = 0\,.
\end{align*}
For this, we compute 
\begin{align*}
      \lim_{c_+ \to 0} c_+^{-1} N(u)(\h{\ww},c_+,0,\eps)  = \vp_+ + \p_{c_+} r(\h{\ww}, 0, 0,\eps)
\end{align*}
and by \eqref{eq:nondimerf}, 
\begin{align*}
    \p_{c_+} r(\h{\ww},0, 0, \eps) & = - \p_r \mc{F}\left(\h{\ww}, 0, 0, 0, \eps\right)^{-1} \p_{c_+} \mc{F}\left(\h{\ww}, 0, 0, 0, \eps\right) \\
      & = \left(1 - \h{\ww}^2 \kk_D - \h{\ww}^2 P_r \rr^{\eps \h{\ww}}\right)^{-1} \h{\ww}^2 P_r \rr^{\eps \h{\ww}}[\vp_+]\,.  
\end{align*}
Moreover, we have $ \p_{c_+} r(\h{\ww}_{*,+},0, 0, \eps) = \vp_{*,+} - \vp_+$, and thus $$ \lim_{c_+ \to 0} c_+^{-1} N(u)(\h{\ww}_{*,+},c_+,0,\eps) = \vp_{*,+}.$$ We can conclude that
\begin{align*}
    \mc{G}_+(\h{\ww}_{*,+}, 0, \eps) = \h{\ww}_{*,+}^{-2} - \lad_+ - \l \vp_+,  \rr^{\eps \h{\ww}_{*,+}}[\vp_{*,+}] \r_D = 0\,,
\end{align*}
taking the inner product of \eqref{eq:linearp} with $\vp_+$. 
We summarize the above discussion as follows.
  
\begin{proposition} \label{sol:purestate}
Under \cref{assp:second}, let $0< \lad_- < \lad_+$  be the largest two eigenvalues of $\kk_D$ with normalized even and odd modes $\vp_-$ and $\vp_+$. Then, the nonlinear resonance problem \eqref{eq:nonlineardimer} admits two associated families of solutions, parameterized as, for sufficiently small $(c_\pm, \eps) \in \R \times \R_+$,
\begin{align} \label{puresol1}
    (c_+, \eps) \to \left(\h{\ww}_+(c_+,\eps), u_+(c_+,\eps) = e^{i \theta} c_+ \vp_+ + e^{i \theta} r(\h{\ww}_+(c_+, \eps), c_+, 0, \eps)\right)\,, \q \theta \in [0, 2 \pi)\,,
\end{align}
and 
\begin{align} \label{puresol2}
    (c_-, \eps) \to   \left(\h{\ww}_-(c_-,\eps), u_-(c_-,\eps) =  e^{i \theta} c_- \vp_- + e^{i \theta} r(\h{\ww}_-(c_-, \eps), 0, c_-, \eps)\right)\,, \q \theta \in [0, 2 \pi)\,,
\end{align}
where $\h{\ww}_\pm(c_\pm,\eps)$ has the asymptotics $$\h{\ww}_\pm(c_\pm,\eps) = \h{\ww}_{*,\pm}(\eps) + \Or(c_\pm^2 + |c_\pm|\eps)$$ with $$\h{\ww}_{*,\pm}(\eps) = 1/\sqrt{\lad_\pm} + \Or(\eps)$$ being the scaled linear resonance associated with $\lad_{\pm}$; the modes $u_+$ and $u_-$ are even and odd, respectively; the function $r(\dd)$ is given as in \cref{prop:implicitr}. 
Moreover, these two families of solutions are the only nontrivial solutions to \eqref{eq:nonlineardimer} when $|c_+| + |c_-|$ and $\eps$ are sufficiently small. 
\end{proposition}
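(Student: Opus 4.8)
The existence part of \cref{sol:purestate} -- the two families \eqref{puresol1}--\eqref{puresol2}, their asymptotics, and the parities of $u_\pm$ -- has essentially been assembled in the discussion preceding the statement, so in the write-up it only needs to be recorded: eliminate $r=r(\h{\ww},c_+,c_-,\eps)$ from \eqref{eq:leading3} via \cref{prop:implicitr}; set $c_-=0$ (respectively $c_+=0$), reduce \eqref{eq:leading11} (respectively \eqref{eq:leading22}) to the scalar equation $\mc{G}_+(\h{\ww},c_+,\eps)=0$ (respectively $\mc{G}_-(\h{\ww},c_-,\eps)=0$), and apply the implicit function theorem at $(\h{\ww},c_\pm,\eps)=(1/\sqrt{\lad_\pm},0,0)$, where $\p_{\h{\ww}}\mc{G}_\pm\neq 0$; obtain $\h{\ww}_\pm(0,\eps)=\h{\ww}_{*,\pm}(\eps)$ by copying the argument of \cref{coro:nonsmall}; and invoke the $S^1$ equivariance \eqref{eq:equivar} together with the parity \eqref{eq:symmrfun} of $r$ to recover the stated forms of $u_\pm$ and their evenness/oddness. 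The genuinely new content is the last sentence, which the plan below is designed to establish.

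Let $(\h{\ww},u)\in\Omega\times H^2(D)$ solve \eqref{eq:nonlineardimer} with $u=c_+\vp_++c_-\vp_-+r$, $r=P_r u$, and $|c_+|+|c_-|+\eps$ sufficiently small. The first step is to note that, by the uniqueness clause of \cref{prop:implicitr}, $r$ is forced to be the function $r(\h{\ww},c_+,c_-,\eps)$ solved from \eqref{eq:leading3}; since that function vanishes when $c_+=c_-=0$, nontriviality of $u$ gives $(c_+,c_-)\neq(0,0)$, and the triple $(c_+,c_-,r(\dd))$ solves the reduced system \eqref{eq:leading11}--\eqref{eq:leading22}. The second and crucial step is to exclude \emph{mixed} states, i.e.\ the possibility that $c_+\neq 0$ and $c_-\neq 0$ simultaneously. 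Arguing by contradiction, I would use the $S^1$ equivariance \eqref{eq:equivar} to normalize $c_+=p_+>0$ and write $c_-=p_-e^{i\Delta\theta}$ with $p_->0$, so that by \eqref{expfpm}--\eqref{expgpm} the corrections $f_\pm$ and $g_\pm$ become functions of $(\h{\ww},p_+,p_-,\Delta\theta,\eps)$ of respective sizes $\Or((p_+^2+p_-^2+\eps)(p_+^2+p_-^2))$ and $\Or(\eps)$ by \eqref{est:fpm} and \eqref{est:gpm}. Dividing \eqref{eq:leading11} by $c_+$ and \eqref{eq:leading22} by $c_-$ and subtracting, the dominant term $\h{\ww}^{-2}$ cancels, while every remaining term is $\Or(p_+^2+p_-^2+\eps)$ (using \eqref{def:coeffA} for the cubic part); hence
\begin{equation*}
  \lad_+-\lad_- = \Or\bigl(p_+^2+p_-^2+\eps\bigr)\,.
\end{equation*}
Under \cref{assp:second} the quantity $\lad_+-\lad_->0$ is a fixed positive constant, so this is impossible once $p_+^2+p_-^2+\eps$ is small enough. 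Therefore at least one of $c_+,c_-$ vanishes.

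It then remains to identify such a solution with one of the two families. Say $c_-=0$ and $c_+\neq 0$; the case $c_+=0$ is symmetric. Every term on the right-hand side of \eqref{eq:leading22} carries a factor $c_-$ (see \eqref{coeff2}, \eqref{def:fpm}, \eqref{def:gpm}), so \eqref{eq:leading22} holds identically, while \eqref{eq:leading11} divided by $c_+$ becomes $\mc{G}_+(\h{\ww},c_+,\eps)=0$, that is $\h{\ww}^{-2}-\lad_+=\lad_+|c_+|^2A_{++}+f_++g_+$ with right-hand side $\Or(|c_+|^2+\eps)$. Consequently $\h{\ww}^{-2}$ lies within $\Or(|c_+|^2+\eps)$ of $\lad_+$; normalizing $\Re\h{\ww}>0$, which is allowed by the $\omega\mapsto-\overline\omega$ symmetry of \cref{prop:nonvanish}, confines $\h{\ww}$ to the neighborhood of $1/\sqrt{\lad_+}$ on which $\mc{G}_+$ has, by the implicit function theorem, a unique zero. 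Hence $\h{\ww}=\h{\ww}_+(c_+,\eps)$, and, restoring the phase via \eqref{eq:equivar}, $(\h{\ww},u)$ belongs to the family \eqref{puresol1}; the case $c_+=0$ produces \eqref{puresol2} in the same way.

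I expect the mixed-mode step to be the main obstacle, and it is exactly what pins down the smallness hypotheses: one needs not merely that $f_\pm,g_\pm$ are small, but that their structure -- dependence only on $p_+,p_-,\Delta\theta$ and the quantitative bounds of \cref{lem:expfpm,lem:expgpm} -- makes the subtracted equation genuinely $o(1)$ after the cancellation of the unbounded term $\h{\ww}^{-2}$; without this, the difference of the two reduced equations would carry terms of order $1$ and the contradiction would be lost. Everything else is a routine application of \cref{prop:implicitr} and the implicit function theorem already set up above.
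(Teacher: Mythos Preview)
Your proposal is correct and follows essentially the same approach as the paper. The paper's proof is even terser: it records only the uniqueness step, dividing \eqref{eq:leading11}--\eqref{eq:leading22} by $c_+,c_-$ (yielding \eqref{eq:lead111}--\eqref{eq:lead222}), subtracting, and invoking \eqref{est:fpm}--\eqref{est:gpm} to obtain $\lad_+-\lad_-=\Or(|c_+|^2+|c_-|^2+\eps)$, a contradiction; your additional identification paragraph (showing a solution with $c_-=0$ must land on the branch \eqref{puresol1}) is left implicit there.
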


\begin{proof}
    We only prove that for sufficiently small $|c_+| + |c_-|$ and $\eps$, \eqref{puresol1} and \eqref{puresol2} are the only solutions to the system \eqref{eq:leading11}--\eqref{normaleq}. If there exists an extra solution, then both $c_+$ and $c_-$ are non-zero. Then, we can rewrite \eqref{eq:leading11}--\eqref{eq:leading22} as \eqref{eq:lead111}--\eqref{eq:lead222} below, which further gives, by subtracting these two equations and the estimates \eqref{est:fpm} and \eqref{est:gpm}, 
    \begin{align*}
        \lad_+ - \lad_- = \Or(|c_+|^2 + |c_-|^2 + \eps)\,,
    \end{align*}
    which contradicts as $|c_+|^2 + |c_-|^2 + \eps \to 0$. 
\end{proof}

We next show the existence of symmetry-breaking bifurcation along the symmetric solution branch $(\h{\ww}_+(c_+, \eps), u_+(c_+, \eps))$, under \cref{asspcob} below for the coefficients \eqref{def:coeffA}.  

\begin{assumption} \label{asspcob} 
Under \cref{assp:second}, let the constants $A_{++}$, $A_{+-}$, and $A_{--}$ be defined in \eqref{def:coeffA}. We assume that 
\begin{align} \label{rela1}
    \lad_+ A_{++}  - 3 \lad_-   A_{+-} < 0\,,  \q  \lad_+ A_{++}  - \lad_-   A_{+-} > 0\,,
\end{align}
and 
\begin{align*}
    \text{$\frac{\lad_+ - \lad_-}{3 \lad_- A_{+-} - \lad_+ A_{++}}$\, is sufficiently small\,.}
\end{align*}
\end{assumption}

\begin{remark}
From \cref{prop:veriassp}, we know that the eigenvalues $\lad_{\pm}$ bifurcate from the principal eigenvalue $\lad_0$ of $\kk_{D_0}$, and there holds $\lad_\pm \to \lad_0$ as $L \to \infty$, where $D_0$ is the reference particle in \eqref{eq:dimercase}. Additionally, by the convergence of the associated eigenfunctions \eqref{eq:coneigfunc}, we obtain
\begin{align*}
    A_{+\pm} \to \frac{1}{2}\int_{D_0} |\vp_0|^4 \ud x\,,\  \text{ as $\,L \to \infty$}\,,
\end{align*}
Therefore, the assumptions $$\lad_+ A_{++}  - 3 \lad_-   A_{+-} < 0 \q \text{and} \q \frac{\lad_+ - \lad_-}{3 \lad_- A_{+-} - \lad_+ A_{++}} = o(1),$$ which are crucial for proving the existence of a symmetry-breaking bifurcation along the symmetric solution branch (see the proof of \cref{thm:2ndbirfuc}), hold uniformly for all sufficiently large $L$. 

On the other hand, note that in the $L \to 0$ limit, $\lambda_+ A_{++} - \lambda_- A_{+-} \to 0$. To verify condition $\lambda_+ A_{++} - \lambda_- A_{+-} > 0$, which is required to preclude symmetry-breaking bifurcations on the antisymmetric branch, one would need higher-order $1/L$ expansions of $\lambda_\pm$ and $A_{+\pm}$.  However, such expansions might introduce additional assumptions, and we omit them here for simplicity. 
\end{remark}

On the new bifurcated branch, one may expect that there will be a hybridization of the even and odd modes $ \varphi_\pm$, namely $c_+\,, c_- \neq 0$. Thus, we can divide equations \eqref{eq:leading11} and \eqref{eq:leading22} by $c_\pm$, respectively, and then use the polar form $c_\pm = p_\pm e^{i \theta_\pm}$ with \eqref{expfpm}--\eqref{expgpm} to rewrite the equations for the new asymmetric solution as follows:
\begin{multline}  \label{eq:lead111}
 F_+ (\h{\ww}, p_+, p_-, \Delta \theta, \eps) :=  \h{\ww}^{-2} - \lad_+ - \lad_+ \left(p_+^2 A_{++} + p_-^2 \left(2 + e^{- i 2 \Delta \theta}  \right) A_{+-}\right) \\ - f_+(\h{\ww}, p_+, p_-, \Delta \theta, \eps) - g_+(\h{\ww}, p_+, p_-, \Delta \theta, \eps) = 0\,,
\end{multline}
\begin{multline}  \label{eq:lead222} 
  F_- (\h{\ww}, p_+, p_-, \Delta \theta, \eps) :=  \h{\ww}^{-2} - \lad_-  - \lad_- \left(p_-^2  A_{--} + p_+^2  \left(2  + e^{2 i \Delta \theta}  \right) A_{+-}\right) \\ - f_-(\h{\ww}, p_+, p_-, \Delta \theta, \eps) - g_-(\h{\ww}, p_+, p_-, \Delta \theta, \eps) = 0\,.
\end{multline}
We observe, by \eqref{def:gpm},
\begin{align*}
    g_\pm(\h{\ww}, p_+, p_-, \Delta \theta, 0) = 0\,,    
\end{align*}
and that the asymmetric solution curve would converge to the bifurcation point on the symmetric branch as $c_- \to 0$ (\textit{i.e.}, $p_- \to 0$). This motivates us to first determine the bifurcation point in the limiting regime $\eps = 0$ (\textit{i.e.}, $\tau \to \infty$), that is, to solve the following equations in $(\h{\ww},p_+, \Delta\theta)$:
\begin{align} \label{eqbipt}
    F_+ (\h{\ww}, p_+, 0, \Delta \theta, 0) = 0\,, \q F_- (\h{\ww}, p_+, 0, \Delta \theta, 0) = 0\,.
\end{align}
Then, we shall use the implicit function theorem to find the function $(\h{\ww}, p_+,\Delta \theta)$ in small $(p_-,\eps)$ from equations \eqref{eq:lead111} and \eqref{eq:lead222} above.

To solve \eqref{eqbipt}, note that we can restrict $\h{\ww}$ to be real and $b_{ist}^-(\h{\ww},0) \in \R$ for $\h{\ww }\in \R$ by \eqref{properfpm3}. Taking the imaginary part of \eqref{eq:lead222} and using \eqref{properfpm4} give 
\begin{align} \label{eq:seriseq}
     \lad_- A_{+-} p_+^2 \sin (2 \Delta \theta) + \sum_{i + s \ge 2,\, t \in \mathbb{Z}} b_{ist}^-(\h{\ww},0) p_+^{2 i} p_-^{2 s} \sin(2 t \Delta \theta) = 0\,,
\end{align}
whose left-hand side is a power series in $p_+$. It follows that \eqref{eq:seriseq} with $p_- = 0$ holds if and only if 
\begin{align*}
    \sin (2 \Delta \theta) = 0\,,
\end{align*}
that is, 
\begin{align*}
  \Delta \theta \in \left\{0, \frac{\pi}{2}, \pi, \frac{3 \pi}{2}\right\}. 
\end{align*}
We next consider the following two cases separately. 

\medskip 

\noindent 
\underline{\emph{Case I: $\Delta \theta = \{0,\pi\}$}}. Noting that
\begin{align} \label{eq:opi}
    F_\pm(\h{\ww}, p_+, p_-, \Delta \theta, \eps) = F_\pm(\h{\ww}, p_+, p_-, \Delta \theta + \pi, \eps)\,,
\end{align}
it suffices to consider $\Delta \theta = 0$. We have, by \eqref{eqbipt}, 
\begin{align*}
    F_+ (\h{\ww}, p_+, 0, 0, 0) & =  \h{\ww}^{-2} - \lad_+ - \lad_+ p_+^2 A_{++} - f_+(\h{\ww}, p_+, 0, 0, 0) \\
    & = \h{\ww}^{-2} - \lad_+ - \lad_+ p_+^2 A_{++} + \Or(p_+^4)  = 0\,,
\end{align*}
and similarly, 
\begin{align*}
    F_- (\h{\ww}, p_+, 0, 0, 0) = \h{\ww}^{-2} - \lad_-  - 3 \lad_-  p_+^2  A_{+-} + \Or(p_+^4) = 0\,.
\end{align*}
It readily follows the equation in $p_+ > 0$: 
\begin{align} \label{eq:pstar}
    \lad_+ - \lad_- + \left(\lad_+ A_{++}  - 3 \lad_-   A_{+-}\right)  p_+^2 + \Or(p_+^4) = 0\,,
\end{align}
which, by \cref{asspcob}, admits a unique positive solution:
\begin{align} \label{eq:pstar2}
       p_{+,*} = \sqrt{\frac{\lad_+ - \lad_-}{3 \lad_- A_{+-} - \lad_+ A_{++}}} + \text{h.o.t.} \,.
\end{align}
Then, it is easy to solve 
\begin{align} \label{eq:wstar}
    \h{\ww}_* = \sqrt{\frac{1}{ \lad_+ (1 + p_{+,*}^2 A_{++}) + \Or(p_{+, *}^4)}} = \Or(1) > 0\,,
\end{align}
such that 
\begin{align*}
     F_+ (\h{\ww}_*, p_{+,*}, 0, 0, 0) = 0\,, \q F_- (\h{\ww}_*, p_{+,*}, 0, 0, 0) = 0\,.
\end{align*}

Next, we compute the Jacobian of $(\Re F_+, \Re F_-, \Im F_+, \Im F_-)$ in real variables $\Re \h{\ww}$, $\Im\h{\ww}$, $p_+$, and $\Delta \theta$ at the point $(\h{\ww}_*, 0, p_{+,*}, 0)$ with $\h{\ww}_*$ and $p_{+,*}$ given in \eqref{eq:pstar2}--\eqref{eq:wstar}. For this, we first calculate, letting $p_{-,*} := 0$,
% \begin{align*}
%     \frac{\p F_\pm}{\p \Delta \theta}(\h{\ww}_*, p_{+,*}, 0, 0, 0) = \pm i 2 \lad_\pm p_{\mp}^2 e^{\mp 2 i \Delta \theta} A_{+-} - f_\pm(\h{\ww}, p_+, p_-, \Delta \theta, \eps) - g_\pm(\h{\ww}, p_+, p_-, \Delta \theta, \eps)
% \end{align*}
    \begin{align} \label{dev1}
          & \frac{\p F_\pm}{\p \Delta \theta}((\h{\ww}_*, 0), p_{+,*}, 0, 0, 0) \notag \\ = & \pm i 2 \lad_\pm p_{\mp, *}^2 A_{+-} - \underbrace{\frac{\p  f_\pm}{\p \Delta \theta}((\h{\ww}_*, 0), p_{+,*}, 0, 0, 0)}_{= \Or(p_{+,*}^4)} - \underbrace{\frac{\p g_\pm}{\p \Delta \theta}((\h{\ww}_*, 0), p_{+,*}, 0, 0, 0)}_{ = 0}  \notag \\
     = & \pm i 2 \lad_\pm p_{\mp, *}^2 A_{+-} +  \Or(p_{+,*}^4)\,,
    \end{align}
by
\cref{lem:expfpm,lem:expgpm} with \eqref{expfpm} and \eqref{expgpm}. 
Since $\pm i 2 \lad_\pm p_{\mp, *}^2 A_{+-}$ is an imaginary number, it follows that
\begin{align*}
     \frac{\p F_\pm}{\p \Delta \theta}((\h{\ww}_*, 0), p_{+,*}, 0, 0, 0) =  \frac{\p \Im F_\pm}{\p \Delta \theta}((\h{\ww}_*, 0), p_{+,*}, 0, 0, 0) +  \Or(p_{+,*}^4)\,.
\end{align*}
Similarly, we have 
\begin{align} \label{dev2}
     \frac{\p F_+}{\p p_+}((\h{\ww}_*, 0), p_{+,*}, 0, 0, 0) = - 2 \lad_+ p_{+,*} A_{++} + \Or(p_{+,*}^3)\,,
\end{align}
and 
\begin{align} \label{dev3}
     \frac{\p F_-}{\p p_+}((\h{\ww}_*, 0), p_{+,*}, 0, 0, 0) = - 6 \lad_- p_{+,*} A_{+-} + \Or(p_{+,*}^3)\,,
\end{align}
which gives, by noting that both $- 2 \lad_+ p_{+,*} A_{++} $ and $- 6 \lad_- p_{+,*} A_{+-} $ are real, 
\begin{align*}
    \frac{\p F_\pm}{\p p_+}((\h{\ww}_*, 0), p_{+,*}, 0, 0, 0) = \frac{\p \Re F_+}{\p p_+}((\h{\ww}_*, 0), p_{+,*}, 0, 0, 0) + \Or(p_{+,*}^3)\,.
\end{align*}
Finally, it is simple to calculate 
\begin{align} \label{dev4}
  \frac{(\p \Re F_\pm, \Im F_\pm)}{\p (\Re \h{\ww}, \Im \h{\ww})}((\h{\ww}_*, 0), p_{+,*}, 0, 0, 0) = \mm -2 \h{\ww}_{*}^{-3} & 0 \\ 0 & -2 \h{\ww}_{*}^{-3} \nn + \Or(p_{+,*}^4)\,.
\end{align}
Then, using \eqref{dev1}--\eqref{dev4}, we readily find that the Jacobian $\frac{\p (\Re F_+, \Re F_-, \Im F_+, \Im F_-)}{\p (\Re \h{\ww},  \Im\h{\ww}, p_+, \Delta \theta)}$ at the point $((\Re \h{\ww}, \Im \h{\ww}), p_{+}, p_-, \Delta \theta, \eps) = ((\h{\ww}_*, 0), p_{+,*}, 0, 0,0)$ can be approximated by a $4 \times 4$ matrix with two $2 \times 2$ blocks $\frac{\p (\Re F_+, \Re F_-)}{\p (\Re \h{\ww}, p_+)}$ and $\frac{\p (\Im F_+, \Im F_-)}{\p (\Im \h{\ww}, \Delta  \theta )}$ with an error matrix of order $\Or(p_{+,*}^3)$. Specifically, there holds, at $((\h{\ww}_*, 0), p_{+,*}, 0, 0,0)$,  
\begin{align*}
    \frac{\p (\Re F_+, \Re F_-, \Im F_+, \Im F_-)}{\p (\Re \h{\ww},  \Im\h{\ww}, p_+, \Delta \theta)} = 
\renewcommand\arraystretch{1.5}
\begin{bmatrix}
    \frac{\p \Re F_+}{\p \Re \h{\ww}} & 0 &  \frac{\p \Re F_+}{\p p_+} & 0 \\
     \frac{\p \Re F_-}{\p \Re \h{\ww}} & 0 &  \frac{\p \Re F_-}{\p p_+} & 0  \\
     0 &  \frac{\p \Im F_+}{\p \Im \h{\ww}} &  0 & \frac{\p \Im F_+}{\p \Delta \theta} \\
        0 &  \frac{\p \Im F_-}{\p \Im \h{\ww}} &  0 & \frac{\p \Im F_-}{\p \Delta \theta}
   \end{bmatrix} + \Or(p_{+,*}^3)\,.
\end{align*}
It is easy to see that the leading-order matrix of the Jacobian is invertible, by \cref{asspcob}, and  
\begin{align*}
    & \det \left( \frac{\p (\Re F_+, \Re F_-)}{\p (\Re \h{\ww}, p_+)} ((\h{\ww}_*, 0), p_{+,*}, 0, 0,0)\right) \\ = & 4 \h{\ww}_*^{-3} p_{+,*} \left(3 \lad_-  A_{+-} - \lad_+ A_{++} \right) + \Or(p_{+,*}^3) \neq 0\,,
\end{align*}
and 
\begin{align*}
    & \det \left( \frac{\p (\Im F_+, \Im F_-)}{\p (\Im \h{\ww}, \Delta  \theta )} ((\h{\ww}_*, 0), p_{+,*}, 0, 0,0) \right)
     \\ = & i 4 \h{\ww}_*^{-3} \lad_- p_{+, *}^2 A_{+-}  + \Or(p_{+,*}^4)  \neq 0\,,
\end{align*}
which further implies that the Jacobian $\frac{\p (\Re F_+, \Re F_-, \Im F_+, \Im F_-)}{\p (\Re \h{\ww},  \Im\h{\ww}, p_+, \Delta \theta)}$ is invertible since 
\begin{align*}
    &\det \left( \frac{\p (\Re F_+, \Re F_-, \Im F_+, \Im F_-)}{\p (\Re \h{\ww},  \Im\h{\ww}, p_+, \Delta \theta)} \right) \\ = &  \det \left( \frac{\p (\Re F_+, \Re F_-)}{\p (\Re \h{\ww}, p_+)} \right) \dd \det \left( \frac{\p (\Im F_+, \Im F_-)}{\p (\Im \h{\ww}, \Delta  \theta )} \right) + \Or(p_{+,*}^3) \neq 0\,.
\end{align*}
Therefore, by the implicit function theorem, there exists a unique solution $ (\h{\ww}, p_+, \Delta \theta)$ to \eqref{eq:lead111} and \eqref{eq:lead222} near $(\h{\ww}_*, p_{+,*}, 0)$, depending on small $(p_-, \eps)$. Recall \eqref{eq:opi}; another solution exists near the point $(\h{\ww}_*, p_{+,*}, \pi)$.

We now compute the asymptotics of $(\h{\ww}, p_+,\Delta \theta)$ in $(p_-,\eps)$. For this, we first note that
\begin{equation*}
    \frac{\p F_\pm}{\p p_-}((\h{\ww}_*, 0), p_{+,*}, 0, 0, 0) = 0\,,
\end{equation*}
and hence 
\begin{equation} \label{eq:expdelta}
    \Delta \theta = \Or(p_-^2 + \eps)\,.
\end{equation}
It follows that, by \eqref{eq:lead111}--\eqref{eq:lead222} with estimates \eqref{est:fpm}--\eqref{est:gpm}, 
\begin{align}
    & \h{\ww}^{-2} - \lad_+ - \lad_+ \left(p_+^2 A_{++} + 3 p_-^2 A_{+-}\right)  = \Or(\eps + p_-^4 + p_{+,*}^4)\,, \label{sbauxeq1} \\
    &  \h{\ww}^{-2} - \lad_-  - \lad_- \left(p_-^2  A_{--} + 3 p_+^2  A_{+-}\right) = \Or(\eps + p_-^4 + p_{+,*}^4)\,, \label{sbauxeq2} 
\end{align}
from which we solve 
\begin{equation} \label{sbauxeq3} 
    \begin{aligned}
           p_+^2  & = \frac{\lad_- - \lad_+}{\lad_+ A_{++} - 3 \lad_- A_{+-}} + \frac{\lad_- A_{--} - 3 \lad_+ A_{+-}}{\lad_+ A_{++} - 3 \lad_- A_{+-}}  p_-^2  + \Or(\eps + p_-^4 + p_{+,*}^4) \\
    & = p_{+,*}^2 + \frac{\lad_- A_{--} - 3 \lad_+ A_{+-}}{\lad_+ A_{++} - 3 \lad_- A_{+-}}  p_-^2  + \Or(\eps + p_-^4 + p_{+,*}^4)\,,
    \end{aligned}
\end{equation}
using $\frac{\lad_- - \lad_+}{\lad_+ A_{++} - 3 \lad_- A_{+-}} =  p_{+,*}^2 + \Or( p_{+,*}^4)$ by \eqref{eq:pstar}. Plugging \eqref{sbauxeq3} into \eqref{sbauxeq1} gives 
\begin{equation} \label{sbauxeq4}
     \begin{aligned} 
    \h{\ww}^2 & = \frac{1}{\lad_+ + \lad_+ \left(p_+^2 A_{++} + 3 p_-^2 A_{+-}\right) + \Or(\eps + p_-^4 + p_{+,*}^4)} \\ 
    & = \frac{1}{\lad_+ (1 + p_{+,*}^2 A_{++}) + \lad_+ \left(\frac{\lad_- A_{--} - 3 \lad_+ A_{+-}}{\lad_+ A_{++} - 3 \lad_- A_{+-}} A_{++} + 3 A_{+-}\right)  p_-^2 + \Or(\eps + p_-^4 + p_{+,*}^4)} \\
    & = \h{\ww}_*^2 -  \lad_+ \h{\ww}_*^4 \left(\frac{\lad_- A_{--} - 3 \lad_+ A_{+-}}{\lad_+ A_{++} - 3 \lad_- A_{+-}} A_{++} + 3 A_{+-}\right)  p_-^2 +  \Or(\eps + p_-^4 + p_{+,*}^4)\,, 
\end{aligned} 
\end{equation}
by using $1/\h{\ww}_*^2 = \lad_+ (1 + p_{+,*}^2 A_{++}) + \Or(p_{+, *}^4)$ from \eqref{eq:wstar}.

\medskip

\noindent
\underline{\emph{Case II: $\Delta \theta = \{\frac{\pi}{2},\frac{3 \pi}{2}\}$}}. Recalling \eqref{eq:opi}, without loss of generality, we consider only $\Delta \theta = \frac{\pi}{2}$. Then, as in Case I, if there is a second bifurcation on the symmetric branch, the following equation from \eqref{eqbipt} with $\Delta \theta = \frac{\pi}{2}$ should admit a solution: 
\begin{align*}
    & \h{\ww}^{-2} - \lad_+ - \lad_+ p_+^2 A_{++} = \Or(p_+^4)\,, \\
    & \h{\ww}^{-2} - \lad_-  - \lad_- p_+^2 A_{+-} = \Or(p_+^4)\,,
\end{align*}
which implies 
\begin{align} \label{eq:bifruc2}
    \lad_+ - \lad_- + \left(\lad_+ A_{++}  - \lad_-   A_{+-}\right)  p_+^2 + \Or(p_+^4) = 0\,.
\end{align}
However, recalling $\lad_+ > \lad_-$, and the condition $\lad_+ A_{++}  - \lad_-   A_{+-} > 0$ from \cref{asspcob}, the existence of a small positive solution $p_+$ to \eqref{eq:bifruc2} is precluded. 

\smallskip

% Note from \eqref{normaleq} that the map from $|c_+|^2 + |c_-|^2$ to $\mc{N}$ is invertible for small $\mc{N}$. 
Combining all the arguments above, we have proved the following main result. 

\begin{theorem} \label{thm:2ndbirfuc}
    Under \cref{assp:second} and \cref{asspcob}, for the scaled nonlinear resonance problem \eqref{eq:nonlineardimer} with normalization \eqref{normalconst}, equivalently, the system \eqref{eq:leading1}--\eqref{eq:leading3} with \eqref{eq:constcoeff}, then 
    \begin{itemize}
        \item When $|c_+| + |c_-|$ is sufficiently small, there exist only two families of nontrivial solutions, given in \cref{sol:purestate}, with the corresponding resonant states being symmetric and antisymmetric, respectively. 
        \item Let $c_\pm = p_{\pm} e^{i\theta_\pm}$ with $\Delta \theta = \theta_+ - \theta_-$. For any fixed $\eps$, on the symmetric branch \eqref{puresol1}, there exists a critical $|c_{+,{\rm crit}}(\eps)| = p_{+,{\rm crit}}(\eps) > 0$ such that 
        at $c_+ = c_{+,{\rm crit}}(\eps)$, there is a symmetry-breaking bifurcation, parameterized by small $(p_-,\eps)$, up to $u \to e^{i \theta} u$, 
        \begin{align} \label{eq2ndbsta}
            (p_-, \eps) \to & (\h{\ww},u) = \left(\h{\ww}(p_-, \eps), c_+ \vp_+ + p_- \vp_- + r(\h{\ww}, c_+, p_-, \eps)\right)\,,
        \end{align}
          with $r(\dd)$ from \cref{prop:implicitr}, and
    \begin{align} \label{eq2ndbsta2}
        c_+ = p_+(p_-,\eps) e^{i \Delta\theta(p_-,\eps) + i \theta_0}\,,\q \theta_0 = 0, \pi\,,
    \end{align}
    where the asymptotic expansions of $\Delta \theta$, $p_+$, and $\h{\ww}$ are given in \eqref{eq:expdelta}, \eqref{sbauxeq3}, and \eqref{sbauxeq4}, respectively. In particular, we have, by letting $p_- \to 0$ in \eqref{sbauxeq3},  
    \begin{align*}
      p_{+,{\rm crit}}(\eps)^2 =   p_{+,*}^2  + \Or(\eps + p_{+,*}^4)\,,
    \end{align*}
    with $p_{+,*}$ given in \eqref{eq:pstar2}. 
    \item There is no symmetry-breaking bifurcation on the antisymmetric solution branch. 
    \end{itemize}
\end{theorem}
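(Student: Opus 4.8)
The statement assembles the results proved in this subsection, so the plan is to verify each of the three bullets by invoking \cref{prop:implicitr,sol:purestate,lem:expfpm,lem:expgpm,coro:properr,lem:symderivr} and the explicit computations \eqref{eq:pstar}--\eqref{eq:wstar}, \eqref{eq:bifruc2}, \eqref{eq:expdelta}, \eqref{sbauxeq3}--\eqref{sbauxeq4}. For the first bullet I would simply quote \cref{sol:purestate}: after $r$ is eliminated via \cref{prop:implicitr}, any solution with $c_+ \neq 0$ and $c_- \neq 0$ would, on dividing \eqref{eq:leading11}--\eqref{eq:leading22} by $c_+$ and $c_-$ and subtracting, force $\lad_+ - \lad_- = \Or(|c_+|^2 + |c_-|^2 + \eps)$ by \eqref{est:fpm}--\eqref{est:gpm}, which is impossible for $|c_+| + |c_-| + \eps$ small since $\lad_+ > \lad_-$; hence only the two pure-state families \eqref{puresol1}--\eqref{puresol2} occur.

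For the second bullet I would study the reduced system $F_\pm(\h\ww, p_+, p_-, \Delta\theta, \eps) = 0$ of \eqref{eq:lead111}--\eqref{eq:lead222} and solve $(\h\ww, p_+, \Delta\theta)$ as functions of $(p_-, \eps)$. First, I locate the bifurcation point in the limit $\eps = p_- = 0$: taking $\Im$ of \eqref{eq:lead222} and using \eqref{properfpm3}--\eqref{properfpm4} gives $\sin(2\Delta\theta) = 0$, so $\Delta\theta \in \{0, \tfrac\pi2, \pi, \tfrac{3\pi}2\}$; by the periodicity \eqref{eq:opi} it suffices to take $\Delta\theta = 0$, for which \eqref{eq:lead111}--\eqref{eq:lead222} collapse to the scalar equation \eqref{eq:pstar}, and the first inequality of \cref{asspcob} delivers the unique small positive root $p_{+,*}$ of \eqref{eq:pstar2} together with the companion real frequency $\h\ww_* = \Or(1) > 0$ from \eqref{eq:wstar}; the quantitative smallness in \cref{asspcob} keeps $p_{+,*}$ inside the neighborhood on which \cref{prop:implicitr} supplies $r(\cdot)$. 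Next, I check that the real $4\times 4$ Jacobian of $(\Re F_+, \Re F_-, \Im F_+, \Im F_-)$ in $(\Re\h\ww, \Im\h\ww, p_+, \Delta\theta)$ at $((\h\ww_*, 0), p_{+,*}, 0, 0, 0)$ is invertible: by \cref{lem:expfpm,lem:expgpm} it is, to leading order in $p_{+,*}$, block-antidiagonal with blocks $\p(\Re F_+, \Re F_-)/\p(\Re\h\ww, p_+)$ and $\p(\Im F_+, \Im F_-)/\p(\Im\h\ww, \Delta\theta)$ whose determinants are $\sim 4\h\ww_*^{-3} p_{+,*}(3\lad_- A_{+-} - \lad_+ A_{++})$ and $\sim i\,4\h\ww_*^{-3}\lad_- p_{+,*}^2 A_{+-}$, both nonzero by \cref{asspcob}, with off-diagonal and remainder entries of strictly higher order in $p_{+,*}$. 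The implicit function theorem then produces real-analytic $(\h\ww, p_+, \Delta\theta)(p_-, \eps)$ near $(\h\ww_*, p_{+,*}, 0)$, and a second solution near $(\h\ww_*, p_{+,*}, \pi)$ by \eqref{eq:opi}, giving the two choices $\theta_0 \in \{0, \pi\}$ in \eqref{eq2ndbsta2}. Since $\p_{p_-}F_\pm$ vanishes at the base point, $\Delta\theta = \Or(p_-^2 + \eps)$ as in \eqref{eq:expdelta}, and substituting back into \eqref{eq:lead111}--\eqref{eq:lead222} yields \eqref{sbauxeq3}--\eqref{sbauxeq4}. The $p_- \to 0$ limit lands on the symmetric branch — there $F_+ = 0$ is exactly the symmetric-branch equation defining $\h\ww_+(p_+, \eps)$ while $F_- = 0$ pins $p_+ = p_{+,\rm crit}(\eps)$ — whence $p_{+,\rm crit}(\eps)^2 = p_{+,*}^2 + \Or(\eps + p_{+,*}^4)$; the bifurcated state is reconstructed as \eqref{eq2ndbsta} with $r$ from \cref{prop:implicitr}, unique up to $u \mapsto e^{i\theta}u$, and it is genuinely symmetry-breaking because $\p_{p_-}r(\h\ww, c_+, 0, \eps)$ is odd and nonzero by \cref{coro:properr,lem:symderivr}, so for $p_- \neq 0$ the state $c_+\vp_+ + p_-\vp_- + r$ is neither even nor odd.

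For the third bullet I would rule out the remaining branches. On the symmetric branch the case $\Delta\theta = \tfrac\pi2$ (equivalently $\tfrac{3\pi}2$ by \eqref{eq:opi}) reduces in the $\eps = p_- = 0$ limit to \eqref{eq:bifruc2}, which has no small positive root $p_+$ because $\lad_+ > \lad_-$ and $\lad_+ A_{++} - \lad_- A_{+-} > 0$ by \cref{asspcob}. For the antisymmetric branch \eqref{puresol2} I would rerun the reduction with $\vp_+ \leftrightarrow \vp_-$: a symmetry-breaking solution needs $c_+ \neq 0$ bifurcating off $c_+ = 0$, so at the would-be bifurcation point ($p_+ = 0$, $\eps = 0$) the same computation forces $\Delta\theta \in \{0, \tfrac\pi2, \pi, \tfrac{3\pi}2\}$, and dividing \eqref{eq:leading11} by $c_+$ and using the branch relation \eqref{eq:leading22}$/c_-$ gives $\lad_+ - \lad_- + (\lad_- A_{--} - 3\lad_+ A_{+-})p_-^2 + \Or(p_-^4) = 0$ for $\Delta\theta = 0$ and $\lad_+ - \lad_- + (\lad_- A_{--} - \lad_+ A_{+-})p_-^2 + \Or(p_-^4) = 0$ for $\Delta\theta = \tfrac\pi2$; in each the constant term is positive while the quadratic coefficient is negative under \cref{asspcob}, which, as explained in the remark following it, holds uniformly in the dilute regime where $A_{++}, A_{+-}, A_{--} \to \tfrac12\int_{D_0}|\vp_0|^4$ and the splitting $\lad_+ - \lad_-$ dominates the differences of the $A$'s, so no small positive root exists and the antisymmetric branch carries no secondary bifurcation.

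The main obstacle is the Jacobian nondegeneracy in the second bullet. In the dilute regime $\lad_+ - \lad_- \to 0$, hence $p_{+,*} \to 0$, and the Jacobian degenerates: its real and imaginary $2\times 2$ blocks have determinants only of order $p_{+,*}$ and $p_{+,*}^2$ respectively, so I would have to track every cross term and higher-order correction and confirm it is $o$ of those leading orders. This is precisely the role of the quantitative hypothesis that $(\lad_+ - \lad_-)/(3\lad_- A_{+-} - \lad_+ A_{++})$ be small in \cref{asspcob}, which at the same time guarantees that $p_{+,*}$ remains within the domain of validity of \cref{prop:implicitr}.
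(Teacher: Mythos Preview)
Your treatment of the first two bullets is essentially the paper's own argument: the first bullet is exactly \cref{sol:purestate} (the subtraction yielding $\lambda_+-\lambda_-=\mathcal{O}(|c_+|^2+|c_-|^2+\eps)$), and for the second you correctly locate $(\h\omega_*,p_{+,*},0)$ via \eqref{eq:pstar}--\eqref{eq:wstar}, verify the $4\times4$ Jacobian is nonsingular (minor quibble: it is block-\emph{diagonal} after permuting the columns $\Im\h\omega\leftrightarrow p_+$, not block-antidiagonal, but the determinant computation is the same), and apply the implicit function theorem to get \eqref{eq:expdelta}, \eqref{sbauxeq3}--\eqref{sbauxeq4}. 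This matches the paper.

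There is, however, a genuine error in your argument for the third bullet. At a would-be bifurcation point on the antisymmetric branch ($p_+=0$, $\eps=0$, $\Delta\theta=0$), subtracting $F_-=0$ from $F_+=0$ in \eqref{eq:lead111}--\eqref{eq:lead222} gives
\[
\lambda_+-\lambda_- + \bigl(3\lambda_+ A_{+-}-\lambda_- A_{--}\bigr)\,p_-^2 + \mathcal{O}(p_-^4)=0,
\]
not your expression with coefficient $\lambda_- A_{--}-3\lambda_+ A_{+-}$; you have the sign of the quadratic term flipped (and similarly for $\Delta\theta=\tfrac{\pi}{2}$). Worse, even taking your stated signs at face value, the conclusion does not follow: an equation $a+bp_-^2+\mathcal{O}(p_-^4)=0$ with $a>0$ and $b<0$ \emph{does} admit a small positive root $p_-^2\approx a/|b|$, precisely because $a=\lambda_+-\lambda_-$ is small in the dilute regime while $|b|$ is of order one. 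So as written your argument would actually manufacture, not preclude, a bifurcation. The correct reasoning is that $3\lambda_+A_{+-}-\lambda_-A_{--}>0$ in the dilute regime (it tends to $\lambda_0\int_{D_0}|\vp_0|^4>0$), so both the constant and the quadratic coefficient are positive and the left-hand side is strictly positive for all small $p_-$. Your appeal to ``the splitting $\lambda_+-\lambda_-$ dominates the differences of the $A$'s'' is also backwards: in the dilute regime the splitting vanishes while the $A$-combinations stay bounded.
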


\begin{remark}
According to \eqref{eq2ndbsta} and \eqref{eq2ndbsta2}, for a nonlinear resonance $\ww$ associated with the symmetry-breaking bifurcation, there exist two corresponding resonant states (up to  $S^1$ symmetry):  
\begin{align} \label{eq:asymsta}
    u = p_+(p_-, \eps) e^{i \Delta \theta(p_-, \eps)} \vp_+ \pm p_- \vp_- + r(\h{\ww}, p_+(p_-, \eps) e^{i \Delta \theta(p_-, \eps)}, \pm p_-, \eps)\,.
\end{align}
Here, the $\pm$ sign indicates that these asymmetric states tend to localize in one of the dielectric particles of the symmetric dimer.
\end{remark}

The variables $(\eps, \h{\ww})$ can easily be transformed back to $(\tau, \ww)$ using \eqref{eq:scalefreq}. Furthermore, similar to \cref{rem:invernorcont}, it follows from \eqref{normaleq} that the mapping from $|c_+| + |c_-|$ to $\mc{N}$ is invertible in the high contrast regime (\textit{i.e.}, when $\tau = 1 / \eps^2$ is large). Consequently, the nonlinear resonance $\ww$ and the state  $u$ can also be parameterized in terms of the normalization constant $\mc{N}$. \cref{thm:2ndbirfuc} demonstrates that when $\mc{N}$ is sufficiently small, there exist only two solution branches, with symmetric and antisymmetric states, bifurcating from the zero solution of the linear resonances $\ww_\pm \sim \sqrt{\tau \lad_\pm}$, respectively. As $\mathcal{N}$ increases beyond a critical threshold $\mathcal{N}_{\rm crit}$, a secondary bifurcation emerges on the symmetric branch, giving rise to two asymmetric resonant states \eqref{eq:asymsta}, where the critical threshold is given by
\begin{align*}
\mathcal{N}_{\rm crit} \sim \frac{\lambda_+ - \lambda_-}{3 \lambda_- A_{+-} - \lambda_+ A_{++}}\,.
\end{align*}

\subsection{Two-dimensional case}

In this section, we show that in the two-dimensional case, under an assumption similar to \cref{assp:second}, such a symmetry-breaking bifurcation does not occur on either the small-amplitude symmetric or antisymmetric solution branches. This result arises from the differing scalings of the principal subwavelength resonance $\ww = \Or(\sqrt{1/\tau \ln \tau})$ and other resonances $\ww = \Or(1)$, as proved in \cref{prop:limit_scalar2d}.

\begin{assumption} \label{asspsnd2c}
    Let $D = D_1 \cup D_2 \subset \R^2$ be a symmetric dimer and let $\w{\kk}_D$ be the operator in \eqref{eq:limopscal}. The largest eigenvalue $\lad_-$ of $\w{\kk}_D$ is simple with the antisymmetric normalized eigenfunction $\vp_-$. 
\end{assumption}

As noted in \cref{rem:krthm3d}, the Krein-Rutman Theorem does not guarantee the simplicity of the principal eigenvalue of $\w{\kk}_D$ or the symmetry of its associated eigenfunction. Verification of \cref{asspsnd2c} in specific cases remains an open question for future research. However, we expect that the simplicity of $\lad_-$ holds in a generic sense, as suggested in \cite{teytel1999rare}.
The following lemma provides the resolvent estimate for $\w{\kk}_D$, analogous to \cref{lem:resolv}.

\begin{lemma} \label{lem:resol2d}
Let $\h{\ww} \in \mathbb{C}$ satisfy $\mathrm{dist}(\h{\ww}^{-2}, \sigma(\w{\kk}_D)\backslash\{\lambda_{-}\}) > 0$, and define  
\begin{align} \label{def:omegafre2}
    \widetilde{\Omega} := \{\h{\ww} \in \mathbb{C}\,;\ \mathrm{dist}(\h{\ww}^{-2}, \sigma(\w{\kk}_D)\backslash\{\lambda_{-}\}) > \w{d}_*\},
\end{align}
where $\w{d}_* := \tfrac{1}{2}\mathrm{dist}(\lambda_{-}, \sigma(\w{\kk}_D)\backslash\{\lambda_-\})$. 
Then, for all $\h{\ww} \in \widetilde{\Omega}$ and any $f \in H^2(D) \cap L^2_0(D)$ with $f \perp \varphi_-$, the solution $u = (1 - \h{\ww}^2 \w{\kk}_D)^{-1}[f]$ satisfies the uniform estimate
\begin{equation} \label{eq:resolkd2}
    \|u\|_{H^2(D)} \leq C_{\w{d}_*} \|f\|_{H^2(D)},
\end{equation}
where the constant $C_{\w{d}_*} > 0$ depends only on $\w{d}_*$.
\end{lemma}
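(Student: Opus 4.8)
The plan is to follow the proof of \cref{lem:resolv} essentially verbatim, the only new features being the restriction to the mean-zero subspace $L^2_0(D)$ and the more singular (logarithmic) kernel of the two-dimensional Newtonian potential. First I would record the spectral reduction: by \cref{asspsnd2c} the eigenvalue $\lad_-$ of the compact self-adjoint operator $\wkk$ on $L^2_0(D)$ is simple with eigenfunction $\vp_-$, so the closed subspace $L^2_0(D)\cap\{\vp_-\}^\perp$ is $\wkk$-invariant and the restriction of $\wkk$ to it has spectrum exactly $\si(\wkk)\setminus\{\lad_-\}$. Hence, for $\h{\ww}$ with $\dist(\h{\ww}^{-2},\si(\wkk)\setminus\{\lad_-\})>0$, the operator $1-\h{\ww}^2\wkk$ is boundedly invertible on this subspace; for $f$ as in the statement the solution $u=(1-\h{\ww}^2\wkk)^{-1}[f]$ again lies in the subspace, and the standard resolvent bound for self-adjoint operators gives
\begin{equation*}
\norm{u}_{L^2(D)} \le \frac{|\h{\ww}|^{-2}}{\dist(\h{\ww}^{-2},\, \si(\wkk)\setminus\{\lad_-\})}\, \norm{f}_{L^2(D)}\,.
\end{equation*}

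The second step is the elliptic bootstrap. I would use that $\wkk=\kk_D-\l 1_D,\kk_D[\dd]\r_D$ differs from the two-dimensional Newtonian potential $\kk_D$ only by a bounded projection onto the (finite-dimensional, hence $H^2$) space of constant functions, together with the boundedness of $\kk_D:L^2(D)\to H^2(D)$, to conclude $\norm{\wkk}_{L^2(D)\to H^2(D)}\le C$. Inserting $u=\h{\ww}^2\wkk[u]+f$ then gives
\begin{equation*}
\norm{u}_{H^2(D)} \le |\h{\ww}|^2 \norm{\wkk[u]}_{H^2(D)} + \norm{f}_{H^2(D)} \le C|\h{\ww}|^2 \norm{u}_{L^2(D)} + \norm{f}_{H^2(D)}\,,
\end{equation*}
and combining with the $L^2$ bound the factors $|\h{\ww}|^{\pm2}$ cancel, so that $\norm{u}_{H^2(D)}\le\big(C\,\dist(\h{\ww}^{-2},\si(\wkk)\setminus\{\lad_-\})^{-1}+1\big)\norm{f}_{H^2(D)}$. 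Specializing to $\h{\ww}\in\w{\Omega}$, where the distance exceeds $\w{d}_*$, then yields the uniform estimate \eqref{eq:resolkd2} with $C_{\w{d}_*}:=C/\w{d}_*+1$.

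I expect the only point requiring genuine care to be the $L^2(D)\to H^2(D)$ mapping property of $\kk_D$ in two dimensions: although the kernel $-\tfrac{1}{2\pi}\ln|x-y|$ is only weakly singular, one still gets the full two-derivative gain because $-\Delta\kk_D[\vp]=\vp\in L^2(D)$ on $D$ and $\kk_D[\vp]\in L^2(D)$ by boundedness of $D$, so interior elliptic $L^2$-regularity applies; this is precisely the content already invoked for $\kk_D^\ww$ in \cref{lem:regkdw} and in the definition \eqref{eq:helmpotential}. Everything else --- compactness and self-adjointness of $\wkk$, the spectral decomposition \eqref{eq:specwkd}, invariance of $\{\vp_-\}^\perp$, and the cancellation of $|\h{\ww}|^{\pm2}$ --- is routine and structurally identical to the three-dimensional argument, so no further obstacle is anticipated.
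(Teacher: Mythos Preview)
Your proposal is correct and follows precisely the route the paper indicates: the lemma is stated as analogous to \cref{lem:resolv}, and your argument reproduces that proof with $\kk_D$ replaced by $\wkk$ on the invariant subspace $L^2_0(D)\cap\{\vp_-\}^\perp$, together with the observation that $\wkk$ inherits the $L^2\to H^2$ mapping property from the two-dimensional Newtonian potential $\kk_D$.
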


\begin{theorem} \label{thm:2d2ndbif}
    Under \cref{asspsnd2c}, consider the nonlinear resonance problem \eqref{eq:nonLippscalar0} with the normalization condition \eqref{eq:normalizecond}. When $\mc{N}$ is sufficiently small, there exist only two nontrivial solution families, with symmetric and antisymmetric resonant states $u_\pm$, bifurcating from the zero solution of linear subwavelength resonances $$\ww_+ \sim \sqrt{4 \pi /(|D| \tau \ln \tau)} \q  \text{and} \q \ww_- \sim \sqrt{1/\tau \lad_-},$$ respectively. Furthermore, no symmetry-breaking bifurcations occur along either branch as $\mc{N}$ increases. 
\end{theorem}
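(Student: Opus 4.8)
The plan is to run the Lyapunov--Schmidt reduction used for the two-dimensional existence theorem (\cref{thm:2dexist}) together with the dimer reduction of \cref{subsec:3dcase}, but now tracking \emph{both} the symmetric ``constant'' mode $1$ (behind the $\Or(1/\sqrt{\tau\ln\tau})$ resonance) and the antisymmetric mode $\vp_-$ (behind the $\Or(1/\sqrt{\tau})$ resonance) simultaneously, and then to extract a contradiction from their incompatible scalings. Write $\eps = \sqrt{\tau}^{-1}$, $\h{\ww} = \eps^{-1}\ww$, and recall from \eqref{decomkdw} the splitting $\kk_D^{\eps\h{\ww}} = \h{\kk}_D^{\eps\h{\ww}} + \rr^{\eps\h{\ww}}$ with $\h{\kk}_D^{\eps\h{\ww}}[\vp] = \kk_D[\vp] - \eta_{\eps\h{\ww}}\int_D\vp$ and $\rr^{\eps\h{\ww}} = \Or((\eps\h{\ww})^2\ln(\eps\h{\ww}))$. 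First I would note that, under \cref{asspsnd2c}, since $\int_D\vp_- = 0$ and $\kk_D[1]$ is symmetric (its kernel depends only on $|x-y|$ and $\mc{R}\chi_D = \chi_D$), one has $\l 1_D,\kk_D[\vp_-]\r_D = 0$, so $\vp_-$ is in fact a $\kk_D$--eigenfunction, $\kk_D[\vp_-] = \lad_-\vp_-$. Then I would introduce the orthogonal projections $P_0 u = \l 1_D,u\r_D\,1$, $P_- u = \l\vp_-,u\r_D\,\vp_-$, $\w{P}_r = I - P_0 - P_-$, parametrize a solution of \eqref{eq:nonLippscalar2} as $u = a + c_-\vp_- + r$ with $r = \w{P}_r u \in H_\sim^2(D) := H^2(D)\cap\w{P}_r L^2(D)$, and use \cref{lem:resol2d} and the implicit function theorem (as in \cref{prop:implicitr}) to solve the $\w{P}_r$--projected equation for a unique real--analytic $r = r(\h{\ww},a,c_-,\eps)$. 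The feature that makes this work is that the logarithmically singular term $\eta_{\eps\h{\ww}}\big(\int_D\cdot\big)\,1$ lives in the constant direction, which $\w{P}_r$ kills, so $\w{P}_r\h{\kk}_D^{\eps\h{\ww}} = \w{P}_r\kk_D$ and the $r$--equation is non-singular; the only new feature compared with \cref{prop:implicitr} is a source term $\h{\ww}^2 a\,\w{P}_r\kk_D[1]$ linear in $a$, contributing $\Or(\h{\ww}^2|a|)$ to $\norm{r}_{H^2(D)}$. I also need the reduction to hold uniformly on a set $\w{\Omega}$ (from \cref{lem:resol2d}) containing neighborhoods of both $\h{\ww}_{*,+} = \sqrt{\tau}\,\ww_+ = \Or(1/\sqrt{\ln\tau})$ and $\h{\ww}_{*,-} = \sqrt{\tau}\,\ww_- = \lad_-^{-1/2} + \Or(1/\ln\tau)$, which holds since $\si(\w{\kk}_D)$ is bounded and $\lad_-$ is isolated in it; and, by the $\mc{R}$-- and $S^1$--equivariance (\cref{lem:invarphaseu,lem:z2equivar} and the analogue of \cref{coro:properr}), that $r(\h{\ww},a,0,\eps)$ is even and $r(\h{\ww},0,c_-,\eps)$ is odd.

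Substituting $r(\dd)$ and projecting \eqref{eq:nonLippscalar2} onto $P_0$ and $P_-$, using $\l\vp_-,1\r_D = 0$ and $\kk_D\vp_- = \lad_-\vp_-$, I expect to arrive at a reduced system of the form
\begin{equation} \label{eq:redsys2d}
   a\,\big(1 + \h{\ww}^2\eta_{\eps\h{\ww}}(|D| + \psi_0)\big) = \h{\ww}^2\,a\,\w{E}_0\,, \qquad
   c_-\,\big(1 - \h{\ww}^2\lad_-(1 + \psi_-)\big) = \h{\ww}^2\,c_-\,\w{E}_-\,,
\end{equation}
together with the normalization $|a|^2|D| + |c_-|^2 + \norm{r}_{L^2(D)}^2 = \mc{N}$ from \eqref{eq:normalizecond}. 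The right--hand sides of \eqref{eq:redsys2d} vanish at $a = 0$, resp.\ $c_- = 0$, because $u|_{a=0}$ is odd and $u|_{c_-=0}$ is even while $\l 1_D,\kk_D[\cdot]\r_D$, $\int_D(\cdot)$ and $\l 1_D,\rr^{\eps\h{\ww}}[\cdot]\r_D$ annihilate odd functions and $\l\vp_-,\cdot\r_D$ annihilates even ones, so the cubic and remainder contributions factor through $a$, resp.\ $c_-$; here $\psi_0,\psi_-$ collect the $\eta_{\eps\h{\ww}}$-- and $\lad_-$--multiplied cubic parts and $\w{E}_0,\w{E}_-$ the rest (with $\w{E}_0$ absorbing the bounded constant $\l 1_D,\kk_D[1]\r_D$ and the $a$--linear part of $r$). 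Using $\norm{r}_{H^2(D)} = \Or(\h{\ww}^2|a| + (|a|+|c_-|)(|a|^2+|c_-|^2+\eps))$, \eqref{eq:priestcub} and the bound on $\rr^{\eps\h{\ww}}$, I would then check that, on the relevant $\h{\ww}$--regions (where $\h{\ww}$ stays bounded), $\psi_0,\psi_- = \Or(|a|^2+|c_-|^2)$, $\w{E}_- = \Or((\eps\h{\ww})^2|\ln(\eps\h{\ww})|)$, and $\w{E}_0 = \Or(1)$.

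The theorem then follows from one dichotomy. If $a \neq 0$, dividing the first equation in \eqref{eq:redsys2d} by $a$ gives $1 + \h{\ww}^2\eta_{\eps\h{\ww}}(|D| + \psi_0) = \h{\ww}^2\w{E}_0 = \Or(1)$; since $|D| + \psi_0 \ge |D|/2$ once $\delta$ is small, this forces $\h{\ww}^2\eta_{\eps\h{\ww}} = \Or(1)$, and as $\eta_{\eps\h{\ww}} = (2\pi)^{-1}(\ln\eps + \ln\h{\ww} + \h{\gamma})$ with $\h{\ww}$ bounded, $|\eta_{\eps\h{\ww}}| = \Theta(\ln\tau)$, hence $|\h{\ww}|^2 = \Or(1/\ln\tau) \to 0$. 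If $c_- \neq 0$, dividing the second equation by $c_-$ gives $1 - \h{\ww}^2\lad_-(1 + \psi_-) = \h{\ww}^2\w{E}_- = \Or(\eps^2|\ln\eps|)$, so $\h{\ww}^2 = \lad_-^{-1}(1 + \Or(\delta^2))$ is bounded away from $0$. These are incompatible for $\delta$ small and $\tau$ large. I would conclude: (i) for small $\mc{N}$, any nontrivial solution bifurcating from $\ww_\pm$ has $a = 0$ or $c_- = 0$, and by \cref{lem:symresostate} and the $\mc{R}$--equivariance it is then antisymmetric (the $a = 0$ branch, $\h{\ww} \sim \lad_-^{-1/2}$, leading mode $\vp_-$) or symmetric (the $c_- = 0$ branch, $\h{\ww} = \Or(1/\sqrt{\ln\tau})$ the constant--mode subwavelength resonance of \cref{prop:limit_scalar2d}, leading mode $1$), with existence and uniqueness of each branch coming from the single remaining equation in \eqref{eq:redsys2d} by the implicit function theorem, exactly as in \cref{thm:2dexist} and \cref{sol:purestate}; and (ii) no symmetry--breaking bifurcation can occur along either branch, since a branch leaving the symmetric branch would have $c_- \neq 0$ with $\h{\ww}$ near $\Or(1/\sqrt{\ln\tau})$ (contradicting $\h{\ww}^2 = \lad_-^{-1}(1+\Or(\delta^2))$), and a branch leaving the antisymmetric branch would have $a \neq 0$ with $\h{\ww}$ near $\lad_-^{-1/2}$ (contradicting $|\h{\ww}|^2 = \Or(1/\ln\tau)$). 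Translating back to $(\tau,\ww)$ via \eqref{eq:scalefreq} and reparametrizing by $\mc{N}$ as in \cref{rem:invernorcont} finishes the argument.

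The hard part will not be any single estimate but the bookkeeping behind this scale--separation dichotomy: I must verify that the singular factor $\eta_{\eps\h{\ww}}$ enters the reduced $a$--equation only in the combination $\eta_{\eps\h{\ww}}(|D| + \psi_0)$ with $|D| + \psi_0$ bounded away from zero --- so the cubic terms cannot cancel it --- and that the Lyapunov--Schmidt reduction for $r$ is valid uniformly on a range of $\h{\ww}$ reaching both the $\Or(1/\sqrt{\ln\tau})$ scale of the symmetric branch and the $\Or(1)$ scale of the antisymmetric branch, which is exactly where \cref{lem:resol2d} and the simplicity of $\lad_-$ in \cref{asspsnd2c} are used.
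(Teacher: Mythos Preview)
Your proposal is correct and follows essentially the same route as the paper: the same two-mode Lyapunov--Schmidt reduction (projections onto $1$, $\vp_-$, and their complement), the same uniform solution of the $r$-equation via \cref{lem:resol2d}, and the same scale-separation mechanism between $\h{\ww}^2\eta_{\eps\h{\ww}}=\Or(1)$ (forced by $a\neq 0$) and $\h{\ww}^2\approx\lad_-^{-1}$ (forced by $c_-\neq 0$).

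The one noteworthy difference is in how the contradiction for the antisymmetric branch is extracted. The paper looks at the would-be bifurcation point ($c_+=0$, $\h{\ww}=\Or(1)$), matches the $\Or(\ln\eps)$ coefficients in the $c_+$-equation (where the cubic terms themselves carry an $\eta_{\eps\h{\ww}}$ via $A_{+-}^{\eps\h{\ww}}=\l 1_D,\kk_D[\vp_-^2]\r_D-\eta_{\eps\h{\ww}}$), and obtains $-|D|=|c_-|^2(2+e^{-i2\Delta\theta})$, whose real part is a sign contradiction. You instead bundle \emph{all} the $\eta_{\eps\h{\ww}}$-contributions into a single coefficient $|D|+\psi_0$ with $|\psi_0|=\Or(|a|^2+|c_-|^2)$ and argue directly that $a\neq 0$ forces $|\h{\ww}|^2=\Or(1/\ln\tau)$, which is incompatible with $c_-\neq 0$. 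Your packaging is a bit cleaner and avoids the phase/real-part step; the paper's version, on the other hand, makes explicit which cubic coefficient carries the competing $\eta_{\eps\h{\ww}}$, which is useful if one wants to see why the 2D obstruction is structural rather than accidental. Either way the content is the same: the logarithmic singularity in the constant-mode equation cannot be balanced by the $\vp_-$-equation, so no mixed $(a,c_-)$ solution exists in the small-amplitude regime.
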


\begin{remark}
The nonlinear subwavelength resonances $\omega_\pm$ and associated states $u_\pm$ admit asymptotic expansions analogous to those derived in \cref{thm:2dexist}. For brevity, we omit the explicit statements in \cref{thm:2d2ndbif}.  While \cref{thm:2d2ndbif} rules out symmetry-breaking bifurcations along the principal resonance branches, such bifurcations may still arise through hybridization between other subwavelength resonances $\omega \sim \sqrt{\tau \mu_j}$, where $\{\mu_j\}$ are the eigenvalues of $\widetilde{\mathcal{K}}_D$.
\end{remark}

\begin{proof}
The proof closely follows the approach in \cref{subsec:3dcase} for the three-dimensional case (also the one for \emph{Case II} in the proof of \cref{thm:2dexist}); thus, we only outline the main steps below. 

It suffices to consider the scaled nonlinear resonance problem \eqref{eq:nonlineardimer} with \eqref{normalconst}, where $D \subset \R^2$ is a symmetric dimer. For convenience, let $\varphi_+ = 1$, and define the projections $P_+[u] := \langle 1_D, u \rangle_D$, $P_-[u] := \langle \varphi_-, u \rangle_D \varphi_-$, and $P_r := I - P_+ - P_-$. We adopt a solution ansatz similar to \eqref{eq:solansa}–\eqref{anszsol2}:  
\begin{align*}
    u = c_+ \varphi_+ + c_- \varphi_- + r = c_+ + c_- \varphi_- + r\,,
\end{align*}  
where $c_+ = \langle 1_D, u \rangle_D$,  $c_- = \langle \varphi_-, u \rangle_D$, and $r = P_r u$. Noting that $\vp_{\pm}$ are not eigenfunctions of $\kk_D$, we modify the definition of the constants $a_{klmn}$ in \eqref{def:constakl} as follows: 
\begin{align} \label{defcontkd}
  a_{+lmn} := \l 1_D, \kk_D[\vp_l \vp_m \vp_n] \r_D \in \R\,,\q  a_{-lmn} := \l \vp_-, \kk_D[\vp_l \vp_m \vp_n] \r_D \in \R\,.
\end{align}
Similarly, with the help of the symmetry of $\vp_\pm$, we still have $a_{klmn} = 0$ if there is one $-$ or three $-$ in $(k,l,m,n)$. It follows that 
\begin{align} \label{def:coeffA2}
a_{klmn} = 
\begin{dcases}
    A_{++}  &\q \text{if $(k,l,m,n) = (+,+, +, +)$},\\
    A_{+-} &\q \text{if $k = +$, and there is one $+$ in $(l,m,n)$},\\
    A_{-+} &\q \text{if $k = -$, and there are two $+$ in $(l,m,n)$},\\
    A_{--} &\q \text{if $(k,l,m,n) = (-,-, -, -)$}.
\end{dcases}  
\end{align}
We recall 
\begin{align*}
    R(c_+,c_-,r) = \kk_{D}\left[|u|^2 u \right] - \kk_D\left[|c_+ \vp_+ + c_- \vp_-|^2 (c_+ \vp_+ + c_- \vp_-)\right].
\end{align*}
We also define $a_{klmn}^{\eps \h{\ww}}$, $A_{\pm\pm}^{\eps \h{\ww}}$, and $R^{\eps \h{\ww}}(c_+,c_-,r)$ by replacing the operator $\kk_D$ with $\kk_D^{\eps \h{\ww}}$. In particular, a direct computation gives, by $\norm{\vp_-}_{L^2(D)} = 1$,
\begin{align} \label{auxexplifor}
    A_{++}^{\eps \h{\ww}} = \l 1_D, \kk_D[1] \r_D - \eta_{\eps \h{\ww}} |D|\,,\q A_{+-}^{\eps \h{\ww}} = \l 1_D, \kk_D[\vp_-^2] \r_D - \eta_{\eps \h{\ww}}\,.
\end{align}

Using the projections $P_{\pm}$ and $P_r$, along with the preparations above, similarly to \eqref{eq:leading1}--\eqref{coeff2}, we can reformulate equation \eqref{eq:nonlineardimer} into the following system: 
\begin{multline} \label{eq:proj031}
    (1 + \h{\ww}^2 \eta_{\eps \h{\ww}} |D|) c_+ = \h{\ww}^2 \l 1_D, \kk_D[c_+ + r] \r_D + \h{\ww}^2|c_+|^2 c_+ A^{\eps \h{\ww}}_{+ +} \\ + \h{\ww}^2(2|c_-|^2 c_+   + c_-^2 \overline{c_+}) A^{\eps \h{\ww}}_{+ -}  + \h{\ww}^2 \l 1_D, R^{\eps \h{\ww}}(c_+,c_-,r)\r_D + \h{\ww}^2 \l 1_D, \rr^{\eps \h{\ww}}[N(u)] \r_D\,,
\end{multline}
\begin{multline} \label{eq:proj041}
   (1 - \h{\ww}^2 \lad_-)  c_-  = \h{\ww}^2 |c_-|^2 c_- A_{--} + \h{\ww}^2\left(2 |c_+|^2 c_- + c_+^2 \overline{c_-}  \right) A_{-+}  \\ + \h{\ww}^2 \l \vp_-, R(c_+,c_-,r)\r_D  + \h{\ww}^2 \l \vp_-, \rr^{\eps \h{\ww}}[N(u)] \r_D\,,
\end{multline}
\begin{align} \label{eq:proj151}
   (1 - \h{\ww}^2 \w{\kk}_D) r = \h{\ww}^2 P_r \kk_D \left[ c_ +\right] + \h{\ww}^2 P_r \kk_D \left[ |u|^2 u \right] + \h{\ww}^2 P_r \rr^{\eps \h{\ww}}[N(u)]\,,
\end{align}
where we have used, from the antisymmetry of $\vp_-$ and $\vp_- \perp r$, 
\begin{align*}
    \l \vp_-, \kk_D[u] \r_D = c_- \lad_-\,,\q \l 1_D, \kk_D[u] \r_D =  \l 1_D, \kk_D[c_+ + r] \r_D\,.
\end{align*}

Then, similarly to the proof of \cref{thm:2dexist}, we solve $r$ from \eqref{eq:proj151} via
\begin{multline*}
    \mc{F}(\h{\ww},c_+,c_-,r,s) := r - c_+ (\h{\ww}^{-2} - \w{\kk}_D)^{-1} P_r \w{\phi} \\ - (\h{\ww}^{-2} - \w{\kk}_D)^{-1} P_r \kk_D[|u|^2 u] - (\h{\ww}^{-2} - \w{\kk}_D)^{-1} P_r \rr^{\eps \h{\ww}}[N(u)]= 0\,,
\end{multline*}
with 
\begin{align*}
    \h{\ww} \in \w{\Omega}\,,\q \eps = e^{-\frac{1}{s^2}}\,,
\end{align*}
and $\w{\phi}$, $\w{\kk}_D$ in \eqref{auxfunc2d}--\eqref{eq:limopscal}. It is easy to see $\mc{F}(\h{\ww},0,0,0,s) = 0$ for any $\h{\ww} \in \w{\Omega}$ and small $s$. From \cref{lem:resol2d}, we estimate 
\begin{align*}
    \norm{\p_r \mc{F} - I}_{H^2(D) \to H^2(D)} \le C_{\w{d}_*} \left( \norm{u}_{H^2}^2 + (\eps \h{\ww})^2 (|\ln \eps| + \ln \h{\ww}) \right),
\end{align*}
for a constant $C_{\w{d}_*}$ depending on $\w{d}_*$. Moreover, by an argument similar to that in \eqref{auxestr}, the following a priori estimate holds for the solution of $\mathcal{F}(\h{\ww}, c_+, c_-, r, s) = 0$:  
\begin{align} \label{esttr}
    \norm{r}_{H^2(D)} \le C_{\w{d}_*} \left( |c_+| + (|c_+|^2 + |c_-|^2 + (\eps \h{\ww})^2 (|\ln \eps| + \ln \h{\ww})) (|c_+| + |c_-|) \right),
\end{align}
when $\norm{u}_{H^2(D)}$ and $\eps$ are sufficiently small. Therefore, as \cref{prop:implicitr}, there exist constants $c_* > 0$ and $s_* > 0$  depending on $\w{d}_*$ such that for $\h{\ww} \in \w{\Omega}$, $|c_+| + |c_-| < c_*$, and $s < s_*$, one can uniquely solve function $r(\dd)$ by $\mc{F}(\h{\ww},c_+,c_-,r,s) = 0$, satisfying the estimate \eqref{esttr} and the properties in \cref{coro:properr}. In particular, there holds 
\begin{align} \label{eq:symmrfun2}
    \rr[r(\h{\ww}, c_+, 0, s)] = r(\h{\ww}, c_+, 0, s)\,,\q \rr[r(\h{\ww}, 0, c_-, s)] = - r(\h{\ww}, 0, c_-, s)\,.
\end{align}

By the symmetry \eqref{eq:symmrfun2}, we have 
$$\l \vp_{-}, R(c_+,c_-,r)|_{c_- = 0} \r_D = 0 \q  \text{and} \q \l 1_D, R^{\eps \h{\ww}}(c_+,c_-,r)|_{c_+ = 0} \r_D = 0.$$ Thus, there exist functions $f_\pm$ such that 
\begin{equation} \label{auxeq112d}
    \begin{aligned} 
    &\l \vp_{-}, R(c_+,c_-,r) \r_D = c_- f_-(\h{\ww},c_+,c_-,s)\,, \\ &\l 1_D, R(c_+,c_-,r) \r_D = c_+ f_+(\h{\ww},c_+,c_-,s)\,.
\end{aligned}
\end{equation}
For the same reason, we can write 
\begin{align} \label{auxeq122d}
     \l \vp_-, \rr^{\eps \h{\ww}}[N(u)] \r_D = c_- g_-(\h{\ww},c_+,c_-,s)\,, \q \l 1_D, \rr^{\eps \h{\ww}}[N(u)] \r_D = c_+ g_+(\h{\ww},c_+,c_-,s)\,,
\end{align}
and 
\begin{align} \label{auxeq132d}
     \l 1_D, \kk_D[c_+ + r] \r_D = c_+ k(\h{\ww},c_+,c_-,s)\,,
\end{align}
for some functions $g_\pm$ and $k$. 

With  $r$  solved, we next focus on equations \eqref{eq:proj031}–\eqref{eq:proj041}, along with \eqref{auxeq112d}–\eqref{auxeq132d}, in the variables $(\h{\ww}, c_+, c_-, s)$. By the same arguments as those in \cref{subsec:2dsolution} and \cref{subsec:3dcase}, the existence of pure mode solutions can be established. Briefly, note that if $c_- = 0$, then \eqref{eq:proj041} is satisfied trivially, allowing  $\h{\ww}(c_+, s)$ to be solved from \eqref{eq:proj031}, with the corresponding $u = c_+ + r(\h{\ww}, c_+, 0, s)$ being symmetric. Similarly, by setting $c_+ = 0$, \eqref{eq:proj031} is satisfied trivially, enabling $\h{\ww}(c_-, s)$ to be solved from \eqref{eq:proj041} with antisymmetric $u = c_- \vp_- + r(\h{\ww}, 0, c_-, s)$. 

We will now prove, by contradiction, the non-existence of symmetry-breaking bifurcations along the pure symmetric and antisymmetric solution branches discussed above. Suppose that such a second bifurcation exists, for which the solution necessarily satisfies $c_\pm \neq 0$. Then, equations \eqref{eq:proj031} and \eqref{eq:proj041} reduce to the following forms, by dividing by $c_+$ and $c_-$, respectively, 
\begin{multline} \label{eq:proj0312}
    1 + \h{\ww}^2 \eta_{\eps \h{\ww}} |D| = \h{\ww}^2 k(\h{\ww},c_+,c_-,s) + \h{\ww}^2|c_+|^2 A^{\eps \h{\ww}}_{+ +} + \h{\ww}^2(2|c_-|^2   + c_-^2 e^{-i 2 \theta_+} ) A^{\eps \h{\ww}}_{+ -}  \\ + \h{\ww}^2 f_+(\h{\ww},c_+,c_-,s) + \h{\ww}^2 g_+(\h{\ww},c_+,c_-,s)\,,
\end{multline}
\begin{multline} \label{eq:proj0412}
   1 - \h{\ww}^2 \lad_-  = \h{\ww}^2 |c_-|^2 A_{--} + \h{\ww}^2\left(2 |c_+|^2 + c_+^2 e^{-i 2 \theta_-}  \right) A_{-+}  \\ + \h{\ww}^2 f_-(\h{\ww},c_+,c_-,s) + \h{\ww}^2 g_-(\h{\ww},c_+,c_-,s)\,,
\end{multline}
where $\theta_\pm$ are the phases from the polar form $c_\pm = p_\pm e^{i \theta_\pm}$. We consider two cases. 
If the symmetry-breaking bifurcation occurs along the symmetric curve, then the bifurcation point $(\h{\ww}, c_+, c_-, s)$, solving equations \eqref{eq:proj0312} and \eqref{eq:proj0412}, must satisfy $\h{\ww} = \Or(\sqrt{-\ln \eps}^{-1})$ and $c_- = 0$. Substituting $c_- = 0$ into \eqref{eq:proj0412} leads to the following:
\begin{align*}
    1 + \Or\left(\frac{1}{\ln \eps}\right) = \Or\left(\frac{1}{\ln \eps}\right),
\end{align*}
which is a contradiction as $\eps \to 0$. On the other hand, if the bifurcation occurs on the antisymmetric curve, then the associated bifurcation point $(\h{\ww}, c_+, c_-, s)$, solving equations \eqref{eq:proj0312} and \eqref{eq:proj0412}, must satisfy $\h{\ww} = \Or(1)$ and $c_+ = 0$, which further implies $\eta_{\eps \h{\ww}} = \Or(\ln \eps) $. Matching the terms of order $\Or(\ln \eps)$ in \eqref{eq:proj0312} gives, by using \eqref{auxexplifor},
\begin{align*}
    -|D| = 2|c_-|^2 + c_-^2 e^{-i 2 \theta_+} = |c_-|^2 \big( 2 + e^{-i 2 (\theta_+ - \theta_-)} \big)\,.
\end{align*}
Taking the real part of this equation leads to a contradiction, as the left-hand side is negative, while the right-hand side is positive.  Thus, the proof is complete.  
\end{proof} 

\section{Concluding remarks}

This work represents a first step toward developing a comprehensive mathematical framework for analyzing nonlinear scattering resonances in dielectric resonators with high refractive indices and a Kerr-type nonlinearity. 
We have established the existence of nonlinear subwavelength dielectric resonances with small-amplitude states in both two and three dimensions, and derived their asymptotic formulas in terms of the contrast and normalization constant. For symmetric configurations, these small-amplitude nonlinear resonant states would inherit the corresponding symmetry properties. We have investigated symmetry-breaking bifurcations for a symmetric dimer, revealing a fundamental distinction between the two- and three- dimensional settings. In the three-dimensional case, we have identified conditions under which a secondary bifurcation occurs on the principal symmetric branch, giving rise to two asymmetric resonant states. However, in the two-dimensional case, the differing scalings of the principal resonance, $\Or(1/\sqrt{\tau \ln \tau})$ versus $\Or(1)$, prevent such bifurcations from occurring.

Our results open up numerous possibilities for advancing the mathematical understanding of nonlinear high-index dielectric resonators and associated photonic metamaterials. Here, we highlight some potential directions for future research.  First, this work focuses on small-amplitude resonant states, corresponding to the weak nonlinearity regime under a scaling. A natural extension would be to investigate the existence of large-amplitude resonant states, particularly symmetry-breaking bifurcations at a large $\mc{N}$, analogous to the results in \cites{jackson2004geometric,kirr2011symmetry} for the nonlinear Schr\"{o}dinger equation. Moreover, as explored in \cites{feppon2022modal,ammari2022modal,li2024large}, an important follow-up question is the modal decomposition of the nonlinear scattered wave by high-index dielectric resonators in both the frequency and time domains. This could lay the foundation for addressing more challenging problems, such as the long-time dynamics of scattered waves and the stability of nonlinear principal resonant states during time evolution.  
Another compelling direction involves studying the localization properties of nonlinear resonant states in various configurations, including periodic lattices of nonlinear dielectric resonators with linear or nonlinear defects, as well as random configurations of particles, as discussed in \cites{ilan2010band,ammari2024anderson,ammari2025disorder}. Finally, extensions to larger resonator networks and their effective-medium or graph-limit descriptions could facilitate the systematic design of nonlinear subwavelength photonic and acoustic metamaterials.

\subsection*{Acknowledgments} This work was initiated while H.A. was visiting the Hong Kong Institute for Advanced Study as a Senior Fellow. It was supported in part by National Key R$\&$D Program of China Grant No. 2024YFA1016000 (B.L.), and by the Swiss National Science Foundation grant number 2025-10004276 (H.A.). 

\begin{appendix}
\section{Auxiliary proofs} \label{app:A}  

% We first give the proof of \cref{prop:veriassp}. 

\begin{proof}[Proof of \cref{prop:veriassp}]
We introduce the operators, for $i \neq j$, 
\begin{align*}
    \mc{R}_{D_i,D_j}[\vp] = \int_{D_i} G^0(x-y)\vp(y) \ud y:\ L^2(D_i) \to  L^2(D_j)\,,
\end{align*}
where $G^0(x-y) = \frac{1}{4 \pi |x - y|}$ is the Green's function for the Laplace operator. Moreover, there holds 
\begin{align*}
    \mc{R}_{D_i,D_j} = \Or(L^{-1})\,.
\end{align*}
Then we can reformulate the eigenvalue problem $(\lad - \kk_D)[\vp] = 0$ as: 
\begin{align*}
    \mm \lad - \kk_{D_1} &  \rr_{D_2,D_1} \\
    \rr_{D_1, D_2} & \lad - \kk_{D_2} \nn \mm \vp|_{D_1} \\ \vp|_{D_2} \nn = \mm \lad - \kk_{D_1} & 0 \\
    0 & \lad - \kk_{D_2} \nn \mm \vp|_{D_1} \\ \vp|_{D_2} \nn + \Or(L^{-1}) = 0\,.
\end{align*}
It is clear that as $L \to 0$, the leading-order eigenvalue problem is non-interacting, and it admits the principal eigenvalue $\lad_0$ with two normalized $L^2(D)$ eigenfunctions
\begin{align*}
    \vp_1 = \mc{T}_L \vp_0\,,\q  \vp_2 = \rr \mc{T}_L \vp_0\,.
\end{align*}
By the perturbation theory \cite{kato2013perturbation}, to find the first-order approximation of $\lad$, we consider the following two-dimensional secular equation:
\begin{align} \label{leadeig}
     (\lad - M) \mm a \\ b\nn = 0\,,
\end{align}
where 
\begin{align*}
    M: = \mm \l \vp_1, \kk_D[\vp_1] \r_D & \l \vp_1, \kk_D[\vp_2] \r_D \\ \l \vp_2, \kk_D[\vp_1] \r_D   & \l \vp_2, \kk_D[\vp_2] \r_D \nn = \mm \lad_0  & k_I \\ k_I  & \lad_0 \nn ,
\end{align*}
with $$ 0 < k_I := \l \vp_1, \kk_D[\vp_2] \r_D = \l \vp_2, \kk_D[\vp_1] \r_D = \Or(L^{-1})\,,$$ by symmetry and $\vp_0 > 0$. It follows that for large $L$, the eigenvalue $\lad_0$ of multiplicity two split into two simple eigenvalues $\lad_+ > \lad_-$ with asymptotics: 
\begin{align*}
    \lad_\pm = \lad_0 \pm k_I + \Or(L^{-2})\,.
\end{align*}
Moreover, note that $\lad_0 \pm k_I$ are eigenvalues of \eqref{leadeig} with corresponding eigenvectors $\frac{1}{\sqrt{2}}[1,\pm 1]$. This implies that normalized eigenfunctions $\vp_\pm$ associated with $\lad_\pm$ satisfy \eqref{eq:coneigfunc}. Since the eigenvalues $\lad_\pm$ are simple, we can conclude from \eqref{eq:coneigfunc} that $\vp_\pm$ are symmetric and antisymmetric, respectively. The proof is complete.
\end{proof}

% We next give the proof of \cref{prop:nonvanish}, which needs the following lemma. 

% \begin{lemma}[{\cite{alsholm1971spectral}*{Proposition 2}}] Let $V \in L^\infty(\mathbb{R}^d)$, $d \ge 3$, be real-valued and satisfy 
% \begin{align*}
%     |V(x)| \leq \frac{C}{(1 + |x|^2)^{\frac{1 + \eps}{2}}}\,,
% \end{align*}
% for a.e. $x \in \R^d$ and constants $C, \epsilon > 0$. Consider $u \in H^2_{\rm loc}(\R^d, \C)$ that solves $-\Delta u + Vu = \ww^2u$ on $\R^d$ and satisfies 
% \begin{align*}
%     \liminf_{R \to \infty} \frac{1}{R} \int_{B_R(0)}  |\nabla u|^2 + \ww^2 |u|^2 \ud x = 0\,.    
% \end{align*}
% Then, $u$ vanishes identically, i.e., $u \equiv 0$ on $\R^d$. 
% \end{lemma}

\begin{proof}[Proof of \cref{prop:nonvanish}]
It suffices to show that for any frequency $\ww$ with $\Im \ww \ge 0$, the nonlinear equation \eqref{eq:nonLippscalar} admits only the trivial solution $u \equiv 0$. The symmetry with respect to the imaginary axis follows from \cref{lem:invarphaseu}.

For simplicity, we let $\eta = 1$ and $V(x): = \tau \chi_D (1 + |u|^2)$. Let us start with the case of $\ww \in \R \backslash \{0\}$. By \cref{lem:invarphaseu},  we only consider $\ww > 0$. From the radiation condition \eqref{eq:radiation}, we observe, as $r \to \infty$, 
\begin{equation}
    \int_{\p B_r(0)}  \left| \frac{\partial u}{\partial \nu} \right|^2 + \ww^2 |u|^2 + 2 \ww \Im \left( u \, \frac{\partial \bar{u}}{\partial \nu} \right) \ud s = \int_{\p B_r(0)} \left| \frac{\partial u}{\partial \nu} - i \ww u \right|^2 \ud s \to 0\,.
\label{eq:surface_int}
\end{equation}
An integration by parts on $B_r(0) \backslash D$ gives 
\begin{equation}
\int_{\p B_r(0)} u \, \frac{\partial \bar{u}}{\partial \nu} \ud s - \int_{\partial D} u \, \frac{\partial \bar{u}}{\partial \nu} \ud s = \int_{B_r(0) \backslash D} |\nabla u|^2 - \ww^2 |u|^2 \ud x\,,
\label{eq:green_app}
\end{equation}
by $\int_{B_r(0) \backslash D} u (\Delta \bar{u} + \ww^2 \bar{u})  \ud x = 0$. It follows that, by taking the imaginary part, 
\begin{align*}
   \Im \int_{\p B_r(0)} u \, \frac{\partial \bar{u}}{\partial \nu} \ud s = \Im \int_{\partial D} u \, \frac{\partial \bar{u}}{\partial \nu} \ud s\,,
\end{align*}
which further gives, by \eqref{eq:surface_int}, 
\begin{align} \label{app:auxeq1}
     \lim_{r \to \infty}   \int_{\p B_r(0)}  \left| \frac{\partial u}{\partial \nu} \right|^2 + \ww^2 |u|^2 \ud s  =  - 2 \ww \Im \int_{\partial D} u \, \frac{\partial \bar{u}}{\partial \nu} \ud s \ge 0\,.
\end{align}
Now, an integration by parts inside the domain $D$ leads to
\begin{align} \label{app:auxeq2}
    \int_{\p D} u \, \frac{\partial \bar{u}}{\partial \nu} \ud s = \int_{D} |\nabla u|^2 - \ww^2 V |u|^2 \ud x\,,
\end{align}
implying that 
\begin{align} \label{app:auxeq4}
    \Im \int_{\p D} u \, \frac{\partial \bar{u}}{\partial \nu} \ud s = 0\,,
\end{align}
since $V$ is real-valued. Hence, by \eqref{app:auxeq1}, we conclude that
\begin{align*}
     \lim_{r \to \infty}   \int_{\p B_r(0)} |u|^2 \ud s = 0\,.
\end{align*}
Then, Rellich’s lemma \cite{colton1998inverse} gives $u = 0$ on $\R^d \backslash D$. Applying the strong unique continuation property \cite{jerison1985unique}*{Theorem 6.3} to the equation $ - \Delta u - \ww^2 (1 + V) u = 0$ shows that $u \equiv 0$ on $\R^d$. 

The discussion for the case of $\Im \ww > 0$ is similar. By \cref{lem:invarphaseu}, we further let $\Re \ww \ge 0$. Noting that $u$ exponentially decays as $|x| \to \infty$, the formula \eqref{eq:green_app} implies 
\begin{align} \label{app:auxeq3}
     - \int_{\partial D} u \, \frac{\partial \bar{u}}{\partial \nu} \ud s = \int_{\R^d \backslash D} |\nabla u|^2 - \ww^2 |u|^2 \ud x\,.
\end{align}
Taking the imaginary parts of \eqref{app:auxeq2} and \eqref{app:auxeq3}, we obtain \eqref{app:auxeq4}, which enables us to reach the same conclusion as in the previous case. This completes the proof.
\end{proof}

\end{appendix}

% \bib, bibdiv, biblist are defined by the amsrefs package.
\begin{bibdiv}
\begin{biblist}

\bib{ammari2025disorder}{article}{
      author={Ammari, Habib},
      author={Barandun, Silvio},
      author={Uhlmann, Alexander},
       title={Subwavelength localization in disordered systems},
        date={2025},
     journal={Proceedings of the Royal Society A},
      volume={481},
       pages={Paper No. 20250407},
}

\bib{ammari2019subwavelength}{article}{
      author={Ammari, Habib},
      author={Dabrowski, Alexander},
      author={Fitzpatrick, Brian},
      author={Millien, Pierre},
      author={Sini, Mourad},
       title={Subwavelength resonant dielectric nanoparticles with high refractive indices},
        date={2019},
     journal={Mathematical Methods in the Applied Sciences},
      volume={42},
      number={18},
       pages={6567\ndash 6579},
}

\bib{ammari2024anderson}{article}{
      author={Ammari, Habib},
      author={Davies, Bryn},
      author={Hiltunen, Erik~Orvehed},
       title={Anderson localization in the subwavelength regime},
        date={2024},
     journal={Communications in Mathematical Physics},
      volume={405},
      number={1},
       pages={Paper No. 1, 20},
}

\bib{aschbacher2002symmetry}{article}{
      author={Aschbacher, WH},
      author={Fr{\"o}hlich, J},
      author={Graf, GM},
      author={Schnee, K},
      author={Troyer, M},
       title={Symmetry breaking regime in the nonlinear hartree equation},
        date={2002},
     journal={Journal of Mathematical Physics},
      volume={43},
      number={8},
       pages={3879\ndash 3891},
}

\bib{arbabi2015dielectric}{article}{
      author={Arbabi, Amir},
      author={Horie, Yu},
      author={Bagheri, Mahmood},
      author={Faraon, Andrei},
       title={Dielectric metasurfaces for complete control of phase and polarization with subwavelength spatial resolution and high transmission},
        date={2015},
     journal={Nature nanotechnology},
      volume={10},
      number={11},
       pages={937\ndash 943},
}

\bib{ammari2025nonlinear}{article}{
      author={Ammari, Habib},
      author={Kosche, Thea},
       title={Nonlinear subwavelength resonances in three dimensions},
        date={2025},
     journal={Studies in Applied Mathematics},
      volume={154},
      number={3},
       pages={e70036},
}

\bib{ammari2024fano}{article}{
      author={Ammari, Habib},
      author={Li, Bowen},
      author={Li, Hongjie},
      author={Zou, Jun},
       title={Fano resonances in all-dielectric electromagnetic metasurfaces},
        date={2024},
     journal={Multiscale Modeling \& Simulation},
      volume={22},
      number={1},
       pages={476\ndash 526},
}

\bib{ammari2023mathematical}{article}{
      author={Ammari, Habib},
      author={Li, Bowen},
      author={Zou, Jun},
       title={Mathematical analysis of electromagnetic scattering by dielectric nanoparticles with high refractive indices},
        date={2023},
     journal={Transactions of the American Mathematical Society},
      volume={376},
      number={01},
       pages={39\ndash 90},
}

\bib{ammari2020superresolution}{article}{
      author={Ammari, Habib},
      author={Li, Bowen},
      author={Zou, Jun},
       title={Superresolution in Recovering Embedded Electromagnetic Sources in High Contrast Media},
        date={2020},
     journal={SIAM Journal on Imaging Sciences},
      volume={13},
    number={3},
    pages={1467\ndash1510},
}

\bib{ammari2022modal}{article}{
      author={Ammari, Habib},
      author={Millien, Pierre},
      author={Vanel, Alice~L},
       title={Modal approximation for strictly convex plasmonic resonators in the time domain: The maxwell's equations},
        date={2022},
     journal={Journal of Differential Equations},
      volume={309},
       pages={676\ndash 703},
}

\bib{ambrosetti1973dual}{article}{
      author={Ambrosetti, Antonio},
      author={Rabinowitz, Paul~H},
       title={Dual variational methods in critical point theory and applications},
        date={1973},
     journal={Journal of functional Analysis},
      volume={14},
      number={4},
       pages={349\ndash 381},
}

\bib{artin2011algebra}{book}{
      author={Artin, M.},
       title={Algebra},
   publisher={Pearson Prentice Hall},
        date={2011},
        ISBN={9780132413770},
         url={https://books.google.com.hk/books?id=QsOfPwAACAAJ},
}

\bib{butet2015optical}{article}{
      author={Butet, J{\'e}r{\'e}my},
      author={Brevet, Pierre-Fran{\c{c}}ois},
      author={Martin, Olivier~JF},
       title={Optical second harmonic generation in plasmonic nanostructures: from fundamental principles to advanced applications},
        date={2015},
     journal={ACS nano},
      volume={9},
      number={11},
       pages={10545\ndash 10562},
}

\bib{baruch2007high}{article}{
      author={Baruch, G},
      author={Fibich, G},
      author={Tsynkov, S},
       title={High-order numerical solution of the nonlinear helmholtz equation with axial symmetry},
        date={2007},
     journal={Journal of computational and applied mathematics},
      volume={204},
      number={2},
       pages={477\ndash 492},
}

\bib{boyd2008nonlinear}{incollection}{
      author={Boyd, Robert~W},
      author={Gaeta, Alexander~L},
      author={Giese, Enno},
       title={Nonlinear optics},
        date={2008},
   booktitle={Springer handbook of atomic, molecular, and optical physics},
   publisher={Springer},
       pages={1097\ndash 1110},
}

\bib{baranov2017all}{article}{
      author={Baranov, D.~G.},
      author={Zuev, D.~A.},
      author={Lepeshov, S.~I.},
      author={Kotov, O.~V.},
      author={Krasnok, A.~E.},
      author={Evlyukhin, A.~B.},
      author={Chichkov, B.~N.},
       title={All-dielectric nanophotonics: the quest for better materials and fabrication techniques},
        date={2017},
     journal={Optica},
      volume={4},
      number={7},
       pages={814\ndash 825},
}

\bib{chen2021complex}{article}{
      author={Chen, Huyuan},
      author={Ev{\'e}quoz, Gilles},
      author={Weth, Tobias},
       title={Complex solutions and stationary scattering for the nonlinear helmholtz equation},
        date={2021},
     journal={SIAM Journal on Mathematical Analysis},
      volume={53},
      number={2},
       pages={2349\ndash 2372},
}

\bib{corless1996lambert}{article}{
      author={Corless, Robert~M},
      author={Gonnet, Gaston~H},
      author={Hare, David~EG},
      author={Jeffrey, David~J},
      author={Knuth, Donald~E},
       title={On the lambert w function},
        date={1996},
     journal={Advances in Computational mathematics},
      volume={5},
       pages={329\ndash 359},
}

\bib{cao2022electromagnetic}{article}{
      author={Cao, Xinlin},
      author={Ghandriche, Ahcene},
      author={Sini, Mourad},
       title={The electromagnetic waves generated by dielectric nanoparticles},
        date={2022},
     journal={arXiv preprint arXiv:2209.02413},
}

\bib{ciarlet2013linear}{book}{
      author={Ciarlet, Philippe~G},
       title={Linear and nonlinear functional analysis with applications},
   publisher={SIAM},
        date={2013},
}

\bib{colton1998inverse}{book}{
      author={Colton, David~L},
      author={Kress, Rainer},
      author={Kress, Rainer},
       title={Inverse acoustic and electromagnetic scattering theory},
   publisher={Springer},
        date={1998},
      volume={93},
}

\bib{crandall1971bifurcation}{article}{
      author={Crandall, Michael~G},
      author={Rabinowitz, Paul~H},
       title={Bifurcation from simple eigenvalues},
        date={1971},
     journal={Journal of Functional Analysis},
      volume={8},
      number={2},
       pages={321\ndash 340},
}

\bib{deimling2013nonlinear}{book}{
      author={Deimling, Klaus},
       title={Nonlinear functional analysis},
   publisher={Springer Science \& Business Media},
        date={2013},
}

\bib{dyatlov2019mathematical}{book}{
      author={Dyatlov, Semyon},
      author={Zworski, Maciej},
       title={Mathematical theory of scattering resonances},
   publisher={American Mathematical Soc.},
        date={2019},
      volume={200},
}

\bib{evlyukhin2012demonstration}{article}{
      author={Evlyukhin, A.~B.},
      author={Novikov, S.~M.},
      author={Zywietz, U.},
      author={Eriksen, R.~L.},
      author={Reinhardt, C.},
      author={Bozhevolnyi, S.~I.},
      author={Chichkov, B.~N.},
       title={Demonstration of magnetic dipole resonances of dielectric nanospheres in the visible region},
        date={2012},
     journal={Nano letters},
      volume={12},
      number={7},
       pages={3749\ndash 3755},
}

\bib{evequoz2013real}{article}{
      author={Ev{\'e}quoz, Gilles},
      author={Weth, Tobias},
       title={Real solutions to the nonlinear helmholtz equation with local nonlinearity},
        date={2013},
     journal={arXiv preprint arXiv:1302.0530},
}

\bib{evequoz2015dual}{article}{
      author={Evequoz, Gilles},
      author={Weth, Tobias},
       title={Dual variational methods and nonvanishing for the nonlinear helmholtz equation},
        date={2015},
     journal={Advances in Mathematics},
      volume={280},
       pages={690\ndash 728},
}

\bib{evequoz2020dual}{article}{
      author={Ev{\'e}quoz, Gilles},
      author={Ye{\c{s}}il, Tolga},
       title={Dual ground state solutions for the critical nonlinear helmholtz equation},
        date={2020},
     journal={Proceedings of the Royal Society of Edinburgh Section A: Mathematics},
      volume={150},
      number={3},
       pages={1155\ndash 1186},
}

\bib{feppon2022modal}{article}{
      author={Feppon, Florian},
      author={Ammari, Habib},
       title={Modal decompositions and point scatterer approximations near the minnaert resonance frequencies},
        date={2022},
     journal={Studies in Applied Mathematics},
      volume={149},
      number={1},
       pages={164\ndash 229},
}

\bib{fibich2001high}{article}{
      author={Fibich, Gadi},
      author={Tsynkov, Semyon},
       title={High-order two-way artificial boundary conditions for nonlinear wave propagation with backscattering},
        date={2001},
     journal={Journal of computational physics},
      volume={171},
      number={2},
       pages={632\ndash 677},
}

\bib{fibich2005numerical}{article}{
      author={Fibich, G},
      author={Tsynkov, S},
       title={Numerical solution of the nonlinear helmholtz equation using nonorthogonal expansions},
        date={2005},
     journal={Journal of Computational Physics},
      volume={210},
      number={1},
       pages={183\ndash 224},
}

\bib{gohberg1990classes}{book}{
      author={Gohberg, I.},
      author={Goldberg, S.},
      author={Kaashoek, M.~A.},
       title={Classes of linear operators},
   publisher={Birkh{\"a}user},
        date={1990},
      volume={49},
}

\bib{griesmaier2022inverse}{article}{
      author={Griesmaier, Roland},
      author={Kn{\"o}ller, Marvin},
      author={Mandel, Rainer},
       title={Inverse medium scattering for a nonlinear helmholtz equation},
        date={2022},
     journal={Journal of Mathematical Analysis and Applications},
      volume={515},
      number={1},
       pages={126356},
}

\bib{gell2020existence}{article}{
      author={Gell-Redman, Jesse},
      author={Hassell, Andrew},
      author={Shapiro, Jacob},
      author={Zhang, Junyong},
       title={Existence and asymptotics of nonlinear helmholtz eigenfunctions},
        date={2020},
     journal={SIAM Journal on Mathematical Analysis},
      volume={52},
      number={6},
       pages={6180\ndash 6221},
}

\bib{gohberg1971operator}{article}{
      author={Gohberg, Israel~C},
      author={Sigal, Efim~I},
       title={An operator generalization of the logarithmic residue theorem and the theorem of rouch{\'e}},
        date={1971},
     journal={Mathematics of the USSR-Sbornik},
      volume={13},
      number={4},
       pages={603},
}

\bib{gilbarg1977elliptic}{book}{
      author={Gilbarg, David},
      author={Trudinger, Neil~S},
      author={Gilbarg, David},
      author={Trudinger, NS},
       title={Elliptic partial differential equations of second order},
   publisher={Springer},
        date={1977},
      volume={224},
      number={2},
}

\bib{gutierrez2004non}{article}{
      author={Guti{\'e}rrez, Susana},
       title={Non trivial l q solutions to the ginzburg-landau equation},
        date={2004},
     journal={Mathematische Annalen},
      volume={328},
      number={1},
       pages={1\ndash 25},
}

\bib{harrell1980double}{article}{
      author={Harrell, Evans~M},
       title={Double wells},
        date={1980},
     journal={Communications in Mathematical Physics},
      volume={75},
      number={3},
       pages={239\ndash 261},
}

\bib{ilan2010band}{article}{
      author={Ilan, Boaz},
      author={Weinstein, Michael~I},
       title={Band-edge solitons, nonlinear schr{\"o}dinger/gross--pitaevskii equations, and effective media},
        date={2010},
     journal={Multiscale Modeling \& Simulation},
      volume={8},
      number={4},
       pages={1055\ndash 1101},
}

\bib{jalade2004inverse}{inproceedings}{
      author={Jalade, Emmanuel},
       title={Inverse problem for a nonlinear helmholtz equation},
        date={2004},
   booktitle={Annales de l'ihp analyse non lin{\'e}aire},
      volume={21},
       pages={517\ndash 531},
}

\bib{jerison1985unique}{article}{
      author={Jerison, David},
      author={Kenig, Carlos~E},
       title={Unique continuation and absence of positive eigenvalues for schr{\"o}dinger operators},
        date={1985},
     journal={Annals of Mathematics},
      volume={121},
      number={3},
       pages={463\ndash 488},
}

\bib{jiang2022finite}{article}{
      author={Jiang, Run},
      author={Li, Yonglin},
      author={Wu, Haijun},
      author={Zou, Jun},
       title={Finite element method for a nonlinear perfectly matched layer helmholtz equation with high wave number},
        date={2022},
     journal={SIAM Journal on Numerical Analysis},
      volume={60},
      number={5},
       pages={2866\ndash 2896},
}

\bib{jackson2004geometric}{article}{
      author={Jackson, Russell~K},
      author={Weinstein, Michael~I},
       title={Geometric analysis of bifurcation and symmetry breaking in a gross—pitaevskii equation},
        date={2004},
     journal={Journal of statistical physics},
      volume={116},
      number={1},
       pages={881\ndash 905},
}

\bib{kato2013perturbation}{book}{
      author={Kato, Tosio},
       title={Perturbation theory for linear operators},
   publisher={Springer Science \& Business Media},
        date={2013},
      volume={132},
}

\bib{kivshar2018all}{article}{
      author={Kivshar, Yuri},
       title={All-dielectric meta-optics and non-linear nanophotonics},
        date={2018},
     journal={National Science Review},
      volume={5},
      number={2},
       pages={144\ndash 158},
}

\bib{koshelev2020subwavelength}{article}{
      author={Koshelev, Kirill},
      author={Kruk, Sergey},
      author={Melik-Gaykazyan, Elizaveta},
      author={Choi, Jae-Hyuck},
      author={Bogdanov, Andrey},
      author={Park, Hong-Gyu},
      author={Kivshar, Yuri},
       title={Subwavelength dielectric resonators for nonlinear nanophotonics},
        date={2020},
     journal={Science},
      volume={367},
      number={6475},
       pages={288\ndash 292},
}

\bib{kirr2011symmetry}{article}{
      author={Kirr, Eduard},
      author={Kevrekidis, Panayotis~G},
      author={Pelinovsky, Dmitry~E},
       title={Symmetry-breaking bifurcation in the nonlinear schr{\"o}dinger equation with symmetric potentials},
        date={2011},
     journal={Communications in mathematical physics},
      volume={308},
      number={3},
       pages={795\ndash 844},
}

\bib{kirr2008symmetry}{article}{
      author={Kirr, EW},
      author={Kevrekidis, PG},
      author={Shlizerman, Eli},
      author={Weinstein, Michael~I},
       title={Symmetry-breaking bifurcation in nonlinear schr{\"o}dinger/gross--pitaevskii equations},
        date={2008},
     journal={SIAM journal on mathematical analysis},
      volume={40},
      number={2},
       pages={566\ndash 604},
}

\bib{kuznetsov2016optically}{article}{
      author={Kuznetsov, A.~I.},
      author={Miroshnichenko, A.~E.},
      author={Brongersma, M.~L.},
      author={Kivshar, Y.~S.},
      author={Luk’yanchuk, B.},
       title={Optically resonant dielectric nanostructures},
        date={2016},
     journal={Science},
      volume={354},
      number={6314},
       pages={aag2472},
}

\bib{li2024large}{article}{
      author={Li, Long},
      author={Sini, Mourad},
       title={Large time behavior for acoustic resonators},
        date={2024},
     journal={arXiv preprint arXiv:2410.09630},
}

\bib{mandel2025dual}{article}{
      author={Mandel, Rainer},
       title={Dual variational methods for time-harmonic nonlinear maxwell's equations},
        date={2025},
     journal={arXiv preprint arXiv:2505.00992},
}

\bib{mandel2017oscillating}{article}{
      author={Mandel, Rainer},
      author={Montefusco, Eugenio},
      author={Pellacci, Benedetta},
       title={Oscillating solutions for nonlinear helmholtz equations},
        date={2017},
     journal={Zeitschrift f{\"u}r angewandte Mathematik und Physik},
      volume={68},
      number={6},
       pages={121},
}

\bib{meklachi2018asymptotic}{article}{
      author={Meklachi, Taoufik},
      author={Moskow, Shari},
      author={Schotland, John~C},
       title={Asymptotic analysis of resonances of small volume high contrast linear and nonlinear scatterers},
        date={2018},
     journal={Journal of Mathematical Physics},
      volume={59},
      number={8},
}

\bib{nirenberg1974topics}{book}{
      author={Nirenberg, Louis},
       title={Topics in nonlinear functional analysis},
   publisher={American Mathematical Soc.},
        date={1974},
      volume={6},
}

\bib{smirnova2020nonlinear}{article}{
      author={Smirnova, Daria},
      author={Leykam, Daniel},
      author={Chong, Yidong},
      author={Kivshar, Yuri},
       title={Nonlinear topological photonics},
        date={2020},
     journal={Applied Physics Reviews},
      volume={7},
      number={2},
}

\bib{teytel1999rare}{article}{
      author={Teytel, Mikhail},
       title={How rare are multiple eigenvalues?},
        date={1999},
     journal={Communications on Pure and Applied Mathematics: A Journal Issued by the Courant Institute of Mathematical Sciences},
      volume={52},
      number={8},
       pages={917\ndash 934},
}

\bib{tzarouchis2018light}{article}{
      author={Tzarouchis, D.},
      author={Sihvola, A.},
       title={Light scattering by a dielectric sphere: perspectives on the {M}ie resonances},
        date={2018},
     journal={Applied Sciences},
      volume={8},
      number={2},
       pages={184},
}

\bib{wu2018finite}{article}{
      author={Wu, Haijun},
      author={Zou, Jun},
       title={Finite element method and its analysis for a nonlinear helmholtz equation with high wave numbers},
        date={2018},
     journal={SIAM Journal on Numerical Analysis},
      volume={56},
      number={3},
       pages={1338\ndash 1359},
}

\bib{yang2020newtonian}{article}{
      author={Yang, Guangchong},
      author={Lan, Kunquan},
       title={Newtonian potential and positive solutions of poisson equations},
        date={2020},
     journal={Nonlinear Analysis},
      volume={196},
       pages={111811},
}

\end{biblist}
\end{bibdiv}

\end{document}